\newtheorem{lemma}{Lemma}
\newtheorem{propo}{Proposition}
\newtheorem{theor}{Theorem}
\newtheorem{defin}{Definition}
\newtheorem{remar}{Remark}
\newcommand{\vertii}[1]{{\left\vert\kern-0.3ex\left\vert #1 
    \right\vert\kern-0.3ex\right\vert}}
\newcommand{\vertiii}[1]{{\left\vert\kern-0.3ex\left\vert\kern-0.3ex\left\vert #1 
    \right\vert\kern-0.3ex\right\vert\kern-0.3ex\right\vert}}
\newcommand*\dd{\mathop{}\!\mathrm{d}}
\DeclareMathOperator*{\supess}{sup\,ess}
\title{About Lanford's theorem in the half-space with specular reflection}
\author{Th\'eophile Dolmaire\thanks{Department of Mathematics and Computer Science, University of Basel, Spiegelgasse 1, 4051 Basel, Switzerland, e-mail: \texttt{theophile.dolmaire@unibas.ch}}}
\begin{document}

\maketitle

\begin{abstract}
The present article proposes a rigorous derivation of the Boltzmann equation in the half-space. We show an analog of the Lanford's theorem in this domain, with specular reflection boundary condition, stating the convergence in the low density limit of the first marginal of the density function of a system of $N$ hard spheres towards the solution of the Boltzmann equation associated to the initial data corresponding to the initial state of the one-particle-density function.\\
The original contributions of this work consist in two main points: the rigorous definition of the collision operator and of the functional space in which the BBGKY hierarchy is solved in a strong sense; and the adaptation to the case of the half-space of the control of the recollisions performed by Gallagher, Saint-Raymond and Texier, which is a crucial step to obtain the Lanford's theorem.
\end{abstract}

\tableofcontents

%

\section{Introduction}

In 1872, starting from an atomistic description of matter, Boltzmann obtained an evolution equation for the density of the particles describing a rarefied gas, marking a milestone in kinetic theory (see \cite{Bolt}). In particular, he managed to grasp the irreversible trend of fluids of tending to equilibirum states. Although this is one of the strengths of his model, it was also at the origin of important debates: how could a reversible, microscopic description of a fluid generate irreversible behaviours on a macroscopic scale? Is this model trustable and rigorously obtained?\footnote{See for instance the criticisms from Loschmidt (\cite{Losc}) and Zermelo (\cite{Zerm}).}\\
The first rigorous derivation of the Boltzmann equation was obtained in 1973 by Lanford (see his pioneering work \cite{Lanf}), for a non trivial (albeit small) time interval, and was completed over time by  Cercignani, Illner and Pulvirenti (\cite{CeIP}), Gerasimenko and Petrina (\cite{CGPY}) and more recently by Pulvirenti, Saffirio and Simonella for short-range potentials (\cite{PuSS}), and by Gallagher, Saint-Raymond and Texier for the hard sphere model in \cite{GSRT}. This derivation may be presented with the following statement, which is not completely formalized at this step for the sake of simplicity.
\begin{theor}[Lanford's theorem]
\label{INTROTheorLanfo}
We consider a system of $N$ particles of radius $\varepsilon$ interacting via the hard sphere model, or via a radial, singular at $0$ and repulsive potential $\Phi_\varepsilon$ supported in $B(0,\varepsilon)$ which enables the parametrization of the scattering of particles by their deflection angle.\\
Let $f_0:\mathbb{R}^{2d} \rightarrow \mathbb{R}_+$ be a continuous density of probability such that, for some $\beta > 0$, $\mu \in \mathbb{R}$, we have:
$$
\vertii{f_0 \exp\big(\frac{\beta}{2} \vert v \vert^2\big)}_{L^\infty(\mathbb{R}^d_x \times \mathbb{R}^d_v)} \leq \exp(-\mu).
$$
Let us assume that the $N$ particles are initially identically distributed according to $f_0$, and ``independent". Then, there exists $T>0$ (depending only on $\beta$ and $\mu$) such that, in the Boltzmann-Grad limit $N \rightarrow +\infty$, $N\varepsilon^{d-1} = 1$, the distribution function of the particles converges to the solution of the Boltzmann equation:
$$
\partial_t f + v\cdot\nabla_x f = \int_{\mathbb{S}^{d-1} \times \mathbb{R}^d} \big[f(v')f(v'_*) - f(v)f(v_*)\big] b(v-v_*,\omega) \dd v_* \dd \omega
$$
with initial data $f_0$, and with the cross-section $b(w,\omega) = [\omega\cdot w]_+$ for the hard sphere interactions, or with a bounded cross-section depending implicitly on the potential $\Phi_\varepsilon$ in the case of the interactions through a repulsive potential.
\end{theor}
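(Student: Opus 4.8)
The plan is to follow the classical strategy of Lanford, in the quantitative form given by Gallagher, Saint-Raymond and Texier. \emph{First}, starting from the Liouville equation for the $N$-particle density $f_N$ on the non-overlapping phase space $\{|x_i-x_j|>\varepsilon\}$ — with specular reflection on the collision manifold, or with the flow generated by $\Phi_\varepsilon$ — I would integrate out $N-s$ variables and use the boundary/collision condition to obtain the BBGKY hierarchy
\[
\partial_t f_N^{(s)} + \sum_{i=1}^s v_i\cdot\nabla_{x_i} f_N^{(s)} = \mathcal{C}_{s,s+1}^\varepsilon f_N^{(s+1)},
\]
where $\mathcal{C}_{s,s+1}^\varepsilon$ integrates over the unit sphere $\mathbb{S}^{d-1}$ and over $v_{s+1}\in\mathbb{R}^d$, carries the prefactor $(N-s)\varepsilon^{d-1}\sim 1$, and evaluates $f_N^{(s+1)}$ at the colliding configuration $x_{s+1}=x_i-\varepsilon\omega$ (gain/loss split, pre- and post-collisional velocities, cross-section $[\omega\cdot(v_i-v_{s+1})]_+$, or its analog for $\Phi_\varepsilon$). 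Passing formally to the limit $N\to\infty$, $N\varepsilon^{d-1}=1$, and setting the $\varepsilon$-shifts to zero gives the Boltzmann hierarchy with operator $\mathcal{C}_{s,s+1}^0$; since $f^{\otimes s}$ solves this hierarchy whenever $f$ solves the Boltzmann equation, it suffices to prove $f_N^{(1)}\to f$.

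\emph{Second}, I would set up both hierarchies in the scale of Banach spaces of sequences $(g^{(s)})_{s\ge1}$ with norm $\sup_s e^{\mu s}\sup_{z_s}|g^{(s)}(z_s)|\exp(\beta\sum_{i\le s}|v_i|^2/2)$, allowing $\beta$ and $\mu$ to decrease slightly along the iteration. Writing each hierarchy in mild (Duhamel) form along the free transport (resp.\ the $N$-particle backward flow), the solution becomes the series of iterated collision operators applied to the initial data; the Gaussian weight absorbs the $v_{s+1}$-integration and the cross-section, yielding the continuity estimate for $\mathcal{C}_{s,s+1}$ between consecutive spaces and, after summing the resulting geometric series, existence and uniqueness of both hierarchies on a common interval $[0,T]$ with $T=T(\beta,\mu)$, uniformly in $N$ and $\varepsilon$. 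At this stage I would also treat the conditioning of the chaotic initial data $f_0^{\otimes N}$ to the non-overlapping set: the partition-function normalization tends to $1$, and the initial marginals $f_{N,0}^{(s)}$ converge quantitatively to $f_0^{\otimes s}$.

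\emph{Third}, I would compare the Duhamel series for $f_N^{(1)}$ and for $f^{(1)}$ term by term. Each term is an integral over a \emph{pseudo-trajectory}: a branching backward flow in which, at each collision time, a new particle is created at distance $\varepsilon$ (resp.\ $0$) from the cluster and one integrates over its velocity and its impact parameter. I would truncate the number of collisions at a large $n$ (the remainder being small by the geometric-series bound) and restrict velocities to $|v|\le R$ with $R$ logarithmically large (the complement negligible through the Gaussian bound). \textbf{The hard part is the geometric control of recollisions}: one must show that, apart from a set of impact parameters and velocities of measure $O(\varepsilon^\delta)$ for some $\delta>0$ — and after discarding, if necessary, a further small set of grazing or nearly-parallel configurations — the $N$-particle pseudo-trajectory never recollides with its cluster within time $T$ and never feels the initial exclusion, adapting the dispersion and geometry estimates of Gallagher--Saint-Raymond--Texier. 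Off this bad set the two pseudo-trajectories differ only by $O(\varepsilon)$ position shifts, so the integrands converge pointwise, dominated convergence applies term by term, and summing the finitely many retained terms gives $f_N^{(1)}\to f$ uniformly on $[0,T]$, i.e.\ the first marginal converges to the solution of the Boltzmann equation with datum $f_0$.
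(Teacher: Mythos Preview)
Your outline is correct and faithfully reproduces the Lanford/GSRT strategy. Note, however, that the paper does \emph{not} prove Theorem~\ref{INTROTheorLanfo}: it is stated in the introduction as a known background result for domains without boundary, with references to \cite{Lanf}, \cite{CeIP}, \cite{PuSS} and \cite{GSRT}. The paper's own contribution is the half-space analog (Theorems~\ref{SECT3TheorLanford___Vers_Quali} and~\ref{SECT4TheorPrincipal_deLanford_}), whose proof follows exactly the architecture you describe --- BBGKY from Liouville, weighted spaces $\widetilde{\textbf{X}}_{\cdot,\widetilde\beta,\widetilde\mu}$, contraction/fixed point, iterated Duhamel, cut-offs in number of collisions, energy and time separation, then geometric control of recollisions via a Shooting Lemma and a Scattering Lemma --- with the additional complications specific to the wall (an extra cut-off in proximity to the obstacle, Lemma~\ref{SECT3LemmeScatePavé_VitesRasan} on grazing velocities after scattering, and symmetry considerations for bounced velocities). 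So your proposal matches the paper's method; there is simply no ``paper's own proof'' of Theorem~\ref{INTROTheorLanfo} to compare against.
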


So far, the previous theorem was stated only for domains without any boundary (namely, $\mathbb{R}^d$ or $\mathbb{T}^d$). In this work, we address the question of the rigorous derivation of the Boltzmann equation in the half-space, prescribing the specular reflection as boundary condition. We will start from the hard shere model, to obtain the analog of Theorem \ref{INTROTheorLanfo} in our setting, which is Theorem \ref{SECT3TheorLanford___Vers_Quali}, stated page \pageref{SECT3TheorLanford___Vers_Quali}. To the best of our knowledge, it is the first rigorous derivation of the Boltzmann equation in a domain with a boundary.

\section{From the dynamics of the particles to the relevant hierarchies}

\subsection{The hard sphere dynamics}

Let $d$ be an integer strictly larger than $1$, $N$ be a positive integer, and $\varepsilon$ be a positive real number. One considers a system of $N$ spherical particles of mass $1$ and of radius $\varepsilon/2$, evolving inside a domain of the Euclidean space $\mathbb{R}^d$, namely the half-space $\{ x \in \mathbb{R}^d \ /\ x\cdot e_1 > 0\}$, where $e_1$ denotes the first vector of the canonical basis. The complement of the domain will be called an obstacle, denotes as $\Omega$. The boundary $\{ x\in\mathbb{R}^d \ /\ x\cdot e_1 = 0 \}$ of the domain in which the particles evolve (which is of course also the boundary of the obstacle) will be called \emph{the wall}.

\paragraph*{The notations for the state of the system of particles.}

The position, respectively the velocity, at time $t$ of the $i$-th particle (for $1\leq i \leq N$) will be denoted $x_i(t)$, respectively $v_i(t)$. One will assume that the particles cannot overlap, neither cannot cross the wall $\{ x\in\mathbb{R}^d \ /\ x\cdot e_1 = 0 \}$, so that, if for all time $t$ one collects all the positions and velocities of the particles of the system to create the vector $Z_N$ defined as
$$
Z_N(t) = \big( x_1(t),\dots,x_N(t),v_1(t),\dots,v_N(t) \big)
$$
(such a vector is called a \emph{configuration} of the system), then this vector $Z_N(t)$ has to lie in
\begin{align}
\label{SECT1DefinEspaPhase}
\big\{ (x_1,\dots,x_N,v_1,\dots,v_N) \in \mathbb{R}^{2dN} \, /\ \forall\, 1 \leq i \leq N,\ x_i\cdot e_1 \geq \varepsilon/2,\text{ and } \forall\, 1 \leq j < k \leq N,\ \vert x_j - x_k \vert \geq \varepsilon \big\}.
\end{align}
This part, defined by the expression \eqref{SECT1DefinEspaPhase}, will be called \emph{the phase space for $N$ hard spheres of radius $\varepsilon/2$}, and it will be denoted $\mathcal{D}^\varepsilon_N$.\\
Sometimes, it will be useful to designate the position or the velocity of a certain particle $i$ of a configuration $Z_N=(x_1,v_1,\dots,x_N,v_N)$. One will then use the notations $Z_N^{X,i} = x_i$ for the position of the particle $i$, and $
Z_N^{V,i} = v_i$ for its velocity. The collection of all the positions $(x_1,\dots,x_N)$ of the configuration $Z_N$ will be denoted $Z_N^X = (x_1,\dots,x_N)$, and the collection of all the velocities $(v_1,\dots,v_N)$ of $Z_N$ will be denoted $Z_N^V = (v_1,\dots,v_N)$.

\paragraph*{The dynamics inside the phase space.}
\label{SSSecDynamDans_EspacPhase}

Inside this phase space $\mathcal{D}^\varepsilon_N$, one will prescribe the most simple dynamics: the particles will travel in straight lines, conserving a constant velocity. In other words, far from the boundary they are subject to the free flow.

\paragraph*{Interaction with the boundary of the domain: the specular reflection.}
\label{SSSecInteractioAvecLeBord}

When a particle reaches the boundary, that is when there exists a time $t_b$ and an integer $1\leq i\leq N$ such that $x_i(t_b)\cdot e_1 = \varepsilon/2$, the velocity of the particle has to be changed in order to keep this particle inside the domain $\{ x\cdot e_1 \geq \varepsilon/2 \}$ for times larger than $t_b$. The law of reflection chosen here will be the specular reflection, also known as the Snell-Descartes law, which takes here a very simple form, namely:
\begin{equation}
v_i(t_b^+) = v_i(t_b^-) - 2 \big( v_i(t_b^-)\cdot e_1 \big) e_1.
\end{equation}

\begin{figure}[!h]
\centering
\includegraphics[scale=0.2, trim = 0cm 5cm 8cm 5cm]{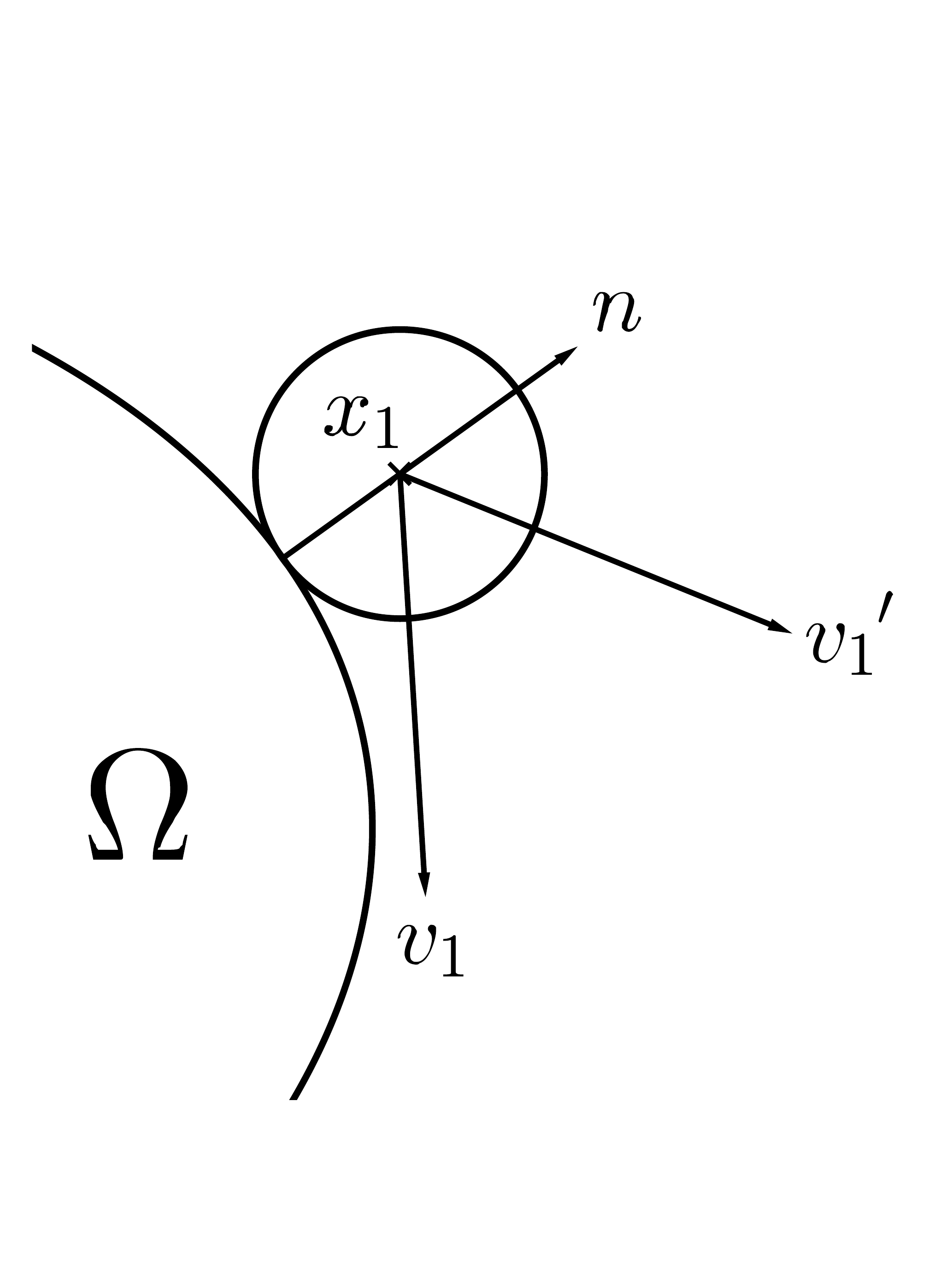}
\caption{A representation of the specular reflection against an obstacle $\Omega$.}
\label{SECT1FigurReflexion_Speculaire}
\end{figure}

\paragraph*{Interaction between the particles: the hard sphere model.}
\label{SSSecInteractioEntreParti}

The particles are assumed to be spherical, with a non zero diameter $\varepsilon$. In addition to the dynamics prescribed by the free flow and the bouncings against the wall, one has to impose another change of velocity when two particles are about to overlap. For two particles at $x_1$ and $x_2$ that are about to overlap (that is such that $x_2=x_1+\varepsilon\omega$ with $\omega \in \mathbb{S}^{d-1}$) with respective pre-collisional velocities $v$ and $v_*$ (that is such that $(x_2-x_1)\cdot(v_*-v) = \omega\cdot(v_*-v) < 0$), one will replace their velocities by post-collisional ones, namely
\begin{equation}
\left\{
\begin{array}{rl}
v' &= v - (v-v_*)\cdot \omega \omega,\\
v_*'&= v_* + (v-v_*)\cdot \omega \omega
\end{array}
\right.
\end{equation}
(that is one has $(x_2-x_1)\cdot(v_*'-v') = \omega\cdot(v_*'-v') > 0$).

\begin{figure}[!h]
\centering
\includegraphics[scale=0.18, trim = 0cm 4cm 8cm 4cm]{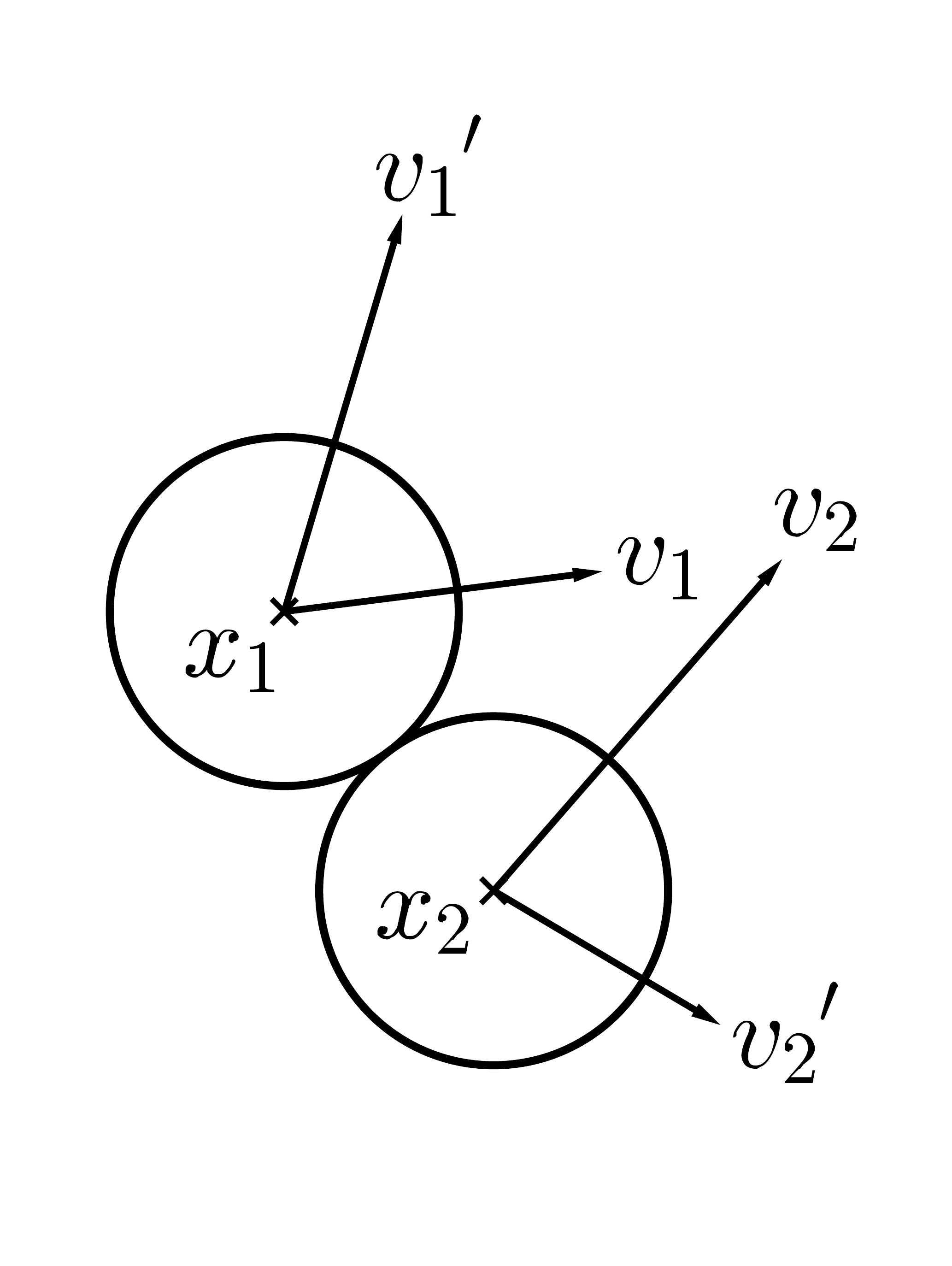}
\caption{A representation of an elastic collision between two hard spheres.}
\label{SECT1FigurReflexion_Speculaire}
\end{figure}

One notices that the kinetic energy and the momentum of the pair of particles is conserved during a collision, that is $\vert v' \vert^2/2 + \vert v_*' \vert^2/2 = \vert v \vert^2/2 + \vert v_* \vert^2/2$
and $v'+v_*' = v+v_*$.
For $\omega \in \mathbb{S}^{d-1}$ fixed, the mapping that associates to the pair of velocities $(v,v_*)$ the new pair
$$
(v',v_*') = \big(v - (v-v_*)\cdot \omega \omega,v_* + (v-v_*)\cdot \omega \omega\big),
$$
which is called the \emph{scattering mapping}, is an involution and sends the pre-collisional velocities $(v,v_*)$ (such that $\omega\cdot(v_*-v) < 0$) onto the post-collisional velocities $(v',v'_*)$ (such that $\omega\cdot(v'_*-v') > 0$), and vice versa.\\
\newline
For more details about the hard sphere dynamics (and the other models of interaction that are usually introduced), the reader may refer to \cite{Cer1} and especially \cite{CeIP}\footnote{See in particular Appendix 4.A ``More About Hard-Sphere Dynamics".}. Let us focus here on an important problem concerning this transport: for a given system of particles, let us call an \emph{event} a time such that a particle bounces against the obstacle or collide with another particle. Depending on the initial configuration, two events involving the same particle can occure at the same time, leading to an ill-defined dynamics. Here are the possible cases.\\
We first deal with the case of a bouncing against the obstacle: thanks to the convexity of the obstacle (in fact any obstacle with a bounded curvature provides the same property up to assume that the size of the particles is small enough), none of the initial configurations could lead to a situation in which a particle touches at the same time the obstacle at two or more different points. However, some initial configurations could lead to a situation in which a particle bounces against the obstacle, and collides with at least another particle at the same time.\\
One should also consider an initial configuration leading to a situation in which a particle collides with two other particles (or more) at the same time.  There is no other kind of simultaneous events involving the same particle when the obstacle is the half-space, but in those cases, two velocities or more are assigned to the same particle at the time of the considered events, we call \emph{pathological} a trajectory for which the dynamics becomes ill-defined due to this phenomenon at some point. Let us then study the initial configurations leading to a pathological trajectory.

\begin{propo}[Rigorous definition of the hard sphere dynamics almost everywhere, globally in time]
\label{SECT1PropoBonneDefinSpherDures}
Let $N$ be an integer larger than $2$ and $0 < \varepsilon \leq 1$ be a positive number. Then the two following assertions hold.
\begin{itemize}
\item The set of initial configurations $Z_N$ in the phase space $\mathcal{D}^\varepsilon_N$ for $N$ hard spheres of radius $\varepsilon/2$ leading to a pathological trajectory during the time interval $\mathbb{R}_+$ is of measure zero.
\item For every initial configuration $Z_N$ in the phase space $\mathcal{D}^\varepsilon_N$, one considers the subset $\mathcal{E}(Z_N)$ of $\mathbb{R}_+$ composed of all the times of the events of this dynamics in the largest time interval where it is well-defined. Then for any initial configuration $Z_N$ outside a subset of the phase space $\mathcal{D}^\varepsilon_N$ of measure zero, $\mathcal{E}(Z_N)$ is a discrete set.
\end{itemize}
\end{propo}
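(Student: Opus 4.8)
The plan is to adapt the classical argument for the well-posedness of the hard sphere flow in $\mathbb{R}^d$ (as in \cite{CeIP} and \cite{GSRT}) to the presence of the wall, by a careful enumeration of the pathological configurations. The pathologies are of finitely many geometric types (up to permutation of particle labels): (i) a triple collision, i.e. three particles $i,j,k$ mutually at distance $\varepsilon$; (ii) a grazing double collision, where $x_i,x_j,x_k$ satisfy $|x_i-x_j|=|x_i-x_k|=\varepsilon$ but the second collision is tangential, i.e.\ the relative velocity is orthogonal to the corresponding $\omega$; (iii) the analogous wall-pathologies, where a particle $i$ touches the wall ($x_i\cdot e_1=\varepsilon/2$) while simultaneously colliding with a particle $j$, or grazing it; and (iv) an infinite accumulation of events in finite time (Zeno behaviour). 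For the first bullet I would fix a time $t_0\geq 0$ and show that, for the \emph{backward} flow from a given configuration, the set of configurations for which some such coincidence occurs exactly at time $t_0$ has codimension at least one in $\mathcal{D}^\varepsilon_N$; then one integrates over $t_0$ in a suitable (countable) family and concludes by Fubini that the bad set has measure zero. Concretely: the free-flight map $Z_N\mapsto Z_N + t\,(Z_N^V,0)$ is measure preserving, specular reflection at the wall is a piecewise-linear measure preserving involution, and the collision map at a prescribed $\omega$ is an isometry; so it suffices to estimate, for a configuration that has just experienced one event, the measure of the subset for which a \emph{second}, simultaneous event is possible. Each such constraint is one scalar equation ($|x_i-x_j|=\varepsilon$ or $x_i\cdot e_1 = \varepsilon/2$ together with the pre-collisional sign condition being an equality rather than a strict inequality in the grazing case), hence cuts out a set of measure zero; summing over the finitely many particle triples/pairs and labels, and over a countable dense set of potential event times, gives the first bullet.

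For the second bullet — the discreteness of $\mathcal{E}(Z_N)$ — the point is to rule out accumulation of events. The standard mechanism is that between two consecutive events of a fixed pair (or a fixed particle and the wall) there is a uniform lower bound on the elapsed time \emph{once the configuration is non-degenerate}: after a genuine (non-grazing) collision between $i$ and $j$ the relative velocity $(v_i-v_j)\cdot\omega$ changes sign and has a definite magnitude, so the two spheres separate at a definite rate and cannot recollide before a time bounded below in terms of that rate and the velocities; similarly, after a specular bounce the normal velocity $v_i\cdot e_1$ is reversed with the same magnitude, so the particle leaves the wall at a definite rate. Since total kinetic energy is conserved, all speeds stay bounded by $\sqrt{2E}$ with $E=E(Z_N)$ fixed; this caps how fast particles can approach each other or the wall and thus bounds the number of events of each elementary type on any bounded time interval, provided no grazing/triple coincidence is ever encountered. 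Combining with the first bullet (which removes, up to measure zero, every configuration whose forward trajectory ever meets a grazing or multiple coincidence) yields that for a.e.\ $Z_N$ the flow is globally defined and $\mathcal{E}(Z_N)$ has no accumulation point in $\mathbb{R}_+$, i.e.\ is discrete.

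I expect the main obstacle to be the bookkeeping needed to make the measure-zero argument genuinely global in time while honouring the wall. Two subtleties deserve care. First, one cannot simply say ``the bad set at time $t$ has measure zero, integrate in $t$'': the event times themselves depend measurably but nonlinearly on $Z_N$, so the cleanest route is the classical one of running the dynamics event-by-event, showing at each step that imposing an extra simultaneous constraint is a codimension-$\geq 1$ condition on the current configuration, and then pulling this back through the (measure preserving, piecewise smooth) composition of free flights, reflections and scatterings to the initial configuration; one must check that the wall reflection, being only piecewise linear, does not destroy this pull-back property (it does not, since it is a bi-Lipschitz measure preserving involution away from the grazing set, which is itself negligible). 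Second, the wall introduces a \emph{new} kind of potential degeneracy absent in the whole-space case — a particle bouncing off the wall and colliding with a neighbour at the same instant — which must be included explicitly in the list of pathologies above; fortunately convexity of the obstacle (stated in the excerpt) already excludes the particle-touching-the-wall-at-two-points scenario, so the list is finite and the same codimension count applies. A final, routine point is to handle the countability: one enumerates events along the trajectory (only countably many can occur before any possible accumulation point), and a countable union of measure zero sets is measure zero.
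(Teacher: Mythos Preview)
Your discreteness argument for the second bullet has a genuine gap. Saying that after a non-grazing collision the pair $(i,j)$ separates at a definite rate only bounds the time until the \emph{same pair} recollides without intervening events; it does not bound the total number of events involving particle $i$, because $i$ may collide with $k$, then bounce off the wall, then collide with $j$ again, etc. The claim ``this caps how fast particles can approach each other or the wall and thus bounds the number of events of each elementary type on any bounded time interval'' is not justified: energy conservation bounds speeds, but a sequence of collisions with different partners (each non-grazing) can in principle accumulate, with the normal components $(v_i-v_j)\cdot\omega$ going to zero along the sequence without any single one being exactly grazing. Removing only the exactly-grazing/exactly-simultaneous sets (which are indeed of measure zero at each fixed step) does not rule this out. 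Relatedly, your event-by-event pull-back for the first bullet presupposes that events are well separated in order to iterate, so the two bullets are entangled and your treatment risks circularity.

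The paper's proof takes a different, quantitative route that handles both bullets at once. One restricts to bounded configurations $\mathcal{D}^\varepsilon_{N,\rho,R}$, fixes a small time step $\delta$, and excises the sets
\[
\mathcal{A}^i_{j,k}(\delta)=\{|x_i-x_j|\leq 2\delta R+\varepsilon,\ |x_i-x_k|\leq 2\delta R+\varepsilon\},\qquad
\mathcal{B}^i_j(\delta)=\{|x_i-x_j|\leq 2\delta R+\varepsilon,\ d(x_i,\Omega)\leq \delta R+\varepsilon/2\},
\]
whose measures are each $O(\delta^2)$. Outside their union every particle can meet at most one partner (or the wall) on $[0,\delta]$, so the dynamics is well defined there with at most one event per particle. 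Iterating $m=t/\delta$ times via measure preservation of the flow yields an excluded set of measure $O(\delta)$ on $[0,t]$; intersecting over $m\to\infty$ and taking countable unions over $R\to\infty$ and $t\to\infty$ gives the global measure-zero bad set $\mathcal{F}^\varepsilon_N$. Discreteness then follows directly: for $Z_N\notin\mathcal{F}^\varepsilon_N$ there exists $m_0$ such that $Z_N$ lies outside $\mathcal{C}^\varepsilon_{N,R}(t/m_0,m_0)$, which forces consecutive events of any fixed particle to be at least $t/m_0$ apart, contradicting accumulation. The key idea you are missing is precisely this $O(\delta^2)$-per-step geometric estimate, which replaces any appeal to a dynamical lower bound on inter-event times.
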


The proof of this result, originally published in \cite{Al75}\footnote{This reference deals with the most general setting possible, taking into account a wide variety of obstacles.}, is presented in a modern way in \cite{GSRT}\footnote{See Proposition 4.1.1 page 28.} for the case without obstacle. The case of the half-space is presented in detail in \cite{PhDTT} but for the sake of completeness, the reader may find a shortened proof in appendix, page \pageref{AppenSectiBonneDefinSpherDures}.

\begin{remar}
This result shows that the hard sphere dynamics is globally defined on time, for almost every initial configurations of particles, and the accumulation of events cannot happen except for a subset of initial data of zero measure.
\end{remar}

\begin{defin}[Hard sphere transport]
\label{SECT1DefinTransport_SpherDures}
For any positive integer $N$ and any positive real number $\varepsilon$, we define the hard sphere transport of $N$ particles of radius $\varepsilon/2$ as the map:
$$
(t,Z_N) \mapsto T^{N,\varepsilon}_t(Z_N),
$$
defined for almost every $Z_N \in \mathcal{D}^\varepsilon_N$ (according to Proposition \ref{SECT1PropoBonneDefinSpherDures}) and any time $t\in\mathbb{R}$, where $T^{N,\varepsilon}_t(Z_N)$ is the configuration starting from $Z_N$ and obtained after following the hard sphere dynamics for a time $t$.
\end{defin}

\subsection{The statistical study of the system of $N$ hard spheres: the BBGKY hierarchy}

In statistical physics the central object turns out to be the density function of the system of particles, which represents the probability, along time, of finding the system of $N$ particles in a given state. The relevant information will be obtained as moments of this function (which corresponds to a "mean information").\\
Another very important object in the following, derived from the density function, will be the family its marginals, that is the integral with respect to some of the variables of the density. In particular, the first marginal, which is obtained as the integral with respect to all of the variables except the position and the velocity of the first particle of the system, represents the mean behaviour of a single particle of the system.\\
One will recall in this section the key observation due to Bogolyubov, Born, Green, Kirkwood and Yvon (see respectively \cite{Bogo}, \cite{BoGr}, \cite{Kirk} and \cite{Yvon}), proving that it is possible to link together those marginals. This link, being known as the \emph{BBGKY hierarchy}, provides a crucial family of equations, deeply linked with the Boltzmann equation (see \cite{Grad}, \cite{Cer1}, \cite{CeIP} and \cite{GSRT}), and which will be the central object of study of the present work.

\paragraph*{The distribution function of a system of $N$ hard spheres.}

We denote
$$
f_N(t,Z_N) = f_N(t,x_1,v_1,\dots,x_N,v_N),
$$
the density function of the system of $N$ hard spheres. In other words, at time $t$, and for $A\subset \mathcal{D}^\varepsilon_N\subset\mathbb{R}^d_{x_1}\times\mathbb{R}^d_{v_1}\times\dots\times\mathbb{R}^d_{x_N}\times\mathbb{R}^d_{v_N}$ measurable, the quantity
$$
\int_A f_N(t,x_1,v_1,\dots,x_N,v_N) \dd x_1 \dd v_1 \dots \dd x_N \dd v_N
$$
represents the probability of finding the system in a configuration belonging to the subpart $A$ of the phase space.\\
We also introduce the following boundary conditions, which incode the hard spheres dynamics, according to the introductive Sections \ref{SSSecDynamDans_EspacPhase} and \ref{SSSecInteractioAvecLeBord}:
\begin{defin}[Boundary condition for the hard sphere dynamics]
\label{SECT1DefinInvolution_CondiBord_}
Let $s$ be a positive integer and $\varepsilon$ be a positive number. One defines the \emph{boundary condition for the hard sphere dynamics of $s$ particles of radius $\varepsilon/2$} as the map defined on the boundary $\mathcal{D}^\varepsilon_s$ of the phase space into itself as
$$
\chi^\varepsilon_s : \left\{
\begin{array}{rl}
\partial \mathcal{D}^\varepsilon_s &\rightarrow \ \ \partial \mathcal{D}^\varepsilon_s,\\
Z_s &\mapsto \ \ \chi^\varepsilon_s(Z_s),
\end{array}
\right.
$$
with $\big( \chi^\varepsilon_s(Z_s) \big)^X = Z_s^X$ (the map does not act on the positions of the configurations) and such that:
\begin{itemize}[leftmargin=*]
\item if for some $1 \leq i < j \leq s$, one has $\vert x_i - x_j \vert = \varepsilon$ and $\vert x_k - x_l \vert > \varepsilon$ for all $(k,l) \neq (i,j)$ (a single collision happens, between the two particles $i$ and $j$), while in addition $x_k\cdot e_1 > \varepsilon/2$ for all $1 \leq k \leq s$, one defines:

$$
\left\{
\begin{array}{rl}
\big(\chi^\varepsilon_s(Z_s)\big)^{V,i} &= v_i - \big(1/\varepsilon^2\big) \big((v_i-v_j) \cdot (x_i-x_j) \big) (x_i-x_j),\\
\big(\chi^\varepsilon_s(Z_s)\big)^{V,j} &= v_j + \big(1/\varepsilon^2\big) \big((v_i-v_j) \cdot (x_i-x_j) \big) (x_i-x_j),\\
\big(\chi^\varepsilon_s(Z_s)\big)^{V,k} &= v_k \text{  for all}\, 1 \leq k \leq s,\ k \neq i,\ k \neq j.
\end{array}
\right.
$$

\item if for some $1 \leq i \leq s$, one has $x_i \cdot e_1 = \varepsilon/2$ and $x_j \cdot e_1 > \varepsilon/2$ for all $j \neq i$ (a single particle bounces against the wall), while in addition $\vert x_k - x_l \vert > \varepsilon$ for all $1 \leq k < l \leq s$, one defines:

$$
\left\{
\begin{array}{rl}
\big(\chi^\varepsilon_s(Z_s)\big)^{V,i} &= v_i - 2 v_i\cdot e_1 e_1, \\
\big(\chi^\varepsilon_s(Z_s)\big)^{V,j} &= v_j \text{  for all  } 1 \leq j \leq s,\ j \neq i.
\end{array}
\right.
$$

\end{itemize}
\end{defin}

The distribution function $f_N$ is a solution of the \emph{Liouville equation}: $\forall\, t \geq 0,\ \forall\, Z_N \in \mathcal{D}^\varepsilon_N$,
\begin{align}
\label{SECT1EquatLiouville_InterieurD}
\partial_t f_N(t,Z_N) + \sum_{i=1}^N v_i\cdot\nabla_{x_i} f_N(t,Z_N) = 0,
\end{align}
with boundary conditions: $\forall\, t \geq 0,\ \forall\, Z_N \in \widehat{\partial \mathcal{D}^\varepsilon_N}^\text{in} = \big( \bigcup_{1 \leq i < j \leq N} B_{i,j}^\text{in} \big) \cup \big( \bigcup_{1 \leq i \leq N} C_i^\text{in} \big)$,
\begin{align}
\label{SECT1EquatLiouville_ConditBord}
f_N(t,Z_N) = f_N\big(t,\chi^\varepsilon_N(Z_N)\big).
\end{align}
For an interesting discussion about the link between the conservation of some physical quantities and the distribution function's boundary conditions, see \cite{Schn}.\\
\newline
The map $\chi^\varepsilon_s$ of the boundary conditions is only well defined on the subset $\widehat{\partial \mathcal{D}^\varepsilon_s} = \big( \bigcup_{1\leq i < j \leq s} B_{i,j} \cup \bigcup_{1 \leq i \leq s} C_i \big)$, with
\begin{align*}
B_{i,j} = \Big\{ \vert x_i - x_j \vert = \varepsilon,\ \vert x_k - x_l \vert > \varepsilon \text{  for all  } (k,l) \neq (i,j), x_k\cdot e_1 > \varepsilon/2 \text{  for all  } k \Big\},
\end{align*}
$$
C_i = \Big\{ x_i \cdot e_1 = \varepsilon/2,\ x_j \cdot e_1 > \varepsilon/2 \text{  for all  } j \neq i \text{  and  } \vert x_k - x_l \vert > \varepsilon \text{  for all  } (k,l) \Big\}.
$$
Nevertheless, and although $\widehat{\partial \mathcal{D}^\varepsilon_s}$ is a strict subset of the boundary $\partial \mathcal{D}^\varepsilon_s$ of the phase space ($\chi^\varepsilon_s$ is not defined for configurations of the boundary such that at least two collisions, two bouncings or a collision and a bouncing happen at the same time), its measure is full, which will be enough for the use we will do of it.\\
One sees that this map $\chi^\varepsilon_s$ is an involution, and if we denote
\begin{align*}
B_{i,j}^\text{in} = \Big\{ \vert x_i - x_j \vert = \varepsilon,\, \vert x_k - x_l \vert > \varepsilon \text{  for all  } (k,l) \neq (i,j),\, x_k \cdot e_1 > \varepsilon/2 \text{  for all  } k \text{  and  } (x_i-x_j)\cdot(v_i-v_j) > 0 \Big\}
\end{align*}
(corresponding to a configuration in which the particles $i$ and $j$ collide and such that for any positive time small enough, the distance between the particles $i$ and $j$, after being transported by the hard sphere dynamics, will be larger than $\varepsilon$: this is an incoming configuration into the phase space), and
\begin{align*}
C_i^\text{in} = \Big\{ x_i \cdot e_1 = \varepsilon/2,\, x_j \cdot e_1 > \varepsilon/2 \text{  for all  } j \neq i,\, \vert x_k - x_l \vert > \varepsilon \text{  for all  } (k,l)
\text{  and  } v_i \cdot e_1 > 0 \Big\}
\end{align*}
(corresponding to a configuration in which the particle $i$ is bouncing against the wall, and such that for any positive time small enough, the distance between the wall and the particle $i$, after being transported by the hard sphere dynamics, will be larger than $\varepsilon/2$: this is also an incoming configuration), then $\chi^\varepsilon_s$ sends the incoming configurations of $B_{i,j}^\text{in}$ and $C_i^\text{in}$ onto the outgoing configurations of $B_{i,j}^\text{out}$ and $C_i^\text{out}$ respectively, and conversely, where
\begin{align*}
B_{i,j}^\text{out} = \Big\{ \vert x_i - x_j \vert = \varepsilon,\, \vert x_k - x_l \vert > \varepsilon \text{  for all  } (k,l) \neq (i,j),\, x_k \cdot e_1 > \varepsilon/2 \text{  for all  } k \text{  and  } (x_i-x_j)\cdot(v_i-v_j) < 0 \Big\}
\end{align*}
and
\begin{align*}
C_i^\text{out} = \Big\{ x_i \cdot e_1 = \varepsilon/2,\, x_j \cdot e_1 > \varepsilon/2 \text{  for all  } j \neq i,\, \vert x_k - x_l \vert > \varepsilon \text{  for all  } (k,l)
\text{  and  } v_i \cdot e_1 < 0 \Big\}.
\end{align*}

\paragraph*{From the Liouville equation to the BBGKY hierarchy.}

Following the computation (which is now a classic) that can be found originally in \cite{Grad}, or in \cite{CeIP} and \cite{GSRT} for a more modern presentation, we can derive an equation verified by the marginals of the distribution function of the hard sphere system. Concerning the particularities appearing due to the presence of the wall, the reader may refer to \cite{PhDTT}.\\
We can show that the marginals $f_N^{(s)}$ of the distribution function solve the following equation on $\mathbb{R}_+\times \mathcal{D}^\varepsilon_s$:
\begin{equation}
\label{SECT1EquatHieraBBGKYFormeDiffe}
\partial_t f^{(s)}_N + \sum_{i=1}^s v_i \cdot \nabla_{x_i} f^{(s)}_N = \mathcal{C}^{N,\varepsilon}_{s,s+1} f^{(s+1)}_N,
\end{equation}
where $\mathcal{C}^{N,\varepsilon}_{s,s+1}$, called the \emph{s-th collision operator}, denotes
\begin{align}
\label{SECT1OperaCollision_HieraBBGKY}
\mathcal{C}^{N,\varepsilon}_{s,s+1} f_N^{(s+1)} (t,Z_s) = \sum_{i=1}^s (N-s)\, \varepsilon^{d-1} \int_{\mathbb{S}^{d-1}\times\mathbb{R}^d} \hspace{-10mm} \omega \cdot (v_{s+1}-v_i) f_N^{(s+1)}(t,Z_s,x_i+\varepsilon\omega,v_{s+1})\dd \omega \dd v_{s+1}.
\end{align}
This generic equation (for $1 \leq s \leq N-1$) constitutes the so-called \emph{BBGKY hierarchy}. Nevertheless, we will not use this version of the BBGKY hierarchy, that has to be considered with the analog of the boundary conditions \eqref{SECT1EquatLiouville_ConditBord}, namely:
\begin{equation}
\label{SECT1CondiBord_HierarchieBBGKY}
\forall \, 1 \leq s \leq N,\forall \, t \geq 0,\ \forall\, Z_s \in \widehat{\partial\mathcal{D}^\varepsilon_s}^\text{in},\ f_N^{(s)}(t,Z_s) = f_N^{(s)}(t,\chi^\varepsilon_s(Z_s)).
\end{equation}
We will use instead an integrated with respect to time version, which is on the one hand more self-contained (since it contains the boundary conditions), and which will be also more convenient to deal with the fixed point argument. This version, equivalent up to assume enough regularity of the solutions, writes
\begin{equation}
\label{SECT1HieraBBGKYVersiInteg/Tmps}
f^{(s)}_N(t,Z_s) ) = \big(\mathcal{T}^{s,\varepsilon}_t f^{(s)}_N(0,\cdot)\big)(Z_s) + \int_0^t \mathcal{T}^{s,\varepsilon}_{t-u} \mathcal{C}^{N,\varepsilon}_{s,s+1} f_N^{(s+1)}(u,Z_s) \dd u,
\end{equation}
where $\mathcal{T}^{s,\varepsilon}_t$ denotes the \emph{backwards} hard sphere flow, defined using the transport introduced in Definition \ref{SECT1DefinTransport_SpherDures}: $\big(\mathcal{T}^{s,\varepsilon}_t f\big)(Z_s) = f\big(T^{s,\varepsilon}_{-t}(Z_s)\big)$.

\paragraph*{The formal limit of the BBGKY hierarchy when $\varepsilon \rightarrow 0$.}

One presents now briefly the bridge built by Grad in \cite{Grad} between the BBGKY hierarchy and the Boltzmann equation, which is now a famous step in the derivation. One can refer to \cite{Cer1}, \cite{CeIP} or again \cite{GSRT} for more details.\\
The first step consists in a change of variable in the pre-collisional configurations in the collision term of the BBGKY hierarchy, that is one will rewrite the integral in order to integrate only over pre-collisional configurations. One writes:
\begin{align*}
\mathcal{C}^{N,\varepsilon}_{s,s+1} f_N^{(s+1)} (t,Z_s) &= \sum_{i=1}^s (N-s)\, \varepsilon^{d-1} \int_{\mathbb{S}^{d-1}\times\mathbb{R}^d} \hspace{-10mm} \omega \cdot (v_{s+1}-v_i) f_N^{(s+1)}(t,Z_s,x_i+\varepsilon\omega,v_{s+1})\dd \omega \dd v_{s+1} \\
&= \sum_{i=1}^s (N-s)\, \varepsilon^{d-1} \Big[\int_{\omega\cdot(v_{s+1}-v_i)\,<\,0} + \int_{\omega\cdot(v_{s+1}-v_i)\,>\,0} \Big] \omega \cdot (v_{s+1}-v_i) \\
& \ \ \ \ \ \ \ \ \ \ \ \ \ \ \ \times f_N^{(s+1)}(t,Z_s,x_i+\varepsilon\omega,v_{s+1})\dd \omega \dd v_{s+1} \\
&= \sum_{i=1}^s (N-s)\, \varepsilon^{d-1} \int_{\omega\cdot(v_{s+1}-v_i)\,>\,0} \hspace{-19mm} \big[\omega \cdot (v_{s+1}-v_i)\big]_+ \\
&\hspace{25mm} \times \Big(f_N^{(s+1)}(t,Z_s,x_i+\varepsilon\omega,v_{s+1}) - f_N^{(s+1)}(t,Z_s,x_i-\varepsilon\omega,v_{s+1}) \Big) \dd \omega \dd v_{s+1},
\end{align*}
where the last line is obtained after performing the change of variables $\omega \rightarrow -\omega$ in the first term constituted of the pre-collisional velocities.\\
Now we can use the boundary conditions verified by the marginal $f_N^{(s+1)}$ in order to remove the post-collisional arguments in the integrand, replacing $f^{(s+1)}_N (t,Z_s,x_i+\varepsilon\omega,v_{s+1})$ by $f^{(s+1)}_N (t,x_1,x_1,\dots,x_i,v_i',\dots,x_i+~\varepsilon\omega,v_{s+1}')$.\\
Finally, taking now formally the limit $\varepsilon \rightarrow 0$, and up to assume that $N\varepsilon^{d-1} \rightarrow 1$, we find the limiting collision operator:
\begin{align*}
\mathcal{C}^0_{s,s+1} f^{(s+1)}(t,Z_s) &= \sum_{i=1}^s \int_{\mathbb{S}^{d-1}\times\mathbb{R}^d} \hspace{-10mm} \big[\omega \cdot (v_{s+1}-v_i)\big]_+ \\
&\times \Big(f^{(s+1)}(t,x_1,v_1,\dots,x_i,v_i',\dots,x_i,v_{s+1}') - f^{(s+1)}(t,Z_s,x_i,v_{s+1}) \Big) \dd \omega \dd v_{s+1},
\end{align*}
where $[x]_+$ denotes the nonnegative part of $x\in\mathbb{R}$, that is $[x]_+ = x$ if $x\geq0$, and $[x]_+ = 0$ if $x<0$.\\
Using this operator, we can define now the limiting hierarchy obtained from the BBGKY one, called the \emph{Boltzmann hierarchy}, which writes
\begin{equation}
\label{SECT1EquatHieraBoltzFormeDiffe}
\partial_t f^{(s)} + \sum_{i=1}^s v_i \cdot \nabla_{x_i} f^{(s)} = \mathcal{C}^0_{s,s+1} f^{(s+1)}.
\end{equation}
As for the BBGKY hierarchy, the integrated version of the Boltzmann hierarchy, which follows, will be the most useful thereafter:
\begin{equation}
\label{SECT1HieraBoltzVersiInteg/Tmps}
f^{(s)}(t,Z_s) ) = \big(\mathcal{T}^{s,0}_t f^{(s)}(0,\cdot)\big)(Z_s) + \int_0^t \mathcal{T}^{s,0}_{t-u} \mathcal{C}^0_{s,s+1} f^{(s+1)}(u,Z_s) \dd u,
\end{equation}
where $\mathcal{T}^{s,0}_t$ denotes the \emph{backwards} free flow with the specular boundary conditions, defined using the associated free transport, that is $\big(\mathcal{T}^{s,0}_t f\big)(Z_s) = f\big(T^{s,0}_{-t}(Z_s)\big)$.

\paragraph*{The Boltzmann equation as the first equation of the Boltzmann hierarchy for tensorized functions.}

The Boltzmann hierarchy is called this way because it is deeply linked with the Boltzmann equation. To be more accurate, if the second unknown $f^{(2)}$ of the sequence of solutions $(f^{(s)})_{s \geq 1}$ of the Boltzmann hierarchy is the tensorization of the first unknown $f^{(1)}$, that is if $f^{(2)}(t,x_1,v_1,x_2,v_2) = f^{(1)}(t,x_1,v_1) f^{(1)}(t,x_2,v_2)$, then $f^{(1)}$ solves the Boltzmann equation.\\ Conversely, if $f$ is a solution of the Boltzmann equation, then the sequence of its tensorizations $(f^{(s)})_{s\geq1} = (f^{\otimes s})_{s \geq 1}$ provides a solution of the Boltzmann hierarchy. This remark, together with the formal derivation of the Boltzmann hierarchy from the BBGKY hierarchy suggests then an interesting plan to obtain a derivation of the Boltzmann equation itself.\\
The assertion that the quantity $N\varepsilon^{d-1}$ stays constant when $N$ goes to infinity is called the \emph{Boltzmann-Grad} limit, introduced by Grad in his pioneering work \cite{Grad} casting for the first time the bridge described between the BBGKY hierarchy and the Boltzmann equation. Physically, it means that the mean free path of a particle remains constant. It also implies that the volume $N\varepsilon^d$ occupied by the particles is going to zero as the number of the particles increases, hence one usually calls this condition the \emph{low density limit}.\\
Concerning now the different steps providing formally the Boltzmann hierarchy, it is quite clear that their order crucially matters: if we had performed the limit $\varepsilon \rightarrow 0$ \emph{before} using the boundary condition
$$
f^{(s+1)}(\dots,v_i',\dots,v_{s+1}') = f^{(s+1)}(\dots,v_i,\dots,v_{s+1}),
$$
then the collision term would have been simply $0$, that is we would have recovered the free transport in the limit. In addition, we decided to remove the post-collisional arguments in the integrand: this can be formally motivated by the fact that the equation involves a \emph{backwards in time} transport, so when two particles collide, it is important to give the pre-collisional velocities associated to a post-collisional pair in order to be able to reconstruct the path of the particles backwards. If we had removed the pre-collisional arguments instead, we would have recover the opposite of the collision term, and then the backwards in time Boltzmann equation.\\
Finally, and even if it was already mentionned several times, all those manipulations and the hierarchies obtained are only formal so far. A first important challenge is to give a rigorous sense to those objects, which is the purpose of the following section.

\section{Introduction of the rigorous functional setting}

\subsection{Definition of the collision operator for the BBGKY hierarchy}
\label{SSect2.1__DefinOperaColliBBGKY}

The free transport with specular reflexion preserves the continuity, and then the Boltzmann hierarchy makes sense for continuous functions. But the hard sphere transport is only defined almost everywhere, so we have to deal with another set of functions. Let us then here study the BBGKY hierarchy, that has to make sense for Lebesgue functions.\\
The general formula \eqref{SECT1HieraBBGKYVersiInteg/Tmps}, using the collision operator described by \eqref{SECT1OperaCollision_HieraBBGKY}, is based on an integration on a manifold with a positive codimension in the phase space. Indeed, if we focus only on the collision term \eqref{SECT1OperaCollision_HieraBBGKY} (and forget for the moment about the integration in time), we see that this collision term is obtained by integrating over the variables $\omega \in \mathbb{S}^{d-1}$ and $v_{s+1}\in\mathbb{R}^d$. Since the trace of a Lebesgue function is not well defined in general, one will need an additional result to give a sense to this term.\\
The problem is for the first time mentionned (and addressed) in \cite{GSRT}, we will here only sketch the main steps of the solution. However, the presentation of the rigorous definition of this term in \cite{GSRT} is quite fast, one may refer to \cite{PhDTT} for a more detailed proof, which leads to the following result. For the sake of completeness, the proof is sketched below.

\begin{theor}[Definition of the collision operator of the BBGKY hierarchy for functions of $\ensuremath{\mathcal{C}\big([0,T],L^\infty(\mathcal{D}^\varepsilon_{s+1})\big)}$ decaying sufficiently fast at infinity in the velocity variables]
\label{SECT2TheorDefinOperaColliBBGKY}
Let $s$ be a positive integer, $\varepsilon$ and $T$ be two positive numbers.\\
Let in addition $g_{s+1} : [0,T] \times \mathbb{R}_+ \rightarrow \mathbb{R}_+$ be a function verifying:
\begin{itemize}[leftmargin=*]
\item $(t,x) \mapsto g_{s+1}(t,x)$ is measurable and almost everywhere positive,
\item for all $x \in \mathbb{R}_+$, the function $t \mapsto g_{s+1}(t,x)$ is increasing,
\item for all $t \in [0,T]$ and almost every $(v_1,\dots,v_s) \in \mathbb{R}^{ds}$, the function \\$v_{s+1} \mapsto \big\vert V_{s+1} \big\vert g_{s+1}\big(t,\big\vert V_{s+1} \big\vert\big)$ is integrable on $\mathbb{R}^d$,
\item for all $t \in [0,T]$, the function $(v_1,\dots,v_s) \mapsto \displaystyle{\int}_{\mathbb{R}^d} \big\vert V_{s+1} \big\vert g_{s+1}\big(t,\big\vert V_{s+1} \big\vert\big) \dd v_{s+1}$ is bounded almost everywhere, and
$$
\vertii{\int_{\mathbb{R}^d} \mathds{1}_{\vert V_{s+1} \vert \geq R} \big\vert V_{s+1} \big\vert g_{s+1}\big(t,\big\vert V_{s+1} \big\vert\big) \dd v_{s+1}}_{L^\infty([0,T],L^\infty(\mathcal{D}^\varepsilon_{s+1}))}
$$
converges to zero as $R$ goes to infinity.
\end{itemize}
Then, for every integer $1 \leq i \leq s$, and for any function $h^{(s+1)} \in \mathcal{C}\big([0,T],L^\infty(\mathcal{D}^\varepsilon_{s+1})\big)$ such that there exists $\lambda \in \mathbb{R}_+$ such that
$$
\vertii{ h^{(s+1)}(t,Z_{s+1}) }_{L^\infty([0,T],L^\infty(\mathcal{D}^\varepsilon_{s+1}))} \leq \lambda \vertii{ g_{s+1}\big(t,\big\vert V_{s+1} \big\vert\big) }_{L^\infty([0,T],L^\infty(\mathcal{D}^\varepsilon_{s+1}))},
$$
the function $\mathcal{C}^\varepsilon_{s,s+1,\pm,i}\mathcal{T}^{s+1,\varepsilon}_t h^{(s+1)}$ is a well defined element of $L^\infty\big([0,T] \times \mathcal{D}^\varepsilon_{s+1} \big)$, and one has almost everywhere on $[0,T] \times \mathcal{D}^\varepsilon_s$:
$$
\Big\vert \mathcal{C}^\varepsilon_{s,s+1,\pm,i}\mathcal{T}^{s+1,\varepsilon}_t h^{(s+1)}(t,Z_s) \Big\vert \leq \lambda \varepsilon^{d-1} \frac{\big\vert \mathbb{S}^{d-1} \big\vert}{2} \int_{\mathbb{R}^d} \big(\vert v_i \vert + \vert v_{s+1} \vert\big)g_{s+1}\big( t,\big\vert V_{s+1} \big\vert \big) \dd v_{s+1}.
$$
\end{theor}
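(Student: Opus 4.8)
The plan is to give the object meaning by a change of variables that trades the integration over the collision manifold for an integration against Lebesgue measure. Spelling out the operator, one must make sense of the expression $\varepsilon^{d-1}$ times
\[
\int_{\mathbb{S}^{d-1}\times\mathbb{R}^d}\big[\pm\,\omega\cdot(v_{s+1}-v_i)\big]_+\,h^{(s+1)}\big(t,T^{s+1,\varepsilon}_{-t}(Z_s,x_i+\varepsilon\omega,v_{s+1})\big)\dd\omega\dd v_{s+1},
\]
the bracket restricting the integration to a half-sphere, and in which $(Z_s,x_i+\varepsilon\omega,v_{s+1})$ runs over a manifold of codimension one in $\mathcal{D}^\varepsilon_{s+1}$ (hence a null set) to which one then applies the transport $T^{s+1,\varepsilon}_{-t}$, itself only defined almost everywhere by Proposition \ref{SECT1PropoBonneDefinSpherDures}. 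I would first record a preliminary point: since $h^{(s+1)}\in\mathcal{C}\big([0,T],L^\infty(\mathcal{D}^\varepsilon_{s+1})\big)$, one may fix a representative admitting a \emph{time-independent} null set $\mathcal{N}_h\subset\mathcal{D}^\varepsilon_{s+1}$ such that $h^{(s+1)}(t,Y)$ is unambiguously defined, and continuous in $t$, for every $Y\notin\mathcal{N}_h$ (take a countable dense set of times, discard the countable union of the corresponding null sets, and extend by uniform-in-$Y$ continuity in $t$). Adjoining to $\mathcal{N}_h$ the flow-invariant null set $\mathcal{P}$ of configurations whose hard sphere trajectory is pathological or accumulates events, which is null by Proposition \ref{SECT1PropoBonneDefinSpherDures}, one obtains a time-independent null set $\mathcal{N}_0=\mathcal{N}_h\cup\mathcal{P}$, off which $Y\mapsto h^{(s+1)}\big(t,T^{s+1,\varepsilon}_{-t}(\cdot)\big)$ is perfectly defined.

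The heart of the proof is the following claim, the half-space counterpart of the computation of \cite{GSRT}. The map
\[
\mathcal{J}:\ (t,Z_s,\omega,v_{s+1})\in[0,T]\times\mathcal{D}^\varepsilon_s\times\mathbb{S}^{d-1}\times\mathbb{R}^d\ \longmapsto\ T^{s+1,\varepsilon}_{-t}\big(Z_s,x_i+\varepsilon\omega,v_{s+1}\big)\in\mathcal{D}^\varepsilon_{s+1}
\]
is defined between two manifolds of the same dimension $2d(s+1)$, and --- away from a set of measure zero (the pathological configurations of Proposition \ref{SECT1PropoBonneDefinSpherDures}; the grazing set $\{\omega\cdot(v_{s+1}-v_i)=0\}$; the configurations for which $(Z_s,x_i+\varepsilon\omega,v_{s+1})$ is not genuinely pre- or post-collisional) --- it is an at most countable-to-one $C^1$-local diffeomorphism whose Jacobian is $\big\vert\det D\mathcal{J}\big\vert=\varepsilon^{d-1}\,\big\vert\omega\cdot(v_{s+1}-v_i)\big\vert$. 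Establishing this is the main obstacle. I would factor $\mathcal{J}$ as the parametrisation $(Z_s,\omega,v_{s+1})\mapsto(Z_s,x_i+\varepsilon\omega,v_{s+1})$ of the collision manifold --- which pulls its surface measure back to $\varepsilon^{d-1}\dd Z_s\dd\omega\dd v_{s+1}$ --- followed by the flow-out $(t,W)\mapsto T^{s+1,\varepsilon}_{-t}(W)$; by Liouville's theorem for the hard sphere flow, the latter has Jacobian equal to the modulus of the flux of the free-transport field $(v,0)$ across the collision manifold, namely $\big\vert\omega\cdot(v_{s+1}-v_i)\big\vert$, and this quantity is constant along trajectories. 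To make this rigorous one decomposes the backward trajectory issued from $(Z_s,x_i+\varepsilon\omega,v_{s+1})$ into finitely many free-flight legs --- finitely many on $[0,T]$ off a null set, again by Proposition \ref{SECT1PropoBonneDefinSpherDures} --- separated by events, and checks that the Jacobian is unchanged across each event, be it a binary collision (transition given by the measure-preserving involution $\chi^\varepsilon_{s+1}$) or, and this is the point specific to the half-space, a bounce on the wall (transition given by the specular reflection $v\mapsto v-2(v\cdot e_1)e_1$, an isometry, hence measure-preserving, which moreover leaves the normal component of the velocity unchanged so the flux is untouched as well). The other difficulty I expect is the precise delimitation of the full-measure ``good set'' for $\mathcal{J}$, where Proposition \ref{SECT1PropoBonneDefinSpherDures} and the geometry of near-grazing collisions enter.

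It then remains to read off the two conclusions. \emph{Well-definedness.} Because $\mathcal{J}$ is, off a null set, a $C^1$-local diffeomorphism with almost everywhere positive Jacobian, it pulls back null sets of $\mathcal{D}^\varepsilon_{s+1}$ to null sets of $[0,T]\times\mathcal{D}^\varepsilon_s\times\mathbb{S}^{d-1}\times\mathbb{R}^d$; applied to the time-independent null set $\mathcal{N}_0$ from the first paragraph, this gives that for almost every $(t,Z_s,\omega,v_{s+1})$ the point $T^{s+1,\varepsilon}_{-t}(Z_s,x_i+\varepsilon\omega,v_{s+1})$ lies outside $\mathcal{N}_0$, so by Fubini the integrand $(\omega,v_{s+1})\mapsto\big[\pm\,\omega\cdot(v_{s+1}-v_i)\big]_+\,h^{(s+1)}\big(t,T^{s+1,\varepsilon}_{-t}(Z_s,x_i+\varepsilon\omega,v_{s+1})\big)$ is well defined and measurable for almost every $(t,Z_s)$. \emph{The bound.} On the half-sphere where it is supported, $\big[\pm\,\omega\cdot(v_{s+1}-v_i)\big]_+\le\vert v_i\vert+\vert v_{s+1}\vert$; moreover $\big\vert h^{(s+1)}(t,\cdot)\big\vert\le\lambda\,g_{s+1}(t,\vert V_{s+1}\vert)$, and the hard sphere flow preserves $\vert V_{s+1}\vert$ (the free legs leave the velocities untouched, a binary collision preserves $\vert v_j\vert^2+\vert v_k\vert^2$, and the specular reflection is an isometry), so $\big\vert h^{(s+1)}\big(t,T^{s+1,\varepsilon}_{-t}(Z_s,x_i+\varepsilon\omega,v_{s+1})\big)\big\vert\le\lambda\,g_{s+1}(t,\vert V_{s+1}\vert)$ with $\vert V_{s+1}\vert$ evaluated on $(v_1,\dots,v_s,v_{s+1})$ only, independently of $\omega$ and of $t$. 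Integrating in $\omega$ (a half-sphere has measure $\vert\mathbb{S}^{d-1}\vert/2$) yields exactly
\[
\Big\vert\mathcal{C}^\varepsilon_{s,s+1,\pm,i}\mathcal{T}^{s+1,\varepsilon}_t h^{(s+1)}(t,Z_s)\Big\vert\le\lambda\,\varepsilon^{d-1}\,\frac{\big\vert\mathbb{S}^{d-1}\big\vert}{2}\int_{\mathbb{R}^d}\big(\vert v_i\vert+\vert v_{s+1}\vert\big)g_{s+1}\big(t,\vert V_{s+1}\vert\big)\dd v_{s+1},
\]
whose right-hand side is finite almost everywhere by the third hypothesis on $g_{s+1}$, and --- by the fourth hypothesis, which makes the velocity tails uniformly small --- bounded on $[0,T]\times\mathcal{D}^\varepsilon_s$; hence $\mathcal{C}^\varepsilon_{s,s+1,\pm,i}\mathcal{T}^{s+1,\varepsilon}_t h^{(s+1)}\in L^\infty\big([0,T]\times\mathcal{D}^\varepsilon_s\big)$, which closes the argument.
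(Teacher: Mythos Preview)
Your approach is correct in spirit but takes a genuinely different route from the paper. You aim for a single global change of variables via the map $\mathcal{J}:(t,Z_s,\omega,v_{s+1})\mapsto T^{s+1,\varepsilon}_{-t}(Z_s,x_i+\varepsilon\omega,v_{s+1})$, invoking Liouville's theorem for the hard sphere flow and arguing that the Jacobian factor $\varepsilon^{d-1}\vert\omega\cdot(v_{s+1}-v_i)\vert$ is propagated across every event (collision or specular bounce). The paper instead proceeds by a sequence of three explicit cut-offs: it first restricts to a subdomain $\mathcal{D}^\varepsilon_{s+1}(\delta,R_1,R_2)$ on which the particles are well separated and bounded, so that for small backward times the hard sphere transport \emph{coincides with the free transport}, and on this domain the change of variables is the elementary map $S^\pm_{s+1}$ (free flight, no events at all). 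The truncated operator $\mathcal{C}^\varepsilon_{s,s+1,\pm,i}(\delta,R_1,R_2)\mathcal{T}^{s+1,\varepsilon}$ is thus defined without ever tracking the flow through collisions or bounces; the full operator is then recovered by relaxing the cut-offs one by one ($\delta\to 0$ via $L^1$ convergence and a time-decomposition argument, $R_1\to\infty$ almost everywhere, $R_2\to\infty$ as a Cauchy sequence in $L^\infty$, which is precisely where the decay hypotheses on $g_{s+1}$ enter).

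What each approach buys: the paper's cut-off strategy never needs to establish regularity of the full hard sphere flow map, only of a free-flight map on an explicit open set; this sidesteps entirely the delicate stratification of the phase space by collision history that your argument must face when you write ``decompose the backward trajectory into finitely many free-flight legs'' and ``the precise delimitation of the full-measure good set''. Your approach, by contrast, is more conceptual and explains transparently why the operator is representative-independent (null sets pull back to null sets through a local diffeomorphism with nonvanishing Jacobian), and it makes the role of the specular reflection explicit as just another measure-preserving transition. Both lead to the same bound; the paper's route is more constructive and probably easier to make fully rigorous, while yours is geometrically cleaner but leaves more to verify about the piecewise-smooth structure of $\mathcal{J}$.
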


\begin{proof}[Sketch of proof of Theorem \ref{SECT2TheorDefinOperaColliBBGKY}]

Let us assume, in order to simplify the presentation, that the function $f^{(s+1)}_N$ on which the collision term \eqref{SECT1OperaCollision_HieraBBGKY} is acting does not depend on time, and let focus only on the second term of the collision operator, which does not involve scattering.\\
The first ingredient is the Fubini theorem: for a function $f:X\times Y \rightarrow \mathbb{R}$ which is integrable for the product measure $\dd x \otimes \dd y$ on $X \times Y$, one knows that
$$
y \mapsto \int_X f(x,y) \dd x
$$
is defined almost everywhere and is integrable with respect to the measure $\dd y$ on $Y$. In other words, this theorem can be seen as a way to define traces in some particular cases.\\
A function $f^{(s+1)}_N$ defined on the phase space $\mathcal{D}^\varepsilon_{s+1}$ (with $s+1$ particles), which is composed with the map $(Z_s,\omega,v_{s+1}) \mapsto (Z_s,x_i+\varepsilon\omega,v_{s+1})$, defined on $\mathbb{R}^{2ds}\times\mathbb{S}^{d-1}\times\mathbb{R}^d$ and taking its values in $\mathbb{R}^{2d(s+1)}$, and integrated with respect to the variables $\omega$ and $v_{s+1}$, depends in the end on the variable $Z_s$. So in order to apply the Fubini theorem in this case, one should integrate again with respect to this last variable $Z_s$. However it is not enough, since if $f^{(s+1)}_N$ is a measurable function defined on $\mathcal{D}^\varepsilon_{s+1}$, the integration variables $Z_s$, $\omega$ and $v_{s+1}$ cover only a manifold of codimension $1$.\\
The second important idea is then to compose $f^{(s+1)}_N$ with the hard sphere transport for $s+1$ particles, depending of course on the configuration of the system of those $s+1$ hard spheres, but also on an additional parameter: the time $u$. This will play the role of the missing variable in the integration.\\
In the end, this insertion of a transport operator inside the collision term will lead us to consider a "shifted" in time version of the BBGKY hierarchy, called the \emph{conjugated BBGKY hierarchy}, on which the regularity results can be stated\footnote{In \cite{GSRT}, this hierarchy is said to be ``more regular", but it is actually the only one on which one can properly work.}: if one denotes
$$
h_N^{(s)}(Z_{s+1}) = f_N^{(s)}\big(T^{s+1,\varepsilon}_u(Z_{s+1})\big), \text{   that is   } h_N^{(s+1)} = \mathcal{T}^{s+1,\varepsilon}_{-u} f^{(s+1)}_N,
$$
then (formally) the $s$-th equation of the BBGKY hierarchy holds if and only if
\begin{equation}
\label{SECT2HieraBBGKYInteg/TmpsConju}
h^{(s+1)}_N(t,\cdot) = f^{(s+1)}_N(0,\cdot) + \int_0^t \mathcal{T}^{s,\varepsilon}_{-u}\mathcal{C}^{N,\varepsilon}_{s,s+1}\mathcal{T}^{s+1,\varepsilon}_u h^{(s+1)}_N(u,\cdot) \dd u.
\end{equation}
In the end, the collision operator will not be defined alone, but composed with the hard sphere transport. That is why we will talk about the \emph{transport-collision} operator of the BBGKY hierarchy in the sequel, and not about the collision operator only.\\

The very last step, which is of technical order, but which leads to a lot of work, is a series of restrictions that have to be relaxed one by one (see \cite{PhDTT}). To be more explicit, we start by decomposing the collision operator into elementary terms (each one concerning a single particle $1 \leq i \leq s$ chosen to collide, and each one being either in a pre- or in a post-collisional configuration according to the adjunction parameters $(\omega,v_{s+1})$), that is we write
$$
\mathcal{C}^{N,\varepsilon}_{s,s+1} = (N-s)\sum_{i=1}^s \big( \mathcal{C}^\varepsilon_{s,s+1,+,i} -  \mathcal{C}^\varepsilon_{s,s+1,-,i} \big)
$$
where
\begin{align*}
\mathcal{C}^\varepsilon_{s,s+1,\pm,i} h^{(s+1)} = \varepsilon^{d-1} \int_{\substack{\vspace{-2mm}\hspace{-0.3mm}\mathbb{S}^{d-1}_\omega\times\mathbb{R}^d_{v_{s+1}}}} \hspace{-12mm} \big(\omega\cdot(v_{s+1}-v_i\big)_\pm h^{(s+1)}(Z_s,x_i+\varepsilon\omega,v_{s+1}) \dd \omega \dd v_{s+1}
\end{align*}
(here again, this last term is only formally introduced for $h^{(s+1)}$ being a $L^p$ function), and we introduce three cut-off parameters $\delta$, $R_1$ and $R_2$ restricting the domain of integration such that
\begin{itemize}[label=\textbullet]
\item $\vert x_j - x_k \vert > \varepsilon + \sqrt{2}\delta R_2$ for all $1 \leq j < k \leq s+1$ with $(j,k) \neq (i,s+1)$,
\item $x_l\cdot e_1 > \varepsilon/2 + \delta R_2$ for all $1 \leq l \leq s+1$,
\item $\vert X_{s+1} \vert = \vert (x_1,x_2,\dots,x_i,\dots,x_i+\varepsilon\omega) \vert \leq R_1$,
\item $\vert V_{s+1} \vert = \vert (v_1,\dots,v_s,v_{s+1}) \vert \leq R_2$.
\end{itemize}

On this restricted domain of integration, that will be denoted $\mathcal{D}^\varepsilon_{s+1}(\delta,R_1,R_2)$, if the pair of particles $(x_i,v_i)$ and $(x_i+\varepsilon\omega,v_{s+1})$ is in a pre-collisional configuration, for $\delta$ small enough (depending on $R_1$ and $R_2$) the backwards hard sphere transport coincides with the free transport for small times, that is for all $0\leq t \leq \delta$ one has
$$
T^{s+1,\varepsilon}_{-t}(Z_s,x_i+\varepsilon\omega,v_{s+1}) = (X_s-tV_s,V_s,x_i+\varepsilon\omega-tv_{s+1},v_{s+1}),
$$
and since the function
$$
S^-_{s+1}=\left\{
\begin{array}{rl}
\mathcal{D}^\varepsilon_s \times [0,\delta] \times \mathbb{S}^{d-1} \times \mathbb{R}^d &\rightarrow \ \ \mathbb{R}^{2d(s+1)} \\
(Z_s,t,\omega,v_{s+1}) &\mapsto \ \ (X_s-tV_s,x_i+\varepsilon\omega-tv_{s+1},v_{s+1})
\end{array}
\right.
$$
is such that its Jacobian determinant has an absolute value equal to
$$
\varepsilon^{d-1} \big\vert \omega \cdot (v_{s+1}-v_i) \big\vert,
$$
we can at last define the pre-collisional elementary terms of the (truncated) transport-collision operator $\mathcal{C}^\varepsilon_{s,s+1,-,i}(\delta,R_1,R_2) \mathcal{T}^{s+1,\varepsilon}$ using the formula
\begin{align*}
\int_{\substack{\vspace{-2mm}\hspace{-1mm} S^-_{s+1}(\mathcal{D}^\varepsilon_{s+1}(\delta,R_1,R_2))}} \hspace{-24mm} h^{(s+1)}(Z_{s+1}) \dd Z_{s+1} &= \int_{\hspace{-0.4mm}0}^{\delta} \hspace{-2mm} \int_{\mathcal{D}^\varepsilon_s} \int_{\substack{\vspace{-2mm}\hspace{-0.5mm}\mathbb{S}^{d-1}\times\mathbb{R}^d}} \hspace{-10mm} \mathds{1}_{\mathcal{D}^\varepsilon_{s+1}(\delta,R_1,R_2)}\varepsilon^{d-1}\big(\omega\cdot(v_{s+1}-v_i)\big)_- \\
& \ \ \ \ \ \ \ \ \times h^{(s+1)}\big(S^-_{s+1}(Z_s,x_i+\varepsilon\omega,v_{s+1})\big) \dd \omega \dd v_{s+1} \dd Z_s \dd t \\
&= \int_{\hspace{-0.4mm}0}^\delta \hspace{-2mm} \int_{\mathcal{D}^\varepsilon_s} \mathcal{C}^\varepsilon_{s,s+1,-,i}(\delta,R_1,R_2) \mathcal{T}^{s+1,\varepsilon} h^{(s+1)}(t,Z_s) \dd Z_s \dd t,
\end{align*}
for $h^{(s+1)}$ a $L^\infty$ function of the phase space $\mathcal{D}^\varepsilon_{s+1}$.\\
One sees that there is a restriction on the time interval, which can be relaxed (that is one can define the truncated transport-collision operator on any time interval $[0,T]$) thanks to the conservation of the $L^\infty$ norm by the hard sphere transport and a decomposition of any time interval into sub-intervals of length $\leq \delta$. The post-collisional terms $\mathcal{C}^\varepsilon_{s,s+1,+,i}$ are defined in the same way, replacing only the mapping $S^-_{s+1}$ by $S^+_{s+1}$, which is defined as the scattering mapping with $S^-_{s+1}$.\\
\newline
Finally, we relax first the cut-off in the time variable (the parameter $\delta$). We can show that the sequence $\big(\mathcal{C}^\varepsilon_{s,s+1,\pm,i}(\delta,R_1,R_2)\mathcal{T}^{s+1,\varepsilon}_th^{(s+1)}\big)_\delta$ converges strongly in $L^1$ as $\delta\rightarrow 0$ towards a limit, denoted $\mathcal{C}^\varepsilon_{s,s+1,\pm,i}(R_1,R_2)\mathcal{T}^{s+1,\varepsilon}_t h^{(s+1)}$, which is also $L^\infty$, and the convergence holds also in the weak sense in $L^\infty$.\\
The cut-off in the position variable (the parameter $R_1$) can then be relaxed: $\big(\mathcal{C}^\varepsilon_{s,s+1,\pm,i}(R_1,R_2)\mathcal{T}^{s+1,\varepsilon}_t h^{(s+1)}\big)_{R_1}$ converges almost everywhere as $R_1\rightarrow+\infty$ towards a limit, denoted $\mathcal{C}^\varepsilon_{s,s+1,\pm,i}(R_2)\mathcal{T}^{s+1,\varepsilon}_th^{(s+1)}$, which is a $L^\infty$ function (with a supremum which depends on $R_2$).\\
To counter-balance the growth in $R_2$ of the $L^\infty$ norm of $\mathcal{C}^\varepsilon_{s,s+1,\pm,i}(R_2)\mathcal{T}^{s+1,\varepsilon}_th^{(s+1)}$, one has to impose a decrease in the velocity variable for $h^{(s+1)}$. The condition, explicited in Theorem \ref{SECT2TheorDefinOperaColliBBGKY} below, is quite strong, but we can notice that among the few functions verifying this condition can be found the gaussians. With this condition, the sequence $\big(\mathcal{C}^\varepsilon_{s,s+1,\pm,i}(R_2)\mathcal{T}^{s+1,\varepsilon}_th^{(s+1)}\big)_{R_2}$ is a Cauchy sequence in $L^\infty$, and then it is converging as $R_2\rightarrow+\infty$.\\
This long process provides the rigourous definition of the transport-collision operator for the BBGKY hierarchy, up to consider it acting on the set of functions described in the theorem. This concludes the sketch of proof of Theorem \ref{SECT2TheorDefinOperaColliBBGKY}.
\end{proof}

\subsection{A practical functional setting for the hierarchies}

To the best of our knowledge, the previous theorem provides the most general setting in which the collision operator of the BBGKY hierarchy makes sense when it acts on functions (and not on distributions). However, it does not answer the question on the functional setting in which the BBGKY hierarchy is rigourously defined: a solution of the BBGKY hierarchy is a \emph{family} of functions $\big(f^{(s)}_N\big)_s$ such that (formally) $f^{(s)}_N = \mathcal{T}^{s,\varepsilon}_t f^{(s)}_{N,0} + \int_0^t \mathcal{T}^{s,\varepsilon}_{t-u} \mathcal{C}^{N,\varepsilon}_{s,s+1} f^{(s+1)}_N \dd u$, for all $1 \leq s \leq N-1$. In other words, we need to define \emph{families} of functional spaces, that are on the one hand consistent with the action of the collision operator, and on the other hand consistent with the low density limit, in order to be able to compare the solutions of the two hierarchies in the end.\\
Let us then introduce first the relevant functional spaces containing individually the elements $f^{(s)}_N$ and $f^{(s)}$ of the solutions of the two hierarchies, and then complete the introduction of the functional setting with the spaces containing the whole sequences $(f^{(s)}_N)_{1 \leq s \leq N}$ and $(f^{(s)})_{s \geq 0}$.

\paragraph*{Definition of the spaces $X_{\varepsilon,s,\beta}$ and $X_{0,s,\beta}$, the functions of the phase space of $s$ particles bounded by a gaussian in the velocity variables.}

One starts with the definition of the first kind of functional space, in which each marginal will lie. The main difference between the spaces for the BBGKY and the Boltzmann hierarchies, except of course the domain of definition, is the continuity of the functions.

\begin{defin}[Norms $|\cdot|_{\varepsilon,s,\beta}$ and $|\cdot|_{0,s,\beta}$, spaces $X_{\varepsilon,s,\beta}$ and $X_{0,s,\beta}$]
\label{SECT2DefinEspacFonctBornéGauss}
Let $\varepsilon$ and $\beta>0$ be two strictly positive numbers and $s$ be a positive integer. For any function $h^{(s)}$ belonging to $L^\infty\big(\mathcal{D}^\varepsilon_s\big)$, one defines:
$$
|h^{(s)}|_{\varepsilon,s,\beta}=\supess_{Z_s\in\mathcal{D}^\varepsilon_s}\left[\big\vert h^{(s)}(Z_s) \big\vert \exp\left(\frac{\beta}{2}\sum_{i=1}^s|v_i|^2\right)\right],
$$
and the space $X_{\varepsilon,s,\beta}$ as the space of the functions of $L^\infty\big(\mathcal{D}^\varepsilon_s\big)$ with a finite $|\cdot|_{\varepsilon,s,\beta}$ norm, that is:
$$
X_{\varepsilon,s,\beta}=\Big\{ h^{(s)}\in L^\infty\left(\mathcal{D}^\varepsilon_s\right)\ /\ |h^{(s)}|_{\varepsilon,s,\beta}<+\infty\Big\}.
$$
For any function $f^{(s)}$ belonging to $\mathcal{C}_0\big(\big(\overline{\Omega^c}\times\mathbb{R}^d\big)^s\big)$, one defines:
$$
|f^{(s)}|_{0,s,\beta}=\sup_{Z_s\in (\overline{\Omega^c}\times\mathbb{R}^d)^s}\left[\big\vert f^{(s)}(Z_s) \big\vert \exp\left(\frac{\beta}{2}\sum_{i=1}^s|v_i|^2\right)\right],
$$
and the space $X_{0,s,\beta}$ as the space of the continuous functions vanishing at infinity defined on $\big(\overline{\Omega^c}\times\mathbb{R}^d\big)^s$ with a finite $|\cdot|_{0,s,\beta}$ norm, that is:
$$
X_{0,s,\beta}=\big\{ f^{(s)}\in \mathcal{C}_0\big(\big(\overline{\Omega^c}\times\mathbb{R}^d\big)^s\big)\ /\ |f^{(s)}|_{0,s,\beta}<+\infty\big\},
$$
and satisfying the following boundary condition $f^{(s)}(Z_s) = f^{(s)}(\chi^0_s(Z_s))$ for all $Z_s$ belonging to the boundary of $\big(\overline{\Omega^c} \times \mathbb{R}^d \big)^s$\hspace{-1.5mm}, that is such that there exists at least an integer $1 \leq i \leq s$ such that $x_i\cdot e_1 = 0$ and $v_i \cdot e_1 > 0$.
\end{defin}

\paragraph*{Definition of the spaces $\textbf{X}_{\varepsilon,\beta,\mu^\alpha}$ and $\textbf{X}_{0,\beta,\mu^\alpha}$, the sequence of functions of $X_{\cdot,s,\beta}$ with an exponential weight with respect to the number of particles.}

Now that we introduced the functional spaces in which each of the marginals will lie, let us introduce a structure on the sequence of such spaces, with, in addition to a real parameter $\mu$, which is the activity of the solution from a physical point of view, another parameter $\alpha$, strictly positive, and which will be taken equal to $1$ or $2$ in the sequel. The choice of this parameter $\alpha$ plays a role in the definition of the continuity in time introduced in the final spaces, introduced below.

\begin{defin}[Norms $\vertii{\cdot}_{N,\varepsilon,\beta,\mu^\alpha}$ and $\vertii{\cdot}_{0,\beta,\mu^\alpha}$, spaces $\textbf{X}_{N,\varepsilon,\beta,\mu^\alpha}$ and $\textbf{X}_{0,\beta,\mu^\alpha}$]
\label{SECT2DefinEspacSuiteMargiExpon}
Let $N$ be a positive integer. Let $\varepsilon$ an $\beta$ be two strictly positive numbers, $\mu$ be a real number and $\alpha > 0$ be a strictly positive number. For any finite sequence $H_N=\big(h^{(s)}_N\big)_{1\leq s\leq N}$ of functions $h^{(s)}_N$ of $X_{\varepsilon,s,\beta}$, one defines :
$$
\vertii{H_N}_{N,\varepsilon,\beta,\mu^\alpha}=\max_{1\leq s\leq N}\Big(\vert h^{(s)}_N \vert_{\varepsilon,s,\beta}\exp(s^\alpha \mu)\Big),
$$
and the space $\textbf{X}_{N,\varepsilon,\beta,\mu^\alpha}$ as the space of the finite sequences $H_N = \big(h^{(s)}_N\big)_{1\leq s\leq N}$ such that for every $1\leq s\leq N$, $h^{(s)}_N$ belongs to $X_{\varepsilon,s,\beta}$, and such that the sequence $\big( h^{(s)}_N \big)_{1\leq s\leq N}$ has a finite $\vertii{\cdot}_{N,\varepsilon,\beta,\mu^\alpha}$ norm, that is :
$$
\textbf{X}_{N,\varepsilon,\beta,\mu^\alpha}=\Big\{H_N=\big(h^{(s)}_N\big)_{1\leq s\leq N} \in \big(X_{\varepsilon,s,\beta}\big)_{1\leq s\leq N}\ /\ \vertii{H_N}_{N,\varepsilon,\beta,\mu^\alpha}<+\infty\Big\}.
$$
Similarly, for any infinite sequence $F = \big(f^{(s)}\big)_{s\geq 1}$ of functions $f^{(s)}$ of $X_{0,s,\beta}$, one defines :
$$
\vertii{F}_{0,\beta,\mu^\alpha}=\sup_{s\geq 1}\Big(\vert f^{(s)} \vert_{0,s,\beta}\exp(s^\alpha \mu)\Big),
$$
and the space $\textbf{X}_{0,\beta,\mu^\alpha}$ as the space of the infinite sequences $\big( f^{(s)} \big)_{s\geq 1}$ such that for every $s\geq 1$, $f^{(s)}$ belongs to $X_{0,s,\beta}$, and such that the sequence $\big( f^{(s)} \big)_{s\geq 1}$ has a finite $\vertii{\cdot}_{0,\beta,\mu^\alpha}$ norm, that is :
$$
\textbf{X}_{0,\beta,\mu^\alpha}=\Big\{F=\big( f^{(s)} \big)_{s\geq 1} \in \big(X_{0,s,\beta}\big)_{s\geq 1}\ /\ \vertii{F}_{0,\beta,\mu^\alpha}<+\infty\Big\}.
$$
\end{defin}

The following result describes the embeddings that exist between the spaces $X_{\varepsilon,s,\beta}$ for different parameters $\beta$ on the one hand, and on the other hand between the spaces $\textbf{X}_{N,\varepsilon,\beta,\mu^\alpha}$ for different parameters $\beta$ and $\mu$. This will be useful to define functional spaces that are stable under the action of the collision operators.

\begin{propo}[Embeddings of the spaces $X_{\varepsilon,s,\beta}$, $X_{0,s,\beta}$, $\textbf{X}_{N,\varepsilon,\beta,\mu^\alpha}$ and $\textbf{X}_{0,\beta,\mu^\alpha}$]
\label{SECT2PropoIncluEspaXHierarchie}
Let $s$ be a positive integer and $\varepsilon$ be a strictly positive number.
\begin{itemize}[leftmargin=*]
\item For any $\beta\leq\beta'$, one has $X_{\varepsilon,s,\beta'} \subset X_{\varepsilon,s,\beta}$ and $X_{0,s,\beta'} \subset X_{0,s,\beta}$, and if $h^{(s)}$ belongs to $X_{\varepsilon,s,\beta'}$ (respectively $f^{(s)}$ belongs to $X_{0,s,\beta'}$), one has 
$\vert h^{(s)} \vert_{\varepsilon,s,\beta} \leq \vert h^{(s)} \vert_{\varepsilon,s,\beta'}$ (respectively $\vert f^{(s)} \vert_{0,s,\beta} \leq \vert f^{(s)} \vert_{0,s,\beta'}$).
\item For any $\beta \leq \beta'$, any $\mu \leq \mu'$ and any $\alpha > 0$, one has $\textbf{X}_{N,\varepsilon,\beta',\mu'^\alpha} \subset \textbf{X}_{N,\varepsilon,\beta,\mu^\alpha} $ and $\textbf{X}_{0,\beta',\mu'^\alpha} \subset \textbf{X}_{0,\beta,\mu^\alpha}$, and if $\big( h^{(s)}_N \big)_{1\leq s\leq N}$ belongs to $\textbf{X}_{N,\varepsilon,\beta',\mu'^\alpha}$ (respectively $\big( f^{(s)} \big)_{s\geq 1}$ belongs to $\textbf{X}_{0,\beta',\mu'^\alpha}$), one has\\
$\vertii{\big( h^{(s)}_N \big)_{1\leq s\leq N}}_{N,\varepsilon,\beta,\mu^\alpha} \leq\vertii{\big( h^{(s)}_N \big)_{1\leq s\leq N}}_{N,\varepsilon,\beta',\mu'^\alpha}$
(respectively $\vertii{\big( f^{(s)} \big)_{s\geq N}}_{0,\beta,\mu^\alpha} \leq \vertii{\big( f^{(s)} \big)_{s\geq 1}}_{0,\beta',\mu'^\alpha})$.
\end{itemize}
\end{propo}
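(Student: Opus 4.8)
The plan is to reduce the whole proposition to two trivial monotonicity observations: that $c \mapsto e^{ct}$ is nondecreasing whenever $t \ge 0$, and that (essential) suprema and maxima preserve pointwise inequalities. First I would establish the fixed-$s$ inclusions. Let $\beta \le \beta'$ and take $h^{(s)} \in X_{\varepsilon,s,\beta'}$. Since $\sum_{i=1}^s |v_i|^2 \ge 0$, one has, for almost every $Z_s \in \mathcal{D}^\varepsilon_s$,
$$
\big\vert h^{(s)}(Z_s) \big\vert \exp\Big(\tfrac{\beta}{2}\textstyle\sum_{i=1}^s |v_i|^2\Big) \le \big\vert h^{(s)}(Z_s) \big\vert \exp\Big(\tfrac{\beta'}{2}\textstyle\sum_{i=1}^s |v_i|^2\Big),
$$
and passing to the essential supremum over $Z_s$ yields $\vert h^{(s)} \vert_{\varepsilon,s,\beta} \le \vert h^{(s)} \vert_{\varepsilon,s,\beta'} < +\infty$, hence $h^{(s)} \in X_{\varepsilon,s,\beta}$. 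For the inclusion $X_{0,s,\beta'} \subset X_{0,s,\beta}$ I would carry out the identical estimate with the genuine supremum over $Z_s \in \big(\overline{\Omega^c}\times\mathbb{R}^d\big)^s$, observing that neither membership in $\mathcal{C}_0$ nor the boundary condition $f^{(s)} = f^{(s)}\circ\chi^0_s$ depends on $\beta$, so both are inherited automatically.

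Then, for the sequence spaces, I would fix $\beta \le \beta'$, $\mu \le \mu'$, $\alpha > 0$ and $H_N = (h^{(s)}_N)_{1\le s\le N} \in \textbf{X}_{N,\varepsilon,\beta',\mu'^\alpha}$. For each $1 \le s \le N$, the previous step gives $h^{(s)}_N \in X_{\varepsilon,s,\beta}$ with $\vert h^{(s)}_N \vert_{\varepsilon,s,\beta} \le \vert h^{(s)}_N \vert_{\varepsilon,s,\beta'}$, while $s^\alpha > 0$ and $\mu \le \mu'$ give $\exp(s^\alpha\mu) \le \exp(s^\alpha\mu')$. Multiplying these two inequalities and taking the maximum over $1 \le s \le N$ produces $\vertii{H_N}_{N,\varepsilon,\beta,\mu^\alpha} \le \vertii{H_N}_{N,\varepsilon,\beta',\mu'^\alpha} < +\infty$, i.e.\ $\textbf{X}_{N,\varepsilon,\beta',\mu'^\alpha} \subset \textbf{X}_{N,\varepsilon,\beta,\mu^\alpha}$ together with the stated norm inequality. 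The Boltzmann-hierarchy counterpart $\textbf{X}_{0,\beta',\mu'^\alpha} \subset \textbf{X}_{0,\beta,\mu^\alpha}$ follows word for word, replacing $X_{\varepsilon,s,\beta}$ by $X_{0,s,\beta}$ and $\max_{1\le s\le N}$ by $\sup_{s\ge 1}$.

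I do not expect any genuine obstacle: the content is pure bookkeeping about the gaussian weights. The only two points I would make explicit are that passing from the a.e.\ pointwise inequality to an inequality of essential suprema is legitimate — the two weighted functions differ by the nonnegative a.e.-defined factor $\exp\big(\tfrac{\beta'-\beta}{2}\sum_i |v_i|^2\big) \ge 1$ — and that the comparison $\exp(s^\alpha\mu) \le \exp(s^\alpha\mu')$ uses only $\mu \le \mu'$ and $s^\alpha > 0$, so it remains valid even when the activity $\mu$ is negative.
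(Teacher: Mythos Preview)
Your proposal is correct. The paper does not supply a proof of this proposition at all, treating it as immediate from the definitions; your argument is exactly the routine verification one would expect, and there is nothing to compare.
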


\paragraph*{Definition of the spaces $\widetilde{\textbf{X}}_{\varepsilon,\widetilde{\beta},\widetilde{\mu}^\alpha}$ and $\widetilde{\textbf{X}}_{0,\widetilde{\beta},\widetilde{\mu}^\alpha}$, the functions of sequences belonging to $\textbf{X}_{\cdot,\widetilde{\beta}(t),\widetilde{\mu}(t)^\alpha}$ at time $t$.}
\label{SSSecDefinFonctSuiteUnifB}

It will be important in the sequel to enable a loss of regularity of the marginals when time grows (translated into a growth of the parameters $\beta$ and $\mu$, for the embeddings of Proposition \ref{SECT2PropoIncluEspaXHierarchie} hold). One will then define spaces of time-dependent functions.\\
From this point, there are mainly two possibilities to define the relevant spaces of time-dependent functions taking their values in $\textbf{X}_{\cdot,\widetilde{\beta}(t),\widetilde{\mu}(t)}$, depending on the value of $\alpha$: this value has to be balanced with the regularity with respect to time, in order to have stable spaces under the action of the collision operator.\\
The motivation of introducting such a parameter is the choice of the continuity in time introduced in \cite{GSRT}, which is uniform in the number of particles $s$. With such a strong continuity in time property, it is possible to show (see \cite{PhDTT}) that the weight $\alpha=1$ is too weak, and has to be replaced by $\alpha > 1$. On the other hand, this value would lead in practice to consider weird initial data, that are not meeting the expected physical properties for marginals of a distribution function. As a consequence, in the sequel we will focus on the practical choice $\alpha = 1$, up to relax the continuity in time property. Only in this section, we will give the proper definition of the spaces  $\widetilde{\textbf{X}}_{\cdot,\widetilde{\beta},\widetilde{\mu}^\alpha}$, for $\alpha = 1$ or $2$ such that the collision operator is stable on those spaces.

\paragraph*{The case of $\alpha = 2$, and uniform continuity in time in the parameter $s$.}
We follow here the definition given in the erratum version of the article \cite{GSRT}\footnote{See the last Definition 5.2.4 of Section 5.2 ``Functional spaces and statement of the results".}, in the sense that we require a uniformly $s$ continuity in time.

\begin{defin}[Norms $\vertiii{\cdot}_{N,\varepsilon,\tilde{\beta},\tilde{\mu}^2}$ and $\vertiii{\cdot}_{0,\tilde{\beta},\tilde{\mu}^2}$, spaces $\widetilde{\textbf{X}}_{N,\varepsilon,\tilde{\beta},\tilde{\mu}^2}$ and $\widetilde{\textbf{X}}_{0,\tilde{\beta},\tilde{\mu}^2}$]
\label{Blablabla}
Let $N$ be a positive integer. Let $\varepsilon$ be a strictly positive number. For any $T>0$, any strictly positive, non increasing function $\tilde{\beta}$, any non increasing function $\tilde{\mu}$, both defined on $[0,T]$, and any function $\widetilde{H}_N:[0,T] \rightarrow\, \bigcup_{t\in[0,T]} \textbf{X}_{N,\varepsilon,\widetilde{\beta}(t),\widetilde{\mu}^2(t)}$, $t \mapsto\, \widetilde{H}_N(t)=\left(h^{(s)}_N(t)\right)_{1\leq s \leq N}$ such that $\widetilde{H}_N(t) \in \textbf{X}_{N,\varepsilon,\widetilde{\beta}(t),\widetilde{\mu}^2(t)}$ for all $t\in[0,T]$, we define
$$
\vertiii{\widetilde{H}_N}_{N,\varepsilon,\tilde{\beta},\tilde{\mu}^2}=\sup_{0\leq t\leq T}\vertii{\widetilde{H}_N(t)}_{N,\varepsilon,\tilde{\beta}(t),\tilde{\mu}^2(t)},
$$
and we define the space $\widetilde{\textbf{X}}_{N,\varepsilon,\tilde{\beta},\tilde{\mu}^2}$ as the space of such functions
$\widetilde{H}_N$ with a finite $\vertiii{\cdot}_{N,\varepsilon,\widetilde{\beta},\widetilde{\mu}^2}$ norm, and verifying the left continuity in time hypothesis:
\begin{align}
\label{SECT2DEFINEspacSuiteTempsContealph2}
\forall t \in\ ]0,T],\ \lim_{u\rightarrow t^-} \vertii{\widetilde{H}_N(t) - \widetilde{H}_N(u)}_{N,\varepsilon,\widetilde{\beta}(t),\widetilde{\mu}^2(t)} = 0.
\end{align}
Similarly, for any $T>0$, any strictly positive, non increasing function $\tilde{\beta}$ and any non increasing function $\tilde{\mu}$, both defined on $[0,T]$, and any function $\widetilde{F}: [0,T] \rightarrow\, \bigcup_{t\in[0,T]} \textbf{X}_{N,\varepsilon,\widetilde{\beta}(t),\widetilde{\mu}^2(t)}$, $t \mapsto\, \widetilde{F}(t)=\left(f^{(s)}(t)\right)_{s\geq 1}$ such that $\widetilde{F}(t) \in \textbf{X}_{0,\widetilde{\beta}(t),\widetilde{\mu}^2(t)}$ for all $t\in[0,T]$, we define
$$
\vertiii{\widetilde{F}}_{0,\tilde{\beta},\tilde{\mu}^2}=\sup_{0\leq t\leq T}\vertii{\widetilde{F}(t)}_{0,\tilde{\beta}(t),\tilde{\mu}^2(t)},
$$
and we define the space $\widetilde{\textbf{X}}_{0,\tilde{\beta},\tilde{\mu}^2}$ as the space of such functions $\widetilde{F}$ with a finite $\vertiii{\cdot}_{0,\widetilde{\beta},\widetilde{\mu}^2}$ norm, and verifying the left continuity in time hypothesis:
\begin{align}
\label{SECT2DEFINEspacSuiteTempsCont0alph2}
\forall t \in\ ]0,T],\ \lim_{u\rightarrow t^-} \vertii{\widetilde{F}(t) - \widetilde{F}(u)}_{0,\widetilde{\beta}(t),\widetilde{\mu}^2(t)} = 0.
\end{align}
\end{defin}

\begin{remar}
To be meaningful, the continuity conditions \eqref{SECT2DEFINEspacSuiteTempsContealph2} and \eqref{SECT2DEFINEspacSuiteTempsCont0alph2} use Proposition \ref{SECT2PropoIncluEspaXHierarchie}, together with the crucial fact that the functions $\widetilde{\beta}$ and $\widetilde{\mu}$ are assumed to be non increasing.
\end{remar}

\paragraph*{The case of $\alpha = 1$, and continuity in time for every integer $s$.}

Let us now introduce the space that will be the most useful for the rest of this work. We choose $\alpha=1$, and require a less restrictive condition of continuity in time than for the case $\alpha=2$: instead of having a continuity condition in the $\vertii{\cdot}_{\cdot,\widetilde{\beta}(t),\widetilde{\mu}(t)^1}$ norm, we will require, for any value of the parameter $s$, a continuity condition in the $\vert \cdot \vert_{\cdot,s,\widetilde{\beta}(t)}$ norm.

\begin{defin}[Norms $\vertiii{\cdot}_{N,\varepsilon,\tilde{\beta},\tilde{\mu}^1}$ and $\vertiii{\cdot}_{0,\tilde{\beta},\tilde{\mu}^1}$, spaces $\widetilde{\textbf{X}}_{N,\varepsilon,\tilde{\beta},\tilde{\mu}^1}$ and $\widetilde{\textbf{X}}_{0,\tilde{\beta},\tilde{\mu}^1}$]
\label{SECT2DefinEspacSuiteMargiTemps}
Let $N$ be a positive integer, $\varepsilon$ be a strictly positive number. For any $T>0$, any strictly positive, non increasing function $\tilde{\beta}$, any non increasing function $\tilde{\mu}$, both defined on $[0,T]$, and any function $\widetilde{H}_N:[0,T] \rightarrow\, \bigcup_{t\in[0,T]} \textbf{X}_{N,\varepsilon,\widetilde{\beta}(t),\widetilde{\mu}^1(t)}$, $
t \mapsto\, \widetilde{H}_N(t)=\left(h^{(s)}_N(t)\right)_{1\leq s \leq N}$ such that $\widetilde{H}_N(t) \in \textbf{X}_{N,\varepsilon,\widetilde{\beta}(t),\widetilde{\mu}^1(t)}$ for all $t\in[0,T]$, we define
$$
\vertiii{\widetilde{H}_N}_{N,\varepsilon,\tilde{\beta},\tilde{\mu}^1}=\sup_{0\leq t\leq T}\vertii{\widetilde{H}_N(t)}_{N,\varepsilon,\tilde{\beta}(t),\tilde{\mu}^1(t)},
$$
and we define the space $\widetilde{\textbf{X}}_{N,\varepsilon,\tilde{\beta},\tilde{\mu}^1}$ as the space of such functions
$\widetilde{H}_N$ with a finite $\vertiii{\cdot}_{N,\varepsilon,\widetilde{\beta},\widetilde{\mu}^1}$ norm, and verifying the left continuity in time hypothesis:
\begin{align}
\label{SECT2DEFINEspacSuiteTempsConte}
\forall t \in\ ]0,T],\ \forall\ 1\leq s\leq N,\ \ \lim_{u\rightarrow t^-} \big\vert h^{(s)}_N(t) - h^{(s)}_N(u) \big\vert_{\varepsilon,s,\widetilde{\beta}(t)} = 0.
\end{align}
Similarly, for any $T>0$, any strictly positive, non increasing function $\tilde{\beta}$ and any non increasing function $\tilde{\mu}$ both defined on $[0,T]$, and any function $\widetilde{F}:[0,T] \rightarrow\, \bigcup_{t\in[0,T]} \textbf{X}_{N,\varepsilon,\widetilde{\beta}(t),\widetilde{\mu}^1(t)}$, $t \mapsto\, \widetilde{F}(t)=\left(f^{(s)}(t)\right)_{s\geq 1}$ such that $\widetilde{F}(t) \in \textbf{X}_{0,\widetilde{\beta}(t),\widetilde{\mu}^1(t)}$ for all $t\in[0,T]$, we define
$$
\vertiii{\widetilde{F}}_{0,\tilde{\beta},\tilde{\mu}^1}=\sup_{0\leq t\leq T}\vertii{\widetilde{F}(t)}_{0,\tilde{\beta}(t),\tilde{\mu}^1(t)},
$$
and we define the space $\widetilde{\textbf{X}}_{0,\tilde{\beta},\tilde{\mu}^1}$ as the space of such functions
$\widetilde{F}$ with a finite $\vertiii{\cdot}_{0,\widetilde{\beta},\widetilde{\mu}^1}$ norm, and verifying the left continuity in time hypothesis:
\begin{align}
\label{SECT2DEFINEspacSuiteTempsCont0}
\forall t \in\ ]0,T],\ \forall\ 1\leq s\leq N,\ \lim_{u\rightarrow t^-} \big\vert h^{(s)}_N(t) - h^{(s)}_N(u) \big\vert_{0,s,\widetilde{\beta}(t)} = 0.
\end{align}
\end{defin}

The spaces $\widetilde{\textbf{X}}_{N,\varepsilon,\widetilde{\beta},\widetilde{\mu}^\alpha}$ and $\widetilde{\textbf{X}}_{0,\widetilde{\beta},\widetilde{\mu}^\alpha}$ defined in the previous section satisfy the following regularity property:

\begin{propo}[Banach space structure of the spaces $\widetilde{\textbf{X}}_{N,\varepsilon,\widetilde{\beta},\widetilde{\mu}^\alpha}$ and $\widetilde{\textbf{X}}_{0,\widetilde{\beta},\widetilde{\mu}^\alpha}$]
\label{THEORComplEspacFonctBBGKY}
Let $N$ be a positive integer. Let $\varepsilon$ be a strictly positive number. For any $T>0$, any strictly positive, non increasing function $\tilde{\beta}$ and any non increasing function $\tilde{\mu}$, both defined on $[0,T]$, and for $\alpha = 1$ or $2$, the spaces $\widetilde{\textbf{X}}_{N,\varepsilon,\widetilde{\beta},\widetilde{\mu}^\alpha}$ and $\widetilde{\textbf{X}}_{0,\widetilde{\beta},\widetilde{\mu}}$ are Banach spaces.
\end{propo}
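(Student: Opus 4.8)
The plan is to establish completeness by a standard argument: take a Cauchy sequence, use the completeness of each fixed-time, fixed-$s$ space to extract a candidate limit, and then verify that this limit lies in the correct space (finite norm plus the left-continuity-in-time property). The key observation making this work is that the ambient spaces $X_{\varepsilon,s,\beta}$ and $X_{0,s,\beta}$ are themselves Banach spaces: $X_{\varepsilon,s,\beta}$ is (isometrically, via multiplication by the Gaussian weight) a closed subspace of a weighted $L^\infty$ space, hence complete; and $X_{0,s,\beta}$ is a closed subspace of the Banach space $\mathcal{C}_0\big((\overline{\Omega^c}\times\mathbb{R}^d)^s\big)$ with the weighted sup-norm, the closedness coming from the fact that the boundary condition $f^{(s)}(Z_s)=f^{(s)}(\chi^0_s(Z_s))$ is preserved under uniform limits (uniform convergence passes to the trace on the boundary since the functions are continuous). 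I will treat the BBGKY case $\widetilde{\textbf{X}}_{N,\varepsilon,\widetilde{\beta},\widetilde{\mu}^\alpha}$ in detail; the Boltzmann case is identical modulo replacing $|\cdot|_{\varepsilon,s,\beta}$ by $|\cdot|_{0,s,\beta}$ and the underlying space.

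\textbf{Step 1: extracting a candidate limit.} Let $(\widetilde{H}_N^{(n)})_{n\geq 0}$ be a Cauchy sequence in $\widetilde{\textbf{X}}_{N,\varepsilon,\widetilde{\beta},\widetilde{\mu}^\alpha}$, with components $h_N^{(s),n}(t)$. For each fixed $t\in[0,T]$ and each fixed $1\leq s\leq N$, since $\widetilde{\beta}$ is non-increasing we have $\widetilde{\beta}(t)\leq\widetilde{\beta}(0)$, so that by the very definition of the $\vertiii{\cdot}$-norm together with the factor $\exp(s^\alpha\mu)$ appearing in $\vertii{\cdot}_{N,\varepsilon,\widetilde{\beta}(t),\widetilde{\mu}^\alpha(t)}$, the scalar sequence $\big(h_N^{(s),n}(t)\big)_n$ is Cauchy in the Banach space $X_{\varepsilon,s,\widetilde{\beta}(t)}$; call its limit $h_N^{(s),\infty}(t)$, and set $\widetilde{H}_N^{(\infty)}(t)=\big(h_N^{(s),\infty}(t)\big)_{1\leq s\leq N}$. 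The Cauchy estimate is moreover uniform in $t$ and $s$ in the following sense: for every $\eta>0$ there is $M$ such that for $m,n\geq M$ one has $\sup_{t}\max_s\big(|h_N^{(s),m}(t)-h_N^{(s),n}(t)|_{\varepsilon,s,\widetilde{\beta}(t)}\exp(s^\alpha\mu)\big)\leq\eta$; letting $m\to\infty$ gives the same bound with $h_N^{(s),m}$ replaced by $h_N^{(s),\infty}$, which simultaneously shows $\widetilde{H}_N^{(\infty)}(t)\in\textbf{X}_{N,\varepsilon,\widetilde{\beta}(t),\widetilde{\mu}^\alpha(t)}$ with finite $\vertiii{\cdot}$-norm, and that $\widetilde{H}_N^{(n)}\to\widetilde{H}_N^{(\infty)}$ in that norm.

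\textbf{Step 2: the limit satisfies the left-continuity property.} It remains to check that $\widetilde{H}_N^{(\infty)}$ belongs to the space, i.e.\ that it verifies \eqref{SECT2DEFINEspacSuiteTempsConte} (resp.\ \eqref{SECT2DEFINEspacSuiteTempsContealph2} when $\alpha=2$). Fix $t\in\,]0,T]$, fix $1\leq s\leq N$ (in the $\alpha=1$ case; in the $\alpha=2$ case one works directly with the $\vertii{\cdot}$-norm), and let $\eta>0$. Choose $n$ with $\vertiii{\widetilde{H}_N^{(n)}-\widetilde{H}_N^{(\infty)}}_{N,\varepsilon,\widetilde{\beta},\widetilde{\mu}^\alpha}\leq\eta$. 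For $u<t$ write the triangle-inequality split
\begin{equation*}
\big|h_N^{(s),\infty}(t)-h_N^{(s),\infty}(u)\big|_{\varepsilon,s,\widetilde{\beta}(t)}\leq \big|h_N^{(s),\infty}(t)-h_N^{(s),n}(t)\big|_{\varepsilon,s,\widetilde{\beta}(t)}+\big|h_N^{(s),n}(t)-h_N^{(s),n}(u)\big|_{\varepsilon,s,\widetilde{\beta}(t)}+\big|h_N^{(s),n}(u)-h_N^{(s),\infty}(u)\big|_{\varepsilon,s,\widetilde{\beta}(t)}.
\end{equation*}
The first and third terms are each $\leq\eta\exp(-s^\alpha\mu)$ using Proposition \ref{SECT2PropoIncluEspaXHierarchie} (to pass from the norm at the parameter $\widetilde{\beta}(u)\geq\widetilde{\beta}(t)$, where the $\vertiii{\cdot}$-control is naturally available, down to $\widetilde{\beta}(t)$, for which we need $\widetilde{\beta}$ non-increasing and $u<t$) together with the definition of $\vertiii{\cdot}$; the middle term tends to $0$ as $u\to t^-$ because $\widetilde{H}_N^{(n)}$ is itself in the space and hence satisfies the left-continuity condition. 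Taking $\limsup_{u\to t^-}$ then $\eta\to0$ yields \eqref{SECT2DEFINEspacSuiteTempsConte}. This shows $\widetilde{H}_N^{(\infty)}\in\widetilde{\textbf{X}}_{N,\varepsilon,\widetilde{\beta},\widetilde{\mu}^\alpha}$, completing the proof; the Boltzmann-hierarchy spaces are handled verbatim, the only new point being that a uniform limit of continuous functions vanishing at infinity and satisfying the specular boundary relation again has all three properties.

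\textbf{Main obstacle.} The only genuinely delicate point is Step 2, and specifically the need to keep the loss-of-regularity direction consistent: the natural uniform bounds coming from the $\vertiii{\cdot}$-norm live at the time-$u$ parameters $(\widetilde{\beta}(u),\widetilde{\mu}(u))$, whereas the continuity condition to be verified is measured in the time-$t$ norm with $t>u$; it is exactly the monotonicity of $\widetilde{\beta},\widetilde{\mu}$ and the embeddings of Proposition \ref{SECT2PropoIncluEspaXHierarchie} that make the three-term split legitimate, and one must be careful that this is what is invoked. A secondary technical point, only in the Boltzmann case, is verifying that $X_{0,s,\beta}$ is closed — i.e.\ that the boundary identity survives uniform limits — which is immediate from continuity but should be stated.
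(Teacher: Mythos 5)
Your argument is the standard Riesz--Fischer-type completeness proof, which is exactly the approach the paper sketches in one sentence (it refers the full proof to the thesis). The two delicate points -- that the candidate limit has finite $\vertiii{\cdot}$-norm and that left-continuity survives passage to the limit by a three-term split in which the embedding of Proposition \ref{SECT2PropoIncluEspaXHierarchie} and the monotonicity of $\widetilde{\beta},\widetilde{\mu}$ are used to compare norms at parameters $\widetilde{\beta}(u)$ and $\widetilde{\beta}(t)$ -- are correctly identified and handled; the only blemishes are cosmetic (for the first of the three terms no change of parameter is needed since the $\vertiii{\cdot}$-control is already at time $t$, and the completeness of $X_{0,s,\beta}$ is better obtained directly from the fact that the weighted sup-norm dominates the $\mathcal{C}_0$ sup-norm rather than via a Gaussian-multiplication isometry, which does not quite land in $\mathcal{C}_0$).
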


Since the norm $\vertiii{\cdot}_{\cdot,\tilde{\beta},\tilde{\mu}}$ is defined using suprema, the proof of the previous proposition is very close to the Riesz-Fischer theorem, establishing the completeness of the $L^\infty$ spaces. See \cite{PhDTT} for a proof of Proposition \ref{THEORComplEspacFonctBBGKY}.\\
\newline
From this point, the parameter $\alpha$ will \emph{always} be taken equal to $1$, and will be omitted in the notations in what follows.

\subsection{Existence and uniqueness of the solutions of the hierarchies in the spaces $\widetilde{\textbf{X}}_{N,\varepsilon,\widetilde{\beta},\widetilde{\mu}}$ and $\widetilde{\textbf{X}}_{0,\widetilde{\beta},\widetilde{\mu}}$}

We are now able to state a result of existence and uniqueness for the solutions of the two hierarchies. This result will use the Banach space structure of the spaces $\widetilde{\textbf{X}}_{N,\varepsilon,\widetilde{\beta},\widetilde{\mu}}$ and $\widetilde{\textbf{X}}_{0,\widetilde{\beta},\widetilde{\mu}}$, and the rewriting of the generic equations of the hierarchies as a fixed point problem.

\begin{defin}[BBGKY operator, Boltzmann operator]
\label{SECT2DefinOperateursBBGKYBoltz}
For a sequence of initial data $(f^{(s)}_{N,0})_{1\leq s\leq N} \in \textbf{X}_{N,\varepsilon,\widetilde{\beta}(0),\widetilde{\mu}(0)}$, we introduce the \emph{BBGKY operator}, acting on the sequences $(h^{(s)}_N)_{1 \leq s \leq N}$ of $\widetilde{\textbf{X}}_{N,\varepsilon,\widetilde{\beta},\widetilde{\mu}}$, denoted as $\mathfrak{E}_{N,\varepsilon}\big((f^{(s)}_{N,0})_{1\leq s\leq N},\cdot\,\big)$, and defined as
$$
\mathfrak{E}_{N,\varepsilon}\big((f^{(s)}_{N,0})_{1\leq s\leq N},(h^{(s)}_N)_{1 \leq s \leq N}\big) = \Big(\mathfrak{E}_{N,\varepsilon}^{(s)}\big((f^{(s)}_{N,0})_{1\leq s\leq N},(h^{(s)}_N)_{1 \leq s \leq N}\big)\Big)_{1 \leq s \leq N}
$$
where
$$
\mathfrak{E}_{N,\varepsilon}^{(s)}\big((f^{(s)}_{N,0})_s,(h^{(s)}_N)_s\big)(t,\cdot\,) = f^{(s)}_{N,0}(\cdot) + \int_0^t \mathcal{T}^{s,\varepsilon}_{-u}\mathcal{C}^{N,\varepsilon}_{s,s+1}\mathcal{T}^{s+1,\varepsilon}_u h^{(s+1)}_N(u,\cdot\,)\dd u,
$$
and this, for all $1 \leq s \leq N-1$ and all $t$ (the case $s=N$ is just given by $\mathfrak{E}_{N,\varepsilon}^{(N)}\big((f^{(s)}_{N,0})_s,(h^{(s)}_N)_s\big)(t,\cdot\,) = f^{(N)}_{N,0}(\cdot)$).\\
The same kind of operator can be introduced as well for the Boltzmann hierarchy: for a sequence of initial data $(f^{(s)}_0)_{s\geq 1} \in \textbf{X}_{0,\widetilde{\beta}(0),\widetilde{\mu}(0)}$, we introduce the \emph{Boltzmann operator}, acting on the sequences $(f^{(s)})_{s \geq 1}$ of $\widetilde{\textbf{X}}_{0,\widetilde{\beta},\widetilde{\mu}}$, denoted as $\mathfrak{E}_0\big((f^{(s)}_0)_{s\geq 1},\cdot\,\big)$, and defined as
$$
\mathfrak{E}_0\big((f^{(s)}_0)_{s\geq 1},(f^{(s)})_{s \geq 1}\big) = \Big(\mathfrak{E}_0^{(s)}\big((f^{(s)}_0)_{s\geq1},(f^{(s)})_{s\geq1}\big)\Big)_{s \geq 1}
$$
where
$$
\mathfrak{E}_0^{(s)}\big((f^{(s)}_0)_s,(f^{(s)})_s\big)(t,\cdot\,) = \mathcal{T}^{s,0}_tf^{(s)}_0(\cdot) + \int_0^t \mathcal{T}^{s,0}_{t-u}\mathcal{C}^0_{s,s+1} f^{(s+1)}(u,\cdot\,)\dd u,
$$
for all $s \geq 1$ and all $t$.
\end{defin}

We have then the following reformulation: $(h^{(s)}_N)_{1 \leq s \leq N}$ is a solution of the conjugated BBGKY hierarchy \eqref{SECT2HieraBBGKYInteg/TmpsConju} associated to the initial data $(f^{(s)}_{N,0})_{1\leq s \leq N}$ if and only if $(h^{(s)}_N)_{1 \leq s \leq N} = \mathfrak{E}_{N,\varepsilon}\big((f^{(s)}_{N,0})_{1\leq s\leq N},(h^{(s)}_N)_{1 \leq s \leq N}\big)$ (and of course the same rewritting holds also for the Boltzmann hierarchy).\\
Thanks to this reformulation into a fixed point problem, we can now obtain the following theorem.

\begin{theor}[Joint local in time existence and uniqueness of solutions to the BBGKY and Boltzmann hierarchies]
\label{SECT2TheorExistUniciSolutHiera}
Let $\beta_0$ be a strictly positive real number and $\mu_0$ a real number. There exist a time $T>0$, a strictly positive decreasing function $\tilde{\beta}$ and a decreasing function $\tilde{\mu}$ defined on $[0,T]$ such that $\tilde{\beta}(0)=\beta_0$, $\tilde{\mu}(0)=\mu_0$ and such that for any positive integer $N$ and any strictly positive number $\varepsilon > 0$ verifying the Boltzmann-Grad limit $N\varepsilon^{d-1} = 1$, any pair of sequences of initial data $F_{N,0}=\big(f^{(s)}_{N,0}\big)_{1\leq s\leq N}$ and $F_0 = \big(f^{(s)}_0\big)_{s\geq 1}$ belonging respectively to $\textbf{X}_{N,\varepsilon,\beta_0,\mu_0}$ and $\textbf{X}_{0,\beta_0,\mu_0}$ give rise respectively to a unique solution $H_N=t \mapsto \big(h^{(s)}_N(t,\cdot)\big)_{1\leq s\leq N}$ in $\widetilde{\textbf{X}}_{N,\varepsilon,\tilde{\beta},\tilde{\mu}}$ to the BBGKY hierarchy with initial datum $F_{N,0}$ and $F = t\mapsto \big( f^{(s)}(t,\cdot)\big)_{s\geq 1}$ in $\widetilde{\textbf{X}}_{0,\tilde{\beta},\tilde{\mu}}$ to the Boltzmann hierarchy with initial datum $F_0$, that is there exists a unique pair of elements $H_N$ and $F$ belonging respectively to the spaces $\widetilde{\textbf{X}}_{N,\varepsilon,\widetilde{\beta},\widetilde{\mu}}$ and $\widetilde{\textbf{X}}_{0,\widetilde{\beta},\widetilde{\mu}}$ such that, for every $t\in [0,T]$:
$$
H_N(t) = \mathfrak{E}_{N,\varepsilon} \big( F_{N,0}, H_N \big)(t),
$$
and
$$
F(t) = \mathfrak{E}_0 \big( F_0, F \big)(t).
$$
Moreover, the decreasing functions $\widetilde{\beta} = \widetilde{\beta}_\lambda$ and $\widetilde{\mu} = \widetilde{\mu}_\lambda$ are affine, given by the expressions
$\widetilde{\beta}_\lambda = [0,T] \rightarrow\, \mathbb{R}_+^*$, $t \mapsto\, \widetilde{\beta}_\lambda(t) = \beta_0 - \lambda t$ and $\widetilde{\mu}_\lambda = [0,T] \rightarrow\, \mathbb{R}$, $t \mapsto\, \widetilde{\mu}_\lambda(t) = \mu_0 - \lambda t$, for $\lambda$ depending only on $\beta_0$ and $\mu_0$, and for $T$ depending only on $\beta_0,\mu_0$ and $\lambda$.
\end{theor}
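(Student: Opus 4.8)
\emph{Setup.} The plan is to solve both hierarchies simultaneously by a fixed point argument of Cauchy--Kovalevskaya type. By Proposition \ref{THEORComplEspacFonctBBGKY} the spaces $\widetilde{\textbf{X}}_{N,\varepsilon,\tilde\beta,\tilde\mu}$ and $\widetilde{\textbf{X}}_{0,\tilde\beta,\tilde\mu}$ are Banach, and by the reformulation recalled after Definition \ref{SECT2DefinOperateursBBGKYBoltz} a solution is exactly a fixed point of the affine maps $H_N \mapsto \mathfrak{E}_{N,\varepsilon}(F_{N,0},H_N)$, resp. $F \mapsto \mathfrak{E}_0(F_0,F)$. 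Since these maps are affine, with linear parts given by the time integral of the transport--collision operator, the fixed point — if it exists — is the sum of the Neumann series obtained by Picard iteration from the initial datum. For the BBGKY hierarchy this series is in fact \emph{finite} in each component (the $s$-th equation only involves $f^{(s+1)}_N,\dots,f^{(N)}_N$, so the system is triangular and can be solved by back-substitution), so the genuinely analytic content lies in the Boltzmann hierarchy, whose series is infinite; I would set things up so that a single quantitative estimate covers both cases and produces the same time $T$ and the same profiles $\tilde\beta_\lambda,\tilde\mu_\lambda$.

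\emph{The key estimate.} First I would establish the single-step bound on the transport--collision operator. Applying Theorem \ref{SECT2TheorDefinOperaColliBBGKY} with $g_{s+1}(t,r)=\exp(-\beta r^2/2)$ (whose four hypotheses — measurability, monotonicity in $t$, Gaussian integrability, vanishing tails — are immediate), using that the hard-sphere flow conserves the total kinetic energy so that $\mathcal{T}^{s,\varepsilon}_t$ is an isometry of every $|\cdot|_{\varepsilon,s,\beta}$, and using $(N-s)\varepsilon^{d-1}\le N\varepsilon^{d-1}=1$, one obtains: there is $C_d>0$ (also depending on a lower bound for $\beta$) such that for all $0<\beta'<\beta\le\beta_0$ with $\beta-\beta'\le 1$,
$$
\Big|\, \mathcal{C}^{N,\varepsilon}_{s,s+1}\mathcal{T}^{s+1,\varepsilon}_u h^{(s+1)}\,\Big|_{\varepsilon,s,\beta'} \;\le\; \frac{C_d\, s}{(\beta-\beta')^{1/2}}\,\big| h^{(s+1)}\big|_{\varepsilon,s+1,\beta},
$$
the factor $s$ coming from the sum over the colliding particles and the half-power loss from one Gaussian velocity moment. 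The Boltzmann collision operator $\mathcal{C}^0_{s,s+1}$ satisfies the same inequality — the binary collision conserves $|v_i|^2+|v_{s+1}|^2$, so the Gaussian weight is preserved, and $\mathcal{T}^{s,0}_t$ preserves continuity in addition to the norm — so the whole argument runs verbatim in $\widetilde{\textbf{X}}_{0,\tilde\beta,\tilde\mu}$.

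\emph{Closing the iteration.} With $\tilde\beta_\lambda(t)=\beta_0-\lambda t$ and $\tilde\mu_\lambda(t)=\mu_0-\lambda t$ (so that the regularity budget $\tilde\beta_\lambda(u)-\tilde\beta_\lambda(t)=\lambda(t-u)$ is exactly the time gap, cf. Proposition \ref{SECT2PropoIncluEspaXHierarchie}), I would expand the $n$-th iterate as an integral over the simplex $0\le u_n\le\cdots\le u_1\le t$ of $n$ nested transport--collision operators carrying $(s+n)$-particle data down to $s$-particle data, iterate the key estimate, bound the resulting $n$-fold time integral of the half-integrable singularities by a factorial gain of order $\Gamma(\tfrac12)^n\,\Gamma(\tfrac n2+1)^{-1}\,t^{n/2}$, and do the weight bookkeeping replacing $|f^{(s+n)}_0|_{\cdot,s+n,\beta_0}$ by $\vertii{F_0}\exp(-(s+n)\mu_0)$ against the outer factor $\exp(s\tilde\mu_\lambda(t))$. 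Choosing $\lambda$ depending only on $\beta_0,\mu_0$, and then $T$ depending only on $\beta_0,\mu_0,\lambda$ (in particular with $\beta_0-\lambda T>0$), one checks that the series converges in $\vertiii{\cdot}$ — uniformly in $N$, $\varepsilon$, in $s$ and in $t\le T$ — and that the norms $\vertiii{H_N}$, $\vertiii{F}$ are controlled by $\vertii{F_{N,0}}$, $\vertii{F_0}$. I expect this last step to be the main obstacle: the combinatorial factor $\prod_{k=0}^{n-1}(s+k)$ produced by the sums over particles in the successive collision operators must be absorbed by the factorial gain of the time simplex \emph{together with} the exponential-in-$s$ weight $\exp(s\tilde\mu_\lambda(t))$ — this is exactly the balance discussed before Definition \ref{SECT2DefinEspacSuiteMargiTemps}, and it is why $T$ must be small and why, with the physical choice $\alpha=1$, one can only impose the left-continuity in time of each individual marginal and not the $s$-uniform one of Definition \ref{Blablabla}.

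\emph{Regularity of the solution and uniqueness.} Once the series converges in $\vertiii{\cdot}$, its sums $H_N\in\widetilde{\textbf{X}}_{N,\varepsilon,\tilde\beta_\lambda,\tilde\mu_\lambda}$ and $F\in\widetilde{\textbf{X}}_{0,\tilde\beta_\lambda,\tilde\mu_\lambda}$ solve the fixed point identities, because the partial sums $S_n$ satisfy $S_{n+1}=\mathfrak{E}_{N,\varepsilon}(F_{N,0},S_n)$ and $\mathfrak{E}_{N,\varepsilon}(F_{N,0},\cdot)$ is continuous (and likewise for the Boltzmann operator). The left-continuity in time demanded by Definition \ref{SECT2DefinEspacSuiteMargiTemps} is obtained for each fixed $s$ from $h^{(s)}_N(t)-h^{(s)}_N(u)=\int_u^t\mathcal{T}^{s,\varepsilon}_{-w}\mathcal{C}^{N,\varepsilon}_{s,s+1}\mathcal{T}^{s+1,\varepsilon}_w h^{(s+1)}_N(w)\,dw$ together with the key estimate, which bounds its $|\cdot|_{\varepsilon,s,\tilde\beta_\lambda(t)}$ norm by a constant (depending on $s$, $\lambda$ and $\vertiii{H_N}$) times $\int_u^t(\lambda(t-w))^{-1/2}dw=2\lambda^{-1/2}(t-u)^{1/2}\to 0$ — the relaxation to per-$s$ continuity being used precisely because this bound carries the extra factor $s$. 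Finally, for uniqueness, the difference $G$ of two solutions with the same initial datum satisfies the \emph{homogeneous} identity $G=\mathfrak{C}_{N,\varepsilon}[G]$, hence $G=\mathfrak{C}_{N,\varepsilon}^n[G]$ for every $n$; the $n$-th iterate estimate of the third step, now with $G$ in place of the initial datum, gives $\vertiii{G}\le\rho_n\vertiii{G}$ with $\rho_n\to 0$, whence $G=0$, and identically for the Boltzmann hierarchy. The affine profiles $\tilde\beta_\lambda,\tilde\mu_\lambda$ and the announced dependences of $\lambda$ and $T$ are exactly those read off from the convergence condition obtained in the third step.
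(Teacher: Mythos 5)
Your general architecture (Banach fixed point in $\widetilde{\textbf{X}}_{\cdot,\tilde\beta_\lambda,\tilde\mu_\lambda}$, Neumann/Duhamel series, affine weight profiles, same $T$ for BBGKY and Boltzmann) matches the paper, but the mechanism you propose to close the iteration is genuinely different, and as stated it leaves a gap. Your key single-step estimate
$\big|\mathcal{C}^{N,\varepsilon}_{s,s+1}\mathcal{T}^{s+1,\varepsilon}_u h^{(s+1)}\big|_{\varepsilon,s,\beta'}\le C_d\, s\,(\beta-\beta')^{-1/2}\,|h^{(s+1)}|_{\varepsilon,s+1,\beta}$
is not quite what the Gaussian computation yields. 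Summing the elementary bound of Theorem~\ref{SECT2TheorDefinOperaColliBBGKY} over $i$ gives $C\beta^{-d/2}\big(\sum_i|v_i|+s\beta^{-1/2}\big)$, and after transferring the weight from $\beta$ to $\beta'$ one obtains roughly $C\big(\sqrt{s}\,(\beta-\beta')^{-1/2}+s\,\beta^{-1/2}\big)$: the first term carries only $\sqrt{s}$ (Cauchy--Schwarz on $\sum|v_i|\le\sqrt{s}\,|V_s|$), and the second term carries the full $s$ but \emph{no} time-gap singularity at all. That second contribution is fatal for your plan: when you iterate over the simplex $0\le u_n<\dots<u_1\le t$, the terms of the $n$-fold product in which the $(s+j)\beta^{-1/2}$ branch is chosen get no factorial gain from the Beta integral, and the resulting $\prod_{j}(s+j)$ cannot be absorbed by $\Gamma(n/2+1)^{-1}t^{n/2}$ together with the single global factor $\exp\!\big(s\tilde\mu_\lambda(t)-(s+n)\mu_0\big)=\exp(-s\lambda t-n\mu_0)$ that you propose to use. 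Even the $\sqrt{s}$-branch alone requires a nontrivial uniformity-in-$s$ argument (bounding $(st)^{n/2}e^{-\lambda st}$ rather than separating $s$ and $t$), and you do not spell it out.

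What the paper does (following Nishida--Ukai--Uchiyama, see Appendix~\ref{AppenSSectContractioOperaHiera}) resolves this at the level of a \emph{single} estimate on the \emph{time-integrated} operator, and the decisive ingredient you are missing is the \emph{per-step} use of the decaying $\tilde\mu_\lambda$. Comparing $\exp\!\big(s\tilde\mu_\lambda(t)\big)$ with $\exp\!\big((s+1)\tilde\mu_\lambda(u)\big)$ inside the $u$-integral produces, besides the overall $\exp(-\tilde\mu_\lambda(T))$, the factor $\exp\!\big(-s\lambda(t-u)\big)$; combined with $\exp\!\big(-\tfrac{\lambda}{2}(t-u)|V_s|^2\big)$ from the decaying $\tilde\beta_\lambda$, the time integral
$\int_0^t\exp\!\big(-\lambda(t-u)(\tfrac12|V_s|^2+s)\big)\,du\le\big(\lambda(\tfrac12|V_s|^2+s)\big)^{-1}$
produces exactly the $1/s$ and $1/|V_s|$ gains needed to cancel \emph{both} pieces $\sum_i|v_i|$ and $s\beta^{-1/2}$, leaving an operator norm $\lesssim(1+\tilde\beta_\lambda(T)^{-1/2})\exp(-\tilde\mu_\lambda(T))/\lambda$, \emph{independent of $s$ and $t$} (inequality~\eqref{SECT2InegaContractioOperaColli}). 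This makes $\mathfrak{E}_{N,\varepsilon}$ and $\mathfrak{E}_0$ contractions after choosing $\lambda$ then $T$ with $\beta_0-\lambda T>0$, and no factorial bookkeeping is needed at all. The $\sqrt{s}\sqrt{t-u}$ estimate you quote for the left-continuity is indeed what the paper uses for \emph{that} purpose (hence the per-$s$ continuity of Definition~\ref{SECT2DefinEspacSuiteMargiTemps}), but it is a separate Cauchy--Schwarz variant, not the one on which the contraction relies. In summary: the route you sketch is not the paper's, and the combinatorial balance you identify as ``the main obstacle'' is not overcome by the tools you list; one needs the $\tilde\mu_\lambda$ decay inside the time integral, step by step, which is precisely what turns the single integrated operator into an $s$-uniform contraction.
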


\begin{remar}
Here it is important to notice that, since in the end the goal is to obtain a convergence result of the solutions of the BBGKY hierarchy towards the solution of the Boltzmann hierarchy, the time of existence of solutions has to be the same for the two hierarchies (this comes from the fact that the upper bound in the control \eqref{SECT2InegaContractioOperaColli} in Section \ref{AppenSSectContractioOperaHiera} is exactly the same for $\mathfrak{E}_{N,\varepsilon}\big((f^{(s)}_{N,0})_{1\leq s\leq N},\cdot\,\big)$ and $\mathfrak{E}_0\big((f^{(s)}_0)_{s\geq 1},\cdot\,\big)$), and it has also to be same for all $N$ concerning the BBGKY hierarchy (this comes from the fact that the dependency on $N$ and $\varepsilon$ for $C_3(d,N,\varepsilon)$ in \eqref{SECT2InegaContractioOperaColli} is exactly through the term $N\varepsilon^{d-1}$, as one may see by tracking the constants along the sketch of proof that was presented, which explains why we have to work in the Boltzmann-Grad limit $N\varepsilon^{d-1}=1$).
\end{remar}

The proof of Theorem \ref{SECT2TheorExistUniciSolutHiera}, which is presented in \cite{GSRT}, and of course in \cite{PhDTT} with much details, relies on a crucial contracting inequality due to Nishida \cite{Nish}, Uchiyama \cite{Uchi} and Ukai \cite{Ukai}. A shortened version is also presented at the end of this work, in appendix \ref{AppenSectiOperaColliContractio}: first we adress the problem of the stability of the spaces $\widetilde{\textbf{X}}_{\cdot,\widetilde{\beta},\widetilde{\mu}}$ by the operators of Definition \ref{SECT2DefinOperateursBBGKYBoltz}, and second we study the norm of those operators. In particular, it is possible to show that, up to choose wisely the weights $\widetilde{\beta}$ and $\widetilde{\mu}$, they are contracting mappings. Nevertheless, the choice of those weights comes with a (serious) time restriction on the validity of the existence of the solutions.

\section{The main result: the convergence of the solutions of the BBGKY hierarchy towards the solutions of the Boltzmann hierarchy}

This section will be devoted to prove the main result of this work, which can be presented as follows.

\begin{theor}[Lanford's theorem: convergence of the BBGKY hierarchy towards the Boltzmann hierarchy]
\label{SECT3TheorLanford___Vers_Quali}
Let $\beta_0>0$ and $\mu_0$ be two real numbers. Then there exists a time $T>0$ such that the following holds:\\
let $F_0 = \big(f^{(s)}_0\big)_{s\geq 1}$ be a sequence of initial data of the Boltzmann hierarchy belonging to $\textbf{X}_{0,\beta_0,\mu_0}$, and for any positive integer $N$, let $F_{N,0} = \big(f^{(s)}_{N,0}\big)_{1 \leq s \leq N}$ be a sequence of initial data of the BBGKY hierarchy belonging to $\textbf{X}_{N,\varepsilon,\beta_0,\mu_0}$. We assume that for any $s \in \mathbb{N}^*$, $f^{(s)}_{N,0}$ converges locally uniformly towards $f^{(s)}_0$ on the phase space of $s$ particles, with in addition $\sup_{N \geq 1} \vertii{ F_{N,0} }_{N,\varepsilon,\beta_0,\mu_0} < +\infty$.\\
Then, in the Boltzmann-Grad limit $N \rightarrow +\infty$, $N\varepsilon^{d-1} = 1$, if one denotes $F=\big(f^{(s)}\big)_{s \geq 1}$ the solution on $[0,T]$ of the Boltzmann hierarchy with initial data $F_0$, and $F_N$ the solution on $[0,T]$ of the BBGKY hierarchy with initial data $F_{N,0}$, one has that, for any positive integer $s$, the locally uniform convergence on the domain of local uniform convergence $\Omega_s$ (see Definition \ref{SECT3DefinDomaiConveLocalUnifo} page \pageref{SECT3DefinDomaiConveLocalUnifo} below), uniformly on $[0,T]$, of $f^{(s)}_N$ towards $f^{(s)}$.
\end{theor}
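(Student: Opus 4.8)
The plan is to follow Lanford's strategy in the form developed in \cite{GSRT}, adapting each step to the presence of the wall. The starting point is the \emph{Duhamel iteration}: iterating the fixed point identities $H_N = \mathfrak{E}_{N,\varepsilon}(F_{N,0},H_N)$ and $F = \mathfrak{E}_0(F_0,F)$ provided by Theorem \ref{SECT2TheorExistUniciSolutHiera}, one represents each marginal as a series of iterated transport--collision operators acting on the initial data,
\[
f^{(s)}_N(t) = \sum_{n\geq 0} \int_{0<t_n<\dots<t_1<t} \mathcal{T}^{s,\varepsilon}_{t-t_1}\mathcal{C}^{N,\varepsilon}_{s,s+1}\mathcal{T}^{s+1,\varepsilon}_{t_1-t_2}\mathcal{C}^{N,\varepsilon}_{s+1,s+2}\cdots\mathcal{C}^{N,\varepsilon}_{s+n-1,s+n}\mathcal{T}^{s+n,\varepsilon}_{t_n}f^{(s+n)}_{N,0}\,\dd t_n\cdots\dd t_1,
\]
and analogously for $f^{(s)}$ with $\varepsilon=0$. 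The contracting estimates underlying Theorem \ref{SECT2TheorExistUniciSolutHiera}, together with the Gaussian weights of the spaces $\widetilde{\textbf{X}}$ and the uniform bound $\sup_N \vertii{F_{N,0}}_{N,\varepsilon,\beta_0,\mu_0}<+\infty$, ensure that both series converge absolutely on $[0,T]$, uniformly in $N$ and locally uniformly on the phase space, so that it suffices to compare the two series term by term.

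First I would reduce the comparison to finitely many, well-controlled terms. Given $\eta>0$, the geometric smallness of the tails allows the truncation of both series at a common order $n=n(\eta)$ with an error $O(\eta)$ uniform in $N$. Inside each retained term one introduces the usual cut-offs: all velocities are restricted to a ball of radius $R$, consecutive times $t_j$ are separated by at least $\delta>0$, and one discards the configurations where, at the adjunction time $t_j$, the newly added particle sits too close (at a scale comparable to $\delta R$) to another particle or to the wall; one also discards the \emph{grazing} configurations where some velocity is almost tangent to the wall, i.e.\ $|v\cdot e_1|$ is small. Each of these cut-off errors will again be $O(\eta)$, uniformly in $N$, thanks to the a priori bounds and the Gaussian decay of the data (here Theorem \ref{SECT2TheorDefinOperaColliBBGKY} supplies the integrable dominating function that makes these estimates rigorous for $L^\infty$ marginals).

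The heart of the proof is then a term-by-term comparison together with the control of recollisions, which is where the half-space geometry enters. On the truncated domain, the backward BBGKY pseudo-trajectory attached to parameters $(\omega_j,v_{s+j},t_j)_{1\leq j\leq n}$ is a branching flow of $\varepsilon$-hard spheres with specular reflection, while the Boltzmann pseudo-trajectory is the corresponding branching flow of point particles with specular reflection; as $\varepsilon\to0$ the $\varepsilon$-shifts $x_i\pm\varepsilon\omega_j$ and the $\varepsilon$-dynamics converge to the point dynamics \emph{provided} no \emph{recollision} occurs along the backward flow, i.e.\ no two particles, and no particle and the wall, come within distance $O(\varepsilon)$ except at the prescribed adjunction times. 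To bound the measure of the bad parameter set I would \emph{unfold} the half-space, replacing the specular reflection of a particle by the free flight of that particle together with its successive mirror images across the wall: a wall-recollision then becomes an ordinary recollision between a real particle and an image, and a particle--particle recollision near the wall becomes a recollision among real and image particles in $\mathbb{R}^d$. Since the collision times are $\delta$-separated, the velocities bounded by $R$ and the positions confined, only a bounded number of images is produced on $[0,T]$, so that the geometric estimates of \cite{GSRT} apply in full space and give that the set of $(\omega_j,v_{s+j},t_j)$ producing a recollision has measure tending to $0$ as $\varepsilon\to0$, at a rate uniform over the retained parameters and over ancestor configurations in a compact subset of $\Omega_s$. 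I expect this step --- the bookkeeping of the mirror images and the exclusion of the near-tangent configurations at the wall --- to be the main obstacle, and it is precisely the adaptation announced in the abstract.

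Finally, I would conclude as follows: on the complement of the bad set the BBGKY pseudo-trajectory converges, locally uniformly in the parameters and in the ancestor configuration $Z_s\in\Omega_s$, to the Boltzmann pseudo-trajectory; combined with the local uniform convergence $f^{(s+n)}_{N,0}\to f^{(s+n)}_0$ and the Gaussian domination, the dominated convergence theorem will show that each truncated BBGKY term converges, uniformly on $[0,T]$ and locally uniformly on $\Omega_s$, to the corresponding truncated Boltzmann term. Hence, for every compact $K\subset\Omega_s$, one gets $\limsup_{N\to+\infty}\sup_{[0,T]}\sup_K|f^{(s)}_N-f^{(s)}|\leq C\eta$ by first letting $N\to+\infty$; letting $\eta\to0$ afterwards then finishes the proof.
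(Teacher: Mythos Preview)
Your plan is correct and is, at the structural level, exactly what the paper does: Duhamel expansion (Proposition~\ref{SECT3PropoFormuDuhamelIterée__}), the same list of cut-offs (Lemmas~\ref{PROPOCutofGrandNombrColli}--\ref{PROPOCutofPetitTempsSepa1}), a geometric control of recollisions, and a dominated-convergence comparison of pseudo-trajectories. Your ``unfolding by mirror images'' is not a different route but precisely how the paper organises the case analysis: for a single flat wall each particle bounces at most once between adjunctions, and the Shooting Lemma (Lemma~\ref{SECT3LemmedeTirPlanAxes_Fixes}) simply runs through the four cases (none/first/second/both bounced), producing alongside the usual cylinder $K(v_1,\overline{x}_1-\overline{x}_2,\cdot)$ the reflected cylinder $K(\mathcal{S}_0(v_1),\mathcal{S}_0(\overline{x}_1)-\overline{x}_2,\cdot)$, i.e.\ your image particle.

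Two points in your write-up should be sharpened, because as stated they would lead you to the wrong surgery. First, a particle meeting the wall is \emph{not} a recollision and must not be excluded: bouncing is part of both flows and contributes only a uniformly $O(\varepsilon)$ drift between the two pseudo-trajectories (Proposition~\ref{SECT3PropoControParamAdjonPatho}). What has to be excluded is that the \emph{adjunction} takes place near the wall. In your image picture the reason is transparent: the target particle $k$ and its own image start at distance $2\,\overline{x}_k\cdot e_1$, so the cone-of-bad-velocities argument for the pair $(k,k{+}1)$-after-a-bounce degenerates unless $\overline{x}_k\cdot e_1\geq\rho$; this is why Theorem~\ref{SECT3TheorBonneConfiAdjonParti} carries the extra term $R^d(a/\rho)^{d-1}$ and why the exclusion is implemented as a \emph{time} cut-off $U_{j_l}$ of size $2\rho/\alpha$ rather than a spatial one. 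Second, the grazing cut-off is not for free in the post-collisional branch: one must bound the set of $(\omega,v_{s+1})$ sending $v'_{j}$ or $v'_{s+1}$ into the slab $\{|v\cdot e_1|\leq\alpha\}$, and this estimate (Lemma~\ref{SECT3LemmeScatePavé_VitesRasan}, with a delicate $d=2$ analysis) is the one genuinely new geometric ingredient that your black-box appeal to \cite{GSRT} would not supply.
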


\begin{remar}
This result is the analog of the Lanford's theorem (see \cite{Lanf}) when the particles evolve in the half-space, in its qualitative version. A modern proof of the original theorem, stated for domains without boundary ($\mathbb{R}^d$ or $\mathbb{T}^d$), can be found in \cite{GSRT}. In this reference, the authors were able to perform an important breakthrough by achieving the most detailed proof of Lanford's result, with in addition an explicit rate of convergence.\\
At this step, the presence of the obstacle does not essentially change the statement of \cite{GSRT}. However, we will see along the proof that this obstacle complicates the argument, and although we can refine Theorem \ref{SECT3TheorLanford___Vers_Quali} and provide a quantitative convergence (as in \cite{GSRT}), stated at the very end of this work in Theorem \ref{SECT4TheorPrincipal_deLanford_} page \pageref{SECT4TheorPrincipal_deLanford_}, the rate of convergence is less sharp when there is an obstacle.\\
We note the significant fact that the theorems presented here provide a convergence in a strong sense, implying in particular the one obtained in \cite{GSRT} (which was the convergence in the sense of the observables, that is a uniform convergence in the time and the position variables, but only in the sense of the distributions for the velocity variable). The counterpart is the domain on which this convergence holds: we are forced to consider compact sets in the phase space that are in particular not crossing the wall, nor containing grazing velocities.\\
The convergence between the hierarchies implies two important results: first, for $s=1$, we obtain a rigorous derivation of the Boltzmann equation from finite systems of hard spheres, and second, since the result holds for any integer $s$, when $f^{(s)}$ is tensorized we also recover the propagation of chaos, since the $s$-th marginal of the hard sphere system converges towards a chaotic distribution function.
\end{remar}

\subsection{An explicit formula for the solutions of the hierarchies}

Now that the problem of existence and uniqueness has been addressed, let us see how the solutions of the hierarchies can be rewritten explicitely in terms of the initial data, and iterations of the integrated in time (transport)-collision-transport operator.\\
To describe this result, which can be seen as an analog of the Duhamel formula, we introduces the following notations: the \emph{integrated in time transport-collision-transport operator of the BBGKY hierarchy} $t \mapsto \displaystyle{\int}_{\hspace{-1.75mm}0}^t \hspace{-1mm} \mathcal{T}^{s,\varepsilon}_{-u} \mathcal{C}^{N,\varepsilon}_{s,s+1} \mathcal{T}^{s+1,\varepsilon}_u f^{(s+1)}_{N,0} \dd u$ will be denoted as $t \mapsto \big( \mathcal{I}^{N,\varepsilon}_s f^{(s)}_{N,0} \big) (t,\cdot)$, while the $k$-th iterate of this operator, that is $t \mapsto \displaystyle{\int}_{\hspace{-1.75mm}0}^t \hspace{-1mm} \mathcal{T}^{s,\varepsilon}_{-t_1} \mathcal{C}^{N,\varepsilon}_{s,s+1} \mathcal{T}^{s+1,\varepsilon}_{t_1}\displaystyle{\int}_{\hspace{-1.75mm}0}^{t_1} \hspace{-2mm} \mathcal{T}^{s+1,\varepsilon}_{-t_2} \mathcal{C}^{N,\varepsilon}_{s+1,s+2} \mathcal{T}^{s+2,\varepsilon}_{t_2} \dots$\\ 
\hspace*{35mm}$\displaystyle{\int}_{\hspace{-1.75mm}0}^{t_{k-1}} \hspace{-2mm} \mathcal{T}^{s+k-1,\varepsilon}_{-t_k} \mathcal{C}^{N,\varepsilon}_{s+k-1,s+k} \mathcal{T}^{s+k,\varepsilon}_{t_k} f^{(s+k)}_{N,0} \dd t_k\dots \dd t_2 \dd t_1$, \\
which can be denoted as $t \mapsto \Big( \mathcal{I}^{N,\varepsilon}_s \circ \mathcal{I}^{N,\varepsilon}_{s+1} \circ \dots \circ \mathcal{I}^{N,\varepsilon}_{s+k-1} f^{(s+k)}_{N,0} \Big) (t,\cdot)$, thanks to the new notations, will be in fact denoted as $t \mapsto \big( \mathcal{I}^{N,\varepsilon}_{s,s+k-1} f^{(s+k)}_{N,0} \big) (t,\cdot)$ (the second subscript index describes the number of iterations). Similarly, the integrated in time collision-transport operator of the Boltzmann hierarchy\\
$t \mapsto \displaystyle{\int}_{\hspace{-1.75mm}0}^t \mathcal{T}^{s,0}_{t-u} \mathcal{C}^0_{s,s+1} f^{(s+1)}(u,\cdot) \dd u$ will be denoted as $t \mapsto \big(\mathcal{I}^0_s f^{(s)}\big) (t,\cdot)$.\\
The $k$-th iterate of this operator, that is: $t \mapsto \displaystyle{\int}_{\hspace{-1.75mm}0}^t \mathcal{T}^{s,0}_{t-t_1} \mathcal{C}^{N,\varepsilon}_{s,s+1} \displaystyle{\int}_{\hspace{-1.75mm}0}^{t_1} \mathcal{T}^{s,0}_{t_1-t_2} \mathcal{C}^0_{s+1,s+2} \dots$\\
$\displaystyle{\int}_{\hspace{-1.75mm}0}^{t_{k-1}} \mathcal{T}^{s+k-1,0}_{t_{k-1}-t_k} \mathcal{C}^0_{s+k-1,s+k} f^{(s+k)}(t_k,\cdot) \dd t_k\dots \dd t_2 \dd t_1$ will be denoted as \\$t \mapsto \big( \mathcal{I}^0_{s,s+k-1} f^{(s+k)} \big) (t,\cdot)$.

\begin{propo}[Iterated Duhamel formula for the solution of the hierarchies]
\label{SECT3PropoFormuDuhamelIterée__}
Let $N$ be a positive integer and $\varepsilon$ be a strictly positive number. In the Boltzmann-Grad limit $N\varepsilon^{d-1} = 1$, for any strictly positive number $\beta_0$, any real number $\mu_0$, and for any sequence of initial data $F_{N,0} = \big( f^{(s)}_{N,0} \big)_{1\leq s\leq N}$ belonging to the space $\textbf{X}_{N,\varepsilon,\beta_0,\mu_0}$, the unique solution of the integrated form of the conjugated BBGKY hierarchy with initial datum $F_{N,0}$ on the time interval $[0,T]$ ($T$ given by Theorem \ref{SECT2TheorDefinOperaColliBBGKY}) is given by
\begin{align}
\label{SECT3PROPODuhamelIterée__BBGKY}
H_N = t \mapsto \Big( f^{(s)}_{N,0} + \sum_{k=1}^{N-s} \mathds{1}_{s\leq N-k} \big( \mathcal{I}^{N,\varepsilon}_{s,s+k-1} f^{(s+k)}_{N,0} \big)(t,\cdot) \Big)_{1\leq s\leq N}.
\end{align}
Similarly for any sequence of initial data $F_0 = \big( f^{(s)}_0 \big)_{s\geq 1}$ belonging to the space $\textbf{X}_{0,\beta_0,\mu_0}$, the unique solution of the integrated form of the Boltzmann hierarchy with initial datum $F_0$ on the same time interval $[0,T]$ is given by
\begin{align}
\label{SECT3PROPODuhamelIterée__Boltz}
F = t \mapsto \Big( \mathcal{T}^{s,0}_t f^{(s)}_0(\cdot) + \sum_{k=1}^{+\infty} \Big( \mathcal{I}^0_{s,s+k-1} \big(u \mapsto \mathcal{T}^{s+k,0}_u f^{(s+k)}_0\big) \Big) (t,\cdot) \Big)_{s\geq 1}.
\end{align}
\end{propo}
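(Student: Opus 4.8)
The plan is to establish the iterated Duhamel formula \eqref{SECT3PROPODuhamelIterée__BBGKY} by induction and a telescoping argument, exploiting that the solutions of the hierarchies have already been characterized in Theorem \ref{SECT2TheorExistUniciSolutHiera} as the unique fixed points of the operators $\mathfrak{E}_{N,\varepsilon}$ and $\mathfrak{E}_0$. Since uniqueness is already in hand, it suffices to verify that the right-hand sides of \eqref{SECT3PROPODuhamelIterée__BBGKY} and \eqref{SECT3PROPODuhamelIterée__Boltz} are well-defined elements of the spaces $\widetilde{\textbf{X}}_{N,\varepsilon,\widetilde{\beta},\widetilde{\mu}}$ and $\widetilde{\textbf{X}}_{0,\widetilde{\beta},\widetilde{\mu}}$ respectively, and that they are fixed points of the corresponding operator.

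First I would treat the BBGKY side, which is algebraically the cleaner of the two because the sum is \emph{finite}: the $s$-th marginal of a system of $N$ particles cannot be coupled to more than $N$ particles, so the series in $k$ terminates at $k = N-s$, and indeed $\mathfrak{E}_{N,\varepsilon}^{(N)}$ is just the constant $f^{(N)}_{N,0}$. I would argue by downward induction on $s$ from $s=N$ to $s=1$: assuming the closed form holds for the $(s+1)$-th component, one substitutes it into $\mathfrak{E}_{N,\varepsilon}^{(s)}\big(F_{N,0},H_N\big)(t,\cdot) = f^{(s)}_{N,0} + \int_0^t \mathcal{T}^{s,\varepsilon}_{-u}\mathcal{C}^{N,\varepsilon}_{s,s+1}\mathcal{T}^{s+1,\varepsilon}_u h^{(s+1)}_N(u,\cdot)\dd u$, and recognizes by the definition of $\mathcal{I}^{N,\varepsilon}_s$ that applying $\mathcal{I}^{N,\varepsilon}_s$ to $f^{(s+1)}_{N,0}$ produces the $k=1$ term while applying it to $\big(\mathcal{I}^{N,\varepsilon}_{s+1,s+k-1} f^{(s+k)}_{N,0}\big)$ produces $\big(\mathcal{I}^{N,\varepsilon}_{s,s+k-1} f^{(s+k)}_{N,0}\big)$, which re-indexes to the $(k+1)$-th term; this is precisely the telescoping that reconstitutes \eqref{SECT3PROPODuhamelIterée__BBGKY}. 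Along the way one must check, using Theorem \ref{SECT2TheorDefinOperaColliBBGKY} together with the exponential decay built into $\textbf{X}_{N,\varepsilon,\beta_0,\mu_0}$ and the loss of weights $\widetilde{\beta}_\lambda,\widetilde{\mu}_\lambda$, that each composed operator $\mathcal{I}^{N,\varepsilon}_{s,s+k-1}$ is well-defined on the relevant space and that the resulting time-dependent sequence satisfies the left-continuity in time required for membership in $\widetilde{\textbf{X}}_{N,\varepsilon,\widetilde{\beta},\widetilde{\mu}}$.

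For the Boltzmann side the argument is parallel but the series is genuinely infinite, so the telescoping/induction has to be complemented by a convergence statement: one cannot ``bottom out'' at $s=N$. Here I would instead unfold the fixed-point identity $F(t) = \mathfrak{E}_0(F_0,F)(t)$ finitely many times — after $K$ iterations one obtains the partial sum $\mathcal{T}^{s,0}_t f^{(s)}_0 + \sum_{k=1}^{K}\big(\mathcal{I}^0_{s,s+k-1}(u \mapsto \mathcal{T}^{s+k,0}_u f^{(s+k)}_0)\big)(t,\cdot)$ plus a remainder term of the form $\big(\mathcal{I}^0_{s,s+K}\, f^{(s+K+1)}\big)(t,\cdot)$ — and then show the remainder tends to $0$ as $K\to\infty$ in the $|\cdot|_{0,s,\widetilde{\beta}(t)}$ norm, uniformly on $[0,T]$. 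This is where the contracting estimate \eqref{SECT2InegaContractioOperaColli} underlying Theorem \ref{SECT2TheorExistUniciSolutHiera} is essential: on the (small) time interval $[0,T]$ the iterated collision-transport operators satisfy a bound geometric in $K$, forcing the remainder to vanish and simultaneously guaranteeing absolute convergence of the series in $\widetilde{\textbf{X}}_{0,\widetilde{\beta},\widetilde{\mu}}$. The insertion of the free-transport operators $u \mapsto \mathcal{T}^{s+k,0}_u f^{(s+k)}_0$ inside the iterated integrals is exactly the ``conjugation'' bookkeeping that matches the definition of $\mathcal{I}^0_{s,s+k-1}$; care is needed to keep track of which transport acts on which argument, since unlike the BBGKY case where the conjugated hierarchy absorbs the flows, here the flows appear explicitly in \eqref{SECT3PROPODuhamelIterée__Boltz}.

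The main obstacle is the convergence control of the infinite Boltzmann series, i.e. showing the $K$-th remainder vanishes: this requires combining the $\varepsilon$-uniform a priori bound on the iterated collision operators (which in $\mathbb{R}^d$ or $\mathbb{T}^d$ follows from the standard $C_3(d,N,\varepsilon) \sim N\varepsilon^{d-1}$ estimate but must here be re-derived with the wall present, using the definition of $\mathcal{C}^0_{s,s+1}$ with specular boundary terms) together with the time-localization $T$ small enough that the geometric factor is $<1$ — precisely the $T$ and the weights $\widetilde{\beta}_\lambda,\widetilde{\mu}_\lambda$ furnished by Theorem \ref{SECT2TheorExistUniciSolutHiera}. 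The presence of the obstacle does not change the combinatorics of the expansion, but one should double-check that the elementary transport-collision operators $\mathcal{C}^0_{s,s+1}$ behave well under the specular free flow $\mathcal{T}^{s,0}_t$, in particular that composing with $\mathcal{T}^{s,0}_t$ preserves the class of functions on which $\mathcal{C}^0_{s,s+1}$ is defined — which is automatic from the continuity-preservation of the specular free flow already noted in Section \ref{SSect2.1__DefinOperaColliBBGKY}.
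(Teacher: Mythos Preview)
Your proposal is correct and essentially follows the paper's argument: both rely on the contracting property of the integrated-in-time collision operators (established in the proof of Theorem \ref{SECT2TheorExistUniciSolutHiera}) to guarantee that the series are well-defined elements of the Banach spaces $\widetilde{\textbf{X}}_{\cdot,\widetilde{\beta},\widetilde{\mu}}$, and on the uniqueness of the fixed point to identify them with the solutions. The only cosmetic difference is that for the Boltzmann hierarchy the paper verifies directly that the series \eqref{SECT3PROPODuhamelIterée__Boltz} is a Cauchy limit and then checks it is a fixed point of $\mathfrak{E}_0$, whereas you start from the known solution $F$ and unfold the fixed-point identity to control the remainder --- the underlying estimate is identical.
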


\begin{proof}[Sketch of proof of Proposition \ref{SECT3PropoFormuDuhamelIterée__}]
The proof of Proposition \ref{SECT3PropoFormuDuhamelIterée__} comes from the fact that the two series in \eqref{SECT3PROPODuhamelIterée__BBGKY} and \eqref{SECT3PROPODuhamelIterée__Boltz} (which are well defined as limits of Cauchy sequences in the two respective Banach spaces $\widetilde{\textbf{X}}_{N,\varepsilon,\widetilde{\beta},\widetilde{\mu}}$ and $\widetilde{\textbf{X}}_{0,\widetilde{\beta},\widetilde{\mu}}$, thanks to the contracting property of the integrated in time collision operators) are fixed points, respectively of the BBGKY and the Boltzmann operators, which concludes the proof of Proposition \ref{SECT3PropoFormuDuhamelIterée__} thanks to the uniqueness of the solutions for the two hierarchies. See \cite{PhDTT} for more details.
\end{proof}

\subsection{A geometrical interpretation of the Duhamel formula}

The Duhamel formula will guide us to a geometrical interpretation, suggesting a somehow natural proof for the convergence. Let us discuss here the formula \eqref{SECT3PROPODuhamelIterée__Boltz}, for the Boltzmann hierarchy, in order to avoid the questions about the meaning of the collision operator for the BBGKY hierarchy.\\
First, the $s$-th marginal is the sum of the terms $\mathcal{I}^0_{s,s+k-1}\big(\mathcal{T}^{s+k,0}_u f^{(s+k)}_0\big)$, for all $k \geq 1$, where $k$ represents the number of iterations of the integrated in time collision-transport operator.\\
Now, each collision operator $\mathcal{C}^0_{s,s+1}$ being a sum of the $2s$ terms $\mathcal{C}^0_{s,s+1,\pm,i}$ (for $1 \leq i \leq s$ and $\pm = +$ or $-$), each term $\mathcal{I}^0_{s,s+k-1}$ can be decomposed as a sum of $2^ks(s+1)\dots(s+k-1)$ terms. Each of those terms corresponds to, first, choosing a first particle $j_1$ among $s$ particles, and a configuration $\pm_1$, being either pre-collisional ($\pm_1 = -$) or post-collisional ($\pm_1 = +$), then choosing a second particle $j_2$ among $s+1$ particles, and a configuration $\pm_2$, and so on, $k$ times. If we write explicitely such a term, for example the one with $k=2$ (two iterations of the integrated in time collision-transport operator), $\pm_1=-$ and $\pm_2=+$, we obtain:
\begin{align*}
- \int_0^t \smashoperator{\bigintssss\limits_{\hspace{14mm}\mathbb{S}^{d-1}_{\omega_1}\times
\mathbb{R}^d_{v_{s+1}}}} \Big[ \omega_1\cdot \big(v_{s+1} - v^{s,0}_{j_1}(t_1)\big) \Big]_- \int_0^{t_1} \smashoperator{\bigintssss\limits_{\hspace{14mm}\mathbb{S}^{d-1}_{\omega_2}\times
\mathbb{R}^d_{v_{s+2}}}} \bigg[ \omega_2 \cdot \big(v_{s+2} - v^{0,j_2}_{s,1}(t_2)\big) \bigg]_+ f_0^{(s+2)}\big( Z^0_{s,2}(0) \big) \dd \omega_2 \dd v_{s+2} \dd t_2 \dd \omega_1 \dd v_{s+1} \dd t_1,
\end{align*}
where
\begin{align*}
v^{0,j_1}_{s,0}(t_1) &= \big(T^{s,0}_{t_1-t}(Z_s)\big)^{V,j_1},\\
v^{0,j_2}_{s,1}(t_2) &= \Big( T^{s+1,0}_{t_2-t_1}\big(
T^{s,0}_{t_1-t}(Z_s),\big( T^{s,0}_{t_1-t}(Z_s) \big)^{X,j_1},v_{s+1}
\big) \Big)^{V,j_2},\\
Z^0_{s,2}(0) &= T^{s+2,0}_{-t_2}\Bigg(
\Bigg(
T^{s+1,0}_{t_2-t_1}\Big(
T^{s,0}_{t_1-t}(Z_s),\big(T^{s,0}_{t_1-t}(Z_s)\big)^{X,j_1},v_{s+1}\Big), \\
&\hspace{50mm}\Big(T^{s+1,0}_{t_2-t_1}\Big(
T^{s,0}_{t_1-t}(Z_s),\big(T^{s,0}_{t_1-t}(Z_s)\big)^{X,j_1},v_{s+1}
\Big)\Big)^{X,j_2},v_{s+2}
\Bigg)'_{j_2,s+2}
\Bigg).
\end{align*}
Considering the expressions from the left to the right, one sees that the iterations of the operators are complicating more and more the arguments below the integrals.\\
Let us investigate the structure of those arguments: in the expression of this term of the solution of the Boltzmann hierarchy at time $t$, the first argument $v^{0,j_1}_{s,0}(t_1)$ corresponds to the velocity of the $j_1$-th particle of the system starting from a configuration $Z_s$, and after following the backward free flow with boundary condition for a time $t-t_1$ (where $t_1$ is the integration variable of the first integrated in time transport-collision operator). Then, a particle is added to this system of $s$ particles at time $t_1-t$, next to the $j_1$-th particle, with $v_{s+1}$ as its initial velocity. Then, this new system with $s+1$ particles follows again the backward free flow for a time $t_1-t_2$. Here we select the velocity of the $j_2$-th particles of this new configuration: this is the argument $v^{0,j_2}_{s,1}(t_2)$. Finally, to obtain the last argument $Z^0_{s,2}(0)$, we restart from the last configuration which was described, picks the $j_2$-th particle and add just next to it (at time $(t-t_1)+(t_1-t_2)$ its position is $\Big(T^{s+1,0}_{t_2-t_1}\Big(
T^{s,0}_{t_1-t}(Z_s),\big(T^{s,0}_{t_1-t}(Z_s)\big)^{X,j_1},v_{s+1}
\Big)\Big)^{X,j_2}$) another particle, with an initial velocity $v_{s+2}$. Since this time $\pm_2$ was chosen to be $+$, the particle which is added starts in a post-collisional configuration, so one applies in addition the scattering operator, and in the end this new system of $s+2$ particles undergoes the action of the backward free transport for a time $t_2$: this is the final argument $Z^0_{s,2}(0)$.\\
\newline
This process, complicated at first glance, can be pictured as in Figure \ref{SECT3FigurConstPseudTraje} below. Such process, mixing transports and adjunctions of particles, builds what is usually called in the literature \emph{pseudo-trajectories} (pseudo because the number of particles of the system changes along time).\\
\begin{figure}[!h]
\centering
\includegraphics[scale=0.4, trim = 3cm 1cm 0cm 0cm]{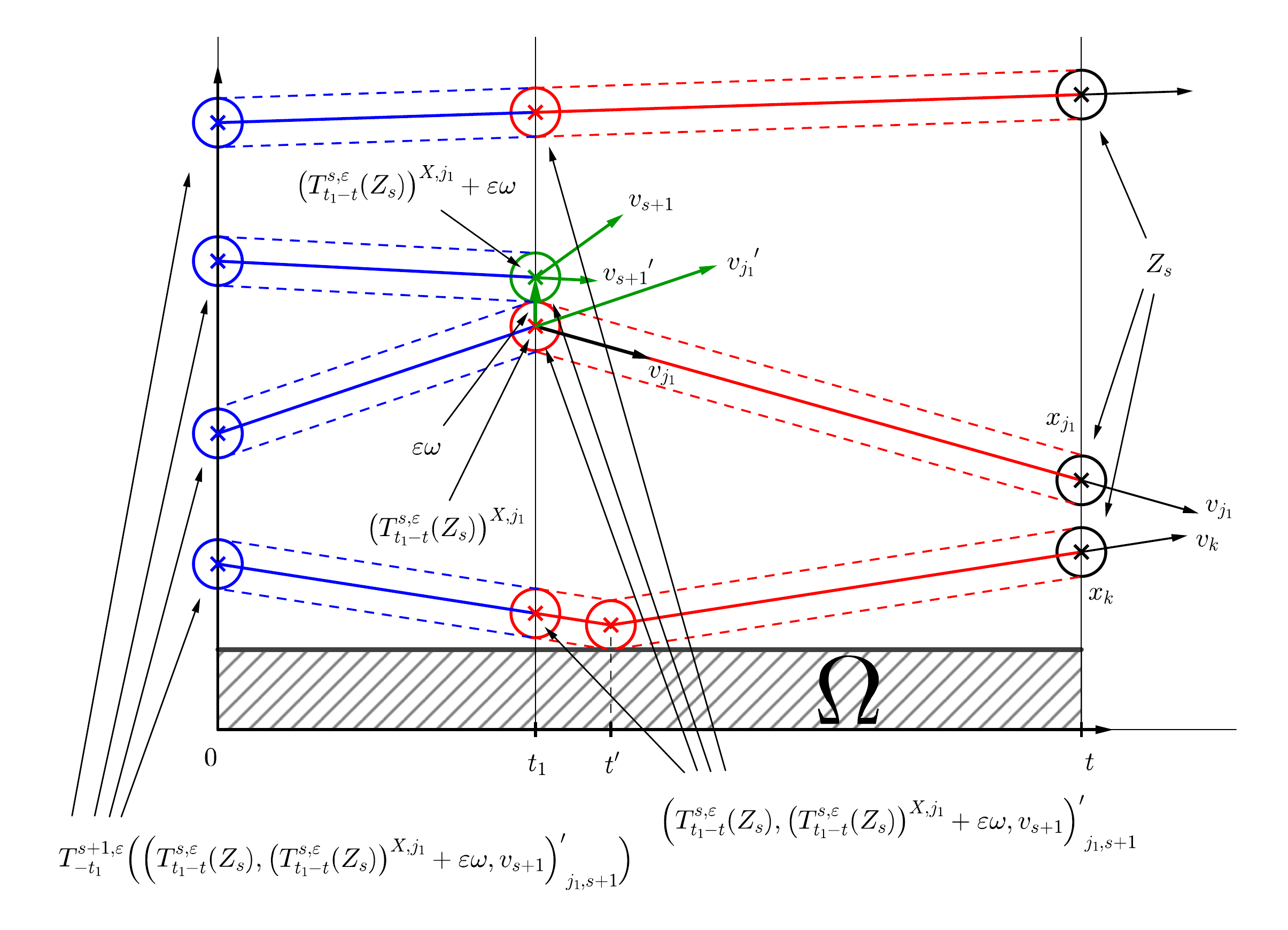}
\caption{Construction of a pseudo-trajectory for the BBGKY hierarchy.}
\label{SECT3FigurConstPseudTraje}
\end{figure}
Formally, the same decomposition, and the same process to describe the elementary terms can be done for the BBGKY hierarchy as well. Of course, there is an important difference: here the particles have a non-zero radius, and the backward free transport is replaced by the backward hard sphere transport, allowing possible interactions between the particles.\\
Let us introduce here some notations. A pseudo-trajectory is entirely determined by its initial configuration $Z_s$, its number of adjunctions $k$, and finally by the choice of its adjunction parameters (the time of adjunction, the particle chosen for it, and the angular parameter and the velocity of the particle added). We introduce then the sets $\mathfrak{T}_k = \big\{(t_1,\dots,t_k) \in [0,t]^k\ /\ t_1 < \dots < t_k \big\}$, $\mathfrak{J}^s_k = \big\{(j_1,\dots,j_k) \in \mathbb{N}^k\ /\ \forall\, i, 1 \leq j_i \leq s+i-1 \big\}$ and $\mathfrak{A}_k = \big\{\big((\omega_1,v_{s+1}),\dots,(\omega_k,v_{s+k})\big)\big\} = \big(\mathbb{S}^{d-1}\times\mathbb{R}^d\big)^k$. Note that the difference between an adjunction in a pre- or post-collisional configuration lies in the sign of $\omega_i\cdot\big(v_{s+i}-v^{\cdot,i-1}_{s,j_i}(t_i)\big)$ (where $v^{\cdot,i-1}_{s,j_i}(t_i)$ is the velocity of the particle $j_i$ undergoing the adjunction, at the time $t_i$ of this adjunction), this sign being $\pm_i$.\\
We call then a \emph{pseudo-trajectory} the collection of all the configurations, along time, of the system starting from $Z_s$, and undergoing the adjunctions described by $T_k \in \mathfrak{T}_k$, $J_k\in\mathfrak{J}^s_k$ and $A_k \in \mathfrak{A}_k$, and denoted $Z^0(Z_s,T_k,J_k,A_k)$ (or $Z^\varepsilon$ for the BBGKY hierarchy), or more simply $Z^0$. To specify that we consider the pseudo-trajectory at a fixed time $\tau$, we denote its configuration $Z^0(\tau)$. A pseudo-trajectory is then a time-dependent function taking its values in the configurations, with an increasing number of particles along time. The position of a generic particle $j_1$ at time $\tau$ of this pseudo-trajectory will be denoted $x^{0,j_1}_{s,i}(\tau)$ (or $x^{\varepsilon,j_1}$ for the BBGKY hierarchy), where the subscripts $s$ and $i$ represent respectively the initial number of particles, and the number of adjunctions performed before $\tau$. For a velocity, we simply replace $x$ by $v$.\\
The main idea of the proof of the convergence can be then described easily: one expects that for particles of small radius, the process described above produces pseudo-trajectories for the BBGKY hierarchy that are uniformly close to the pseudo-trajectories for the Boltzmann hierarchy, as pictured in Figure \ref{SECT3FigurCompaPseudTrajectoir}.
\begin{figure}[!h]
\centering
\includegraphics[scale=0.4, trim = 3cm 2.5cm 0cm 0cm]{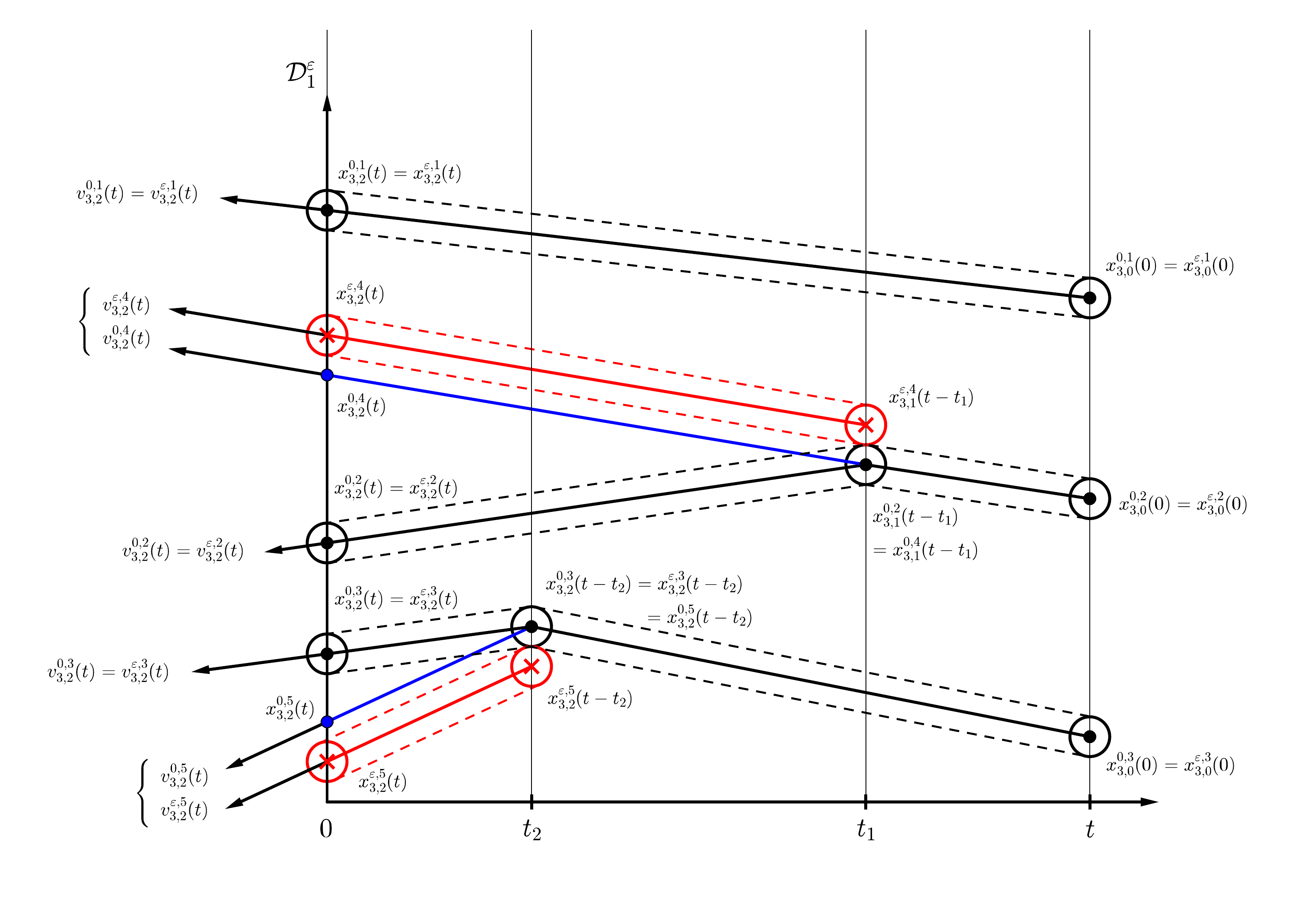}
\caption{Comparison of the pseudo-trajectories of the two hierarchies.}
\label{SECT3FigurCompaPseudTrajectoir}
\end{figure}
Then, if the pseudo-trajectories can be compared, a continuity argument will show that the integrands in the expression of the Duhamel formula will converge, and a dominated convergence argument would conclude the proof.\\
\newline
Nevertheless, the uniform comparison of the pseudo-trajectories is not always possible, for initial configurations of the system may lead to drastically diverging pseudo-trajectories. This is the case when two particles, evolving according to the hard sphere transport, collide one with another. Since this behaviour cannot happen for the corresponding Boltzmann pseudo-trajectory (that is, with the same initial configuration, the same choice of particles for the adjunctions, and the same choice for the pre- or post-collisional settings), a radical difference between the positions and the velocities may suddenly appear, as it is pictured in Figure \ref{SECT3FigurArbrePathoCasdeRecol}. This is a well-known obstruction in Lanford's proof, called \emph{recollision} (see \cite{CeIP}). Those recollisions were studied in much details in \cite{GSRT} in the case of the Euclidean space (without any obstacle in the domain).

\begin{figure}[!h]
\centering
\includegraphics[scale=0.1, trim = 2cm 14.5cm 0cm 3.5cm]{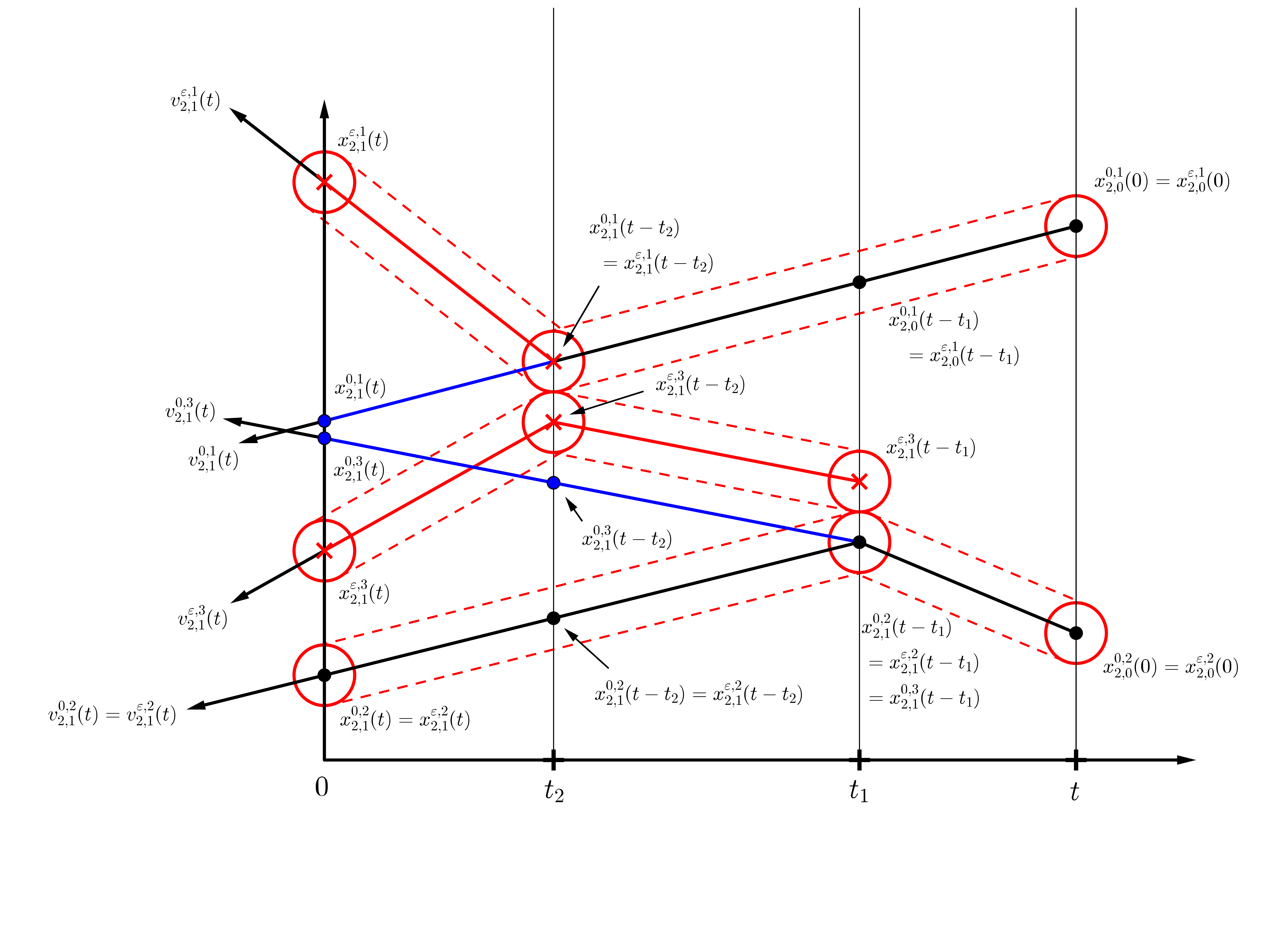}
\caption{Case of a recollision in the dynamics of the hard spheres, and divergence from the dynamics of particles of radius zero, following the free flow.}
\label{SECT3FigurArbrePathoCasdeRecol}
\end{figure}

In the case of our work, the presence of the wall may also produce divergence between the pseudo-trajectories. If a particle hits the wall, starting from a given point, with a given velocity, the point of impact will depend on the radius of the particle, as well as the trajectory of the particle after the bouncing. This variation can be uniformly controlled in terms of the radii of the particles though. A much more serious problem comes from the divergence between the times of bouncing of two particles of different radii: between the two bouncings, one has already bounced and has already a reflected velocity, while the other particle has still its pre-bouncing velocity, leading to an important difference. Those phenomena are pictured in Figure \ref{SECT3FigurDiffeDynamApresRebon} below.

\begin{figure}[!h]
\centering
\includegraphics[scale=0.4, trim = 0.95cm 2cm 0cm 1cm]{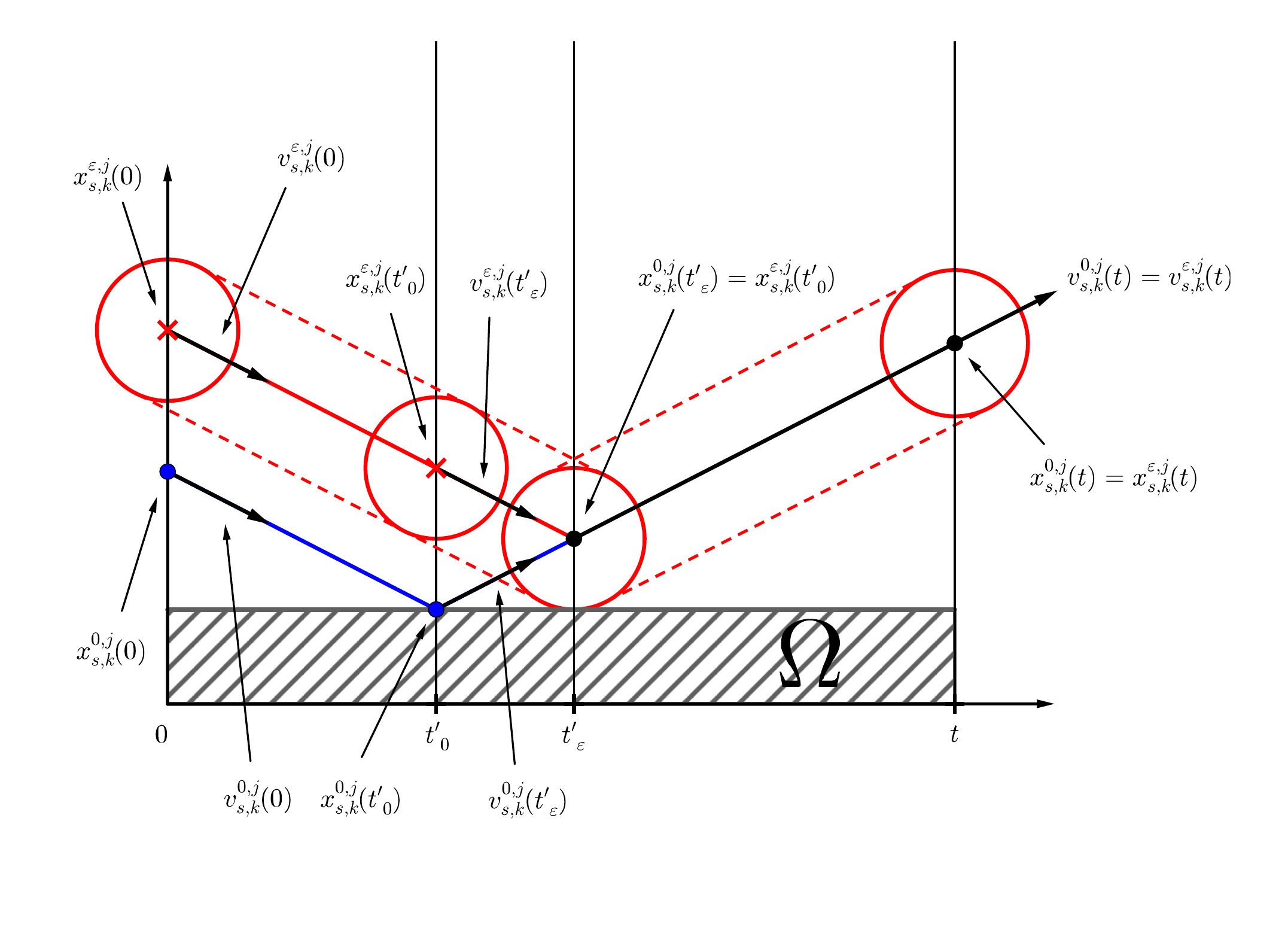}
\caption{The two phenomena of divergence of the pseudo-trajectories appearing during a bouncing against the obstacle.}
\label{SECT3FigurDiffeDynamApresRebon}
\end{figure}

The task will be now to prepare the solutions for those geometrical comparisons: some cut-offs will simplify the proof, while we will have to control the size of the configurations leading to pathological pseudo-trajectories preventing the comparison.

\subsection{Simplification of the pseudo-trajectories thanks to cut-offs}

As in \cite{GSRT}\footnote{See Chapter 7.}, we start with considering only pseudo-trajectories with a finite number of adjunctions, that is obtained with a finite number of iterations of the collision operators. In addition, we will remove the pseudo-trajectories with particles travelling too fast: we will perfom a cut-off in large velocities. Finally, an important tool to simplify the study of the geometry of the pseudo-trajectories will be to prevent the adjunctions to be to close in time: the time difference between two adjunctions will be bounded from below. All those simplifications have a cost, that is described in the three lemmas below.

\begin{lemma}[Cut-off in high number of collisions]
\label{PROPOCutofGrandNombrColli}
Let $\beta_0$ be a strictly positive number and $\mu_0$ be a real number. For any positive integer $n$, any positive integer $N$ and any strictly positive number $\varepsilon > 0$ verifying the Boltzmann-Grad limit $N\varepsilon^{d-1} = 1$, and any couple of sequences of initial data $F_{N,0} = \big( f^{(s)}_{N,0}\big)_{1\leq s\leq N}$ and $F_0 = \big(f^{(s)}_0\big)_{s\geq 1}$ belonging respectively to $\textbf{X}_{N,\varepsilon,\beta_0,\mu_0}$ and $\textbf{X}_{0,\beta_0,\mu_0}$, the respective unique solutions $H_N \in \widetilde{\textbf{X}}_{N,\varepsilon,\widetilde{\beta}_\lambda,\widetilde{\mu}_\lambda}$ to the BBGKY hierarchy with initial datum $F_{N,0}$ and $F \in \widetilde{\textbf{X}}_{0,\widetilde{\beta}_\lambda,\widetilde{\mu}_\lambda}$ to the Boltzmann hierarchy with inital datum $F_0$ (where $\widetilde{\beta}_\lambda$, $\widetilde{\mu}_\lambda$, $H_N$ and $F$ are given by Theorem \ref{SECT2TheorExistUniciSolutHiera}) satisfy
\begin{align}
\label{CUTOFGrandNombrColliBBGKY}
\vertiii{ H_N - \Big( f^{(s)}_{N,0} + \sum_{k=1}^n \mathds{1}_{s\leq N-k} \big( \mathcal{I}^{N,\varepsilon}_{s,s+k-1} f^{(s+k)}_{N,0}\big) \Big)_{1\leq s\leq N}}_{\substack{\vspace{-2.5mm}N,\varepsilon,\widetilde{\beta}_\lambda,\widetilde{\mu}_\lambda}} \hspace{-12.5mm} \leq \Big(\frac{1}{2}\Big)^n \vertii{ F_{N,0} }_{N,\varepsilon,\beta_0,\mu_0},
\end{align}
and
\begin{align}
\label{CUTOFGrandNombrColliBoltz}
\hspace{-3mm}\vertiii{ F - \Big( t\mapsto \mathcal{T}^{s,0}_t f^{(s)}_0 + \sum_{k=1}^n \big( \mathcal{I}^0_{s,s+k-1} \big( u\mapsto \mathcal{T}^{s+k,0}_u f^{(s+k)}_0\big)\big)(t,\cdot)\Big)_{s\geq 1}}_{\substack{\vspace{-2.5mm}0,\widetilde{\beta}_\lambda,\widetilde{\mu}_\lambda}} \hspace{-9mm} \leq \Big(\frac{1}{2}\Big)^n \vertii{ F_0 }_{0,\beta_0,\mu_0}.
\end{align}
\end{lemma}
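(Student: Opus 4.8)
The plan is to recognise the iterated Duhamel formula of Proposition~\ref{SECT3PropoFormuDuhamelIterée__} as the Neumann series of an affine contraction and then simply estimate its tail. First I would fix $\beta_0,\mu_0$ together with the associated $T,\lambda,\widetilde\beta_\lambda,\widetilde\mu_\lambda$ of Theorem~\ref{SECT2TheorExistUniciSolutHiera}, and record that by Definition~\ref{SECT2DefinOperateursBBGKYBoltz} the map $\mathfrak{E}_{N,\varepsilon}(F_{N,0},\cdot)$ is affine in its second argument: writing $\Lambda_{N,0}$ for the time-constant lift of $F_{N,0}$ (the sequence whose $s$-th entry is $t\mapsto f^{(s)}_{N,0}$), one has $\mathfrak{E}_{N,\varepsilon}(F_{N,0},G)=\Lambda_{N,0}+\mathcal{L}_{N,\varepsilon}G$, where $\mathcal{L}_{N,\varepsilon}$ is the bounded linear operator on $\widetilde{\textbf{X}}_{N,\varepsilon,\widetilde\beta_\lambda,\widetilde\mu_\lambda}$ whose $s$-th component sends $(g^{(s')})_{s'}$ to $t\mapsto\int_0^t\mathcal{T}^{s,\varepsilon}_{-u}\mathcal{C}^{N,\varepsilon}_{s,s+1}\mathcal{T}^{s+1,\varepsilon}_u g^{(s+1)}(u,\cdot)\dd u$. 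The key input, which I would take from the proof of Theorem~\ref{SECT2TheorExistUniciSolutHiera} through the contraction inequality \eqref{SECT2InegaContractioOperaColli}, is that $\lambda$ and $T$ were chosen precisely so that $\vertiii{\mathcal{L}_{N,\varepsilon}}\leq 1/2$ on $\widetilde{\textbf{X}}_{N,\varepsilon,\widetilde\beta_\lambda,\widetilde\mu_\lambda}$, uniformly in $N$ under the Boltzmann--Grad scaling $N\varepsilon^{d-1}=1$; the analogous decomposition $\mathfrak{E}_0(F_0,F)=\Lambda_0+\mathcal{L}_0F$ with $\vertiii{\mathcal{L}_0}\leq 1/2$ holds for the Boltzmann side, with $\Lambda_0=\big(t\mapsto\mathcal{T}^{s,0}_tf^{(s)}_0\big)_{s\geq1}$.

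Next I would identify, using the very definition of the composed operators $\mathcal{I}^{N,\varepsilon}_{s,s+k-1}$, the partial sum appearing inside the norm in \eqref{CUTOFGrandNombrColliBBGKY} with the $n$-th Neumann partial sum $S_n:=\sum_{k=0}^{n}\mathcal{L}_{N,\varepsilon}^{\,k}\Lambda_{N,0}$ (the indicators $\mathds{1}_{s\leq N-k}$ and the $s=N$ boundary case being built in automatically, since $\mathcal{C}^{N,\varepsilon}_{s,s+1}$ carries the factor $N-s$ and there is no $(N{+}1)$-st component), and the full solution of Proposition~\ref{SECT3PropoFormuDuhamelIterée__} with $H_N=\sum_{k\geq0}\mathcal{L}_{N,\varepsilon}^{\,k}\Lambda_{N,0}$, this series converging in the Banach space $\widetilde{\textbf{X}}_{N,\varepsilon,\widetilde\beta_\lambda,\widetilde\mu_\lambda}$ (Proposition~\ref{THEORComplEspacFonctBBGKY}) because $\vertiii{\mathcal{L}_{N,\varepsilon}}\leq 1/2$. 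The truncation error is then nothing but the tail of this geometric-type series,
\[
H_N-S_n=\sum_{k\geq n+1}\mathcal{L}_{N,\varepsilon}^{\,k}\Lambda_{N,0}=\mathcal{L}_{N,\varepsilon}^{\,n+1}H_N,
\]
so that $\vertiii{H_N-S_n}_{N,\varepsilon,\widetilde\beta_\lambda,\widetilde\mu_\lambda}\leq(1/2)^{n+1}\vertiii{H_N}_{N,\varepsilon,\widetilde\beta_\lambda,\widetilde\mu_\lambda}$. To finish I would bound $\vertiii{H_N}$: again from the geometric series, $\vertiii{H_N}\leq\sum_{k\geq0}(1/2)^k\vertiii{\Lambda_{N,0}}=2\,\vertiii{\Lambda_{N,0}}$, and since $\widetilde\beta_\lambda(t)\leq\beta_0$ and $\widetilde\mu_\lambda(t)\leq\mu_0$ on $[0,T]$, the embedding of Proposition~\ref{SECT2PropoIncluEspaXHierarchie} yields $\vertiii{\Lambda_{N,0}}_{N,\varepsilon,\widetilde\beta_\lambda,\widetilde\mu_\lambda}=\sup_{t\in[0,T]}\vertii{F_{N,0}}_{N,\varepsilon,\widetilde\beta_\lambda(t),\widetilde\mu_\lambda(t)}\leq\vertii{F_{N,0}}_{N,\varepsilon,\beta_0,\mu_0}$. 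Multiplying these estimates gives exactly $(1/2)^n\vertii{F_{N,0}}_{N,\varepsilon,\beta_0,\mu_0}$, that is \eqref{CUTOFGrandNombrColliBBGKY}.

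For \eqref{CUTOFGrandNombrColliBoltz} the argument is word for word the same, with $\mathcal{L}_{N,\varepsilon}$ replaced by $\mathcal{L}_0$ and $\Lambda_{N,0}$ by $\Lambda_0$, using in addition that the backward free flow with specular reflection leaves both the $L^\infty$ norm and the Gaussian weight $\exp\!\big(\tfrac{\beta}{2}\sum_i|v_i|^2\big)$ invariant (it does not alter the speeds of the particles), so that $\vertiii{\Lambda_0}_{0,\widetilde\beta_\lambda,\widetilde\mu_\lambda}\leq\vertii{F_0}_{0,\beta_0,\mu_0}$ once more by Proposition~\ref{SECT2PropoIncluEspaXHierarchie}. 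The whole statement is thus a soft consequence of the contracting affine structure of the hierarchy operators, so I do not expect a genuine obstacle; the only points requiring a little care are the bookkeeping that identifies the truncated Duhamel sums of \eqref{CUTOFGrandNombrColliBBGKY}--\eqref{CUTOFGrandNombrColliBoltz} with the Neumann iterates $\mathcal{L}^{\,k}\Lambda$ (the indices in $\mathcal{I}^{N,\varepsilon}_{s,s+k-1}$, the indicators $\mathds{1}_{s\leq N-k}$, and the factors $N-s$), and the passage between the time-$0$ norms $\vertii{\cdot}_{\cdot,\beta_0,\mu_0}$ and the time-dependent norms $\vertiii{\cdot}_{\cdot,\widetilde\beta_\lambda,\widetilde\mu_\lambda}$ through Proposition~\ref{SECT2PropoIncluEspaXHierarchie}; the factor $1/2$ is literally the contraction constant of \eqref{SECT2InegaContractioOperaColli}, which is where the time restriction of Theorem~\ref{SECT2TheorExistUniciSolutHiera} enters. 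See \cite{PhDTT} for the complete details.
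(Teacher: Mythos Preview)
Your proposal is correct and follows essentially the same route as the paper, which states only that ``the proof is a simple consequence of the contracting property of the collision operators on the time interval $[0,T]$ given by Theorem~\ref{SECT2TheorExistUniciSolutHiera}''. Your write-up spells out explicitly the affine decomposition $\mathfrak{E}=\Lambda+\mathcal{L}$, the identification of the truncated Duhamel sums with the Neumann partial sums, and the tail estimate $(1/2)^{n+1}\vertiii{H_N}\leq(1/2)^{n+1}\cdot 2\,\vertii{F_{N,0}}=(1/2)^n\vertii{F_{N,0}}$, which is exactly the content the paper leaves implicit.
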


\begin{proof}[Proof of Lemma \ref{PROPOCutofGrandNombrColli}]
The proof is a simple consequence of the contracting property of the collision operators on the time interval $[0,T]$ given by Theorem \ref{SECT2TheorExistUniciSolutHiera}. The proof is presented in details in \cite{PhDTT}.
\end{proof}
In the following, we will denote:
$$
H_N^n = t \mapsto \Big( f^{(s)}_{N,0}(\cdot) + \sum_{k=1}^n \mathds{1}_{s\leq N-k} \big( \mathcal{I}^{N,\varepsilon}_{s,s+k-1} f^{(s+k)}_{N,0} \big)(t,\cdot) \Big)_{1\leq s\leq N}.
$$
and
$$
F^n =  t \mapsto \Big( \mathcal{T}^{s,0}_t f^{(s)}_0 + \sum_{k=1}^n \big( \mathcal{I}^0_{s,s+k-1}\big( u\mapsto \mathcal{T}^{s+k,0}_u f^{(s+k)}_0\big) \big)(t,\cdot) \Big)_{s\geq 1}.
$$
In addition we introduce for any parameter $R>0$:
\begin{align*}
H_N^{n,R} = t\mapsto \Big( f^{(s)}_{N,0}(\cdot)\mathds{1}_{\vert V_s \vert \leq R} + \sum_{k=1}^n \mathds{1}_{s\leq N-k} \big( \mathcal{I}^{N,\varepsilon}_{s,s+k-1} \big(f^{(s+k)}_{N,0}\mathds{1}_{\vert V_{s+k} \vert \leq R}\big) \big)(t,\cdot)\Big)_{1\leq s\leq N}
\end{align*}
and
\begin{align*}
F^{n,R} = t\mapsto \Big( \mathcal{T}^{s,0}_t f^{(s)}_0(\cdot)\mathds{1}_{\vert V_s \vert \leq R} +\sum_{k=1}^n \big( \mathcal{I}^{N,\varepsilon}_{s,s+k-1} \big( u\mapsto \mathcal{T}^{s+k,0}_u f^{(s+k)}_{N,0}\mathds{1}_{\vert V_{s+k} \vert \leq R}\big) \big)(t,\cdot)\Big)_{s\geq 1}. 
\end{align*}

\begin{lemma}[Cut-off in large energy configurations]
\label{PROPOCutofConfigHauteEnerg}
Let $\beta_0$ be a strictly positive number and $\mu_0$ be a real number. There exists an affine, strictly positive, decreasing function $\widetilde{\beta}' < \widetilde{\beta}$ defined on $[0,T]$ (where $\widetilde{\beta} = \widetilde{\beta}_\lambda$ is given by Theorem \ref{SECT2TheorExistUniciSolutHiera}) and two constants $C_1(d,\beta_0,\mu_0)$ and $C_2(d,\beta_0,\mu_0)$, depending only on the dimension $d$ and on the numbers $\beta_0$ and $\mu_0$, such that for any positive integer $n$, any strictly positive number $R$, any positive integer $N$ and any strictly positive number $\varepsilon > 0$ verifying the Boltzmann-Grad limit $N\varepsilon^{d-1} = 1$, and any pair of sequences of initial data $F_{N,0} = \big( f^{(s)}_{N,0}\big)_{1\leq s\leq N}$ and $F_0 = \big(f^{(s)}_0\big)_{s\geq 1}$ belonging respectively to $\textbf{X}_{N,\varepsilon,\beta_0,\mu_0}$ and $\textbf{X}_{0,\beta_0,\mu_0}$, the truncated in high number of collisions solutions $H_N^n \in \widetilde{\textbf{X}}_{N,\varepsilon,\widetilde{\beta}_\lambda,\widetilde{\mu}_\lambda}$ and $F^n \in \widetilde{\textbf{X}}_{0,\widetilde{\beta}_\lambda,\widetilde{\mu}_\lambda}$ satisfy
\begin{align}
\label{CUTOFHauteEnergColliBBGKY}
\vertiii{ H_N^n - H_N^{n,R}}_{N,\varepsilon,\widetilde{\beta}',\widetilde{\mu}_\lambda}\leq C_1 \exp\big( -C_2 R^2 \big) \vertii{ F_{N,0} }_{N,\varepsilon,\widetilde{\beta}'(0),\mu_0},
\end{align}
and
\begin{align}
\label{CUTOFHauteEnergColliBoltz}
\vertiii{ F^n - F^{n,R}}_{0,\widetilde{\beta}',\widetilde{\mu}_\lambda}\leq C_1 \exp\big( -C_2 R^2 \big) \vertii{ F_0 }_{0,\widetilde{\beta}'(0),\mu_0}.
\end{align}
\end{lemma}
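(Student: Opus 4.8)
The plan is to exploit the \emph{linearity} of the truncated Duhamel series with respect to the initial datum sitting at the deepest level, and then to feed the Gaussian bound on the data into the continuity estimates already used to prove Theorem~\ref{SECT2TheorExistUniciSolutHiera}. In $H_N^{n,R}$ the cut-off $\mathds{1}_{\vert V_{s+k}\vert\leq R}$ is applied \emph{only} to the initial datum $f^{(s+k)}_{N,0}$ at the bottom of each iterated integral, never to an intermediate configuration, and every operator $\mathcal{I}^{N,\varepsilon}_{s,s+k-1}$ is linear in its argument; hence $H_N^n-H_N^{n,R}$ is exactly the $n$-times truncated iterated Duhamel series \eqref{SECT3PROPODuhamelIterée__BBGKY} built from the modified sequence $\widetilde F_{N,0}^R:=\big(f^{(s)}_{N,0}\mathds{1}_{\vert V_s\vert>R}\big)_{1\leq s\leq N}$, and likewise $F^n-F^{n,R}$ is the truncated series \eqref{SECT3PROPODuhamelIterée__Boltz} built from $\widetilde F_0^R:=\big(f^{(s)}_0\mathds{1}_{\vert V_s\vert>R}\big)_{s\geq 1}$. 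It thus suffices to estimate these two series.

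The elementary but crucial ingredient is the loss-of-Gaussian-weight inequality: for any $0<\widetilde\beta'(0)<\beta_0$ and any velocity vector $V$, since $\exp\!\big(-\tfrac{\beta_0-\widetilde\beta'(0)}{2}(\vert V\vert^2-R^2)\big)\leq 1$ on the set $\{\vert V\vert>R\}$,
$$
\mathds{1}_{\vert V\vert>R}\,\exp\!\Big(-\frac{\beta_0}{2}\vert V\vert^2\Big)\ \leq\ \exp\!\Big(-\frac{\beta_0-\widetilde\beta'(0)}{2}R^2\Big)\,\exp\!\Big(-\frac{\widetilde\beta'(0)}{2}\vert V\vert^2\Big).
$$
Applying this with $\vert V\vert^2=\sum_{i=1}^s\vert v_i\vert^2$ and using $F_{N,0}\in\textbf{X}_{N,\varepsilon,\beta_0,\mu_0}$ gives $\vert f^{(s)}_{N,0}\mathds{1}_{\vert V_s\vert>R}\vert_{\varepsilon,s,\widetilde\beta'(0)}\leq\exp\!\big(-\tfrac{\beta_0-\widetilde\beta'(0)}{2}R^2\big)\,\vert f^{(s)}_{N,0}\vert_{\varepsilon,s,\beta_0}$, so that, after multiplying by $e^{s\mu_0}$ and taking the maximum over $s$,
$$
\vertii{\widetilde F_{N,0}^R}_{N,\varepsilon,\widetilde\beta'(0),\mu_0}\ \leq\ \exp\!\Big(-\frac{\beta_0-\widetilde\beta'(0)}{2}R^2\Big)\,\vertii{F_{N,0}}_{N,\varepsilon,\beta_0,\mu_0},
$$
and likewise for $\widetilde F_0^R$. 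This is exactly the role of requiring $\widetilde\beta'<\widetilde\beta$: lowering the Gaussian rate is what turns the truncation $\mathds{1}_{\vert V_s\vert>R}$ into the exponential gain $\exp(-C_2R^2)$, at the price of measuring the data at the weaker regularity $\widetilde\beta'(0)$.

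It remains to propagate this smallness through the iterated collision operators. For the $k=0$ term (the contribution $f^{(s)}_{N,0}\mathds{1}_{\vert V_s\vert>R}$ alone), which is time-independent, Proposition~\ref{SECT2PropoIncluEspaXHierarchie} together with $\widetilde\beta'(t)\leq\widetilde\beta'(0)$ and $\widetilde\mu_\lambda(t)\leq\mu_0$ bounds its $\vertiii{\cdot}_{N,\varepsilon,\widetilde\beta',\widetilde\mu_\lambda}$-norm by $\vertii{\widetilde F_{N,0}^R}_{N,\varepsilon,\widetilde\beta'(0),\mu_0}$. For the terms $k\geq1$ one re-runs the contracting estimate underlying Theorem~\ref{SECT2TheorExistUniciSolutHiera} and Lemma~\ref{PROPOCutofGrandNombrColli}, but now with the \emph{lowered} budget $\widetilde\beta'(t)=\widetilde\beta'(0)-\lambda t$: keeping the same slope $\lambda$ and the same $\widetilde\mu_\lambda$ and only lowering the starting value to some $\widetilde\beta'(0)\in(\lambda T,\beta_0)$ (e.g. $\widetilde\beta'(0)=\tfrac12(\beta_0+\lambda T)$), one still has $\widetilde\beta'>0$ on $[0,T]$ and $\widetilde\beta'<\widetilde\beta_\lambda$ pointwise, and the contraction estimate still holds with ratio $\leq 1/2$ per iteration on the \emph{same} interval $[0,T]$, so summing the geometric series over $0\leq k\leq n$ yields
$$
\vertiii{H_N^n-H_N^{n,R}}_{N,\varepsilon,\widetilde\beta',\widetilde\mu_\lambda}\ \leq\ C\,\vertii{\widetilde F_{N,0}^R}_{N,\varepsilon,\widetilde\beta'(0),\mu_0}
$$
with $C=C(d,\beta_0,\mu_0)$ independent of $n$, $R$, $N$ and $\varepsilon$ — the independence of $N$ and $\varepsilon$ being precisely where the Boltzmann--Grad relation $N\varepsilon^{d-1}=1$ enters, as in the remark following Theorem~\ref{SECT2TheorExistUniciSolutHiera}. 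Combining with the previous display gives \eqref{CUTOFHauteEnergColliBBGKY} with $C_1=C$ and $C_2=\tfrac12\big(\beta_0-\widetilde\beta'(0)\big)=\tfrac14(\beta_0-\lambda T)>0$, which depends only on $d$, $\beta_0$, $\mu_0$ since so do $\lambda$ and $T$. The Boltzmann bound \eqref{CUTOFHauteEnergColliBoltz} is obtained identically, working with continuous functions, the free flow $\mathcal{T}^{s,0}$ and the full (infinite) series, using the contracting inequality for $\mathfrak{E}_0$.

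The main obstacle is the simultaneous bookkeeping of the regularity parameters: one must produce a single $\widetilde\beta'$ — and check that $\widetilde\mu_\lambda$ remains compatible with it — such that at once $\widetilde\beta'<\widetilde\beta_\lambda$ on all of $[0,T]$ (so that $C_2=\tfrac12(\beta_0-\widetilde\beta'(0))$ is genuinely positive), the contracting inequality still has ratio $\leq1/2$ on the \emph{already fixed} interval $[0,T]$ (so that the existence time is not shrunk and the truncation index $n$ never enters the constants), and all constants stay uniform in $N$ under the Boltzmann--Grad scaling; the observation that keeping the slope $\lambda$ and only lowering $\widetilde\beta(0)$ preserves the contraction is what makes this compatible. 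Once these parameters are arranged, the estimate is nothing more than linearity, the Gaussian-loss inequality, and the geometric summation already available from the fixed-point construction of the solutions.
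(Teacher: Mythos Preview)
Your strategy---linearity of the Duhamel series in the bottom datum, the Gaussian-loss inequality to convert $\mathds{1}_{|V|>R}$ into $\exp(-C_2R^2)$ at the price of a weaker weight $\widetilde\beta'(0)<\beta_0$, and then the contracting estimate to sum the iterated terms uniformly in $n$---is exactly the standard argument to which the paper defers (see the sentence following Lemma~\ref{PROPOCutofPetitTempsSepa1}, referring to \cite{GSRT} and \cite{PhDTT}).

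One point needs more care than you indicate. With your explicit example $\widetilde\beta'(0)=\tfrac12(\beta_0+\lambda T)$ one gets $\widetilde\beta'(T)=\tfrac12\widetilde\beta_\lambda(T)$, and since the contraction constant in \eqref{SECT2InegaContractioOperaColli} behaves like $\widetilde\beta(T)^{-d/2}\big(1+\widetilde\beta(T)^{-1/2}\big)/\lambda$, halving the terminal value can push the ratio above $1$; the geometric sum over $0\leq k\leq n$ would then grow with $n$ and the constant $C_1$ would no longer be uniform. The fix is immediate: by continuity of the contraction constant in $\widetilde\beta(T)$, choose $\widetilde\beta'(0)\in(\lambda T,\beta_0)$ close enough to $\beta_0$ that the ratio remains strictly below $1$ on the same interval $[0,T]$. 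Then $C_2=\tfrac12(\beta_0-\widetilde\beta'(0))>0$ and the summed constant $C_1$ depend only on $d,\beta_0,\mu_0$, as claimed. With this adjustment your argument is complete and matches the intended proof.
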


We introduce here the notations for the \emph{integrated in time collision operators, truncated in small time difference between the collisions}, that is we consider
\begin{align*}
&\mathds{1}_{t\geq(k-1)\delta}\int_{(k-1)\delta}^t \mathcal{T}^{s,\varepsilon}_{-t_1} \mathcal{C}^{N,\varepsilon}_{s,s+1} \mathcal{T}^{s+1,\varepsilon}_{t_1}\Big( \mathds{1}_{t_1\geq(k-1)\delta} \int_{(k-2)\delta}^{t_1-\delta} \mathcal{T}^{s+1,\varepsilon}_{-t_2} \mathcal{C}^{N,\varepsilon}_{s+1,s+2} \mathcal{T}^{s+2,\varepsilon}_{t_2}\\
&\hspace{40mm} \dots \Big(\mathds{1}_{t_{k-1}\geq \delta} \int_0^{t_{k-1}-\delta} \mathcal{T}^{s+k-1,\varepsilon}_{-t_k} \mathcal{C}^{N,\varepsilon}_{s+k-1,s+k} \mathcal{T}^{s+k,\varepsilon}_{t_k} f^{(s+k)}_{N,0} (t_k,\cdot) \dd t_k \Big) \dots \Big) \dd t_1,
\end{align*}
so that we have $t_{j-1}-t_j \geq \delta$ for all $2 \leq j \leq k$. It is indeed a cut-off in small differences between the adjunctions, since they are performed at the times $t_j$.\\
Those truncated in small differences between the adjunctions, iterated collision operators will be denoted $\mathcal{I}_{s,s+k-1}^{N,\varepsilon,\delta}$ and $\mathcal{I}_{s,s+k-1}^{0,\delta}$ respectively for the BBGKY hierarchy and the Boltzmann hierarchy. We introduce then
\begin{align*}
H_N^{n,R,\delta} = t\mapsto \Big( &f^{(s)}_{N,0}(\cdot)\mathds{1}_{\vert V_s \vert \leq R} + \sum_{k=1}^n \mathds{1}_{s\leq N-k} \Big( \mathcal{I}^{N,\varepsilon,\delta}_{s,s+k-1} \big(f^{(s+k)}_{N,0}\mathds{1}_{\vert V_{s+k} \vert \leq R}\big) \Big) (t,\cdot)\Big)_{1\leq s\leq N}
\end{align*}
and
\begin{align*}
F^{n,R,\delta} = t\mapsto \Big( &\mathcal{T}^{s,0}_t f^{(s)}_0(\cdot)\mathds{1}_{\vert V_s \vert \leq R} + \sum_{k=1}^n \Big( \mathcal{I}^{N,\varepsilon,\delta}_{s,s+k-1} \big( u\mapsto \mathcal{T}^{s+k,0}_u f^{(s+k)}_{N,0}\mathds{1}_{\vert V_{s+k} \vert \leq R}\big) \Big) (t,\cdot)\Big)_{s\geq 1}. 
\end{align*}

\begin{lemma}[Cut-off in small time difference between the collisions]
\label{PROPOCutofPetitTempsSepa1}
Let $\beta_0$ be a strictly positive number and $\mu_0$ be a real number. There exists a constant $C_3(d,\beta_0,\mu_0)$, depending only on the dimension $d$ and on the numbers $\beta_0$ and $\mu_0$, such that for any positive integer $n$, any strictly positive numbers $R$ and $\delta$, any positive integer $N$ and any strictly positive number $\varepsilon > 0$ verifying the Boltzmann-Grad limit $N\varepsilon^{d-1} = 1$, and any couple of sequences of initial data $F_{N,0} = \big( f^{(s)}_{N,0}\big)_{1\leq s\leq N}$ and $F_0 = \big(f^{(s)}_0\big)_{s\geq 1}$ belonging respectively to $\textbf{X}_{N,\varepsilon,\beta_0,\mu_0}$ and $\textbf{X}_{0,\beta_0,\mu_0}$, the respective truncated in high number of collisions and in large energy solutions $H^{n,R}_N \in \widetilde{\textbf{X}}_{N,\varepsilon,\widetilde{\beta},\widetilde{\mu}}$ and $F \in \widetilde{\textbf{X}}_{0,\widetilde{\beta},\widetilde{\mu}}$ verify, for all integer $1\leq s\leq N$ and time $t\in [0,T]$:
\begin{align}
\label{CUTOFPetitTempsSeparBBGK1}
&\Big\vert \big(H^{n,R}_N\big)^{(s)}(t,\cdot) - \big(H^{n,R,\delta}_N\big)^{(s)}(t,\cdot) \Big\vert_{\varepsilon,s,\widetilde{\beta}(t)} \leq C_3 \vertii{ F_{N,0} }_{N,\varepsilon,\beta_0,\mu_0^1}\sqrt{s} n^{3/2}\sqrt{\delta},
\end{align}
and
\begin{align}
\label{CUTOFPetitTempsSeparBolt1}
&\Big\vert \big(F^{n,R}\big)^{(s)}(t,\cdot) - \big(F^{n,R,\delta}\big)^{(s)}(t,\cdot) \Big\vert_{0,s,\widetilde{\beta}(t)} \leq C_3 \vertii{ F_0 }_{0,\beta_0,\mu_0^1}\sqrt{s} n^{3/2}\sqrt{\delta}.
\end{align}
\end{lemma}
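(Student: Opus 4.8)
The plan is to estimate, term by term in the sum over $k$, the difference between the $k$-th iterated collision operator $\mathcal{I}^{N,\varepsilon}_{s,s+k-1}$ (restricted to high-number-of-collisions and energy-truncated data) and its truncated version $\mathcal{I}^{N,\varepsilon,\delta}_{s,s+k-1}$, where the time-difference cut-off removes exactly those regions of the simplex $\mathfrak{T}_k$ where two consecutive adjunction times $t_{j-1}, t_j$ are at distance less than $\delta$. First I would write the difference $\mathcal{I}^{N,\varepsilon}_{s,s+k-1} - \mathcal{I}^{N,\varepsilon,\delta}_{s,s+k-1}$ as a sum over $j \in \{2,\dots,k\}$ of terms in which all adjunction times are separated by at least $\delta$ except the $j$-th gap, which lies in $[0,\delta)$; by a telescoping argument one controls the full difference by $k$ such "one-bad-gap" terms (here one uses $n$ as the bound on $k$).

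Next I would bound a single "one-bad-gap" term. The key ingredient is the continuity-type / boundedness estimate of Theorem \ref{SECT2TheorDefinOperaColliBBGKY}, applied iteratively to propagate the Gaussian bound on the energy-truncated initial datum $f^{(s+k)}_{N,0}\mathds{1}_{|V_{s+k}|\le R}$ through the $k$ layers of transport-collision operators. Each application of $\mathcal{C}^\varepsilon_{s,s+1,\pm,i}\mathcal{T}^{s+1,\varepsilon}$ costs a factor of the form $C(d)\varepsilon^{d-1}(|v_i|+|v_{s+1}|)$ times the Gaussian weight, and after integrating the new velocity against the Gaussian this produces, at level $s+j$, a constant times $\sqrt{s+j}$ (the square root coming from $\big(\sum_{i\le s+j}|v_i|^2\big)^{1/2}$ controlled by the Gaussian moment) — this is the source of the $\sqrt{s}$ factor in the statement. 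The factor $(N-s-j+1)\varepsilon^{d-1}$ is bounded by $1$ in the Boltzmann–Grad scaling $N\varepsilon^{d-1}=1$. Summing the geometric-type bounds over all $j_i \le s+i-1$ and the $2$ choices of sign $\pm_i$ at each level, together with the loss $\widetilde\beta(t) < \widetilde\beta'(0)=\beta_0$ needed to absorb the polynomial-in-velocity prefactors into the Gaussian, yields a bound of the form $C(d,\beta_0,\mu_0)^k \vertii{F_{N,0}}_{N,\varepsilon,\beta_0,\mu_0}$ for the $k$-th full iterate; crucially the combinatorial factor $s(s+1)\cdots(s+k-1)$ is compensated by the smallness coming from the Gaussian integrations (this is precisely the Nishida–Uchiyama–Ukai mechanism already invoked for Theorem \ref{SECT2TheorExistUniciSolutHiera}).

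The gain of the one-bad-gap term over a generic term is the measure of the sliver $\{t_{j-1}-t_j < \delta\}$ inside the ordered simplex. Integrating out the bad time variable over an interval of length at most $\delta$ gives, after the Cauchy–Schwarz step that is usually used here (pairing the $\dd t_j$ integral of length $\delta$ against the remaining simplex of volume $\le T^{k-1}/(k-1)!$), a factor $\sqrt\delta$ times a factor $\sqrt{k}$-type combinatorial loss from the $k$ choices of which gap is bad; combined with the $k$ from the telescoping this produces the $n^{3/2}$ in the statement. Multiplying the per-term bound $C^k$ by the number $n$ of retained terms and re-summing — which converges because $T$ was chosen small enough for the contraction in Theorem \ref{SECT2TheorExistUniciSolutHiera}, so that $\sum_k C^k T^k/k!$ is finite — gives the claimed estimate $C_3 \vertii{F_{N,0}}_{N,\varepsilon,\beta_0,\mu_0}\sqrt{s}\,n^{3/2}\sqrt\delta$. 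The Boltzmann-hierarchy estimate \eqref{CUTOFPetitTempsSeparBolt1} is obtained verbatim, replacing hard-sphere transports by the free flow with specular reflection and the discrete constraint $N-s$ by its limit, which only makes the estimates simpler.

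The main obstacle I expect is the bookkeeping of the velocity growth through the pseudo-trajectory: one must check that in the half-space setting the specular bouncings against the wall — present in both $\mathcal{T}^{s,\varepsilon}$ and $\mathcal{T}^{s,0}$ — do not spoil the iterated Gaussian bound. They do not, because specular reflection is an isometry preserving $|v_i|$ (and hence the Gaussian weight $\exp(\tfrac\beta2\sum|v_i|^2)$ is invariant along the flow), so the estimate of Theorem \ref{SECT2TheorDefinOperaColliBBGKY} applies unchanged; the only real care is to track the dependence of $C_3$ on $d$, $\beta_0$, $\mu_0$ through the constants $C_1,C_2,C_3$ of the earlier lemmas and through $\lambda$ and $T$, and to verify that the loss from $\widetilde\beta$ to $\widetilde\beta(t)$ is uniform on $[0,T]$, which it is since $\widetilde\beta$ is affine and bounded below by $\widetilde\beta(T)>0$.
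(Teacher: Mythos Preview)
Your plan is essentially correct and matches the standard argument the paper defers to (it gives no proof of its own here, only remarking that the estimate ``is not specific to the case of the half-plane, and uses only the properties of the collision operators'', and referring to \cite{GSRT} and \cite{PhDTT}); in particular your observation that specular reflection is an isometry, so the Gaussian weight is preserved along the flow and Theorem~\ref{SECT2TheorDefinOperaColliBBGKY} applies unchanged, is exactly the reason the half-space introduces no new difficulty. One small imprecision: the Cauchy--Schwarz step producing $\sqrt{\delta}$ is not a pairing ``against the remaining simplex'' but rather the estimate, on the single bad-gap time integral, of the exponential weight $\exp\big(\lambda(\tau-t)(\tfrac12|V_s|^2+s)\big)$ coming from the decreasing $\widetilde\beta,\widetilde\mu$ (exactly as in the continuity-in-time computation in Appendix~\ref{AppenSSectContractioOperaHiera}), which simultaneously yields the factor $\sqrt{s}$; the $n^{3/2}$ then follows from the crude bound $k\sqrt{k}\le n^{3/2}$ together with the convergent geometric sum in $k$.
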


The proofs of Lemmas \ref{PROPOCutofConfigHauteEnerg} and \ref{PROPOCutofPetitTempsSepa1} are not specific to the case of the half-plane, and use only the properties of the collision operators. See \cite{GSRT} or \cite{PhDTT}, where the details are presented.

\subsection{The stability of the good configurations}

We will control here the configurations leading to recollisions. To do so, we will introduce the concept of \emph{good configurations}, that is, the initial data for the system of particles such that all the particles remain at a certain fixed distance one from another, for all time. The goal is to show that, except for a few adjunction parameters, the good configurations are stable under adjunctions, following \cite{GSRT}\footnote{See Proposition 12.1.1 page 94, their key geometrical result allowing to compare the pseudo-trajectories.}. Here the main difference, due of course to the presence of the wall, is the requirement of an additional condition on the particle undergoing the adjunction: it has to be far enough from the obstacle. The reason of this restriction is easy to understand, since if a particle $k+1$ is added next to a particle $k$ which is close to the boundary of the domain, there are many possible velocities to choose for the new particle such that a recollision will happen between the particles $k$ and $k+1$, after a bouncing of the new particle $k+1$ (see Figure \ref{SECT3FigurDiffeVitesRecol/dist} below).

\begin{figure}[!h]
\centering
\includegraphics[scale=0.5, trim = 0cm 9cm 0cm 9cm]{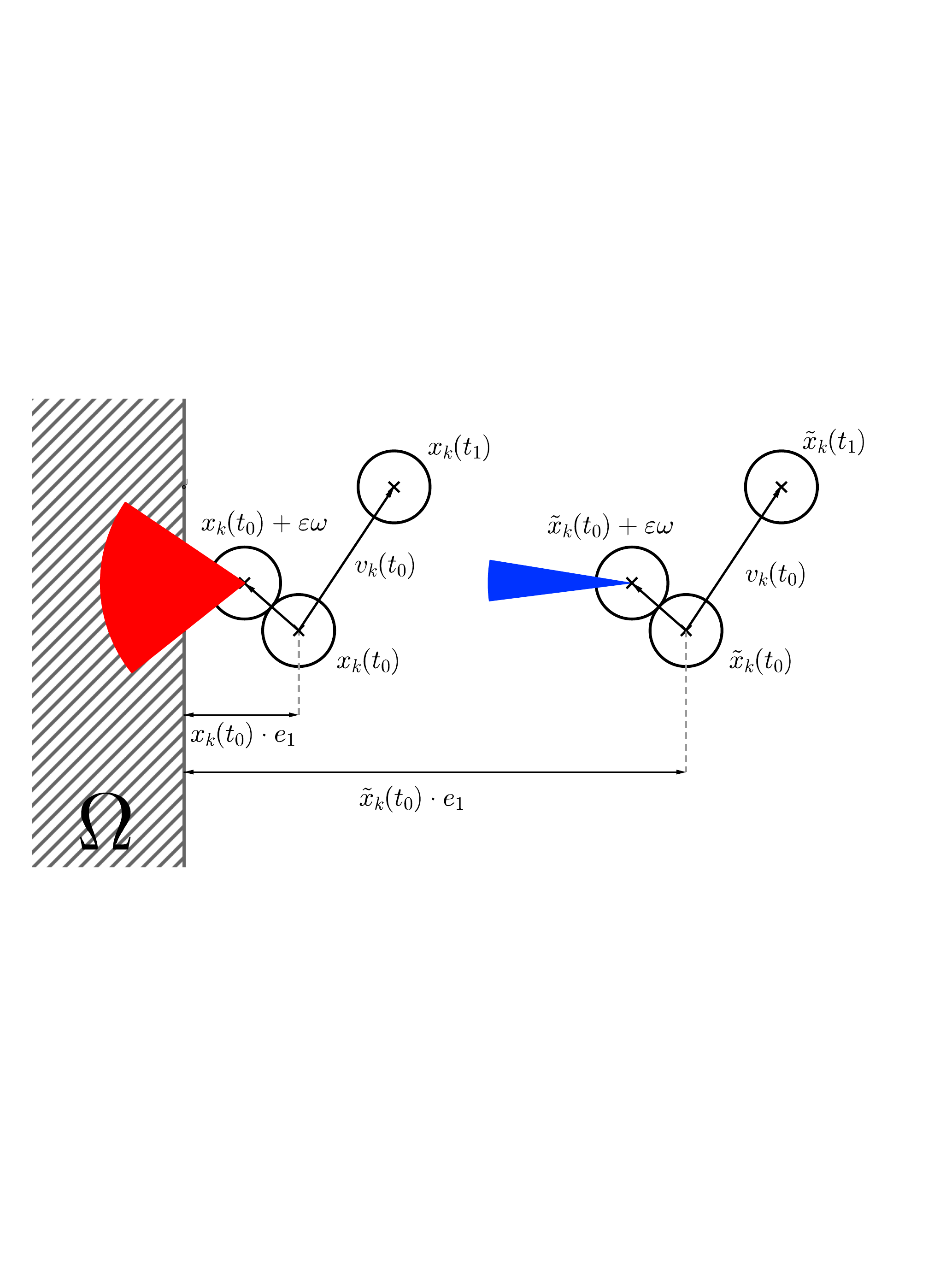}
\caption{The possible directions (in red and blue) of the velocity of the new particle $k+1$ leading to a recollision between $t_0$ and $t_1$ with the particle $k$ undergoing the adjunction (the particle $k$ undergoes the collision at time $t_0$, and travels then until $t_1$ on the figure), depending on the distance between this particle and the wall.}
\label{SECT3FigurDiffeVitesRecol/dist}
\end{figure}

\begin{defin}[Good configuration]
Let $\varepsilon$ and $c$ be two strictly positive numbers and $k$ be a positive integer. One defines the \emph{set of good configurations for $k$ hard spheres separated by at least $c$}, respectively the \emph{set of good configurations for $k$ particles following the free flow separated by at least $c$}, as the subset of $\mathcal{D}^\varepsilon_k$ of the configurations $Z_s$ such that
$$
\Big\vert \left(T^{k,\varepsilon}_{-\tau}(Z_k)\right)^{X,i}-\left(T^{k,\varepsilon}_{-\tau}(Z_k)\right)^{X,j} \Big\vert > c,
$$
respectively as the subset of $\big(\Omega^c\times\mathbb{R}^d\big)^k$ of the configuration $Z_s$ such that
$$
\Big\vert \left(T^{k,0}_{-\tau}(Z_k)\right)^{X,i}-\left(T^{k,0}_{-\tau}(Z_k)\right)^{X,j} \Big\vert > c,
$$
for all $\tau > 0$ and $1 \leq i \neq j \leq k$.\\
Those sets will be denoted respectively $\mathcal{G}^\varepsilon_k(c)$ and $\mathcal{G}^0_k(c)$.
\end{defin}

Let us introduce first an accurate definition of the stability under the adjunctions. We will also use the following notations for the orthogonal symmetries: $\mathcal{S}_0(x)$ will denote $x - 2 (x \cdot e_1) e_1$ and $\mathcal{S}_\varepsilon(x)$ will denote $x - 2(x\cdot e_1)e_1 + \varepsilon e_1$.

\begin{defin}[Stability by adjunction of the good configurations]
\label{SECT3DefinVitesExcluRecolBinai}
Let $k$ be a positive integer, and $R,\delta,\varepsilon$, $a$, $\varepsilon_0$ be five strictly positive numbers. For $\overline{Z}_k\in\mathcal{G}^0_k(\varepsilon_0)$, we define
$$
\mathcal{B}_k(R,\delta,\varepsilon,a,\varepsilon_0)(\overline{Z}_k)\subset \mathbb{S}^{d-1}\times B(0,R)
$$
as the \emph{complement} of the set of the elements $(\omega,v)$ of $\mathbb{S}^{d-1} \times B(0,R)$ such that, for all $Z_k \in \mathcal{G}^\varepsilon_k(\varepsilon)$ with, for all $1\leq i \leq k$, we have $\big\vert \overline{x}_i-x_i \big\vert = \Big\vert \big(\overline{Z}_k\big)^{X,i} - \big(Z_k\big)^{X,i} \Big\vert \leq a$ and $\overline{v}_k = v_k$, that is $\big(\overline{Z}_k\big)^{V,k} = \big(Z_k\big)^{V,k}$, and for all $1\leq i\leq k-1$, $v_i = \overline{v}_i$ or $v_i = \mathcal{S}_0(\overline{v}_i)$, that is $(Z_k)^{V,i} = \big(\overline{Z}_k\big)^{V,i}$ or $(Z_k)^{V,i} = \mathcal{S}_0\big((\overline{Z}_k)^{V,i}\big)$, then
\begin{itemize}[leftmargin=*]
\item if $\omega\cdot(v-\overline{v}_k)<0$:
	\begin{itemize}[leftmargin=0cm]
	\item the configuration $(Z_k,x_k+\varepsilon\omega,v)$ does not lead to a further recollision, that is $\big(Z_k,x_k+\varepsilon\omega,v\big) \in \mathcal{G}^\varepsilon_{k+1}(\varepsilon)$,
	\item the configuration $\big(\overline{Z}_k,\overline{x}_k,v\big)$ is a good configuration (for the free-flow) separated by at least $\varepsilon_0$ after a time $\delta$, that is $T^{k+1,0}_{-\delta}\big(\overline{Z}_k,\overline{x}_k,v\big)\in\mathcal{G}^0_{k+1}(\varepsilon_0)$,
	\end{itemize}
\item if $\omega\cdot(v-\overline{v}_k)>0$:
	\begin{itemize}[leftmargin=0cm]
	\item the configuration $\big(Z_s,x_k+\varepsilon\omega,v\big)'_{k,k+1}$ does not lead to a further recollision, that is $\big(Z_s,x_k+\varepsilon\omega,v\big)'_{k,k+1} \in \mathcal{G}^\varepsilon_{k+1}(\varepsilon)$,
	\item the configuration $\big(\overline{Z}_k,\overline{x}_k,v\big)$ is a good configuration (for the free flow) separated by at least $\varepsilon_0$ after a time $\delta$, that is $T^{k+1,0}_{-\delta}\Big(\big(\overline{Z}_k,\overline{x}_k,v\big)'_{k,k+1}\Big)\in\mathcal{G}^0_{k+1}(\varepsilon_0)$.
	\end{itemize}
\end{itemize}
\end{defin}

\begin{remar}
Here, we wants to find a small upper bound on the size of the set $\big\vert \mathcal{B}_k\big(R,\delta,\varepsilon,a,\varepsilon_0\big)\big(\overline{Z}_k\big) \big\vert$, to show that there are a lot of ways to add a particle to a system of $k$ particles in a good configuration, which lead to a new system of $k+1$ particles in a good configuration.\\
There are two important degrees of freedom introduced in Definition \ref{SECT3DefinVitesExcluRecolBinai}: one authorizes a possible small difference between the positions of the configurations $\overline{Z}_k \in \big( \Omega^c \times \mathbb{R}^d\big)^k$ and the positions of the vector $Z_k \in \mathcal{D}^\varepsilon_k$, and the velocities of those two configurations may differ by a symmetry (that is $v_k=\overline{v}_k$ or $\mathcal{S}_0(\overline{v}_k)$).\\
The necessity of those degrees of freedom is actually clear: differences between the pseudo-trajectories of the BBGKY and the Boltzmann hierarchies will appear, due to the radius of the particles and the interaction with the obstacle (see Figure \ref{SECT3FigurDiffeDynamApresRebon}).
\end{remar}

\begin{theor}[Control of the size of the good configurations by adjunction of particles]
\label{SECT3TheorBonneConfiAdjonParti}
There exists a strictly positive constant $C(d)$ depending only on the dimension $d$ such that for any positive integer $k$, and for all $R,\delta,\varepsilon,a,\varepsilon_0,\rho,\eta$ strictly positive real numbers such that $2 \varepsilon \leq a, 4\sqrt{3} a \leq \varepsilon_0, \varepsilon_0 \leq \eta\delta, 3a \leq \rho$, $R \geq 1$ and $\varepsilon_0/\delta \leq 1$, and for all $\overline{Z}_k\in\mathcal{G}^0_k(\varepsilon_0)$ such that
\begin{align}
\label{PropoBnCfiAdjonMinimDista}
\overline{x}_k\cdot e_1 = \big(\overline{Z}_k\big)^{X,k}\cdot e_1 \geq \rho,
\end{align}
there exists a measurable subset $\widetilde{\mathcal{B}_k}(R,\delta,\varepsilon,a,\varepsilon_0,\rho,\eta)(\overline{Z}_k) \subset {S}^{d-1}\times B(0,R)$ such that :
$$
\mathcal{B}_k(R,\delta,\varepsilon,a,\varepsilon_0)(\overline{Z}_k) \subset \widetilde{\mathcal{B}_k}(R,\delta,\varepsilon,a,\varepsilon_0,\rho,\eta)(\overline{Z}_k)
$$
and
$$
\bigcup_{ \substack{\overline{Z}_k \in\ \mathcal{G}_k(\varepsilon_0), \\ \overline{x}_k\cdot e_1 \geq \rho} } \{ \overline{Z}_k \} \times \widetilde{\mathcal{B}_k}(R,\delta,\varepsilon,a,\varepsilon_0,\rho,\eta)(\overline{Z}_k)
$$
is measurable. Moreover, one has:
\begin{align}
\label{PropoStabiBoCfgTaillPatho}
\Big\vert \widetilde{\mathcal{B}_k}(R,\delta,\varepsilon,a,\varepsilon_0,\rho,\eta)(\overline{Z}_k) \Big\vert \leq \ C(d) \bigg( \eta^d + R^d\Big(\frac{a}{\rho}\Big)^{d-1} + k R^{2d-1}\Big(\frac{a}{\varepsilon_0}\Big)^{d-3/2} + k R^{d+1/2}\Big(\frac{\varepsilon_0}{\delta}\Big)^{d-3/2} \bigg).
\end{align}
\end{theor}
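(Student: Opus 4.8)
The plan is to follow the strategy of \cite{GSRT} for the Euclidean case and to complement it with new exclusion zones accounting for the wall. Concretely, one builds $\widetilde{\mathcal{B}_k}(R,\delta,\varepsilon,a,\varepsilon_0,\rho,\eta)(\overline Z_k)$ as an explicit finite union of ``forbidden'' subsets of $S^{d-1}\times B(0,R)$, each attached to one geometric mechanism that could destroy the good-configuration property when a particle is glued next to particle $k$, and one estimates the measure of each piece separately. Two preliminary reductions: first, it suffices to treat the pre-collisional adjunctions $\omega\cdot(v-\overline v_k)<0$, since in the post-collisional case the relevant configuration is the image of a pre-collisional one under the scattering involution $(Z_k,x_k+\varepsilon\omega,v)\mapsto(Z_k,x_k+\varepsilon\omega,v)'_{k,k+1}$, which preserves $|\omega\cdot(v-\overline v_k)|\,\dd\omega\,\dd v$ up to a harmless Jacobian, so the two contributions have comparable sizes; second, since $\widetilde{\mathcal B_k}$ must work for \emph{every} admissible $Z_k\in\mathcal G^\varepsilon_k(\varepsilon)$ at once (those with $|\overline x_i-x_i|\le a$, $v_k=\overline v_k$ and $v_i\in\{\overline v_i,\mathcal S_0(\overline v_i)\}$ for $i<k$), every forbidden set will be built around the \emph{reference} configuration $\overline Z_k$, with its trajectories thickened by $a$ (which, thanks to $2\varepsilon\le a$, also absorbs the difference between the $0$- and $\varepsilon$-dynamics) and its velocities replaced by both branches $\overline v_i$ and $\mathcal S_0(\overline v_i)$. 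All the forbidden sets will be sublevel sets of continuous functions of $(\overline Z_k,\omega,v)$, which gives at once the measurability of $\widetilde{\mathcal B_k}$ and of the union over $\overline Z_k$.

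\emph{Zones inherited from the Euclidean case.} First, if $|v-\overline v_k|\le\eta$ the new particle separates too slowly from particle $k$: this ball, intersected with $B(0,R)$, has measure $\le C(d)\eta^d$, and outside it backward free transport over a time $\delta$ puts particles $k$ and $k+1$ at mutual distance $\ge\eta\delta\ge\varepsilon_0$ (this is exactly where the hypothesis $\varepsilon_0\le\eta\delta$ is used; the case where $k+1$ bounces against the wall within this time and returns toward $k$ is covered by the third zone below), while in the $\varepsilon$-dynamics the pair simply keeps separating from its initial distance $\varepsilon$; this accounts for the pair $(k,k+1)$ and yields the term $\eta^d$. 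Next, for each $i\in\{1,\dots,k\}$, the set of $(\omega,v)$ for which particle $k+1$, \emph{before} any wall bounce of either particle, would come within $\varepsilon_0$ of particle $i$ along the backward $0$-flow, or within $\varepsilon$ of it along the $\varepsilon$-flow, is exactly the object controlled by the key binary-recollision estimate of \cite{GSRT}: particle $i$ sits at distance $\ge\varepsilon_0$ from particle $k$ along its (here straight) trajectory, the position uncertainty is $a$ with $4\sqrt3\,a\le\varepsilon_0$, and its velocity is one of two explicit vectors; applying that estimate to the thickened target and to both velocity branches bounds this set by $C(d)R^{2d-1}(a/\varepsilon_0)^{d-3/2}$, and summing over the $k$ particles gives the term $kR^{2d-1}(a/\varepsilon_0)^{d-3/2}$. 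These two families of forbidden sets are essentially those of \cite{GSRT} and require only notational changes.

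\emph{New zones due to the wall.} The third mechanism is a bounce of particle $k+1$ against the wall followed by a recollision with particle $k$. Because $\overline x_k\cdot e_1\ge\rho$ by \eqref{PropoBnCfiAdjonMinimDista} and the positions are uncertain only up to $a$ with $3a\le\rho$, the unfolding (reflection) principle shows that such a recollision forces the direction of $v$ into a cone of half-angle $\lesssim a/\rho$ around the line-of-sight from $\overline x_k$ to the reflected point $\mathcal S_0(\overline x_k)$; intersecting with $B(0,R)$, this forbidden set has measure $\le C(d)R^d(a/\rho)^{d-1}$, the third term. The fourth, genuinely new mechanism is the one pictured in Figure \ref{SECT3FigurDiffeDynamApresRebon}: a particle $i\le k$ of the reference configuration may be arbitrarily close to the wall and bounce within the horizon, after which its velocity is reflected and its trajectory turns; because in the $0$- and $\varepsilon$-dynamics this bounce occurs at slightly different times, the two trajectories of particle $i$ diverge near the bounce, so the ``target'' that particle $k+1$ must avoid is a thin \emph{broken-line} tube rather than a straight one. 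I would estimate the corresponding forbidden set, for each fixed $i$, by splitting the target at the wall and excluding the two half-cones pointing at the two straight pieces; using that the bounce point lies at distance $\ge\varepsilon_0$ from all the other particle positions (good configuration) and that $\varepsilon_0\le\eta\delta$ with $\varepsilon_0/\delta\le1$, each half-cone has solid angle $\lesssim(\varepsilon_0/\delta)^{d-3/2}$, so this set has measure $\le C(d)R^{d+1/2}(\varepsilon_0/\delta)^{d-3/2}$ per particle $i$, hence $kR^{d+1/2}(\varepsilon_0/\delta)^{d-3/2}$ after summation.

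Taking $\widetilde{\mathcal B_k}$ to be the union of the four families of forbidden sets gives both the inclusion $\mathcal B_k\subset\widetilde{\mathcal B_k}$ — any $(\omega,v)$ outside it produces, for every admissible $Z_k$, a configuration in $\mathcal G^\varepsilon_{k+1}(\varepsilon)$ and, after a time $\delta$, in $\mathcal G^0_{k+1}(\varepsilon_0)$ — and, by adding the four measure bounds, the estimate \eqref{PropoStabiBoCfgTaillPatho}, with $R\ge1$ absorbing the lower-order powers of $R$. The main obstacle is the fourth step: in \cite{GSRT} every target trajectory is a straight line, so a single cone exclusion suffices and the bounce-time divergence simply does not occur, whereas here one must handle a non-convex broken-line target produced by the specular reflection \emph{and} control the divergence of the two dynamics in time around a wall bounce. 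Extracting the precise exponent $(\varepsilon_0/\delta)^{d-3/2}$ there, rather than a worse power, is the delicate point, and it is where the lower bounds $\rho$ on $\overline x_k\cdot e_1$, $\delta$ on the time between adjunctions, and the scaling relations $3a\le\rho$, $\varepsilon_0\le\eta\delta$, $\varepsilon_0/\delta\le1$ all enter crucially.
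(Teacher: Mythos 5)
Your proposal correctly identifies two of the mechanisms (the $\eta^d$ ball around $\overline{v}_k$ for the slowly separating pair $(k,k+1)$, and the cone of half-angle $\lesssim a/\rho$ around the reflected line-of-sight, which yields the $R^d(a/\rho)^{d-1}$ term and is indeed where the hypothesis $\overline{x}_k\cdot e_1\geq\rho$ is used). However the proof has a structural error in the very first ``preliminary reduction,'' and it infects the two remaining terms.

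You claim it suffices to treat the pre-collisional case because the post-collisional one is ``the image of a pre-collisional one under the scattering involution, which preserves $|\omega\cdot(v-\overline v_k)|\,\dd\omega\,\dd v$ up to a harmless Jacobian.'' This is not how the post-collisional case works, and the Jacobian is anything but harmless. In the post-collisional case the forbidden conditions constrain the \emph{scattered} velocities $v'$ and $\overline v_k'$ (both of which depend on $\omega$ and $v$, with $\overline v_k$ fixed), and what one must estimate is the measure in $(\omega,v)$-space of the set where $v'$ or $\overline v_k'$ falls inside a cylinder. That is exactly what the Scattering Lemma (Lemma~\ref{SECT3LemmeScateCylinVitesPatho}) controls, and the bound there is $C(d)R^{d+1/2}\rho^{d-3/2}$, not $C(d)R\rho^{d-1}$: the scattering map degenerates near grazing collisions, and the measure of the pre-image inflates with a $1/2$ loss in the exponent. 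The two exponents $(a/\varepsilon_0)^{d-3/2}$ and $(\varepsilon_0/\delta)^{d-3/2}$ in the target estimate \eqref{PropoStabiBoCfgTaillPatho} arise \emph{solely} from this post-collisional scattering step applied to the cylinders of radii $\sim Ra/\varepsilon_0$ (hard-sphere flow) and $\varepsilon_0/\delta$ (free flow). The pre-collisional analysis alone gives the better exponents $(a/\varepsilon_0)^{d-1}$ and $(\varepsilon_0/\delta)^{d-1}$, so a genuine reduction to the pre-collisional case would give a \emph{smaller} bound than the one you are trying to prove, which should be a warning sign.

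Consequently, the justification you give for the fourth term is not correct. You attribute $(\varepsilon_0/\delta)^{d-3/2}$ to the solid angle of a ``half-cone'' pointing at a broken-line target, but a cone of half-angle $\theta$ intersected with $B(0,R)$ has measure $\sim R\theta^{d-1}$; cone geometry never produces a $d-3/2$ exponent. Nor is the bounce of a particle $i\le k$ the ``genuinely new'' source of this term: the paper handles that bounce inside the Shooting Lemma~\ref{SECT3LemmedeTirPlanAxes_Fixes} itself, by excluding \emph{two} cylinders with axes $\overline x_1-\overline x_2$ and $\mathcal S_0(\overline x_1)-\overline x_2$ (direct and reflected line-of-sight), both still of radius $\varepsilon_0/\delta$ or $\sim Ra/\varepsilon_0$ and hence still with exponent $d-1$. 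The wall thus doubles the number of cylinders and introduces the $(a/\rho)^{d-1}$ term via the $(k,k+1)$ recollision, but it does not change $d-1$ into $d-3/2$; that change occurs only when the scattering map intervenes, i.e.\ only in the post-collisional adjunctions that your reduction purports to eliminate. To repair the proof you would need to drop the first preliminary reduction, run the pre-collisional analysis as you do, and then treat the post-collisional case by converting the resulting cylinder conditions on $v'$ and $\overline v_k'$ into conditions on $(\omega,v)$ via a Scattering-Lemma-type estimate, paying there the $1/2$ loss in the exponent.
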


The proof of such a theorem lies on two cornerstones, as in \cite{GSRT}: on the one hand, a ``Shooting Lemma", proving that for a particle $k$ which starts from a ball, with a given velocity, there is only a small amount of velocities, lying in a cylinder with a small radius, for another particle $k+1$ starting from another ball such that the particles $k$ and $k+1$ will collide (or, more generally, will be close at some time). We also have to show that the scattering operator maps small cylinders into small subsets of the adjunction parameters, in the case when the new particle is added in a post-collisional configuration.\\
Let us introduce here the notation for the cylinders. For two vectors $v,w \in \mathbb{R}^d$ and a positive real number $\rho$, we set:
$$
K(v,w,\rho) = \Big\{ x \in \mathbb{R}^d\ /\ \forall u \in \mathbb{S}^{d-1} \text{  such that  } u\cdot w = 0,\, (x-v)\cdot u \leq \rho \Big\}.
$$

\begin{lemma}[Shooting Lemma with fixed axes]
\label{SECT3LemmedeTirPlanAxes_Fixes}
Let $R$, $\delta$, $\varepsilon$, $a$ and $\varepsilon_0$ be five strictly positive numbers, such that $
\varepsilon \leq a, 2\sqrt{3} a \leq \varepsilon_0$. We consider two points $\overline{x}_1,\overline{x}_2\in\big\{x\in\mathbb{R}^d\ /\ x\cdot e_1>0\big\}$ such that $|\overline{x}_1-\overline{x}_2|\geq\varepsilon_0$, and $v_1\in B(0,R)$.\\
Then for all $x_1\in B(\overline{x}_1,a)$, $x_2\in B(\overline{x}_2,a)$, and $v_2\in B(0,R)$:
\begin{enumerate}
\item if for some $\delta > 0$
\begin{align*}
v_2\notin&\ K(v_1,\overline{x}_1-\overline{x}_2,\varepsilon_0/\delta)\cup K(\mathcal{S}_0(v_1),\mathcal{S}_0(\overline{x}_1) - \overline{x}_2,\varepsilon_0/\delta),
\end{align*}
for all $\tau\geq\delta$, we have: $\Big\vert \Big(T^{2,0}_{-\tau}\big(\overline{Z_2}\big)\Big)^{X,1}-\Big(T^{2,0}_{-\tau}
\big(\overline{Z_2}\big)\Big)^{X,2} \Big\vert > \varepsilon_0$,
\item if in addition
\begin{align*}
v_2\notin&\ K(v_1,\overline{x}_1-\overline{x}_2,12Ra/\varepsilon_0)\cup K(\mathcal{S}_0(v_1),\mathcal{S}_0(\overline{x}_1) - \overline{x}_2,16Ra/\varepsilon_0),
\end{align*}
for all $\tau > 0$, we have: $\Big\vert\left(T^{2,\varepsilon}_{-\tau}(Z_2)\right)^{X,1}-\left(T^{2,\varepsilon}_{-\tau}(Z_2)\right)^{X,2}\Big\vert > \varepsilon$.
\end{enumerate}
\end{lemma}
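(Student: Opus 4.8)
The plan is to reduce everything to the motion of two \emph{non-interacting} particles that only see the wall, for which the specular reflection can be ``unfolded'' into a straight line. First I would record the following elementary facts about the half-space. A single particle (a point particle for $T^{\cdot,0}$, a hard-sphere centre of radius $\varepsilon/2$ for $T^{\cdot,\varepsilon}$) starting at $x$ with velocity $v$ and subject to the backward specular free flow off the wall $\{y\cdot e_1=c\}$ (with $c=0$, resp.\ $c=\varepsilon/2$) bounces \emph{at most once} in backward time, since after its first bounce its first coordinate becomes strictly monotone; hence its position at backward time $\tau$ is $\Phi_c(x-\tau v)$, where $\Phi_c$ folds $\{y\cdot e_1<c\}$ onto $\{y\cdot e_1>c\}$. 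The identity $(|p|-|q|)^2=\min\big((p-q)^2,(p+q)^2\big)$ then gives the \emph{folding identity}
$$
\big|\Phi_c(a)-\Phi_c(b)\big|^2=\min\Big(|a-b|^2,\ |a-\mathcal{S}(b)|^2\Big),
$$
where $\mathcal{S}$ is the orthogonal symmetry across $\{y\cdot e_1=c\}$, that is $\mathcal{S}_0$ when $c=0$ and $\mathcal{S}_\varepsilon$ when $c=\varepsilon/2$. Under $T^{2,0}$ the two particles never interact, while under $T^{2,\varepsilon}$ they follow exactly this decoupled wall dynamics up to their first mutual collision; so in both cases, as long as they have not hit each other, the squared distance between particles $1$ and $2$ at backward time $\tau$ equals $\min\big(|\Delta(\tau)|^2,|\Delta_{\mathcal{S}}(\tau)|^2\big)$, where $\Delta(\tau)$ is the ``direct'' relative position and $\Delta_{\mathcal{S}}(\tau)$ the one obtained after reflecting particle $1$ across the wall. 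Since $\mathcal{S}$ is linear, \emph{both are affine in $\tau$}: up to a displacement $|w-w_0|\leq 3a$ of the path (coming from $x_i\in B(\overline{x}_i,a)$, which contributes $\leq 2a$, and from using $\mathcal{S}_0$ instead of $\mathcal{S}_\varepsilon$, which contributes an $\varepsilon e_1$-shift, $\varepsilon\leq a$) they are $\Delta(\tau)=(\overline{x}_1-\overline{x}_2)-\tau(v_1-v_2)$ and $\Delta_{\mathcal{S}}(\tau)=(\mathcal{S}_0\overline{x}_1-\overline{x}_2)-\tau(\mathcal{S}_0 v_1-v_2)$.

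The core of the matter is then a one-line ``shooting'' estimate. For an affine path $\tau\mapsto w-\tau V$ and a fixed axis $w_0\neq0$, its component orthogonal to $w_0$ is $w^{\perp_{w_0}}-\tau V^{\perp_{w_0}}$ with $w^{\perp_{w_0}}=(w-w_0)^{\perp_{w_0}}$, hence
$$
|w-\tau V|\leq\rho\ \Longrightarrow\ \tau\,\big|V^{\perp_{w_0}}\big|\leq\rho+|w-w_0|,
$$
and, from the parallel component, $|w_0|\leq\rho+|w-w_0|+\tau\,\big|V^{\parallel_{w_0}}\big|$. So if $V$ has a \emph{large} component orthogonal to $w_0$, the path can only stay within $\rho$ of the origin for a \emph{short} time; and ``$v_2\notin K(\cdot,w_0,\cdot)$'' says exactly that $v_1-v_2$ (resp.\ $\mathcal{S}_0v_1-v_2$) has a large component orthogonal to the axis $w_0$ of that cylinder. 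For part (1) the positions are exact, so $|w-w_0|=0$; applying the estimate to the two branches of $\min$ (with $w_0=\overline{x}_1-\overline{x}_2$, $V=v_1-v_2$, and with $w_0=\mathcal{S}_0\overline{x}_1-\overline{x}_2$, $V=\mathcal{S}_0v_1-v_2$, using that $\mathcal{S}_0$ is an isometry) shows that, once $v_2$ is excluded from both cylinders $K(\cdot,\cdot,\varepsilon_0/\delta)$, either branch being $\leq\varepsilon_0$ would force $\tau<\delta$; via the folding identity this is precisely the claimed lower bound $>\varepsilon_0$ for all $\tau\geq\delta$.

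For part (2) I would argue by contradiction: at a hypothetical first mutual-collision time $\tau>0$ both branches of $\min$ equal $\varepsilon$. With $|w-w_0|\leq 3a$, $\rho=\varepsilon$ and $\varepsilon\leq a$, the estimate first yields $\tau\leq C\varepsilon_0/R$ and then, reinjected into the parallel inequality, $|\overline{x}_1-\overline{x}_2|<C a+\tfrac12\varepsilon_0$ on the direct branch and $|\mathcal{S}_0\overline{x}_1-\overline{x}_2|<C a+\tfrac12\varepsilon_0$ on the mirrored branch. The first contradicts $|\overline{x}_1-\overline{x}_2|\geq\varepsilon_0$; the second contradicts $|\mathcal{S}_0\overline{x}_1-\overline{x}_2|>|\overline{x}_1-\overline{x}_2|\geq\varepsilon_0$ (valid because $\overline{x}_1$ and $\mathcal{S}_0\overline{x}_1$ lie on opposite sides of the wall while $\overline{x}_1\cdot e_1,\overline{x}_2\cdot e_1>0$, so $|\mathcal{S}_0\overline{x}_1-\overline{x}_2|^2-|\overline{x}_1-\overline{x}_2|^2=4(\overline{x}_1\cdot e_1)(\overline{x}_2\cdot e_1)>0$) — \emph{provided} $a$ is small enough that $Ca+\tfrac12\varepsilon_0<\varepsilon_0$. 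For the remaining range of $a$ the corresponding cylinder radius $12Ra/\varepsilon_0$ (resp.\ $16Ra/\varepsilon_0$) already exceeds $2R\geq|v_1|+|v_2|$, so the cylinder contains all of $B(0,R)$ and the hypothesis ``$v_2\notin K$'' is vacuous; the numerical constants $12,16$ (larger than the naive value) are chosen precisely so that these two regimes cover all $a$ allowed by $2\sqrt3\,a\leq\varepsilon_0$. Since no mutual collision can occur, $T^{2,\varepsilon}$ coincides with the decoupled wall dynamics for all $\tau>0$ and $\min(|\Delta|,|\Delta_{\mathcal{S}}|)>\varepsilon$ throughout.

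The genuinely new ingredient compared with the wall-free Shooting Lemma of \cite{GSRT} is the first step: the ``at most one bounce'' observation and the folding identity are what keep the reflected trajectory a single affine path — rather than an unbounded cascade of reflections — and let the mirror image $\mathcal{S}_0\overline{x}_1$ act as a genuine second ``source''. The hard part of the write-up will be the bookkeeping in the last step: carrying the two $O(a)$ errors ($x_i$-perturbation and the $\mathcal{S}_0$/$\mathcal{S}_\varepsilon$ mismatch) consistently through the chain of inequalities, and fixing the constants so that the ``$a$ small $\Rightarrow$ geometric contradiction'' and ``$a$ large $\Rightarrow$ vacuous cylinder'' regimes overlap under the standing hypotheses $2\sqrt3\,a\leq\varepsilon_0$ and $\varepsilon\leq a$.
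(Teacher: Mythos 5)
Your argument is correct and reaches the same cylinder‐exclusion conclusion, but the route is genuinely cleaner than the paper's. The paper proceeds by enumerating four subcases according to which of the two particles has already bounced before the critical time, writes out the corresponding affine trajectory formulas, and then observes that the ``both bounced'' case collapses onto the ``none bounced'' case by conjugating with $\mathcal{S}_0$, while the two ``one bounced'' cases yield (after a short manipulation) the same cylinder axis $\mathcal{S}_0(\overline{x}_1)-\overline{x}_2$. Your folding identity
\[
\bigl|\Phi_c(a)-\Phi_c(b)\bigr|^2=\min\bigl(|a-b|^2,\ |a-\mathcal{S}_c(b)|^2\bigr)
\]
— combined with the observation that the backward specular flow off a flat wall produces at most one bounce per particle, so each trajectory is exactly $\tau\mapsto\Phi_c(x-\tau v)$ — compresses that case analysis into a single identity valid at every $\tau$, making it immediate that only two cylinders (one per branch of the $\min$) are needed and that the two isometry‐equivalent cases never secretly produce a third. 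The ensuing shooting estimate (bounding $\tau$ via the orthogonal component, then bounding $|w_0|$ via the parallel component, with the $3a$ path displacement absorbing both the $x_i\to\overline{x}_i$ and the $\mathcal{S}_\varepsilon\to\mathcal{S}_0$ errors under $\varepsilon\le a$) is the same mechanism as the paper's, and your explicit ``vacuous cylinder for large $a$'' remark makes transparent why the constants $12$ and $16$ suffice under the standing hypothesis $2\sqrt{3}a\le\varepsilon_0$, something the paper leaves implicit. Your algebraic justification $|\mathcal{S}_0\overline{x}_1-\overline{x}_2|^2-|\overline{x}_1-\overline{x}_2|^2=4(\overline{x}_1\cdot e_1)(\overline{x}_2\cdot e_1)>0$ is also simpler than the paper's segment‐intersection argument. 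One small imprecision worth fixing in a final write-up: at a hypothetical first mutual collision only \emph{one} branch of the $\min$ need equal $\varepsilon$ (the other is merely $\ge\varepsilon$), not both; your argument in fact shows each branch separately stays above $\varepsilon$ once the corresponding cylinder is excluded, so the conclusion is unaffected, but the phrasing should reflect that.
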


\begin{proof}[Proof of Lemma \ref{SECT3LemmedeTirPlanAxes_Fixes}]
Let us start with the second point, and assume that the two particles (of radius $\varepsilon$) collide, and we consider the smallest time $\tau_0$ such that it happens. Then by definition, the particles follow the free flow, with boundary conditions, before $\tau_0$. From this point, there are two possibilities for the expression of the position of the two particles, depending on wether they have already bounced against the obstacle, or not, before $\tau_0$. If none of the particles have bounced, the situation is already studied in \cite{GSRT}\footnote{See Lemma 12.2.1 page 96.}. The conclusion in that case is that $v_2$ has to lie in the cylinder $K(v_1,\overline{x}_1-\overline{x}_2,12Ra/\varepsilon_0)$, as soon as $\varepsilon_0 \geq 2 \sqrt{3}a$.\\
It remains then three cases to study, specific to the presence of the wall. Before the collision at $\tau_0$:\\
\textbullet\, when only the first particle has bounced against the wall, one has\\
$\big(T^{2,\varepsilon}_{-\tau_0}(Z_2)\big)^{X,1} = \mathcal{S}_\varepsilon(x_1-\tau_0 v_1)$ and $\big(T^{2,\varepsilon}_{-\tau_0}(Z_2)\big)^{X,2} = x_2 - \tau_0 v_2$. We have
$$
\big\vert \big(T^{2,\varepsilon}_{-\tau_0}(Z_2)\big)^{X,1} - \big(T^{2,\varepsilon}_{-\tau_0}(Z_2)\big)^{X,2} \big\vert = \big\vert \big(\mathcal{S}_\varepsilon(x_1) - x_2\big) - \tau_0\big(\mathcal{S}_0(v_1) - v_2\big) \big\vert,
$$
which is equal to $\varepsilon$ by definition of $\tau_0$. We now get rid of the vectors $x_1$ and $x_2$ using the fact that $\mathcal{S}_\varepsilon(u) - \mathcal{S}_\varepsilon(v) = \mathcal{S}_0(u) - \mathcal{S}_0(v)$ and that $\mathcal{S}_0$ is linear, writing:
$$
\mathcal{S}_\varepsilon(x_1) - x_2 = \mathcal{S}_0(x_1-\overline{x}_1) + \mathcal{S}_\varepsilon(\overline{x}_1) - \overline{x}_2 + (\overline{x}_2 - x_2),
$$
which provides, since $\vert x_i - \overline{x}_i \vert \leq a$ (for $i=1,2$) and $\varepsilon \leq a$, that
\begin{equation}
\label{SECT3PREUVLemmeTir__Dista1Rebo}
\big\vert \mathcal{S}_\varepsilon(\overline{x}_1) - \overline{x}_2 - \tau_0\big(\mathcal{S}_0(v_1)-v_2\big) \big\vert \leq 3a.
\end{equation}
Repeating the proof of \cite{GSRT}, we find that $\mathcal{S}_0(v_1)-v_2$ belongs to the cone $\mathcal{C}$ of vertex $0$ (in $\mathbb{R}^d$), based on the ball centered on $\mathcal{S}_\varepsilon(\overline{x}_1)-\overline{x}_2$ and of radius $3a$. Now, since $\big\vert \mathcal{S}_\varepsilon(\overline{x}_1) - \overline{x}_2 \big\vert \geq \vert \overline{x}_1 - \overline{x}_2 \vert \geq \varepsilon_0$ (the first inequality is an easy consequence of the fact that, if we denote $\overline{x}_\theta$ the element of the segment $\big[\overline{x}_1,\mathcal{S}_\varepsilon(\overline{x}_2)\big]$, one has on the one hand $\big\vert \mathcal{S}_\varepsilon(\overline{x}_2) - \overline{x}_\theta \big\vert = \vert \overline{x}_2 - \overline{x}_\theta \vert$, and on the other hand $\big\vert \mathcal{S}_\varepsilon(\overline{x}_2) - \overline{x}_1 \big\vert = \big\vert \mathcal{S}_\varepsilon(\overline{x}_2) - \overline{x}_\theta \big\vert + \vert \overline{x}_\theta - \overline{x}_1 \vert$), if $\varepsilon_0 \geq 2\sqrt{3}a$, the cylinder $K(0,\mathcal{S}_\varepsilon(\overline{x}_1)-\overline{x}_2,12Ra/\varepsilon_0)$ contains the intersection of the cone $\mathcal{C}$ with the ball $B(0,2R)$, so this cylinder contains $v_2 - \mathcal{S}_0(v_1)$.\\
However, this result is not entirely satisfactory, since the axis of the cylinder depends on $\varepsilon$ (which would cause trouble in the preparation of the pseudo-trajectories, see Proposition \ref{SECT3PropoControParamAdjonPatho} below). To eliminate this dependency, we simply write $\mathcal{S}_\varepsilon(\overline{x}_1) = \mathcal{S}_0(\overline{x}_1) + \varepsilon e_1$, so that we replace \eqref{SECT3PREUVLemmeTir__Dista1Rebo} by
$$
\big\vert \mathcal{S}_0(\overline{x}_1) - \overline{x}_2 - \tau_0\big(\mathcal{S}_0(v_1)-v_2\big) \big\vert \leq 3a + \varepsilon \leq 4a,
$$
leading to the conclusion that $v_1 \in K(\mathcal{S}_0(v_1),\mathcal{S}_0(\overline{x}_1)-\overline{x}_2,16Ra/\varepsilon_0)$.\\
\textbullet\, When only the second particle has bounced, we obtain here the condition $\big\vert \big(x_1 - \mathcal{S}_0(x_2)\big) - \tau_0\big(v_1 - \mathcal{S}_0(v_2)\big) \big\vert = \varepsilon$. We use now the fact that $\mathcal{S}_0$ is a linear, involutive isometry, so that the condition can be rewritten as
$\big\vert \mathcal{S}_0\big(x_1 - \mathcal{S}_\varepsilon(x_2)\big) - \tau_0\big(\mathcal{S}_0(v_1) - v_2\big) \big\vert = \varepsilon$. Taking care again of removing the dependency on $\varepsilon$ in $\mathcal{S}_0(x_1-\mathcal{S}_\varepsilon(x_2))$ and replacing $x_i$ by $\overline{x}_i$ (for $i=1,2$), we find here:
$$
\big\vert \mathcal{S}_0(\overline{x}_1 - \mathcal{S}_0(\overline{x}_2)) - \tau_0 \big(\mathcal{S}_0(v_1)-v_2\big) \big\vert \leq 4a
$$
after writing $\mathcal{S}_0\big(\overline{x}_1 - \mathcal{S}_\varepsilon(\overline{x}_2)\big) = \mathcal{S}_0\big(\overline{x}_1 - \mathcal{S}_0(\overline{x}_2) - \varepsilon e_1\big) = \mathcal{S}_0\big(\overline{x}_1 - \mathcal{S}_0(\overline{x}_2)\big) - \varepsilon\mathcal{S}_0(e_1)$. We deduce again that $v_2$ has to lie in $K\big(\mathcal{S}_0(v_1),\mathcal{S}_0(\overline{x}_1-\mathcal{S}_0(\overline{x}_2)),16Ra/\varepsilon_0\big)$.\\
\textbullet\, And finally when the two particles have bounced, we have in that case $\big(T^{2,\varepsilon}_{-\tau_0}(Z_2)\big)^{X,i} = \mathcal{S}_\varepsilon(x_i - \tau_0 v_i)$ for $i=1,2$, so that, using again the identity $\mathcal{S}_\varepsilon(u) - \mathcal{S}_\varepsilon(v) = \mathcal{S}_0(u) - \mathcal{S}_0(v)$, we obtain that
\begin{align*}
\big\vert \big(T^{2,\varepsilon}_{-\tau_0}(Z_2)\big)^{X,1} - \big(T^{2,\varepsilon}_{-\tau_0}(Z_2)\big)^{X,2} \big\vert &= \big\vert \mathcal{S}_0\big((x_1-\tau_0 v_1) - (x_2-\tau_0v_2)\big) \big\vert = \big\vert (x_1-x_2) - \tau_0(v_1-v_2) \big\vert = \varepsilon,
\end{align*}
which turns out to be exactly the condition obtained without any bouncing before $\tau_0$, corresponding to the case studied in \cite{GSRT}.\\
For the first point of the lemma, we notice that it is the free flow which is involved, so there is no concern about collisions between the particles to have. Let us start to say that the same four sub-cases as for the previous point have to be considered, and the one without bouncing is already adressed in \cite{GSRT}. Let us present the proof when only the first particle has bounced against the wall before $\tau_0$, a time such that:
$$
\big\vert \big(T^{2,0}_{-\tau_0}(\overline{Z}_2)\big)^{X,1} - \big(T^{2,0}_{-\tau_0}(\overline{Z}_2)\big)^{X,1} \big\vert \leq \varepsilon_0.
$$
In that case, the condition writes explicitely $\big\vert \mathcal{S}_0(\overline{x}_1) - \overline{x}_2 - \tau_0\big(\mathcal{S}_0(v_1) - v_2\big) \big\vert \leq \varepsilon_0$. Then, for all unit vector $n$ orthogonal to $\mathcal{S}_0(\overline{x}_1) - \overline{x}_2$, we have thanks to the Cauchy-Schwarz inequality that $\tau_0 \big\vert n \cdot \big(\mathcal{S}_0(v_1)-v_2\big) \big\vert \leq \varepsilon_0$, which means that $\mathcal{S}_0(v_1)-v_2$ belongs to $K(0,\mathcal{S}_0(\overline{x}_1) - \overline{x}_2,\varepsilon_0/\delta)$, hence the conclusion. The other cases are obtained in the same way. This concludes the proof of Lemma \ref{SECT3LemmedeTirPlanAxes_Fixes}.
\end{proof}

The second lemma studies the effect of the scattering of the cylinders, already stated and proved in \cite{GSRT}:

\begin{lemma}[Scattering Lemma for cylinders]
\label{SECT3LemmeScateCylinVitesPatho}
There exists a strictly positive constant $C_5(d)$ depending only on the dimension $d$ such that for all strictly positive numbers $\rho$ and $R$, all vectors $(y,w)\in\mathbb{R}^d\times B(0,R)$, and $v_1\in B(0,R)$, one has :
$$
\left|\mathcal{N}^*(w,y,\rho)(v_1)\right|\leq C_5R^{d+1/2}\rho^{d-3/2}
$$
with
\begin{align*}
&\mathcal{N}^*(w,y,\rho)(v_1)=\Big\{(\omega,v_2)\in\mathbb{S}^{d-1}\times B(0,R)\ /\ (v_2-v_1)\cdot\omega>0,\, v_1'\in K(w,y,\rho)\text{  or  }v_2'\in K(w,y,\rho)\Big\}.
\end{align*}
\end{lemma}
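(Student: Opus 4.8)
The statement reproduces a lemma of \cite{GSRT}, and the plan is to reproduce its proof. The starting point is the elementary inclusion
$$
\mathcal{N}^*(w,y,\rho)(v_1) \subseteq \mathcal{N}_1 \cup \mathcal{N}_2,
$$
where $\mathcal{N}_1$, respectively $\mathcal{N}_2$, denotes the set of $(\omega,v_2) \in \mathbb{S}^{d-1}\times B(0,R)$ with $(v_2-v_1)\cdot\omega > 0$ and $v_1' \in K(w,y,\rho)$, respectively $v_2' \in K(w,y,\rho)$, where $v_1' = v_1 - \big((v_1-v_2)\cdot\omega\big)\omega$ and $v_2' = v_2 + \big((v_1-v_2)\cdot\omega\big)\omega$ are the post-collisional velocities. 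Since $|\mathcal{N}^*| \leq |\mathcal{N}_1| + |\mathcal{N}_2|$, it is enough to bound each piece, and for both I would apply the Fubini theorem, integrating first in $v_2 \in B(0,R)$ at $\omega$ fixed and then in $\omega \in \mathbb{S}^{d-1}$. One may also assume $\rho \leq R$ (otherwise $\mathcal{N}^* \subseteq \mathbb{S}^{d-1}\times B(0,R)$ already has measure $\lesssim R^d$, which is dominated by the claimed bound when $R \geq 1$, as is the case in the intended application).

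For $\mathcal{N}_1$, set $\tau = (v_1-v_2)\cdot\omega$, so that $v_1' = v_1 - \tau\omega$: at $\omega$ fixed, $v_1'$ depends on $v_2$ only through the scalar $\tau$ and sweeps the line $v_1 + \mathbb{R}\omega$. Splitting $v_2$ into its $\omega$-component and its projection onto $\omega^\perp$, the latter is unconstrained (measure $\leq |B^{d-1}|R^{d-1}$), while the condition $v_1 - \tau\omega \in K(w,y,\rho)$ forces the component of $v_1'$ transverse to the cylinder axis $y$ into a $\rho$-ball; as this transverse component moves at speed $|P_{y^\perp}\omega|$ in $\tau$, the admissible values of $\tau$, hence of $v_2\cdot\omega$, form a set of length $\leq \min\!\big(C R,\, C\rho/|P_{y^\perp}\omega|\big)$. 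Integrating in $\omega$ with the pole placed at $y/|y|$ and cutting the sphere at the latitude where $|P_{y^\perp}\omega| = \rho/R$ gives $|\mathcal{N}_1| \lesssim R^{d-1}\rho$ for $d \geq 3$, and $|\mathcal{N}_1| \lesssim R^{d-1}\rho\log(R/\rho)$ for $d = 2$; in all cases this is $\leq C R^{d+1/2}\rho^{d-3/2}$ using $\rho \leq R$ and $R \geq 1$.

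For $\mathcal{N}_2$, one has $v_2' = P_{\omega^\perp}v_2 + (v_1\cdot\omega)\omega$, so at $\omega$ fixed $v_2'$ depends on $v_2$ only through $P_{\omega^\perp}v_2$, the $\omega$-component of $v_2$ being free (measure $\leq 2R$). The constraint $v_2' \in K(w,y,\rho)$ then confines $P_{\omega^\perp}v_2$ to the preimage, under the projection $P_{y^\perp}\colon \omega^\perp \to y^\perp$, of a $\rho$-ball, a $(d-1)$-dimensional set of measure $\leq C\rho^{d-2}\min\!\big(R,\, \rho/|\cos\theta|\big)$ with $\theta = \angle(\omega,y)$, the factor $1/|\cos\theta|$ coming from the single singular value of $P_{y^\perp}|_{\omega^\perp}$ equal to $|\cos\theta|$. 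The crux is the integration in $\omega$ near the codimension-one ``equator'' $\{\omega\cdot y = 0\}$ where $|\cos\theta| \to 0$: splitting $\mathbb{S}^{d-1}$ at the threshold $|\cos\theta| = \sqrt{\rho/R}$, away from a neighbourhood of the equator one uses $\rho/|\cos\theta| \leq \sqrt{R\rho}$, while that neighbourhood has measure $\lesssim \sqrt{\rho/R}$ and one uses there the trivial bound $R$; both contributions are $\lesssim R^{3/2}\rho^{d-3/2}$, whence $|\mathcal{N}_2| \lesssim R^{3/2}\rho^{d-3/2} \leq C R^{d+1/2}\rho^{d-3/2}$. Adding the two estimates yields the claim with $C_5 = C_5(d)$.

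I expect the genuine obstacle to be precisely this last $\omega$-integration in the $\mathcal{N}_2$ case: the scattering map $v_2 \mapsto v_2'$ becomes rank-deficient relative to the cylinder $K(w,y,\rho)$ when $\omega$ is nearly orthogonal to the axis $y$, and it is the balance between the measure of this ``bad'' set of directions and the size of the worst admissible $v_2$-slab that produces the half-integer exponents $R^{d+1/2}$ and $\rho^{d-3/2}$. Everything else — the post-collisional identities, the Fubini slicing, and the Jacobian of $P_{y^\perp}|_{\omega^\perp}$ — is elementary, and I would carry out the bookkeeping following the corresponding lemma of \cite{GSRT} (see also \cite{PhDTT}).
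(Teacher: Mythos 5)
The paper does not prove this lemma — it only quotes it from \cite{GSRT} — so there is no internal argument to compare against, and I assess your proposal on its own merits. Your plan (split $\mathcal{N}^*$ into the $v_1'$-branch $\mathcal{N}_1$ and the $v_2'$-branch $\mathcal{N}_2$, slice $v_2$ at fixed $\omega$ along $\omega$ and $\omega^\perp$, integrate in $\omega$ with a threshold) is the natural structure, and the $\mathcal{N}_2$ branch is correct: the single degenerate singular value $|\cos\theta|$ of $P_{y^\perp}\vert_{\omega^\perp}$, truncated against $B(0,R)$ and integrated over the sphere with the cut at $|\cos\theta|=\sqrt{\rho/R}$, gives $|\mathcal{N}_2|\lesssim R^{3/2}\rho^{d-3/2}\leq CR^{d+1/2}\rho^{d-3/2}$ for $R\geq1$, and you correctly identify this as the source of the half-integer exponents.

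The gap is the closing step for $\mathcal{N}_1$ when $d\geq3$. Your bound $|\mathcal{N}_1|\lesssim R^{d-1}\rho$ is in fact sharp: take $w=v_1$ and $y=e_1$, so the ray $\{v_1+\tau\omega:0<\tau<2R\}$ starts on the cylinder axis, and for \emph{every} $\omega$ the admissible $\tau$-slab has length $\asymp\min(R,\rho/\sin\theta)$, while $\int_{\mathbb{S}^{d-1}}\min(R,\rho/\sin\theta)\,d\omega\asymp\rho$ for $d\geq3$ and $\rho\leq R$, whence $|\mathcal{N}_1|\asymp R^{d-1}\rho$. But your claimed absorption $R^{d-1}\rho\leq CR^{d+1/2}\rho^{d-3/2}$ amounts to $R^{3/2}\rho^{d-5/2}\geq 1/C$, which is \emph{not} implied by $\rho\leq R$ and $R\geq1$ and fails as $\rho\to0$ for every $d\geq3$ — and that is exactly the regime in which the lemma is invoked, the cylinder radii $12Ra/\varepsilon_0$ and $\varepsilon_0/\delta$ tending to $0$ with $\varepsilon$. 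Only at $d=2$, where $\rho^{d-3/2}=\rho^{1/2}$ and your logarithm is harmless, do both branches fit under the stated bound. You should therefore revisit the $v_1'$-branch: either a different argument, or an additional hypothesis bounding $\rho$ from below, is needed; and since $|\mathcal{N}_1|$ genuinely is of order $R^{d-1}\rho$, you should also double-check the exponent $\rho^{d-3/2}$ in the statement against \cite{GSRT} — the difficulty may lie in the target rather than in your method.
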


\begin{proof}[Proof of Theorem \ref{SECT3TheorBonneConfiAdjonParti}]
There are two cases that have to be considered separately: whether the particles $k$ and $k+1$ are in a pre-collisional configuration or not.\\
\newline
\textbullet\ Let us start with the pre-collisional case. It implies that the velocities of the pair of particles $(k,k+1)$ are not modified by the scattering.\\
Considering now the recollisions (that is, if $Z^\varepsilon_{k+1}$ belongs to $\mathcal{G}^\varepsilon_{k+1}(\varepsilon)$ or not, with $Z^\varepsilon_{k+1}=(Z_k,x_k+\varepsilon\omega,v)$), they cannot happen between the particles $i$ and $j$ with $1 \leq i < j \leq k$ by hypothesis. If now $i < k$ and $j = k+1$, a simple application of the Shooting Lemma \ref{SECT3LemmedeTirPlanAxes_Fixes} provides $4(k-1)$ cylinders (for the velocity $v$) of respective volume $C(d)(4R)\big(24Ra/\varepsilon_0\big)^{d-1}$ to exclude in order to prevent recollision. The most interesting case is then when $i=k$ and $j=k+1$. In that case, except if the velocities of the two particles are the same, no recollision can occur if none or both of the two particles have bounced against the wall. If only one of the two particles has already bounced against the wall at $\tau_0$, a time such that $\big\vert \big(T^{k+1,\varepsilon}_{-\tau_0}(Z^\varepsilon_{k+1})\big)^{X,k} - \big(T^{k+1,\varepsilon}_{-\tau_0}(Z^\varepsilon_{k+1})\big)^{X,k+1} \big\vert = \varepsilon$, this condition can be rewritten as
\begin{equation}
\label{SECT3PREUVTheorAdjonRecolkk+1e}
\big\vert -2x_k\cdot e_1 e_1 - \varepsilon \omega + \varepsilon e_1 - \tau_0\big(\mathcal{S}_0(\overline{v}_k)-v\big) \big\vert = \varepsilon.
\end{equation}
To obtain a control on $v - \mathcal{S}_0(\overline{v}_k)$ which does not depend on the positions of the particles nor on the angular parameter $\omega$, we write:
\begin{align*}
\varepsilon &\geq \big\vert 2\overline{x}_k\cdot e_1 e_1 - \tau_0\big(v - \mathcal{S}_0(\overline{v}_k)\big) \big\vert - \big\vert 2 x_k \cdot e_1 e_1 - 2 \overline{x}_k \cdot e_1 e_1 \big\vert - \varepsilon \vert \omega \vert - \varepsilon \vert e_1 \vert \\
&\geq \big\vert 2\overline{x}_k\cdot e_1 e_1 - \tau_0\big(v - \mathcal{S}_0(\overline{v}_k)\big) \big\vert - 2a - 2\varepsilon,
\end{align*}
so that $\big\vert 2\overline{x}_k\cdot e_1 e_1 - \tau_0\big(v - \mathcal{S}_0(\overline{v}_k)\big) \big\vert \leq 5a$. Following the same proof as for Lemma \ref{SECT3LemmedeTirPlanAxes_Fixes}, we deduce that $v - \mathcal{S}_0(\overline{v}_k)$ belongs to the cone of vertex $0$ and based on the ball $B(2\overline{x}_k\cdot e_1 e_1,5a)$. We see here why we need to assume that the particle $k$ is far enough from the boundary: if $\big\vert 2\overline{x}_k\cdot e_1 e_1 \big\vert \leq 5a$, the condition on the cone is empty, since we would describe the whole space with a cone centered on $0$ and based on a ball which contains $0$. So here, since $\rho \leq \overline{x}_k\cdot e_1$ and $5a/\sqrt{3} \leq \rho$, we can deduce that the condition \eqref{SECT3PREUVTheorAdjonRecolkk+1e} implies that $v \in K\big(\mathcal{S}_0(\overline{v}_k),2\overline{x}_k\cdot e_1 e_1,10Ra/\rho\big)$. The measure of this cylinder, intersected with $B(0,R)$, is then controlled by $C(d)(4R)\big(10Ra/\rho\big)^{d-1}$.\\
Let us consider now the problem of the good configurations for the free flow (that is, if $T^{k+1,0}_{-\delta}\big(\overline{Z}^0_{k+1}\big)$ belongs to $\mathcal{G}^0_{k+1}(\varepsilon_0)$ or not, with $\overline{Z}^0_{k+1} = (\overline{Z}_k,\overline{x}_k,v)$). Again, if the two particles $i$ and $j$ are such that $1 \leq i<j \leq k$, by hypothesis the distance between those particles will always be larger than $\varepsilon_0$. If $i<k$ and $j=k+1$, we can again use the Shooting Lemma \ref{SECT3LemmedeTirPlanAxes_Fixes}, so that up to exclude $2(k-1)$ cylinders (for the velocity $v$) of respective volume $C(d)R\big(\varepsilon_0/\delta\big)^{d-1}$, the pair of particles $(i,k+1)$ will stay at a distance larger than $\varepsilon_0$ after a time $\delta$. Now if $i=k$ and $j=k+1$, the condition
\begin{equation}
\label{SECT3PREUVTheorAdjonRecolkk+10}
\big\vert \big(T^{k+1,0}_{-\tau_0}(\overline{Z}^0_{k+1})\big)^{X,k} - \big(T^{k+1,0}_{-\tau_0}(\overline{Z}^0_{k+1}\big)^{X,k+1} \big\vert \leq \varepsilon_0
\end{equation}
may happen either if only one of the two particles has bounced against the wall at time $\tau_0$, or if none or both have. In the last case, \eqref{SECT3PREUVTheorAdjonRecolkk+10} writes $\big\vert (\overline{x}_k-\tau_0\overline{v}_k) - (\overline{x}_k-\tau_0v)\big\vert = \tau_0\vert v - \overline{v}_k \vert \leq \varepsilon_0$. If $v \notin B(\overline{v}_k,\eta)$ with $\varepsilon_0 \leq \delta\eta$, this can never happen. In the first case, \eqref{SECT3PREUVTheorAdjonRecolkk+10} writes $\big\vert \tau_0\big(v - \mathcal{S}_0(\overline{v}_k)\big) - \big(\mathcal{S}_0(\overline{x}_k) - \overline{x}_k\big) \big\vert \leq \varepsilon_0$. We deduce then that for all unitary vector $u$ orthogonal to $e_1$ we have $\tau_0 \cdot \big\vert u\cdot\big(v - \mathcal{S}_0(\overline{v}_k)\big) \big\vert \leq \varepsilon_0$. Then, for $\tau_0 \geq \delta$ and $v \notin K\big(\mathcal{S}_0(\overline{v}_k),e_1,\varepsilon_0/\delta\big)$, the condition \eqref{SECT3PREUVTheorAdjonRecolkk+10} cannot hold if only one of the particles has bounced. In both cases, up to exclude a subset of velocities of size $C(d)\big(\eta^d + R\big(\varepsilon_0/\delta\big)^{d-1}\big)$, \eqref{SECT3PREUVTheorAdjonRecolkk+10} cannot hold.\\
\textbullet\ It remains the post-collisional case to be investigated. Let us start with the recollisions for the hard sphere dynamics. First, since the scattering does not modify the velocities of the particles $i < k$, the first recollision obtained from $Z^{\varepsilon,'}_{k+1}$ (with $Z^{\varepsilon,'}_{k+1} = (\dots,x_k,\overline{v}_k\hspace{-1.5mm}',x_k+\varepsilon\omega,v')$) cannot be between the particles $i$ and $j$ with $1 \leq i < j \leq k-1$ (by hypothesis on the pre-collisional configuration $Z^\varepsilon_{k+1}$). In other words, if we get rid of the recollisions with the other pairs, it would imply that we will get rid also of the recollisions for the pairs $1 \leq i < j \leq k-1$. If now $i \leq k-1$ and $j=k$, thanks to Lemma \ref{SECT3LemmedeTirPlanAxes_Fixes}, up to exclude $4(k-1)$ cylinders of radius $12Ra/\varepsilon_0$ for the velocity $\overline{v}_k\hspace{-1.5mm}'$, we would make sure of the absence of recollision for those pairs. Similarly, for $1 \leq i \leq k-1$ and $j=k+1$, by excluding the same $4(k-1)$ cylinders as in the pre-collisional case (for $v'$ this time, and not $v$), we would eliminate the recollisions for that case. However the conditions described here concern the post-collisonal velocities  $\overline{v}_k\hspace{-1.5mm}'$ and $v'$, not the adjunction parameters $v$ and $\omega$. To convert those conditions and obtain a control on those parameters, we will simply use Lemma \ref{SECT3LemmeScateCylinVitesPatho}. In summary, this lemma applied to the excluded cylinders together provides a set of adjunction parameters $(\omega,v)$ of size $C(d) (k-1) R^{2d-1} \big(a/\varepsilon_0\big)^{d-3/2}$ to exclude. It remains only the most delicate case to consider, when $i=k$ and $j=k+1$. Here one cannot repeat the argument of the pre-collisional case, for both post-collisional velocities $\overline{v}_k\hspace{-1.5mm}'$ and $v'$ depend on $v$ (which prevent to define cylinders depending only on $\overline{Z}_k$), and also on $\omega$, which is another source of difficulty. Now for that pair, since the configuration $(x_k,\overline{v}_k\hspace{-1.5mm}',x_k+\varepsilon\omega,v')$ is pre-collisional by definition of the scattering, no recollision can happen between those particles if none or both have bounced against the wall. In the case when only one particle has already bounced at time $\tau_0$, the condition of the recollision between the particles $k$ and $k+1$ writes $\big\vert 2x_k\cdot e_1 e_1 + \varepsilon\omega - \varepsilon e_1 - \tau_0\big(v' - \mathcal{S}_0(\overline{v}_k\hspace{-1.5mm}')\big) \big\vert = \varepsilon$. The naive consideration of the cylinder, as for the pre-collisional case, would provide a condition on $v' - \mathcal{S}_0(\overline{v}_k\hspace{-1.5mm}')$, which prevents to conclude directly. But writing $v' - \mathcal{S}_0(\overline{v}_k\hspace{-1.5mm}') = v' - \overline{v}_k\hspace{-1.5mm}' - \big(\mathcal{S}_0(\overline{v}_k\hspace{-1.5mm}') - \overline{v}_k\hspace{-1.5mm}'\big)$, and using the definition of the post-collisional velocities, we have $v' - \overline{v}_k\hspace{-1.5mm}' = \big(v - \overline{v}_k\big) - 2\big(v - \overline{v}_k\big) \cdot \omega \omega$, showing that $v' - \mathcal{S}_0(\overline{v}_k\hspace{-1.5mm}')$ belongs to $K(0,e_1,10Ra/\rho)$ if and only if $\big(v - \overline{v}_k\big) - 2\big(v - \overline{v}_k\big) \cdot \omega \omega$ belongs to the same cylinder. This is a condition on $v$ (instead of $v'$), but depending also on $\omega$. This condition is equivalent to that $v - \overline{v}_k$ belongs to $K(0,e_1 - 2e_1\cdot\omega \omega,10Ra/\rho)$. As a conclusion, up to exclude among the adjunction parameters $(\omega,v)$ the subset
$$
\big( \hspace{-5mm} \bigcup_{\hspace{5mm} \omega \, \in \, \mathbb{S}^{d-1}} \hspace{-5mm} \big\{\omega\big\} \times K(0,e_1 - 2e_1\cdot\omega \omega,10Ra/\rho) \hspace{0.5mm} \big),
$$
which has a size controlled by $C(d)R\big(10Ra/\rho\big)^{d-1}$, we can claim that there will be no recollision between the particles $k$ and $k+1$, which concludes the study of the recollisions in the post-collisional case.\\
Let us finally consider the problem of the good configurations for the free flow in the post-collisional case. First, since only the velocities of the two last particles are modified with the scattering, no pair $(i,j)$ of particles, with $1 \leq i < j < k$, can be closer than $\varepsilon_0$ by hypothesis on $\overline{Z}_k$. Concerning the pairs $(i,k)$ and $(i,k+1)$ for $i < k$, thanks to the Shooting Lemma \ref{SECT3LemmedeTirPlanAxes_Fixes}, up to exclude $2(k-1)$ cylinders of radius $\varepsilon_0/\delta$ among the velocities $\overline{v}_k\hspace{-1.5mm}'$ and $v'$, we can be sure that the particles will stay at a distance larger than $\varepsilon_0$ one from another. The Scattering Lemma \ref{SECT3LemmeScateCylinVitesPatho} enables to translate this condition on $\omega$ and $v$: it means that a subset of size $C(d)(k-1)R^{d+1/2}\big(\varepsilon_0/\delta\big)^{d-3/2}$ has to be excluded to keep the distance larger than $\varepsilon_0$ between those particles. Finally, for the particles $k$ and $k+1$, we have $\big\vert \big(T^{k+1,0}_{-\tau_0}(\overline{Z}^{0,'}_{k+1})\big)^{X,k} - \big(T^{k+1,0}_{-\tau_0}(\overline{Z}^{0,'}_{k+1})\big)^{X,k+1} \big\vert \leq \varepsilon_0$ (with $\overline{Z}^{0,'}_{k+1} = (\dots,\overline{x}_k,\overline{v}_k\hspace{-1.5mm}',\overline{x}_k,v')$) if and only if $\tau_0 \vert v' - \overline{v}_k\hspace{-1.5mm}' \vert \leq \varepsilon_0$ or $\big\vert \tau_0\big(v' - \mathcal{S}_0(\overline{v}_k\hspace{-1.5mm}')\big) - \big(\mathcal{S}_0(\overline{x}_k)-\overline{x}_k\big) \big\vert \leq \varepsilon_0$. If $\vert v' - \overline{v}_k\hspace{-1.5mm}' \vert > \eta$ with $\varepsilon_0 \leq \delta \eta$, that is, thanks to the conservation of the kinetic energy along the collisions implying $\vert v' - \overline{v}_k\hspace{-1.5mm}' \vert = \vert v - \overline{v}_k \vert$, if $\vert v - \overline{v}_k \vert \geq \eta$, the first condition cannot hold. The last condition $\big\vert \tau_0\big(v' - \mathcal{S}_0(\overline{v}_k\hspace{-1.5mm}')\big) - \big(\mathcal{S}_0(\overline{x}_k)-\overline{x}_k\big) \big\vert \leq \varepsilon_0$ is finally studied in the same fashion as for the post-collisional case of the hard sphere flow: up to exclude among the adjunction parameters $(\omega,v)$ the subset
$$
\big( \hspace{-5mm} \bigcup_{\hspace{5mm} \omega \, \in \, \mathbb{S}^{d-1}} \hspace{-5mm} \big\{\omega\big\} \times K(0,e_1 - 2e_1\cdot\omega \omega,\varepsilon_0/\delta) \hspace{0.5mm} \big),
$$
of size $C(d)R\big(\varepsilon_0/\delta\big)^{d-1}$, the second condition cannot hold.\\
\newline
The subset $\mathcal{B}_k(\overline{Z}_k)$ of Theorem \ref{SECT3TheorBonneConfiAdjonParti} is now simply the collection of all the excluded subsets of the adjunction parameters $(\omega,v) \in \mathbb{S}^{d-1} \times \mathbb{R}$ described above: on its complement the adjunction parameters provide new configurations of $k+1$ particles that are in a good configuration, and the size of $\mathcal{B}_k(\overline{Z}_k)$ is controlled by the sum of the size of the previous excluded subsets, which concludes the proof of Theorem \ref{SECT3TheorBonneConfiAdjonParti}.
\end{proof}

\subsection{The inclusion of the cut-off in proximity with the obstacle}

In Theorem \ref{SECT3TheorBonneConfiAdjonParti}, it was important that the particle undergoing the adjunction is far from the wall. Sadly, there is no direct way to fulfill this condition, since the positions are not integration variables in the integrated in time collision operators, that is this condition cannot be obtained with a naive cut-off.\\
However, there is a way to use the adjunction parameters to reach that goal. If a particle has not a grazing velocity, it will not stay close to the wall for a long time. We will then remove the pseudo-trajectories with grazing collisions, and remove also the time intervals during which the particle chosen to undergo the adjunction is too close to the wall. To do so, an important (and quite technical) step is to control the effect of the scattering mapping on the grazing collisions.

\begin{defin}[Adjunction parameters inducing grazing collisions after scattering]
\label{DEFINParamAdjonColliRasan}
For any $v_1 \in B(0,R)$, we will call the subset of the \emph{adjunction parameters inducing grazing collisions after adding a particle to another one with velocity $v_1$ and after scattering}, and we will denote $\mathcal{N}^*(R,\alpha)(v_1) \subset \mathbb{S}^{d-1}\times B(0,R)$ for the set defined by:
\begin{align*}
\mathcal{N}^*(R,\alpha)(v_1)=&\ \bigg\{(\omega,v_2)\in\mathbb{S}^{d-1}\times B(0,R)\ /\ (v_2-v_1)\cdot\omega>0,\, v_1'\in\{|v\cdot e_1|\leq\alpha\}\text{  or  }v_2'\in\{|v\cdot e_1|\leq\alpha\}\bigg\}.
\end{align*}
\end{defin}

It is possible then to obtain the following result.

\begin{lemma}
\label{SECT3LemmeScatePavé_VitesRasan}
There exist two strictly positive constants $C(d)$ and $c(d)$ depending only on the dimension $d$ such that for all strictly positive numbers $\alpha \leq c(d)$ and $R \geq 1$, and all $v\in B(0,R)$, one has:
\begin{align}
\label{LemmeScatePavé_Vitesd=2__}
\Big\vert \mathcal{N}^*(R,\alpha)(v) \Big\vert \leq C(d) R^2\alpha^{1/8}
\end{align}
in the case $d=2$, and
\begin{align}
\label{LemmeScatePavé_Vitesdgeq3}
\Big\vert \mathcal{N}^*(R,\alpha)(v) \Big\vert \leq C(d)R^{d-1}\alpha
\end{align}
in the case $d\geq3$.
\end{lemma}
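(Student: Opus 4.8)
The plan is to estimate the measure of the set $\mathcal{N}^*(R,\alpha)(v)$ by decomposing it according to which of the post-collisional velocities $v_1'$ or $v_2'$ is grazing, and then, for each piece, integrating first over the velocity variable $v_2$ for fixed $\omega$, or first over $\omega$ for fixed $v_2$, whichever makes the slice cheap to control. Recall that $v_1' = v_1 - ((v_1-v_2)\cdot\omega)\omega$ and $v_2' = v_2 + ((v_1-v_2)\cdot\omega)\omega$, so that $v_1'\cdot e_1 = v_1\cdot e_1 - ((v_1-v_2)\cdot\omega)(\omega\cdot e_1)$ and similarly for $v_2'$. Thus the conditions $|v_1'\cdot e_1|\le\alpha$ and $|v_2'\cdot e_1|\le\alpha$ are, for fixed $\omega$ with $\omega\cdot e_1$ not too small, linear constraints on $v_2$ confining it to a slab of width $O(\alpha/|\omega\cdot e_1|)$; integrating over $v_2\in B(0,R)$ then costs $O(R^{d-1}\alpha/|\omega\cdot e_1|)$, and integrating this bound over $\omega\in\mathbb{S}^{d-1}$ gives a finite answer when $d\ge 3$ (since $\int_{\mathbb{S}^{d-1}}|\omega\cdot e_1|^{-1}\dd\omega<\infty$ for $d\ge 3$) yielding the clean bound $C(d)R^{d-1}\alpha$. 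For $d=2$ the integral $\int_{\mathbb{S}^{1}}|\omega\cdot e_1|^{-1}\dd\omega$ diverges logarithmically, which forces the loss of exponent and the weaker estimate $C(d)R^2\alpha^{1/8}$.

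More precisely, for $d=2$ I would split $\mathbb{S}^1$ into the region $\{|\omega\cdot e_1|\ge \alpha^{\theta}\}$ and its complement, for a well-chosen exponent $\theta\in(0,1)$. On the first region the slab argument above gives a contribution bounded by $C R\,\alpha\int_{|\omega\cdot e_1|\ge\alpha^\theta}|\omega\cdot e_1|^{-1}\dd\omega \le C R\,\alpha\,|\log\alpha| \le C R\,\alpha^{1-\theta'}$ for any $\theta'>0$ absorbing the log. On the complementary region $\{|\omega\cdot e_1|<\alpha^\theta\}$ one simply bounds the $v_2$-slice by the full ball volume $O(R^2)$ (or better, by noting that the constraint still localizes $v_2$ once $(v_1-v_2)\cdot\omega$ is bounded away from zero, but the crude bound suffices), times the measure $O(\alpha^\theta)$ of that set of $\omega$, giving $C R^2\alpha^\theta$. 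Optimizing — i.e. balancing $\alpha^{1-\theta'}$ against $\alpha^\theta$ and keeping some room for the logarithm — produces an exponent bounded below by a fixed positive number; the value $1/8$ in the statement is simply a safe choice (any exponent up to nearly $1/2$ is in fact attainable, but $1/8$ leaves comfortable margin for the crude steps). The condition $\alpha\le c(d)$ is what guarantees $\alpha^\theta$ is small enough for these splittings to be meaningful, and $R\ge1$ is used to absorb lower powers of $R$ into $R^2$ (resp. $R^{d-1}$).

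The main obstacle is the $d=2$ case: there the naive slab-plus-integration argument fails because of the non-integrable singularity of $|\omega\cdot e_1|^{-1}$ near the grazing directions $\omega\perp e_1$, and one must carry out the two-regime decomposition and track the resulting powers of $\alpha$ carefully — in particular being honest about the fact that near $\omega\cdot e_1=0$ the map $v_2\mapsto v_i'\cdot e_1$ degenerates, so the localization of $v_2$ is lost and only the smallness of the $\omega$-set can be exploited. The $d\ge 3$ case is essentially a direct computation once the change of variables is set up, with the only care being the scattering constraint $(v_2-v_1)\cdot\omega>0$, which merely restricts the domain and hence can only decrease the measure. A subsidiary point to check is measurability of $\mathcal{N}^*(R,\alpha)(v)$ and the applicability of Fubini, which is immediate since the defining conditions are given by polynomial (in)equalities in $(\omega,v_2)$.
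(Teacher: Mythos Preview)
Your argument contains a genuine error in the $d\ge 3$ case. You claim that $\int_{\mathbb{S}^{d-1}}|\omega\cdot e_1|^{-1}\,\dd\omega<\infty$ for $d\ge 3$, but this is false: writing $\omega\cdot e_1=t$, the spherical measure is proportional to $(1-t^2)^{(d-3)/2}\,\dd t$, so the integrand behaves like $|t|^{-1}$ near $t=0$ in every dimension $d\ge 2$, and the integral diverges logarithmically. Even if you fix this by replacing the slab volume by $\min\big(C R^{d-1}\alpha/|\omega\cdot e_1|,\,C R^d\big)$ and splitting at $|\omega\cdot e_1|\sim\alpha/R$, you only obtain $C(d)R^{d-1}\alpha\log(R/\alpha)$, not the clean $C(d)R^{d-1}\alpha$ stated in the lemma. (Incidentally, you also treat the $v_1'$ and $v_2'$ constraints as producing the same slab, but the linear map $v_2\mapsto v_2'\cdot e_1$ has gradient $e_1-(\omega\cdot e_1)\omega$, of norm $\sqrt{1-(\omega\cdot e_1)^2}$, so its slab is singular at the poles $\omega\cdot e_1=\pm 1$ rather than at the equator; this does not rescue the argument but shows the two pieces are not interchangeable.)

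The paper avoids this obstruction by reversing the order of integration: it fixes $v_1$ and $v_2$ and estimates the $\omega$-slice first. The point is that for $v_1,v_2$ fixed, the post-collisional velocities $v_1',v_2'$ range over the sphere $\partial B\big(\tfrac{v_1+v_2}{2},\tfrac{|v_1-v_2|}{2}\big)$, and the inscribed angle theorem identifies (up to a factor and a rescaling) the $\omega$-measure of $\{|v_i'\cdot e_1|\le\alpha\}$ with the surface area of the portion of that sphere trapped between the hyperplanes $y\cdot e_1=\pm\alpha$. For $d\ge 3$ this area is at most $C(d)r^{d-2}\alpha$ with $r=|v_1-v_2|/2$, so the $\omega$-slice has measure $\le C(d)\alpha/r$; the resulting singularity $r^{-1}$ is integrable against $r^{d-1}\,\dd r$ on $[0,2R]$, and one recovers exactly $C(d)R^{d-1}\alpha$. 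In short, the singularity that is non-integrable on $\mathbb{S}^{d-1}$ in your fibration becomes an integrable $|v_1-v_2|^{-1}$ singularity in the paper's fibration. Your $d=2$ plan, by contrast, is sound in spirit and would in fact yield a better exponent than $1/8$; but the $d\ge 3$ part needs to be redone along the paper's lines if you want the sharp $\alpha$ rather than $\alpha\log(1/\alpha)$.
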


\begin{proof}[Proof of Lemma \ref{SECT3LemmeScatePavé_VitesRasan}]
First, one needs an intermediate result about the measure of some subspace of a sphere, which follows.\\
Let $\alpha$, $r$ and $R$ be three strictly positive numbers. In the case $d=2$, if $R \geq 1$, $\alpha \leq \min \Big\{1,R^2/36,\big(\frac{1+\sqrt{2}-\sqrt{3}}{\sqrt{3}/2+\sqrt{2}}\big)^2\Big\}$,  $2\sqrt{\alpha}\leq r \leq 2R$, and if $\big\vert x\cdot e_1 \big\vert \leq r - \sqrt{\alpha}$, then
\begin{align}
\label{SECT3PREUVSpherEntr2Plansdim=2}
\Big\vert \big\{ y \in \mathbb{R}^2\ /\ \big\vert x - y \big\vert = r, \big\vert y \cdot e_1 \big\vert \leq \alpha \big\} \Big\vert \leq \sqrt{R}\alpha^{1/4},
\end{align}
and in the case $d \geq 3$, there exists a constant $C(d)$ depending only on the dimension $d$ such that if $r \leq 2R$, then
\begin{align}
\label{SECT3PREUVSpherEntr2Plansdim>2}
\Big\vert \big\{ y \in \mathbb{R}^2\ /\ \big\vert x - y \big\vert = r, \big\vert y \cdot e_1 \big\vert \leq \alpha \big\} \Big\vert \leq C(d)r^{d-2}\alpha.
\end{align}
This intermediate result is obtained after studying the three possible subcases concerning the position of the point $x$ with respect to the hyperplane $\{ x \cdot e_1 = 0 \}$. For this purpose, let us introduce the quantity $p = x\cdot e_1$. Without loss of generality, let us assume that $p\geq 0$.\\
\newline
\textbullet\ If $r < p - \alpha$ (the trivial case when the sphere does not cross $\{-\alpha \leq y \cdot e_1 \leq \alpha\}$), then here of course $\Big\vert \big\{ y \in \mathbb{R}^2\ /\ \big\vert x - y \big\vert = r, \big\vert y \cdot e_1 \big\vert \leq \alpha \big\} \Big\vert = 0$.\\
\newline
\textbullet\ If $r \geq p - \alpha$, and ($r \leq p + \alpha$ or $r \leq \alpha$) (the case when the sphere crosses only a single plane delimiting $\{-\alpha \leq y \cdot e_1 \leq \alpha\}$), then either $r \leq \alpha$, and then here
$\{ y \in \mathbb{R}^2\ /\ \vert x - y \vert = r, \vert y \cdot e_1 \vert \leq \alpha \} \subset \partial B(x,r)$, so that:\\
$\Big\vert \big\{ y \in \mathbb{R}^2\ /\ \big\vert x - y \big\vert = r, \big\vert y \cdot e_1 \big\vert \leq \alpha \big\} \Big\vert \leq C(d)r^{d-1}$, or $r \geq p - \alpha$ and $r < p+\alpha$, and in that case only one of the two apices (along the diameter parallel to $e_1$) of the sphere is strictly in between the two planes delimiting $\{-\alpha \leq y \cdot e_1 \leq \alpha\}$. It is clear that in this particular case, the surface of the sphere is maximized when the apex is tangent to one of the two planes, so that one has here:\\
$\Big\vert \big\{ y \in \mathbb{R}^d\ /\ \vert y-x \vert \leq r, \vert y \cdot e_1 \vert \leq \alpha \big\} \Big\vert \leq \Big\vert \big\{ y \in \mathbb{R}^d\ /\ \vert y-(r-\alpha)e_1 \vert \leq r, \vert y \cdot e_1 \vert \leq \alpha \big\} \Big\vert$.\\
\newline
\textbullet\ Finally, the only remaining case is when $r > p + \alpha$ and $r > \alpha$, that is when the sphere is large enough to have both its apices (along the diameter parallel to $e_1$) outside $\{-\alpha \leq y \cdot e_1 \leq \alpha\}$. The question now is then when the measure of the surface contained between the two planes $y \cdot e_1 = \pm\alpha$ is maximal. We will investigate this using an explicit computation, and separating the two cases $d = 2$ and $d \geq 3$.\\
Let us start with the simplest case, when $d \geq 3$. One has
\begin{align*}
\Big\vert \big\{ y \in \mathbb{R}^d\ /\ \big\vert x-y \big\vert \leq r, \big\vert y \cdot e_1 \big\vert \leq \alpha \big\} \Big\vert &= \int_{\hspace{-0.5mm}\substack{\vspace{-1mm}(x + r \mathbb{S}^{d-1}) \cap \{ \vert y \cdot e_1 \vert \leq \alpha \} }} \hspace{-26mm} \dd S \\
&= \int_{\hspace{-0.5mm}\substack{\vspace{-3.5mm}[p-r,p+r] \cap [-\alpha,\alpha]}} \hspace{-20mm} \Big\vert \Big( \sqrt{r^2 - (p-z)^2} \Big) \mathbb{S}^{d-2} \Big\vert \dd z.
\end{align*}
The hypotheses about $p$, $r$ and $\alpha$ imply that $[-\alpha,\alpha] \subset [p-\alpha,p+r]$, so that
\begin{align*}
\Big\vert \big\{ y \in \mathbb{R}^d\ /\ \big\vert x-y \big\vert \leq r, \big\vert y \cdot e_1 \big\vert \leq \alpha \big\} \Big\vert &= \int_{-\alpha}^\alpha \hspace{-2mm} \big(r^2-(p-y)^2\big)^{(d-2)/2} \big\vert \mathbb{S}^{d-2} \big\vert \dd y \\
&= C(d) r^{d-1} \hspace{-1mm} \int_{-(\alpha+p)/r}^{(\alpha-p)/r} \hspace{-5mm} (1-u^2)^{(d-2)/2} \dd u.
\end{align*}
The function $p \mapsto \int_{-(\alpha+p)/r}^{(\alpha-p)/r} (1-u^2)^{(d-2)/2}$ is decreasing for $p\geq 0$ (it means that the surface is maximal when the center of the sphere is exactly between the two planes delimiting $\{-\alpha \leq y \cdot e_1 \leq \alpha\}$), one has
$$
\int_{\substack{\vspace{-0.5mm}-(\alpha+p)/r}}^{\substack{\vspace{0.5mm}(\alpha-p)/r}} \hspace{-9mm}(1-u^2)^{(d-2)/2} \dd u \leq 2 \int_0^{\alpha/r} \hspace{-4mm}(1-u^2)^{(d-2)/2} \dd u \leq 2 \frac{\alpha}{r}.
$$
We obtained in the end for the case $d\geq 3$ that
\begin{align*}
\Big\vert \big\{ y \in \mathbb{R}^d\ /\ \big\vert x-y \big\vert \leq r, \big\vert y \cdot e_1 \big\vert \leq \alpha \big\} \Big\vert \leq C(d) r^{d-2} \alpha,
\end{align*}
so that \eqref{SECT3PREUVSpherEntr2Plansdim>2} is proved.\\
When $d=2$, the set $\big\{ y \in \mathbb{R}^2\ /\ \big\vert x-y \big\vert \leq r, \big\vert y \cdot e_1 \big\vert \leq \alpha \big\}$ corresponds to $\big\{ (y_1,y_2) \in \mathbb{R}^2\ /\ (y_1-p)^2+y_2^2 = r^2,\ -\alpha \leq y_1 \leq \alpha \big\}$, so that the explicit computation leads to
\begin{align}
\label{SECT3PREUVSpherEntr2Plansdim21}
\Big\vert \big\{ y \in \mathbb{R}^2\ /\ \big\vert x-y \big\vert \leq r, \big\vert y \cdot e_1 \big\vert \leq \alpha \big\} \Big\vert &= 2 \int_{-\alpha}^\alpha \sqrt{1 + \frac{(y_1-p)^2}{r^2 - (y_1-p)^2}} \dd y_1 \nonumber\\
&= 2r \Big( \arccos\Big(\hspace{-1mm}-\frac{(\alpha+p)}{r}\Big) - \arccos\Big(\frac{\alpha-p}{r}\Big) \Big)
\end{align}
The goal now is to simplify the quantity \eqref{SECT3PREUVSpherEntr2Plansdim21}, firstly by removing the dependency with respect to $p$, then to $r$, and lastly by obtaining a more convenient expression.\\
First, the quantity \eqref{SECT3PREUVSpherEntr2Plansdim21}, seen as a function of $p$, is increasing (as a simple computation of the derivative shows it), so after introducing a cut-off on the values of $p$ that are close to the upper bound $p=r$, namely:
\begin{equation}
\label{}
p \leq r - \alpha^a,
\end{equation}
with $a\in\,]0,1[$ (so that $\alpha^a-\alpha > 0$ for $\alpha$ small enough), we find
\begin{align}
\label{SECT3PREUVSpherEntr2Plansdim22}
2r \Big( \arccos\Big(\hspace{-1mm}-\frac{(\alpha+p)}{r}\Big) - \arccos\Big(\frac{\alpha-p}{r}\Big) \Big) \leq 2r \Big( \arccos\Big(\frac{\alpha^a-\alpha}{r}-1\Big) - \arccos\Big(\frac{\alpha^a+\alpha}{r}-1\Big) \Big).
\end{align}
Now, we consider the right hand side of the inequality \eqref{SECT3PREUVSpherEntr2Plansdim22} as a function of $r$, and again the computation of the derivative proves that it is increasing if and only if
\begin{align}
\label{SECT3PREUVSpherEntr2Plansdim23}
\arccos\Big(\frac{\alpha^a-\alpha}{r}-1\Big) - \arccos\Big(\frac{\alpha^a-\alpha}{r}-1\Big) \geq \frac{\alpha^a + \alpha}{\sqrt{2r(\alpha^a+\alpha)-(\alpha^a+\alpha)^2}} - \frac{\alpha^a-\alpha}{\sqrt{2r(\alpha^a-\alpha)-(\alpha^a-\alpha)^2}}\cdotp
\end{align}
If one assumes in addition that $\alpha$ is small compared to $r$, that is, explicitely, such that $2\alpha^a \leq r$ and $\alpha \leq 1$, then $\alpha^a-\alpha -r \leq \alpha^a + \alpha - r \leq 2\alpha^a - r \leq 0$, so that the arguments inside the two arccosine functions in \eqref{SECT3PREUVSpherEntr2Plansdim23} are negative. Since the arccosine function is convex on $[-1,0]$, we can then state that
\begin{align*}
\arccos\Big(\frac{\alpha^a-\alpha}{r}-1\Big) - \arccos\Big(\frac{\alpha^a+\alpha}{r}-1\Big) &\geq \Big(\Big(\frac{\alpha^a+\alpha}{r}-1\Big) - \Big(\frac{\alpha^a-\alpha}{r}-1\Big)\Big)\frac{1}{\sqrt{1-\Big(\frac{\alpha^a+\alpha}{r}-1\Big)^2}} \\
&\geq \frac{2\alpha}{\sqrt{2r(\alpha^a+\alpha)-(\alpha^a+\alpha)^2}}\cdotp
\end{align*}
As a consequence, the following inequality
\begin{align}
\label{SECT3PREUVSpherEntr2Plansdim24}
\frac{2\alpha}{\sqrt{2r(\alpha^a+\alpha)-(\alpha^a+\alpha)^2}} \geq \frac{\alpha^a + \alpha}{\sqrt{2r(\alpha^a+\alpha)-(\alpha^a+\alpha)^2}} - \frac{\alpha^a-\alpha}{\sqrt{2r(\alpha^a-\alpha)-(\alpha^a-\alpha)^2}} 
\end{align}
would imply \eqref{SECT3PREUVSpherEntr2Plansdim23}. But since $\alpha^a - \alpha > 0$, \eqref{SECT3PREUVSpherEntr2Plansdim24} is equivalent to
$$
2r(\alpha^a-\alpha) - (\alpha^a-\alpha)^2 \leq 2r(\alpha^a+\alpha) - (\alpha^a+\alpha)^2
$$
or again $\alpha^a \leq r$, which was assumed, so \eqref{SECT3PREUVSpherEntr2Plansdim24}, and then \eqref{SECT3PREUVSpherEntr2Plansdim23} hold. As a consequence, for $2\alpha^a \leq r \leq R$, one has
\begin{align}
\label{SECT3PREUVSpherEntr2Plansdim25}
r\Big(\arccos\Big(&\frac{\alpha^a-\alpha}{r}-1\Big) - \arccos\Big(\frac{\alpha^a+\alpha}{r}-1\Big)\Big) \leq R\Big(\arccos\Big(\frac{\alpha^a-\alpha}{R}-1\Big) - \arccos\Big(\frac{\alpha^a+\alpha}{R}-1\Big)\Big).
\end{align}
Finally, the upper bound of \eqref{SECT3PREUVSpherEntr2Plansdim25} can be simplified when $\alpha$ is small, using basically the idea that $\arccos(x) - \pi \underset{-1}{\sim} - \sqrt{2(x+1)}$. More precisely, using the identity:
$$
\pi - \sqrt{3(x+1)} \leq \arccos(x) \leq \pi - \sqrt{2(x+1)},
$$ holding for all $x \in [-1,-2/3]$, one finds
\begin{align*}
\arccos\Big(\frac{\alpha^a-\alpha}{R}-1\Big) - \arccos\Big(\frac{\alpha^a+\alpha}{R}-1\Big) &\leq \sqrt{3\frac{(\alpha^a+\alpha)}{R}} - \sqrt{2\frac{(\alpha^a-\alpha)}{R}} \\
&\leq \frac{\alpha^{a/2}}{\sqrt{R}}\Big( \sqrt{3}\sqrt{1+\alpha^{1-a}} - \sqrt{2}\sqrt{1-\alpha^{1-a}} \Big) \\
&\leq \frac{\alpha^{a/2}}{\sqrt{R}}\big( \sqrt{3}(1+\alpha^{1-a}/2) - \sqrt{2}(1-\alpha^{1-a}) \big),
\end{align*}
as soon as $\alpha \leq (R/6)^{1/a}$, implying in particular $0 \leq (\alpha^a + \alpha)/R \leq 1/3$. If one has in addition that $(\sqrt{3}/2+\sqrt{2})\alpha^{1-a} \leq 1 - (\sqrt{3} - \sqrt{2})$, that is $\alpha \leq \big((1+\sqrt{2}-\sqrt{3})/(\sqrt{3}/2+\sqrt{2})\big)^{1/(1-a)}$, then in the end
\begin{align*}
\arccos\Big(\frac{\alpha^a-\alpha}{R}-1\Big) &- \arccos\Big(\frac{\alpha^a+\alpha}{R}-1\Big) \leq \frac{\alpha^{a/2}}{\sqrt{R}}.
\end{align*}
Multiplying the difference of the two arccosines by $2r$ and keeping in mind that $r$ is bounded by $R$, we recover the result \eqref{SECT3PREUVSpherEntr2Plansdim=2} for the dimension $d=2$, with the restrictions that were described concerning $\alpha$, when we choose $a = 1/2$.\\
\newline
Back to the control of the size of the set $\mathcal{N}^*(R,\alpha)(v_1)$, we recall that, by definition, this set is composed of the adjunction parameters $\omega$ and $v_2$ leading to at least one grazing post-collisional velocity $v'$ (that is such that $-\alpha \leq v'\cdot e_1 \leq \alpha$) between $v'_1$ and $v'_2$. Here it is important to recall the elementary geometrical propetries fulfilled by those post-collisional velocities: by definition of the scattering operator, for $v_1$ and $v_2$ fixed, the two velocities $v'_1$ and $v'_2$ lie in the boundary of the ball centered on $(v_1+v_2)/2$ and of radius $\vert v_1 - v_2 \vert/2$, $v_1'$ and $v_2'$ delimiting a diameter of this ball. In addition, the three vectors $v'_1-v_1$, $v'_2-v_2$ and $\omega$ have always the same direction.\\
Now that the general geometrical setting has been set, let us now describe the main argument for the control of the size of $\mathcal{N}^*(R,\alpha)(v_1)$. Assuming that $v_1$ and $v_2$ are both fixed, if $v'_1$ is prescribed to lie in a given part of the ball centered on $(v_1+v_2)/2$ and of radius $\vert v_1 - v_2 \vert/2$, what can be said about the part in which $\omega$ lies? In the case of $v'_1$, it is also important to notice that $v'_1-v_1$ and $\omega$ have, in addition to the same direction, the same orientation (since $(v_2-v_1)\cdot\omega > 0$: we are in the situation in which the scattering has to be applied). The situation can then be pictured as in Figure \ref{SECT3FigurLien_Entrev'_1&Omega} below.
\begin{figure}[!h]
\centering
\includegraphics[scale=0.06, trim = 0.95cm 2cm 0cm 1cm]{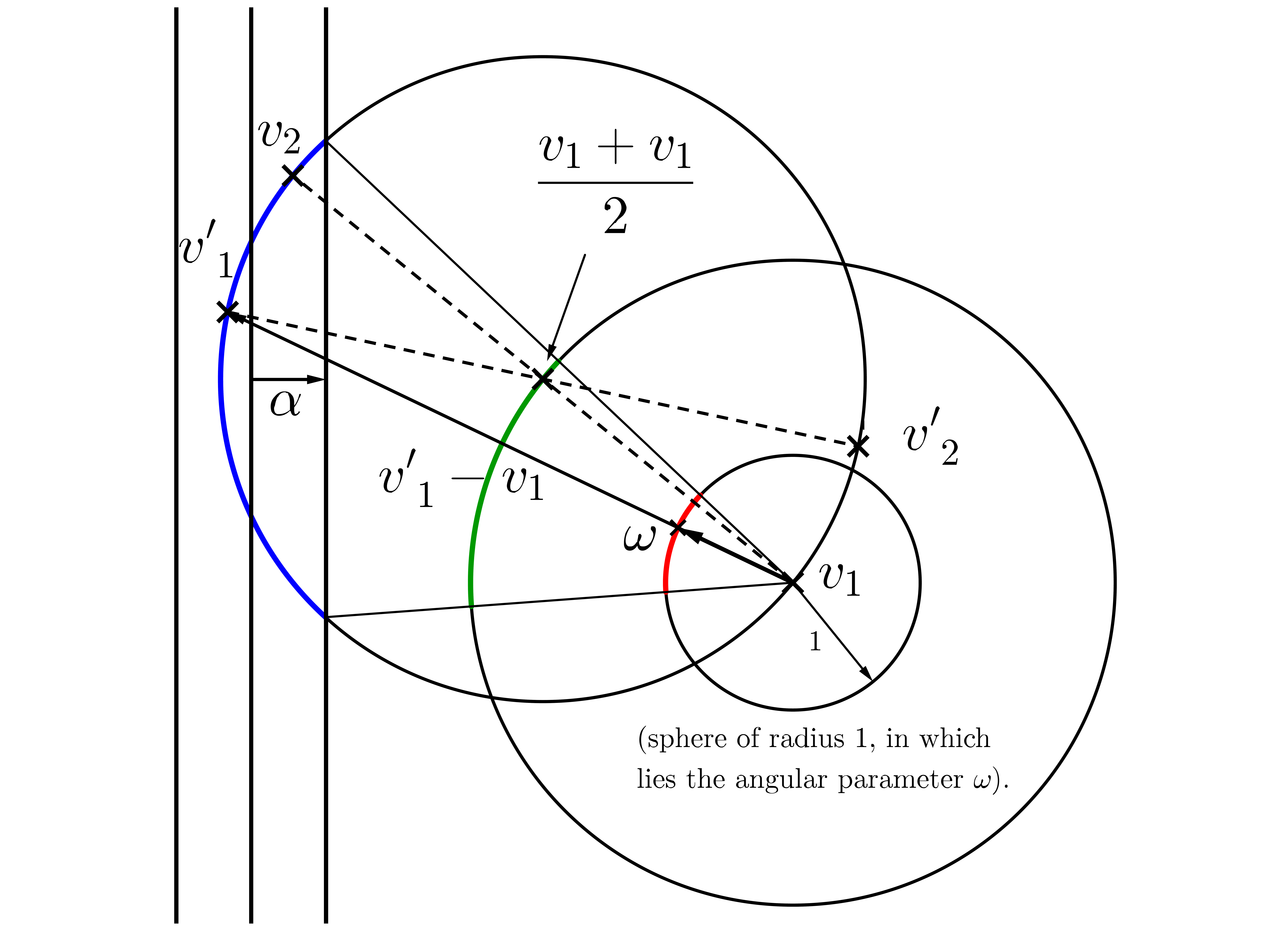}
\caption{The link between the constraints over the position of $v'_1$ and the position of $\omega$.}
\label{SECT3FigurLien_Entrev'_1&Omega}
\end{figure}
The key observation is that, in the two dimensional case, if the vector $v'_1-(v_1+v_2)/2$ covers some angle, then, thanks to the inscribed angle theorem, the vector $v'_1-v_1$ will cover half of this angle, since $v'_1-(v_1+v_2)/2$ has its origin at the center of the ball centered on $(v_1+v_2)/2$ and of radius $\vert v_1 - v_2 \vert/2$, while the origin of $v'_1-v_1$ lies on the boundary of this ball. From this point, the link between the surface covered by $v'_1-v_1$ and the one covered by $\omega$ is a simple question of scaling. Let us now make this argument rigorous.\\
We saw that the idea is to solve the problem when $v_2$ is fixed, and then deduce the general result by releasing the constraint on $v_2$. We introduce then
$$
\mathcal{N}^*(\alpha)(v_1,v_2) = \big\{ \omega \in \mathbb{S}^{d-1}\ /-\alpha \leq v'_1 \cdot e_1 \leq \alpha \text{     or   } -\alpha \leq v'_2 \cdot e_1 \leq \alpha \big\},
$$
so that $\vert \mathcal{N}^*(R,\alpha)(v_1) \vert = \int_{v_2 \in B(0,R)} \vert \mathcal{N}^*(R,\alpha)(v_1,v_2) \vert \dd v_2$.

For the dimension $d\geq 3$, we have to parametrize properly the problem. Denoting $\tilde{r}$ the norm of $v_2-v_1$ and $u \in \mathbb{S}^{d-2}$ its normalized projection on the hyperplane $v \cdot e_1 = 0$, we have $v_2 = v_2(u,\tilde{r},\theta) = v_1 + \tilde{r}(\cos\theta e_1 + \sin \theta u)$, with $\theta \in [0,\pi]$. The polar coordinates provide then
$$
\big\vert \mathcal{N}^*(R,\alpha)(v_1) \big\vert \leq C(d) \int_{\mathbb{S}^{d-1}}\int_0^{2R}\int_0^\pi \tilde{r}^{d-1} \big\vert \mathcal{N}^*(\alpha)\big(v_1,v_2(u,\tilde{r},\theta)\big) \big\vert \dd \theta \dd \tilde{r} \dd u.
$$
Therefore, considering $x = (v_1+v_2)/2$ and $r = \vert v_2-v_1 \vert/2$, $\omega$ belongs to $\mathcal{N}^*(\alpha)(v_1,v_2)$ if and only if $v'_1$ or $v'_2$ belong to $\partial B(x,r) \cap \{ \vert y\cdot e_1 \vert \leq \alpha \}$.\\
We can now use the argument based on the inscribed angle theorem, since for any two-dimensional plane $\mathcal{P}$ through $v_1$, the intersection of this plane with $\partial B(x,r)$ is a circle which contains $v_1$, while the intersection of $\mathcal{P}$ with the ball centered on $v_1$ and of radius $\vert v_1-v_2 \vert/2 = r$ is a circle of the same radius (so that we are exactly in the situation described in Figure \ref{SECT3FigurLien_Entrev'_1&Omega}). We have then
\begin{align*}
\Big\vert \mathcal{P} \cap \partial B(v_1,r) &\cap \big\{z\ /\ \exists\, \lambda \in \mathbb{R}\ /\ \lambda(z-v_1) \in \{-\alpha \leq y\cdot e_1 \leq \alpha\}\cap \partial B(x,r) \big\} \Big\vert \\
&= \frac{1}{2}\big\vert \mathcal{P} \cap \big( \partial B(x,r) \cap \{-\alpha \leq y \cdot e_1 \leq \alpha\}\big) \big\vert,
\end{align*}
(it means that the green line on Figure \ref{SECT3FigurLien_Entrev'_1&Omega} measures half the length of the blue line), so that
\begin{align*}
\Big\vert \partial B(v_1,r) \cap \big\{z\ /\ \exists\, \lambda& \in \mathbb{R}\ /\ \lambda(z-v_1) \in \{-\alpha \leq y\cdot e_1 \leq \alpha\}\cap \partial B(x,r) \big\} \Big\vert \\
&= \frac{1}{2}\big\vert\partial B(x,r) \cap \{-\alpha \leq y \cdot e_1 \leq \alpha\} \big\vert.
\end{align*}
Finally, and it is an important point, the elements $z$ of $\partial B(v_1,r) \cap \big\{z\ /\ \exists\, \lambda \in \mathbb{R}\ /\ \lambda(z-v_1) \in \{-\alpha \leq y\cdot e_1 \leq \alpha\}\cap \partial B(x,r) \big\}$ are such that $z-v_1$ has a norm equal to $r$, and is colinear to $\omega$ (since $z-v_1$ is colinear to $v'_1-v_1$ by construction). So we can state in the end that
\begin{align*}
\big\vert \mathcal{N}^*(\alpha)\big(v_1,v_2(u,\tilde{r},\theta)\big) \big\vert = \frac{1}{r^{d-1}}\big\vert\partial B(x,r) \cap \{-\alpha \leq y \cdot e_1 \leq \alpha\} \big\vert.
\end{align*}
One will now conclude using the intermediate results \eqref{SECT3PREUVSpherEntr2Plansdim=2} and \eqref{SECT3PREUVSpherEntr2Plansdim>2}. For the case $d\geq 3$, one has
\begin{align*}
\big\vert \mathcal{N}^*(R,\alpha)(v_1) \big\vert &\leq C(d) \int_{\mathbb{S}^{d-2}}\int_0^{2R}\int_0^\pi \tilde{r}^{d-1}\Big(\frac{1}{\tilde{r}^{d-1}}\tilde{r}^{d-2}\alpha\Big) \dd \theta \dd \tilde{r} \dd u\\
&\leq C(d)R^{d-1}\alpha,
\end{align*}
which is the sought result for the case when the dimension is higher than $2$.\\
When $d=2$, one has to be more careful, due to the restrictions specific to this case. We start by cutting off the small difference between the velocities $v_1$ and $v_2$ (to take into account the constraint $2\sqrt{\alpha} \leq r \leq 2R$): to do so, let us consider a parameter $b \in ]0,1[$, and let us write
\begin{align*}
\big\vert \mathcal{N}^*(R,\alpha)(v_1) \big\vert \leq \int_{\substack{\vspace{-1.5mm}B(v_1,\alpha^b)}} \hspace{-8mm} \big\vert \mathcal{N}^*(\alpha)(v_1,v_2) \big\vert \dd v_2 + \int_{\substack{\vspace{-1.5mm}B(v_1,2R)\backslash B(v_1,\alpha^b)}} \hspace{-21mm} \big\vert \mathcal{N}^*(\alpha)(v_1,v_2) \big\vert \dd v_2.
\end{align*}
Let us rewrite and bound from above the second term using again the polar coordinates, which take the much simpler expression in the case $d=2$:
\begin{align*}
\big\vert \mathcal{N}^*(R,\alpha)(v_1) \big\vert \leq \int_{\substack{\vspace{-1.5mm}B(v_1,\alpha^b)}} \hspace{-8mm} \big\vert \mathcal{N}^*(\alpha)(v_1,v_2) \big\vert \dd v_2 + \int_{\alpha^b}^{2R} \int_0^\pi \tilde{r} \big\vert \mathcal{N}^*(\alpha)\big(v_1,v_2(\tilde{r},\theta)\big) \big\vert \dd \theta \dd \tilde{r}.
\end{align*}
In order to apply the control \eqref{SECT3PREUVSpherEntr2Plansdim=2}, we need now to take into account the constraint $\vert x \cdot e_1 \vert \leq r - \sqrt{\alpha}$, that is here $\vert v_1\cdot e_1 + (\tilde{r}\cos\theta)/2 \vert \leq \tilde{r}/2 - \sqrt{\alpha}$. Remembering also that if $r < \vert x \cdot e_1 \vert - \alpha$, then $\vert \{ y \in \partial B(x,r)\ /\ \vert y \cdot e_1 \vert \leq \alpha \} \vert = 0$, we decompose then the integral over $\theta$ as:
\begin{align*}
\int_{\alpha^b}^{2R} \hspace{-2mm}\int_0^\pi \tilde{r} \big\vert \mathcal{N}^*(\alpha)\big(v_1,v_2(\tilde{r},\theta)\big) \big\vert \dd \theta \dd \tilde{r} \leq \int_{\alpha^b}^{2R}\hspace{-2mm}& \int_{\substack{\vspace{-1.5mm}\tilde{r}/2-\sqrt{\alpha} < \vert v_1\cdot e_1 + (\tilde{r}\cos\theta)/2 \vert \leq \tilde{r}/2 + \alpha}} \hspace{-43mm} \tilde{r} \big\vert \mathcal{N}^*(\alpha)\big(v_1,v_2(\tilde{r},\theta)\big) \big\vert \dd \theta \dd \tilde{r} \\
&\hspace{20mm} + \int_{\alpha^b}^{2R}\hspace{-2mm} \int_{\substack{\vspace{-1.5mm}\vert v_1\cdot e_1 + (\tilde{r}\cos\theta)/2 \vert \leq \tilde{r}/2-\sqrt{\alpha})}} \hspace{-34mm} \tilde{r} \big\vert \mathcal{N}^*(\alpha)\big(v_1,v_2(\tilde{r},\theta)\big) \big\vert \dd \theta \dd \tilde{r}.
\end{align*}
The first term will be small since the domain of integration is small, and \eqref{SECT3PREUVSpherEntr2Plansdim=2} will hold for the second term. More precisely, the condition $\tilde{r}/2-\sqrt{\alpha} < \vert v_1\cdot e_1 + (\tilde{r}\cos\theta)/2 \vert \leq \tilde{r}/2 + \alpha$ contains actually the two conditions
\begin{align*}
1- \frac{2}{\tilde{r}}\big(\sqrt{\alpha}+v_1\cdot e_1\big) < \cos \theta \leq 1 + \frac{2}{\tilde{r}}\big(\alpha-v_1\cdot e_1) \text{   or   } -1-\frac{2}{\tilde{r}}(\alpha+v_1\cdot e_1) < \cos\theta \leq -1 + \frac{2}{\tilde{r}}\big(\sqrt{\alpha}-v_1\cdot e_1\big).
\end{align*}
Using now the fact that the second derivative of the arccosine function is non negative on $[-1,0]$, we can deduce that for $-1\leq x \leq y \leq 0$ holds $\arccos(x) - \arccos(y) \leq \arccos(-1) - \arccos(-1+y-x)$. This provides then that
\begin{align*}
\arccos\Big(\min\big(1,1-\frac{1}{\tilde{r}}(\sqrt{\alpha}+v_1\cdot e_1\big)\Big) - \arccos\Big(\min\big(1,1+&\frac{2}{\tilde{r}}(\alpha-v_1\cdot e_1\big)\Big) \\
&\leq \arccos(-1) - \arccos\big(-1+\frac{2}{\tilde{r}}(\alpha+\sqrt{\alpha})\big) \\
&\leq \arccos(-1) - \arccos\big(-1+\frac{4}{\tilde{r}}\sqrt{\alpha}\big)
\end{align*}
and similarly
\begin{align*}
\arccos\Big(\max\big(-1,-1-\frac{1}{\tilde{r}}(\alpha+v_1\cdot e_1\big)\Big) - \arccos\Big(\min\big(-1,-1&+\frac{2}{\tilde{r}}(\sqrt{\alpha}-v_1\cdot e_1\big)\Big) \\
&\leq \arccos(-1) - \arccos\big(-1+\frac{4}{\tilde{r}}\sqrt{\alpha}\big).
\end{align*}
To get rid of the arccosines in the expression, we use in the end that $\pi - \sqrt{3(x+1)} \leq \arccos(x)$ for all $x \in [-1,-2/3]$, providing that
$$
\arccos(-1) - \arccos\big(-1+\frac{4}{\tilde{r}}\sqrt{\alpha}\big) \leq \sqrt{3\frac{4}{\tilde{r}}\sqrt{\alpha}},
$$
up to have $\sqrt{\alpha}/\tilde{r} \leq 1/12$, hence 
$$\big\vert \big\{ \theta \in [0,\pi]\ /\ \tilde{r}/2-\sqrt{\alpha} < \vert v_1\cdot e_1 + (\tilde{r}\cos\theta)/2 \vert \leq \tilde{r}/2 + \alpha \big\} \big\vert \leq 4\sqrt{3}\alpha^{1/4-b/2}.
$$
Back to the decomposition of $\big\vert \mathcal{N}^*(R,\alpha)(v_1) \big\vert$, we have, bounding roughly the integrands of the two first terms (keeping simply in mind that $\mathcal{N}^*(\alpha)(v_1,v_2)$ is by definition a part of the sphere $\mathbb{S}^{d-1}$) and applying at last \eqref{SECT3PREUVSpherEntr2Plansdim=2} in the last one:
\begin{align*}
\hspace{-20mm}\big\vert \mathcal{N}^*(R,\alpha)(v_1) \big\vert \leq \int_{B(v_1,\alpha^b)} \big\vert \mathbb{S}^{d-1} \big\vert &\dd v_2 + C \int_{\alpha^b}^{2R}\hspace{-2mm} \int_{\substack{\vspace{-1.5mm}\tilde{r}/2-\sqrt{\alpha} < \vert v_1\cdot e_1 + (\tilde{r}\cos\theta)/2 \vert \leq \tilde{r}/2 + \alpha}} \hspace{-43mm} 2R \big\vert \mathbb{S}^{d-1} \big\vert \dd \theta \dd \tilde{r} \\
&+ C \int_{\alpha^b}^{2R}\hspace{-2mm} \int_{\substack{\vspace{-1.5mm}\vert v_1\cdot e_1 + (\tilde{r}\cos\theta)/2 \vert \leq \tilde{r}/2-\sqrt{\alpha})}} \hspace{-34mm} \tilde{r}\Big( \frac{1}{\tilde{r}} \sqrt{R}\alpha^{1/4}\Big) \dd \theta \dd \tilde{r} \\
& \leq C \big( \alpha^{2b} + R^2 \alpha^{1/4-b/2} + R^{3/2} \alpha^{1/4} \big).
\end{align*}
It is important to notice that \eqref{SECT3PREUVSpherEntr2Plansdim=2} holds for $R\geq 1$ and $\alpha$ small enough, with a condition which does not depend on $R$, and such that $\sqrt{\alpha} \leq \tilde{r}/4$. If one takes $b < 1/2$, and if $\tilde{r} \geq \alpha^b$, then $\tilde{r}/(4\sqrt{\alpha}) \geq \alpha^{b-1/2}/4 \rightarrow + \infty$ when $\alpha \rightarrow 0$, so that there exists a constant $c(b)$ such that for $\alpha \leq c(b)$, one has $\tilde{r}/4 \geq \sqrt{\alpha}$ and $\sqrt{\alpha}/\tilde{r} \leq 1/12$ for all $\tilde{r} \in [\alpha^b,2R]$, which enables indeed to use \eqref{SECT3PREUVSpherEntr2Plansdim=2}, and in addition to control the size of the domain of the second term.\\
As a conclusion, since $\big\vert \mathcal{N}^*(R,\alpha)(v_1) \big\vert \leq CR^2\big(\alpha^{2b}+\alpha^{1/4-b/2}\big)$, choosing $b=1/4$ provides the conclusion of the lemma, and the size of $\mathcal{N}^*(R,\alpha)(v_1)$ is going to zero for $R$ fixed and as $\alpha$ goes to zero. This concludes the proof of Lemma \ref{SECT3LemmeScatePavé_VitesRasan}.
\end{proof}

\subsection{The convergence of the pseudo-trajectories}

All the tools are now in place to obtain pseudo-trajectories without recollisions. Now, we can see on which set of initial configurations the geometrical results may be used to construct such pseudo-trajectories. First, let us introduce the domain of local uniform convergence:
\begin{defin}[Domain of local uniform convergence]
\label{SECT3DefinDomaiConveLocalUnifo}
let $s$ be a positive integer. We introduce the \emph{domain of local uniform convergence}, denoted as $\Omega_s$, as the subset $\Omega_s$ of the phase space of $s$ particles defined as $\Omega_s = \bigcap_{j=1}^4 \Omega_s^j$ with
$$
\left\{
\begin{array}{rl}
\Omega_s^1 &= \big\{Z_s\ /\ \forall i \neq j, x_i \neq x_j\big\},\\
\Omega_s^2 &= \big\{Z_s\ /\ \forall i, v_i \cdot e_1 \neq 0\big\},\\
\Omega_s^3 &= \big\{Z_s\ /\ \forall i \neq j, v_j \notin v_i + \text{Vect}(x_i-x_j)\big\},\\
\Omega_s^4 &= \big\{Z_s\ /\ \forall i \neq j, v_j \notin \mathcal{S}_0(v_i) + \text{Vect}\big(\mathcal{S}_0(x_i)-x_j\big)\big\},
\end{array}
\right.
$$
\end{defin}
together with the following subsets in the phase space $\mathcal{D}^\varepsilon_s \subset \mathbb{R}^{2ds}$: we set $\Delta_s = \Delta_s(\varepsilon,R,\varepsilon_0,\alpha,\gamma) = \bigcap_{j=1}^6 \Delta_s^j$, where
\begin{align*}
\Delta_s^1& = \Delta_s^1(\varepsilon) = \big\{  \forall\, 1 \leq i \leq s,\ x_i\cdot e_1 > \varepsilon/2 \big\}, \Delta_s^2 = \Delta_s^2(R) = \big\{ \vert V_s \vert \leq R \big\},\\
\Delta_s^3 = \Delta_s^3&(\varepsilon_0) = \big\{ \min_{1 \leq i < j \leq s} \vert x_i - x_j \vert \geq \varepsilon_0 \big\}, \Delta_s^4 = \Delta_s^4(\alpha) = \big\{\ \min_{1 \leq i \leq s} \vert v_i \cdot e_1 \vert \geq \alpha \big\},\\
&\Delta_s^5 = \Delta_s^5(\gamma) = \big\{ \min_{1 \leq i < j \leq s} d\big(v_j,v_i+\text{Vect}(x_i-x_j)\big) \geq \gamma \big\},\\
\Delta_s^6& = \Delta_s^6(\gamma) = \big\{ \min_{1 \leq i < j \leq s} d\big(v_j,\mathcal{S}_0(v_i)+\text{Vect}\big(\mathcal{S}_0(x_i)-x_j\big) \big) \geq \gamma \big\}.
\end{align*}
The first condition avoid the overlapping with the wall. All the others but the fourth enable to use Proposition \ref{SECT3TheorBonneConfiAdjonParti}, so that from an initial configuration taken in $\Delta_s$, all the adjunctions but a small amount of them (with a controlled size) will provide pseudo-trajectories without recollisions, up to perform the adjunctions on particles far enough from the wall. This will be possible for all the times but a small amount of them, thanks to the fourth condition, avoiding grazing collisions, and then particles staying too long too close to the wall.\\
For this subset $\Delta_s$ of the phase space, one can state the following result, characterizing the convergence we obtained in the final theorem.

\begin{propo}[Control of the size of the pathological adjunction parameters for pseudo-trajectories starting from a compact set of $\Omega_s$, and uniform control on the difference between the pseudo-trajectories]
\label{SECT3PropoControParamAdjonPatho}
For any compact set $K$ contained in the domain of local uniform convergence $\Omega_s$, there exist five strictly positive numbers $\overline{\varepsilon}$, $\overline{\varepsilon}_0$, $\overline{\alpha}$, $\overline{\gamma}$ and $\overline{R}$ such that for all $\varepsilon$, $\varepsilon_0$, $\alpha$, $\gamma$ and $R$ such that $\varepsilon \leq \overline{\varepsilon}, \varepsilon_0 \leq \overline{\varepsilon}_0, \alpha \leq \overline{\alpha}, \gamma \leq \overline{\gamma}$ and $R \geq \overline{R}$, we have $K \subset \Delta_s(\varepsilon,R,\varepsilon_0,\alpha,\gamma)$.\\
In addition, for all positive numbers $\eta,a,\delta,\rho$ and $\alpha$ such that $R \geq 1, \eta \leq 1, 2\varepsilon \leq a, 4\sqrt{3} \leq \varepsilon_0, \varepsilon_0 \leq \eta\delta, 3a \leq \rho, \alpha \leq c(d)$ and
$max(\varepsilon_0/\delta,16Ra/\varepsilon_0) \leq \overline{\gamma}$, where $c(d)$ is a constant which depends only on the dimension $d$, for all $Z_s \in \Delta_s$, $T_k = (t_1,\dots,t_k) \in \mathfrak{T}_k$, $J_k = (j_1,\dots,j_k) \in \mathfrak{J}^s_k$ and $A_k = \big((\omega_1,v_{s+1}),\dots,(\omega_k,v_{s+k})\big) \in \mathfrak{A}_k$, there exist two families of respective subsets of $[0,T]$ and $\mathbb{S}^{d-1}\times\mathbb{R}^d$
$$
U_{J_k} = \big(U_{j_1}(Z_s),U_{j_2}(Z^0_{s,1}(t_1)),\dots,U_{j_k}(Z^0_{s,k-1}(t_{k-1})\big)
$$
(for the pathological times of adjunction) and
$$
E_{J_k} = \big(E_{j_1}(Z^0_{s,0}(t_1)),\dots,E_{j_k}(Z^0_{s,k-1}(t_k))\big)
$$
(for the pathological velocity and angular parameter of adjunction) such that for all $1 \leq l \leq k$:
\begin{align}
&\hspace{20mm}\big\vert U_{j_l} \big( Z^0_{s,l-1}(t_{l-1}) \big) \big\vert \leq 2\frac{\rho}{\alpha}, \\
\big\vert E_{j_l} \big(Z^0_{s,l-1}(t_l)\big) \big\vert \leq C(d) &\Big( \eta^d + R^d\Big(\frac{a}{\rho}\Big)^{d-1} \hspace{-3mm} + \hspace{0.5mm} kR^{2d-1}\Big(\frac{a}{\varepsilon_0}\Big)^{d-3/2} \hspace{-3mm} + \hspace{0.5mm} kR^{d+1/2}\Big(\frac{\varepsilon_0}{\delta}\Big)^{d-3/2} + R^{d-1}\alpha + R^d\alpha^{1/8} \Big),
\end{align}
and such that if $t_l \notin U_{j_l}$ and $(\omega_l,v_{s+l}) \notin E_{j_l}$, then:
\begin{itemize}[leftmargin=*]
\item no particle of the pseudo-trajectory $Z^\varepsilon(Z_s,T_k,J_k,A_k)$ associated to the BBGKY hierarchy will undergo a recollision, that is $\big\vert x^{\varepsilon,m}_{s,l}(\tau) - x^{\varepsilon,m'}_{s,l}(\tau) \big\vert > \varepsilon$ for all $0 \leq l \leq k$, $\tau \in [t_{l+1},t_l]$ and $1 \leq m \neq m' \leq s+l$,
\item the distance between the particles of the BBGKY and the Boltzmann pseudo-trajectories is uniformly controlled after the $k$ adjunctions, and more precisely $\big\vert x^{\varepsilon,m}_{s,k}(0) - x^{0,m}_{s,k}(0) \big\vert \leq 2k\varepsilon$.
\end{itemize}
\end{propo}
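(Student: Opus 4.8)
The first claim, that a compact $K\subset\Omega_s$ is contained in $\Delta_s(\varepsilon,R,\varepsilon_0,\alpha,\gamma)$ once the parameters are small (resp. large) enough, is a pure compactness statement and is the easy part. Since $\Omega_s=\bigcap_{j=1}^4\Omega_s^j$ is open, each of the continuous functions $Z_s\mapsto x_i\cdot e_1$, $Z_s\mapsto|v_i\cdot e_1|$, $Z_s\mapsto|x_i-x_j|$, $Z_s\mapsto d\big(v_j,v_i+\mathrm{Vect}(x_i-x_j)\big)$, $Z_s\mapsto d\big(v_j,\mathcal{S}_0(v_i)+\mathrm{Vect}(\mathcal{S}_0(x_i)-x_j)\big)$ attains a strictly positive minimum on $K$, while $|V_s|$ attains a finite maximum; I would set $\overline{\varepsilon}$ below twice $\min_K x_i\cdot e_1$, $\overline{\alpha}=\min_K|v_i\cdot e_1|$, $\overline{\varepsilon}_0=\min_K|x_i-x_j|$, $\overline{\gamma}$ the minimum of the last two distances, $\overline{R}=\max_K|V_s|$, and conclude $K\subset\Delta_s$ for all admissible parameters below these thresholds.

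For the second part the plan is an induction on the number $l$ of adjunctions already performed, constructing $U_{j_1},\dots,U_{j_k}$ and $E_{j_1},\dots,E_{j_k}$ one at a time and propagating the invariant: (i) for every time between $0$ and the current one, the Boltzmann configuration $Z^0_{s,l}$ is a good configuration separated by at least $\varepsilon_0$ and all its velocities $v$ satisfy $|v\cdot e_1|\geq\alpha$; (ii) the BBGKY configuration $Z^\varepsilon_{s,l}$ has undergone no recollision, its particles are within $2l\varepsilon$ of those of $Z^0_{s,l}$ (position by position), and each of its velocities equals the corresponding Boltzmann velocity, possibly composed with the wall symmetry $\mathcal{S}_0$. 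The invariant holds at $l=0$ because $Z_s\in\Delta_s$. At step $l+1$ the adjunction acts on particle $j_{l+1}$ of $Z^0_{s,l}$. Between two adjunctions the particles follow the free flow with specular reflection, hence each particle bounces against the wall \emph{at most once}; combined with $|v^{\cdot,l}_{s,j_{l+1}}\cdot e_1|\geq\alpha$, the set of times at which that particle lies within distance $\rho$ of the wall is contained in a single interval of length at most $2\rho/\alpha$, which I take as $U_{j_{l+1}}$. For $t_{l+1}\notin U_{j_{l+1}}$ the separation-from-the-wall hypothesis \eqref{PropoBnCfiAdjonMinimDista} of Theorem \ref{SECT3TheorBonneConfiAdjonParti} holds, so that theorem (applied with $s+l$ particles) produces an excluded set of measure at most $C(d)\big(\eta^d+R^d(a/\rho)^{d-1}+(s+l)R^{2d-1}(a/\varepsilon_0)^{d-3/2}+(s+l)R^{d+1/2}(\varepsilon_0/\delta)^{d-3/2}\big)$, outside of which (i) propagates to $Z^0_{s,l+1}$ and no recollision involving the new particle occurs for $Z^\varepsilon_{s,l+1}$; bounding $s+l$ by the displayed $k$-dependence (up to adjusting $C(d)$, $s$ being fixed) matches the stated estimate. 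To this I would add, inside $E_{j_{l+1}}$, the adjunction parameters making a velocity grazing after the step: the set $\mathcal{N}^*(R,\alpha)\big(v^{\cdot,l}_{s,j_{l+1}}\big)$ of Lemma \ref{SECT3LemmeScatePavé_VitesRasan} in the post-collisional case, of measure at most $C(d)(R^{d-1}\alpha+R^d\alpha^{1/8})$, and the slab $\{|v_{s+l+1}\cdot e_1|\leq\alpha\}$ in the pre-collisional case, of measure at most $C(d)R^{d-1}\alpha$; this restores the non-grazing part of (i), hence the bound on $|E_{j_{l+1}}|$ announced in the statement.

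It then remains to check that, for $t_{l+1}\notin U_{j_{l+1}}$ and $(\omega_{l+1},v_{s+l+1})\notin E_{j_{l+1}}$, the per-particle positional discrepancy between the two pseudo-trajectories grows by at most $2\varepsilon$: one $\varepsilon$ from the adjunction itself (the new particle sitting at $x_{j_{l+1}}+\varepsilon\omega_{l+1}$ rather than $x_{j_{l+1}}$), and one $\varepsilon$ from the drift accumulated during the subsequent free evolution. This last point is where the wall is genuinely delicate and is, in my view, the main obstacle: two configurations that are $c$-close in position, equal up to $\mathcal{S}_0$ in velocity, with the relevant particle non-grazing and away from the wall, are transported by the free flow with specular reflection into configurations that are still $c+\mathcal{O}(\varepsilon)$-close, the key being that the mismatch between the two bouncing times is of order $\varepsilon/\alpha$ (impact heights differing by $\mathcal{O}(\varepsilon)$, normal speed bounded below by $\alpha$), a window during which one configuration already carries the reflected velocity and the other does not — exactly the phenomenon of Figure \ref{SECT3FigurDiffeDynamApresRebon}, whose quantitative control rests on the geometric estimates of the Shooting Lemma \ref{SECT3LemmedeTirPlanAxes_Fixes}. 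Iterating over the $k$ adjunctions yields $|x^{\varepsilon,m}_{s,k}(0)-x^{0,m}_{s,k}(0)|\leq 2k\varepsilon$, while the absence of recollision for every intermediate $Z^\varepsilon_{s,l}$ is exactly the good-configuration property carried by the induction. The genuinely new work compared with the boundaryless case of \cite{GSRT} is therefore this symmetry-up-to-$\mathcal{S}_0$ bookkeeping through each bounce together with the non-grazing cutoff and its survival under scattering (Lemma \ref{SECT3LemmeScatePavé_VitesRasan}), which prevents the bouncing-time mismatch from ever producing an uncontrolled divergence.
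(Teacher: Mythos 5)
Your proposal is correct and follows essentially the same route as the paper: an induction over the adjunctions in which the time cut-off $U_{j_l}$ of measure $2\rho/\alpha$ comes from the non-grazing condition, the adjunction cut-off $E_{j_l}$ is built by combining Theorem~\ref{SECT3TheorBonneConfiAdjonParti} (good-configuration stability) with the grazing-velocity exclusions (the slab $\{|v\cdot e_1|\le\alpha\}$ in the pre-collisional case, Lemma~\ref{SECT3LemmeScatePavé_VitesRasan} after scattering in the post-collisional case), and the positional discrepancy is propagated with a contribution of order $\varepsilon$ per adjunction offset and per bounce, the velocities being tracked up to the wall symmetry $\mathcal{S}_0$. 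The only remark worth making is that the inductive application of Theorem~\ref{SECT3TheorBonneConfiAdjonParti} requires the cumulative discrepancy $2l\varepsilon$ to stay below the parameter $a$, so the constraint should in fact read $2k\varepsilon\le a$ (as indeed appears as $2n\varepsilon\le a$ in Lemmas~\ref{SECT3LemmeChiruParamAdjonPatho} and~\ref{SECT3LemmeErreuVenanDiverTraje}); this is a harmless misprint in the statement, not a gap in your argument.
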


\begin{remar}
This proposition signifies that the goal of removing the recollisions, except for small subsets of adjunction parameters with a controlled size, has been fulfilled. In addition, the closedness of the two different pseudo-trajectories starting from the same initial configuration and undergoing the same adjunctions has been quantitavily obtained.\\
The definition of the elements $U_{j_l} \big( Z^0_{s,l-1}(t_{l-1}) \big)$ and $E_{j_l} \big(Z^0_{s,l-1}(t_l)\big)$ may look a bit intricated, but they are perfectly well defined by recursion: starting from $Z_s$ and choosing the first particle $j_1$ to undergo an adjunction enables to defines $U_{j_1}$, so we can choose $t_1$ outside, enabling now to define $Z^0_{s,0}(t_1)$, and then $E_{j_1}$. From this point, one can just iterate the process, choosing the second particle $j_2$ for the adjunction, defining then $U_{j_2}$, and so on.
\end{remar}

\begin{proof}[Proof of Proposition \ref{SECT3PropoControParamAdjonPatho}]
The proof is a simple induction using the geometrical lemmas.\\ The first step consists in noticing that once $Z_s$ belongs to $\Delta_s(\varepsilon,R,\varepsilon_0,\alpha,\gamma)$ with $\max(\varepsilon_0/\delta,16Ra/\varepsilon_0) \leq \gamma$, the velocities of this configuration are outside the cylinders described in Lemma \ref{SECT3LemmedeTirPlanAxes_Fixes}, and its results apply: we have $Z_s \in \mathcal{G}^\varepsilon_s(\varepsilon)$, and $T^{s,0}_{-\delta}(Z_s) \in \mathcal{G}^0_s(\varepsilon_0)$. In addition, all the velocities $v_i$ of the configuration $Z_s$ satisfy $\vert v_i \cdot e_1 \vert \geq \alpha$. As a consequence, for $j_1$, the index of the first particle undergoing an adjunction, the set $U_{j_1}(Z_s)$ of times $\tau$ such that $\vert x^0_{j_1}(\tau) \cdot e_1 \vert < \rho$ has a measure smaller than $2\rho/\alpha$. Then, for any time $t_1$ (chosen for the first adjunction) outside $U_{j_1}(Z_s)$, the configuration $Z^0_{s,0}(t_1) = T^{s,0}_{-t_1}(Z_s)$ fulfills the condition of Theorem \ref{SECT3TheorBonneConfiAdjonParti} (in particular, it is easy to show that the positions of the particles of the configuration $Z^\varepsilon_{s,0}(t_1)$ are close to the corresponding particles of $Z^0_{s,0}(t_1)$ since the trajectories are easily explicitable, and the velocities are the same up to apply the symmetry with respect to the wall), so this theorem provides a set $\mathcal{B}_s\big(Z^0_{s,0}(t_1)\big)$ for which, if the adjunction parameters $(\omega_1,v_{s+1})$ are taken outside of, the new configurations of $s+1$ particles $T^{s+1,0}_{-\delta}\big(Z^0_{s,1}(t_1)\big) = Z^0_{s,1}(t_1-\delta)$ and $Z^\varepsilon_{s,1}(t_1)$ belong respectively to $\mathcal{G}^0_{s+1}(\varepsilon_0)$ and $\mathcal{G}^\varepsilon_{s+1}(\varepsilon)$. In order to have new configurations of $s+1$ particles at $t_1$ such that all the velocities are not grazing (such that $\vert v_i \cdot e_1 \vert \geq \alpha$), one has to remove only the set $\{ \vert v_{s+1} \cdot e_1 \vert \leq \alpha \}$, of measure $C(d)R^{d-1}\alpha$, if the adjunction with $(\omega_1,v_{s+1})$ provides a pre-collisional configuration, and one has to remove the set $\mathcal{N}^*(R,\alpha)(v_{j_1})$, described in Lemma \ref{SECT3LemmeScatePavé_VitesRasan} if the adjunction provides a post-collisional configuration (so that in that case the scattering has to be applied immediately, before applying the transport). Those three exclusions define the set $E_{j_1}\big(Z^0_{s,0}(t_1)\big)$, with the size verifying the control stated in the proposition. Then on the new configuration of $s+1$ particles $Z^0_{s,1}(t_1)$ can be applied again the same process, and so on until the last $k$-th adjunction. This concludes the proof of the controls on the size of $U_{j_l}$ and $E_{j_l}$ for all $1 \leq l \leq k$, and the absence of recollision.\\
The control on the divergence between the two pseudo-trajectories $Z^0$ and $Z^\varepsilon$ can be obtained again by recursion, starting from the initial configuration $Z_s$ and the explicit expressions of the trajectories (which are now easy, thanks to the absence of recollision). The difference between the pseudo-trajectories comes from the adjunctions, but unlike the case of the domain without obstacle, it also comes from the bouncings against the obstacles, both of them generating errors of order $\varepsilon$. Proposition \ref{SECT3PropoControParamAdjonPatho} is proven.
\end{proof}

Now that we know which adjunction parameters have to be removed, it is important to see what is the impact on the terms of the Duhamel formula of such removals. Let us first introduce the truncated in adjunction parameters elementary terms of the hierarchies.

\begin{defin}[Truncated in adjunction parameters operators and elementary terms of the hierarchies]
For any integer $1 \leq j \leq s$, any function $f^{(s+1)} \in \mathcal{C}\big([0,T]\times(\overline{\Omega^c}\times\mathbb{R}^d)^{s+1}\big)$ with $f^{(s+1)}(t,\cdot) \in X_{0,s+1,\beta}$, for any $t \in [0,T]$, and for any functions $U : Z_s \mapsto U(Z_s) \in \mathcal{P}([0,t-\delta])$ and $E : Z_s \mapsto E(Z_s) \in \mathcal{P}(\mathbb{S}^{d-1}\times\mathbb{R}^d)$ of measurable subsets $U(Z_s)$ and $E(Z_s)$, we define the \emph{truncated in adjunction parameters, integrated in time collision-transport operator of the Boltzmann hierarchy of type $(\pm,j)$} the function
\begin{align*}
\mathds{1}_{t \geq \delta} \int_0^{t-\delta} \hspace{-4mm} \mathds{1}_{U(Z_s)}(u)&\int_{\substack{\vspace{-2mm} \mathbb{S}^{d-1}_\omega\times\mathbb{R}^d_{v_{s+1}}}} \hspace{-12.5mm} \mathds{1}_{E(Z^0_{s,0}(u))}  \Big[ \omega\cdot\big(v_{s+1}-v^{0,j}_{s,0}(u)\big)\Big]_\pm f^{(s+1)}\big(u,Z^0_{s,1}\big(u,j,(\omega,v_{s+1})\big)(u)\big) \dd \omega \dd v_{s+1} \dd u,
\end{align*}
denoted as $\mathcal{I}^{0,\delta}_{\hspace{-1.5mm}\substack{s \\ \pm,j}}(U,E) f^{(s+1)}(t,Z_s)$.\\
In the case when $E(Z_s) = \mathbb{S}^{d-1}\times\mathbb{R}^d$ for every $Z_s$ (the surgery occurs only on the time variable), we will simply denote the operator as $\mathcal{I}^{0,\delta}_{\hspace{-1.5mm}\substack{s \\ \pm,j}}(U)f^{(s+1)}(t,Z_s)$.\\
Then, for any positive integer $k$, any $J_k = (j_1,J_{k-1}) \in \mathfrak{J}^s_k$, $M_k = (\pm_1,M_{k-1}) \in \mathfrak{M}_k$, for any measurable function $f^{(s+k)}:[0,T]\times\big(\overline{\Omega^c}\times\mathbb{R}^d\big)^{s+k} \rightarrow \mathbb{R}$ with $f^{(s+k)}(t,\cdot) \in X_{0,s+k,\beta}$ for any $t \in [0,T]$, and for any families of functions $U_{J_k} = (U_{j_1},\dots,U_{j_k}) = (U_{j_1},U_{J_{k-1}})$ and $E_{J_k} = (E_{j_1},\dots,E_{j_k}) = (E_{j_1},E_{J_{k-1}})$ (with $U_{j_l} : Z_{s+l} \mapsto U(Z_{s+l}) \in \mathcal{P}([0,t-\delta])$ and $E_{j_l} : Z_{s+l} \mapsto E(Z_{s+l}) \in \mathcal{P}(\mathbb{S}^{d-1}\times\mathbb{R}^d)$ for all $1 \leq l \leq k$), we define the \emph{truncated in adjunction parameters elementary term of the Boltzmann hierarchy of type $(M_k,J_k)$} the function defined by recursion
\begin{align*}
\mathds{1}_{t \geq k\delta} &\int_{(k-1)\delta}^{t-\delta} \hspace{-4mm} \mathds{1}_{U_{j_1}(Z_s)} \int_{\substack{ \vspace{-2mm}\mathbb{S}^{d-1}_\omega \times \mathbb{R}^d_{v_{s+1}}}} \hspace{-12.5mm} \mathds{1}_{E_{j_1}(Z^0_{s,0}(t_1))} \Big[ \omega_1 \cdot \big(v_{s+1} - v^{0,j_1}_{s,0}(t_1)\big) \Big]_\pm \\
& \hspace{35mm} \times \Big( \mathcal{I}^{0,\delta}_{\substack{s+1,s+k-1 \\ M_{k-1},J_{k-1}}}(U_{J_{k-1}},E_{J_{k-1}}) f^{(s+k)} \Big) \big(t_1,Z^0_{s,1}\big(t_1,j_1,(\omega,v_{s+1})\big)(t_1)\big) \dd \omega \dd v_{s+1} \dd t_1,
\end{align*}
denoted as $\mathcal{I}^{0,\delta}_{\substack{s,s+k-1 \\ M_k,J_k}}(U_{J_k},E_{J_k})f^{(s+k)}$.\\
In the same fashion, we introduce the \emph{truncated in adjunction parameters elementary terms of the BBGKY hierarchy of type $(M_k,J_k)$}, defined here for $h^{(s+k)} \in L^\infty\big([0,T]\times\mathcal{D}^\varepsilon_{s+k}\big)$, denoted $\mathcal{I}^{N,\varepsilon,\delta}_{\substack{s,s+k-1 \\ (M_k,J_k)}}(U_{J_k},E_{J_k})h^{(s+k)}$.
\end{defin}

\begin{remar}
Considering the two subsets of pathological adjunction parameters $U_{j_l}$ and $E_{j_l}$ defined and studied in Proposition \ref{SECT3PropoControParamAdjonPatho} the objective is to perform the decomposition $\mathcal{I}^{\cdot,\delta}_{\substack{s,s+k-1 \\ (M_k,J_k)}} = \mathcal{I}^{\cdot,\delta}_{\substack{s,s+k-1 \\ (M_k,J_k)}}(U^c_{J_k},E^c_{J_k}) + \text{remainder terms}$. More explicitely, each operator in the iterated elementary terms will be decomposed as
$$
\mathcal{I}^{0,\delta}_{\substack{s \\ \pm,j}} = \mathcal{I}^{0,\delta}_{\substack{s \\ \pm,j}}(U_j) + \mathcal{I}^{0,\delta}_{\substack{s \\ \pm,j}}(U_j^c,E) + \mathcal{I}^{0,\delta}_{\substack{s \\ \pm,j}}(U_j^c,E_j^c),
$$
where here only the last term will not be a remainder.\\
Concerning the truncated in pathological adjunction parameters of the BBGKY hierarchy, it is important here to notice that the elementary terms \emph{are not defined as usual iterated integrals}. The same work as in Section \ref{SSect2.1__DefinOperaColliBBGKY} has to be done, and the operator obtained in the limit will behave like an integral, in the sense that for two subsets $E_1$ and $E_2$ of adjunction parameters in $\mathbb{S}^{d-1}\times\mathbb{R}^d$ such that $\vert E_1 \cap E_2 \vert = 0$, one has $\mathcal{I}^{N,\varepsilon,\delta}_{\substack{s \\ \pm,j}}(E_1 \cap E_2) = \mathcal{I}^{N,\varepsilon,\delta}_{\substack{s \\ \pm,j}}(E_1) + \mathcal{I}^{N,\varepsilon,\delta}_{\substack{s \\ \pm,j}}(E_2)$, enabling in particular the surgery on pathological adjunction parameters. For more details on the definition of those objects, the reader may refer to \cite{PhDTT}\footnote{In particular, see the Section ``Rigorous definition of the truncated in adjunction parameters, integrated in time transport-collision-transport of the BBGKY hierarchy", starting page 416.}.
\end{remar}

Let us now introduce the relevant notations describing this surgery. We define the \emph{elementary BBGKY term}, denoted $\mathcal{J}^{N,\varepsilon,\delta}_{\substack{s,s+k-1 \\ M_k,J_k}}(U^c,E^c) f^{(s+k)}_{N,0}$, as the function
$$
(t,Z_s) \mapsto \mathcal{T}^{s,\varepsilon}_t \Big( \mathcal{I}^{N,\varepsilon,\delta}_{\substack{s,s+k-1 \\ M_k,J_k}}(U^c_{J_k},E^c_{J_k}) f^{(s+k)}_{N,0}(t,Z_s) \Big)
$$
and the \emph{elementary Boltzmann term}, denoted $\mathcal{J}^{0,\delta}_{\substack{s,s+k-1 \\ M_k,J_k}}(U^c,E^c) f^{(s+k)}_0$, as the function
$$
(t,Z_s) \mapsto \mathcal{I}^{0,\delta}_{\substack{ s,s+k-1 \\ M_k,J_k }}(U^c_{J_k},E^c_{J_k})\Big( u \mapsto \mathcal{T}^{s+k,0}_u f^{(s+k)}_0 \Big)(t,Z_s),
$$
where $U$ and $E$ are the notations for the families of subsets $(U_1,\dots,U_k) \subset [0,T]^k$ and $(E_1,\dots,E_k) \subset \big( \mathbb{S}^{d-1} \times \mathbb{R}^d \big)^k$. With those elementary terms, taking into account the surgeries in adjunction parameters, we can constitute approximations of the solutions:
\begin{align*}
F^{n,R,\delta}_N&(U^c,E^c) = t \mapsto \Big( \mathcal{T}^{s,\varepsilon}_t f^{(s)}_{N,0}(\cdot) \mathds{1}_{\vert V_s \vert \leq R} + \sum_{k=1}^n \mathds{1}_{s \leq N-k} \hspace{-3mm} \sum_{M_k \in \mathfrak{M}_k} \sum_{J_k \in \mathfrak{M}^s_k} \mathcal{J}^{N,\varepsilon,\delta}_{\substack{s,s+k-1 \\ M_k,J_k}}(U^c,E^c) \big( f^{(s+k)}_{N,0} \mathds{1}_{\vert V_{s+k} \vert \leq R} \big)(t,\cdot) \Big)_{1 \leq s \leq N}
\end{align*}
for the BBGKY hierarchy, and
\begin{align*}
F^{n,R,\delta}&(U^c,E^c) = t \mapsto \Big( \mathcal{T}^{s,0}_t f^{(s)}_0(\cdot) \mathds{1}_{\vert V_s \vert \leq R} + \sum_{k=1}^n \sum_{M_k \in \mathfrak{M}_k} \sum_{J_k \in \mathfrak{J}^s_k} \mathcal{J}^{0,\delta}_{\substack{s,s+k-1 \\ M_k,J_k}}(U^c,E^c) \big( f^{(s+k)}_0 \mathds{1}_{\vert V_{s+k} \vert \leq R} \big)(t,\cdot) \Big)_{s \geq 1}
\end{align*}
for the Boltzmann hierarchy.\\
The purpose of the following lemma is then to measure the error produced by the surgery in the pathological adjunction parameters.

\begin{lemma}[Surgery with the adjunction parameters for the elementary terms of the hierarchies]
\label{SECT3LemmeChiruParamAdjonPatho}
Let $\beta_0>0$ and $\mu_0$ be two real numbers. Then, there exist two strictly positive constants $c(d)$ and $C_4(d,\beta_0,\mu_0)$ such that the following holds.\\
Let $N$ and $n$ be two positive integers, $\varepsilon$, $R$ and $\delta$ be three strictly positive numbers such that the Boltzmann-Grad limit $N\varepsilon^{d-1} = 1$ holds. For any pair $F_{N,0} = (f^{(s)}_{N,0})_{1 \leq s \leq N}$ and $F_0 = (f^{(s)}_0)_{s \geq 1}$ of sequences of initial data belonging respectively to $\textbf{X}_{N,\varepsilon,\beta_0,\mu_0}$ and $\textbf{X}_{0,\beta_0,\mu_0}$, and any strictly positive numbers $a$, $\varepsilon_0$, $\rho$, $\eta$ and $\alpha$ such that $R \geq 1, \eta \leq 1, 2\varepsilon \leq a, 4\sqrt{3}a \leq \varepsilon_0, \varepsilon_0 \leq \eta\delta, 3a \leq \rho, \alpha \leq c(d)$, one has for the BBGKY hierarchy:
\begin{align}
&\mathds{1}_{\Delta_s}(Z_s) \Big\vert \mathcal{T}^{s,\varepsilon}_t\big( H^{n,R,\delta}_N \big)^{(s)} - \big(F^{n,R,\delta}_N(U^c,E^c)\big)^{(s)} \Big\vert (t,Z_s) \exp\Big(\frac{\widetilde{\beta}_\lambda(t)}{2} \vert V_s \vert^2 \Big) \nonumber\\
&\hspace{10mm} \leq C_4 n(s+n)R \Big( \frac{\rho}{\alpha} + \eta^d + R^d\Big(\frac{a}{\rho}\Big)^{d-1} \hspace{-3mm} + nR^{2d-1}\Big(\frac{a}{\varepsilon_0}\Big)^{d-3/2} \hspace{-3mm} + nR^{d+1/2}\Big(\frac{\varepsilon_0}{\delta}\Big)^{d-3/2}\nonumber\\
&\hspace{89mm}  + R^{d-1}\alpha + R^d\alpha^{1/8}\Big) \vertii{(f^{(s)}_{N,0})_{1 \leq s \leq N}}_{N,\varepsilon,\beta_0,\mu_0}
\end{align}
and for the Boltzmann hierarchy:
\begin{align}
&\mathds{1}_{\Delta_s}(Z_s) \Big\vert \big( F^{n,R,\delta}\big)^{(s)} - \big(F^{n,R,\delta}(U^c,E^c)\big)^{(s)} \Big\vert (t,Z_s) \exp\Big(\frac{\widetilde{\beta}_\lambda(t)}{2} \vert V_s \vert^2 \Big) \nonumber\\
&\hspace{10mm} \leq C_4 n(s+n)R \Big( \frac{\rho}{\alpha} + \eta^d + R^d\Big(\frac{a}{\rho}\Big)^{d-1} \hspace{-3mm} + nR^{2d-1}\Big(\frac{a}{\varepsilon_0}\Big)^{d-3/2} \hspace{-3mm} + nR^{d+1/2}\Big(\frac{\varepsilon_0}{\delta}\Big)^{d-3/2}\nonumber\\
&\hspace{95mm}  + R^{d-1}\alpha + R^d\alpha^{1/8}\Big) \vertii{(f^{(s)}_0)_{s \geq 1}}_{0,\beta_0,\mu_0}
\end{align}
with $\Delta_s = \Delta_s\big(\varepsilon,R,\varepsilon_0,\max(16Ra/\varepsilon_0,\varepsilon_0/\delta)\big)$, and $U$ and $E$ being the two families of pathological adjunction parameters described in Proposition \ref{SECT3PropoControParamAdjonPatho}.
\end{lemma}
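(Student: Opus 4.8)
The plan is to reduce the two inequalities to an estimate for a single elementary term and then peel off the pathological adjunction parameters one level of adjunction at a time. Since the backwards flows $\mathcal{T}^{s,\varepsilon}_t$ and $\mathcal{T}^{s,0}_t$ are $L^\infty$-isometries leaving $\vert V_s\vert$ invariant, the left-hand side of the BBGKY inequality is $\mathds{1}_{\Delta_s}(Z_s)$ times the weighted absolute value of $\sum_{k=1}^n\mathds{1}_{s\leq N-k}\,\mathcal{T}^{s,\varepsilon}_t\sum_{M_k\in\mathfrak{M}_k}\sum_{J_k\in\mathfrak{J}^s_k}\big(\mathcal{I}^{N,\varepsilon,\delta}_{s,s+k-1,M_k,J_k}(\mathrm{full})-\mathcal{I}^{N,\varepsilon,\delta}_{s,s+k-1,M_k,J_k}(U^c,E^c)\big)\big(f^{(s+k)}_{N,0}\mathds{1}_{\vert V_{s+k}\vert\leq R}\big)$, and analogously on the Boltzmann side with $\mathcal{T}^{s+k,0}_u f^{(s+k)}_0\mathds{1}_{\vert V_{s+k}\vert\leq R}$ as innermost data. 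For a fixed $k$ and type $(M_k,J_k)$, I would then use the telescoping identity
$$
\mathcal{I}_{s,s+k-1,M_k,J_k}(\mathrm{full})-\mathcal{I}_{s,s+k-1,M_k,J_k}(U^c,E^c)=\sum_{l=1}^{k}\ \mathcal{I}^{\,\mathrm{good}}_{1,\dots,l-1}\circ\mathcal{I}^{\,\mathrm{patho}}_{l}\circ\mathcal{I}^{\,\mathrm{full}}_{l+1,\dots,k},
$$
obtained by splitting, at each level $l$, the adjunction parameters into their good part (the complement of $U_{j_l}$ in time and of $E_{j_l}$ in $\mathbb{S}^{d-1}\times\mathbb{R}^d$) and their pathological part, and by invoking the additivity of the iterated transport--collision operators under disjoint decompositions of the adjunction parameters --- immediate for the Boltzmann operators and part of the very definition of the BBGKY operators built as $L^1$/almost-everywhere limits in Section \ref{SSect2.1__DefinOperaColliBBGKY}.

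The decisive point --- and the reason the cut-off $\mathds{1}_{\Delta_s}(Z_s)$ is present --- is that for $Z_s\in\Delta_s$ the recursive construction of Proposition \ref{SECT3PropoControParamAdjonPatho} applies at \emph{every} level occurring in this sum: as long as the first $l-1$ adjunctions avoid their pathological sets, the intermediate configuration $Z^{\cdot}_{s,l-1}(t_{l-1})$ is again a good configuration whose $j_l$-th particle lies at distance at least $\rho$ from the wall, so that the pointwise bounds $\vert U_{j_l}(\cdot)\vert\leq 2\rho/\alpha$ and $\vert E_{j_l}(\cdot)\vert\leq C(d)\big(\eta^d+R^d(a/\rho)^{d-1}+kR^{2d-1}(a/\varepsilon_0)^{d-3/2}+kR^{d+1/2}(\varepsilon_0/\delta)^{d-3/2}+R^{d-1}\alpha+R^d\alpha^{1/8}\big)$ supplied by Proposition \ref{SECT3PropoControParamAdjonPatho} are available at each of them.

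Then I would estimate the three blocks. The inner block $\mathcal{I}^{\,\mathrm{full}}_{l+1,\dots,k}$ applied to $f^{(s+k)}\mathds{1}_{\vert V_{s+k}\vert\leq R}$ is controlled by iterating the collision-operator bound of Theorem \ref{SECT2TheorDefinOperaColliBBGKY} (equivalently the contracting estimate underlying Theorem \ref{SECT2TheorExistUniciSolutHiera}), the Gaussian-weight losses being absorbed as in the existence proof, which produces a factor $(Ct)^{k-l}(s+l)\cdots(s+k-1)/(k-l)!$ times a Gaussian in $V_{s+l}$ and the initial-data norm. For the middle, pathological block at level $l$, the kernel $[\omega_l\cdot(v_{s+l}-v^{\cdot,j_l}_{s,l-1}(t_l))]_\pm$ is bounded by $2R$ on $\{\vert V\vert\leq R\}$ because the specular reflections preserve speeds, while either the time integral is confined to a set of measure $\leq 2\rho/\alpha$ or the $(\omega_l,v_{s+l})$-integral to a set of measure bounded as in the previous paragraph, the remaining velocity being integrated against a Gaussian; this produces exactly the factor $R\big(\rho/\alpha+\eta^d+R^d(a/\rho)^{d-1}+kR^{2d-1}(a/\varepsilon_0)^{d-3/2}+kR^{d+1/2}(\varepsilon_0/\delta)^{d-3/2}+R^{d-1}\alpha+R^d\alpha^{1/8}\big)$. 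The outer block $\mathcal{I}^{\,\mathrm{good}}_{1,\dots,l-1}$ is bounded by discarding the surgery and applying the collision estimate once more, contributing $(Ct)^{l-1}s\cdots(s+l-2)/(l-1)!$. Multiplying the three blocks, applying $\mathcal{T}^{s,\varepsilon}_t$ (harmless by its $L^\infty$- and $\vert V_s\vert$-isometry property) and the weighted supremum, and summing over $1\leq l\leq k\leq n$ and over all $2^k\,s(s+1)\cdots(s+k-1)$ types --- the sums over $j_1,\dots,j_k$ in the inner and outer blocks being absorbed into the particle factors already present in the operator bounds, the $2^k$ into $C^k$, and the series in $k$ converging because $t\leq T$ is small, exactly as in Lemma \ref{PROPOCutofGrandNombrColli} --- leaves only the $\leq n$ choices of the critical level $l$ and the $\leq s+n$ particles available for the adjunction there, which is the announced prefactor $n(s+n)$.

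I expect the main difficulty to lie not in the telescoping, which is routine, but in two bookkeeping issues. First, one must verify that Proposition \ref{SECT3PropoControParamAdjonPatho} is genuinely recursive, so that \emph{every} intermediate configuration obtained after good adjunctions again satisfies its hypotheses --- being a good configuration with the adjunction particle far from the wall; this is exactly where the cut-off in proximity with the obstacle and the $\rho/\alpha$ time surgery come in, and is what makes the half-space case heavier than that of \cite{GSRT}. Second, and specific to the BBGKY hierarchy, the elementary terms $\mathcal{I}^{N,\varepsilon,\delta}_{s,s+k-1,M_k,J_k}$ are not honest iterated integrals but limits constructed as in Section \ref{SSect2.1__DefinOperaColliBBGKY}, so one must know beforehand that these limiting operators still obey the pointwise Gaussian bound of Theorem \ref{SECT2TheorDefinOperaColliBBGKY} and are additive under disjoint decompositions of the adjunction parameters, failing which the telescoping identity would be meaningless at the BBGKY level. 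The half-space-specific estimates --- the grazing terms $R^{d-1}\alpha+R^d\alpha^{1/8}$ coming from Lemma \ref{SECT3LemmeScatePavé_VitesRasan} --- enter only through the bound on $\vert E_{j_l}\vert$ already supplied by Proposition \ref{SECT3PropoControParamAdjonPatho}, so no new geometric input is required.
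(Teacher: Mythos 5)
Your reconstruction is consistent with the strategy the paper signals but does not spell out: the paper defers the proof to \cite{PhDTT}, but the Remark following the definition of the truncated operators explicitly prescribes the per-level decomposition $\mathcal{I}^{0,\delta}_{\substack{s \\ \pm,j}} = \mathcal{I}^{0,\delta}_{\substack{s \\ \pm,j}}(U_j) + \mathcal{I}^{0,\delta}_{\substack{s \\ \pm,j}}(U_j^c,E) + \mathcal{I}^{0,\delta}_{\substack{s \\ \pm,j}}(U_j^c,E_j^c)$ --- ``where here only the last term will not be a remainder'' --- of which your ``one bad level'' telescoping is exactly the iteration, and it correctly isolates the two nontrivial ingredients the paper itself flags (recursivity of Proposition~\ref{SECT3PropoControParamAdjonPatho} and additivity of the limit BBGKY operators under disjoint splittings of adjunction parameters). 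The estimate of each block, the bookkeeping for the prefactor $n(s+n)R$, and the absorption of the remaining sums into the contracting estimates of Theorem~\ref{SECT2TheorExistUniciSolutHiera} are sound, so your proposal is correct and follows the intended route.
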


The proof of Lemma \ref{SECT3LemmeChiruParamAdjonPatho} is presented in \cite{PhDTT}.\\
\newline
Now it is time to obtain the crucial result of this work. Can we show that the convergence of the pseudo-trajectories implies the convergence of the solutions of the hierarchies? Relying on the regularity of the initial data, the following result answers affirmatively the question, casting a decisive bridge between the two hierarchies.\\
For this result, let us investigate the explicit expressions of the elementary terms. For the Boltzmann hierarchy, the final term $\mathcal{J}^{0,\delta}_{\substack{s,s+k-1 \\ M_k,J_k}}(U^c,E^c) f^{(s+k)}_0$ writes:
\begin{align*}
&\mathds{1}_{t \geq k\delta} \int_{\hspace{-0.75mm}\substack{\vspace{-2mm}(k-1)\delta}}^{\hspace{0.25mm}\substack{\vspace{1mm}t-\delta}} \hspace{-6mm} \mathds{1}_{U^c_{j_1}(Z_s)} \hspace{-1mm} \int_{\hspace{-0.5mm}\substack{\vspace{-4mm}\mathbb{S}^{d-1}_{\omega_1}\times\mathbb{R}^d_{v_{s+1}}}} \hspace{-13mm} (\pm_1) \mathds{1}_{E^c_{j_1}(Z^0_{s,0}(t_1))} \Big[ \omega_1 \cdot\big(v_{s+1}-v^{0,j_1}_{s,0}(t_1)\big)\Big]_{\pm_1}   \\[8pt]
&\times\mathds{1}_{t_1 \geq (k-1)\delta} \int_{\hspace{-0.75mm}\substack{\vspace{-2mm}(k-2)\delta}}^{\hspace{0.25mm}\substack{\vspace{1mm}t_1-\delta}} \hspace{-6mm} \mathds{1}_{U^c_{j_2}(Z^0_{s,1}(t_1))} \hspace{-1mm} \int_{\hspace{-0.5mm}\substack{\vspace{-4mm}\mathbb{S}^{d-1}_{\omega_2} \times \mathbb{R}^d_{v_{s+2}}}} \hspace{-13mm} (\pm_2) \mathds{1}_{E^c_{j_2}(Z^0_{s,1}(t_2))} \Big[ \omega_2 \cdot \big( v_{s+2} - v^{0,j_2}_{s,1}(t_2) \big) \Big]_{\pm_2} \\
&\dots \\
&\times \mathds{1}_{\hspace{-0.75mm}\substack{\vspace{-2mm}t_{k-1}\geq\delta}} \int_0^{\hspace{0.25mm}\substack{\vspace{1mm}t_{k-1}-\delta}} \hspace{-9mm} \mathds{1}_{U^c_{j_k}(Z^0_{s,k-1}(t_{k-1}))} \int_{\hspace{-0.5mm}\substack{\vspace{-4mm}\mathbb{S}^{d-1}_{\omega_k} \times \mathbb{R}^d_{v_{s+k}}}} \hspace{-13mm} (\pm_k) \mathds{1}_{E^c_{j_k}(Z^0_{s,k-1}(t_k))} \Big[ \omega_k \cdot \big( v_{s+k} - v^{0,j_k}_{s,k-1}(t_k) \big) \Big]_{\pm_k} \\[5pt]
&\hspace{2mm} \times \mathds{1}_{\vert V_{s+k} \vert \leq R} f^{(s+k)}_0 \big(Z^0_{s,k}(0)\big) \dd \omega_k \dd v_{s+k} \dd t_k \dots \dd \omega_2 \dd v_{s+2} \dd t_2 \dd \omega_1 \dd v_{s+1} \dd t_1.
\end{align*}
Concerning the BBGKY hierarchy, one recalls that the terms are \emph{not} defined as usual integrals. However, thanks to the surgery in adjunction parameters, we will see that a new writing can be provided for those terms. But there are two remaining differences between the elementary terms of the two hierarchies: a different sequence of initial data, and the presence of \emph{prefactors} (the product of the $(N-s)\varepsilon^{d-1}$ terms). Both points will be addressed in the next section, so here let us introduce finally an hybrid version of the elementary term of the BBGKY hierarchy, getting rid of those differences, denoted $\overline{\mathcal{J}}^{\varepsilon,\delta}_{\substack{s,s+k-1 \\ M_k,J_k}}(U^c,E^c)$, and defined as:
\begin{align*}
&\mathds{1}_{t \geq k\delta} \int_{\hspace{-0.75mm}\substack{\vspace{-2mm}(k-1)\delta}}^{\hspace{0.25mm}\substack{\vspace{1mm}t-\delta}} \hspace{-6mm} \mathds{1}_{U^c_{j_1}(Z_s)} \hspace{-1mm} \int_{\hspace{-0.5mm}\substack{\vspace{-4mm}\mathbb{S}^{d-1}_{\omega_1}\times\mathbb{R}^d_{v_{s+1}}}} \hspace{-13mm} (\pm_1) \mathds{1}_{E^c_{j_1}(Z^0_{s,0}(t_1))} \Big[ \omega_1 \cdot\big(v_{s+1}-v^{\varepsilon,j_1}_{s,0}(t_1)\big)\Big]_{\pm_1}   \\[8pt]
&\times\mathds{1}_{t_1 \geq (k-1)\delta} \int_{\hspace{-0.75mm}\substack{\vspace{-2mm}(k-2)\delta}}^{\hspace{0.25mm}\substack{\vspace{1mm}t_1-\delta}} \hspace{-6mm} \mathds{1}_{U^c_{j_2}(Z^0_{s,1}(t_1))} \hspace{-1mm} \int_{\hspace{-0.5mm}\substack{\vspace{-4mm}\mathbb{S}^{d-1}_{\omega_2} \times \mathbb{R}^d_{v_{s+2}}}} \hspace{-13mm} (\pm_2) \mathds{1}_{E^c_{j_2}(Z^0_{s,1}(t_2))} \Big[ \omega_2 \cdot \big( v_{s+2} - v^{\varepsilon,j_2}_{s,1}(t_2) \big) \Big]_{\pm_2} \\
&\dots \\
&\times \mathds{1}_{\hspace{-0.75mm}\substack{\vspace{-2mm}t_{k-1}\geq\delta}} \int_0^{\hspace{0.25mm}\substack{\vspace{1mm}t_{k-1}-\delta}} \hspace{-9mm} \mathds{1}_{U^c_{j_k}(Z^0_{s,k-1}(t_{k-1}))} \int_{\hspace{-0.5mm}\substack{\vspace{-4mm}\mathbb{S}^{d-1}_{\omega_k} \times \mathbb{R}^d_{v_{s+k}}}} \hspace{-13mm} (\pm_k) \mathds{1}_{E^c_{j_k}(Z^0_{s,k-1}(t_k))} \Big[ \omega_k \cdot \big( v_{s+k} - v^{\varepsilon,j_k}_{s,k-1}(t_k) \big) \Big]_{\pm_k} \\[5pt]
&\hspace{2mm} \times \mathds{1}_{\vert V_{s+k} \vert \leq R} f^{(s+k)}_0 \big(Z^\varepsilon_{s,k}(0)\big) \dd \omega_k \dd v_{s+k} \dd t_k \dots \dd \omega_2 \dd v_{s+2} \dd t_2 \dd \omega_1 \dd v_{s+1} \dd t_1
\end{align*}
(the only difference between $\mathcal{J}^{0,\delta}_{\substack{s,s+k-1 \\ M_k,J_k}}(U^c,E^c)$ and $\overline{\mathcal{J}}^{\varepsilon,\delta}_{\substack{s,s+k-1 \\ M_k,J_k}}(U^c,E^c)$ lies in the choice of the pseudo-trajectories).

\begin{lemma}[Error coming from the divergence of the trajectories]
\label{SECT3LemmeErreuVenanDiverTraje}
Let $s$ and $n$ be two positive integers, $\beta_0>0$ and $\mu_0$ be two real numbers. Then, there exists a time $T'$ satisfying $0 \leq T' < T$ (where $T$ is given by Theorem \ref{SECT2TheorExistUniciSolutHiera}) such that, for any nonnegative, normalized (for the $L^1$ norm) function $f_0 \in X_{0,1,\beta_0}$ with $\sqrt{f_0}$ being Lipschitz-continuous with respect to the position variable uniformly in the velocity variable, and any compact set $K$ of the domain of local uniform convergence $\Omega_s$, there exist five strictly positive numbers $\overline{\varepsilon}$, $\overline{\varepsilon}_0$, $\overline{\alpha}$, $\overline{\gamma}$ and $\overline{R}$ (depending only on $K$) such that for every strictly positive numbers $R$, $\delta$, $\varepsilon$, $a$, $\varepsilon_0$, $\rho$, $\eta$ and $\alpha$ which satisfy $\varepsilon \leq \overline{\varepsilon}, \varepsilon_0 \leq \overline{\varepsilon}_0, \alpha \leq \overline{\alpha}, \max\big(16R\varepsilon/\varepsilon_0,\varepsilon_0/\delta\big) \leq \overline{\gamma}, R \geq \overline{R}$ and $4\sqrt{3} \leq \varepsilon_0, 3a \leq \rho, \varepsilon_0 \leq \eta\delta, R \geq 1, \eta \leq 1, \alpha \leq c(d)$ and finally $2n\varepsilon \leq a$, then in the Boltzmann-Grad limit $N\varepsilon^{d-1} = 1$, for all the parameters fixed but $\varepsilon$ and $N$, one has the following uniform convergence on $K$ and on $[0,T']$, with the explicit control on the rate of convergence:
\begin{align*}
\Big\vert \kern-0.3ex \Big\vert \mathds{1}_K \sum_{k=1}^n \sum_{M_k \in \mathfrak{M}_k} \sum_{J_k \in \mathfrak{J}^s_k} \Big(& \mathcal{J}^{0,\delta}_{\substack{s,s+k-1 \\ M_k,J_k }}(U^c,E^c)\big( f^{\otimes (s+k)}_0 \mathds{1}_{\vert V_{s+k} \vert \leq R} \big) - \overline{\mathcal{J}}^{\varepsilon,\delta}_{\substack{s,s+k-1 \\ M_k,J_k}}(U^c,E^c) \big( f^{\otimes (s+k)}_0 \mathds{1}_{\vert V_{s+k} \vert \leq R} \big) \Big) (Z_s) \Big\vert \kern-0.3ex \Big\vert_{L^\infty} \\
&\hspace{60mm} \leq C(d) (s+n)^2nR\frac{\varepsilon}{\alpha} \big\vert \nabla_x \sqrt{f_0} \big\vert_\infty \vertii{ F_0 }_{0,\beta_0/2,\mu_0}.
\end{align*}
\end{lemma}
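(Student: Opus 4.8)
The plan is to reduce the estimate to the geometric control of Proposition~\ref{SECT3PropoControParamAdjonPatho} and then to compare, term by term, the two iterated integrals defining the elementary terms $\mathcal{J}^{0,\delta}(U^c,E^c)$ and $\overline{\mathcal{J}}^{\varepsilon,\delta}(U^c,E^c)$. Fix the compact $K\subset\Omega_s$ and choose $\overline{\varepsilon},\overline{\varepsilon}_0,\overline{\alpha},\overline{\gamma},\overline{R}$ as in Proposition~\ref{SECT3PropoControParamAdjonPatho}, so that $K\subset\Delta_s$. For $Z_s\in K$ and adjunction parameters outside the pathological sets $U^c_{J_k},E^c_{J_k}$, that proposition guarantees that the BBGKY pseudo-trajectory has no recollision and that $|x^{\varepsilon,m}_{s,l}(\tau)-x^{0,m}_{s,l}(\tau)|=O(n\varepsilon)$ for every particle $m$, every level $l$ and every time $\tau$ (this follows from the recursion in its proof). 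Tracking the wall bounces as in that proof, the velocities $v^{0,m}_{s,l}(\tau)$ and $v^{\varepsilon,m}_{s,l}(\tau)$ are \emph{equal} except when $\tau$ lies in a ``desynchronisation window'' straddling a bounce of the $m$-th particle: before such a bounce the particle approaches the wall with normal speed at least $\alpha$ while its two trajectories stay $O(n\varepsilon)$-close, so each such window has length $O(n\varepsilon/\alpha)$, and on it the two velocities differ by the reflection $\mathcal{S}_0$, hence by at most $2R$.

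After the surgery in the adjunction parameters, both $\mathcal{J}^{0,\delta}(U^c,E^c)$ and $\overline{\mathcal{J}}^{\varepsilon,\delta}(U^c,E^c)$ are genuine iterated integrals over the \emph{same} domain of the variables $(t_1,\dots,t_k,\omega_1,v_{s+1},\dots,\omega_k,v_{s+k})$; the only differences are which pseudo-trajectory enters the $k$ collision kernels $[\omega_l\cdot(v_{s+l}-v^{0,j_l}_{s,l-1}(t_l))]_{\pm_l}$ and which configuration, $Z^0_{s,k}(0)$ or $Z^\varepsilon_{s,k}(0)$, is inserted into $f_0^{\otimes(s+k)}$. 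I would bound their difference by the integral of the pointwise difference of integrands, telescoped into $k+1$ pieces: in the $l$-th piece ($1\le l\le k$) only the $l$-th kernel changes its velocity argument from $v^{0,j_l}$ to $v^{\varepsilon,j_l}$, and in the last piece only the argument of $f_0^{\otimes(s+k)}$ changes. For the $l$-th kernel piece, since $w\mapsto[\omega\cdot(v-w)]_\pm$ is $1$-Lipschitz, the perturbation is at most $|v^{\varepsilon,j_l}_{s,l-1}(t_l)-v^{0,j_l}_{s,l-1}(t_l)|$, which by the previous paragraph vanishes unless $t_l$ belongs to a desynchronisation window of the $j_l$-th particle — a set of measure $O(n\varepsilon/\alpha)$ — and is then $\le 2R$; integrating in $t_l$ thus costs $O(nR\varepsilon/\alpha)$, while the remaining $k-1$ kernels together with $f_0^{\otimes(s+k)}$ are estimated by their suprema, the Gaussian decay $\exp\big(-\frac{\beta_0}{4}|V_{s+k}|^2\big)$ of $f_0^{\otimes(s+k)}$ making each $v_{s+m}$-integral converge with a weight controlled by $|f_0|_{0,1,\beta_0/2}$ and producing a factor linear in $R$ per adjunction.

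For the last piece I would split $f_0^{\otimes(s+k)}=(\sqrt{f_0})^{\otimes(s+k)}\cdot(\sqrt{f_0})^{\otimes(s+k)}$ and telescope $(\sqrt{f_0})^{\otimes(s+k)}(Z^\varepsilon_{s,k}(0))-(\sqrt{f_0})^{\otimes(s+k)}(Z^0_{s,k}(0))$ over the $s+k$ particles. Each summand carries one factor $\sqrt{f_0}(x^{\varepsilon,m}_{s,k}(0),v^{\varepsilon,m}_{s,k}(0))-\sqrt{f_0}(x^{0,m}_{s,k}(0),v^{0,m}_{s,k}(0))$: when the two time-$0$ velocities agree this is $\le|\nabla_x\sqrt{f_0}|_\infty\,O(n\varepsilon)$ by the Lipschitz hypothesis, and when they differ (the particle being then within $O(nR\varepsilon/\alpha)$ of the wall) one first moves its position to the wall, then uses the boundary condition $f_0^{(s+k)}(\cdot)=f_0^{(s+k)}(\chi^0_{s+k}(\cdot))$ built into $X_{0,s+k,\beta_0}$ to undo the reflection, then moves back — all at a total cost $|\nabla_x\sqrt{f_0}|_\infty\,O(nR\varepsilon/\alpha)$. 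The other $\sqrt{f_0}$ factors, together with the second tensor copy, carry a Gaussian weight of parameter $\beta_0/2$, which after bounding $f_0^{\otimes(s+k)}$ in $|\cdot|_{0,s+k,\beta_0/2}$ and passing to the sequence norm yields $\vertii{F_0}_{0,\beta_0/2,\mu_0}$.

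It remains to assemble these bounds. Summing over $\pm_1,\dots,\pm_k$, over $J_k\in\mathfrak{J}^s_k$, over the index $l\in\{1,\dots,k\}$ of the desynchronising kernel (or over the telescoped particle in the final piece), and over $k=1,\dots,n$, the bounded collision kernels against the Gaussian weights keep each added level linear in $R$, and choosing $T'$ small enough makes the resulting series converge and absorbs all factorial and $R$-power growth — this is precisely the contraction mechanism underlying Theorem~\ref{SECT2TheorExistUniciSolutHiera}, and it forces $T'<T$. The surviving combinatorics is $(s+n)^2 n$ (a factor $n$ from the level $l$, a factor $s+n$ from the telescoped particle, and a further $s+n$ from which particle is desynchronised at time $0$), leaving the announced bound $C(d)(s+n)^2 n R\,(\varepsilon/\alpha)\,|\nabla_x\sqrt{f_0}|_\infty\,\vertii{F_0}_{0,\beta_0/2,\mu_0}$. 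The main obstacle, and the genuinely new point compared with \cite{GSRT}, is the bookkeeping of the wall-bouncing desynchronisation: one must verify that the $O(R)$ (not $O(\varepsilon)$) mismatches of velocity arguments — in the kernels and, at time $0$, inside $f_0^{\otimes(s+k)}$ — are confined to parameter sets of measure $O(\varepsilon/\alpha)$ (respectively to positions $O(R\varepsilon/\alpha)$-close to the wall, where the boundary condition on $f_0$ applies), and that this control survives the $k$ nested adjunctions while the combinatorial prefactor stays polynomial in $s+n$; the convergence of the series in $k$ is a secondary technical point, responsible for the restriction to $[0,T']$.
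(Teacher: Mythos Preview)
Your proposal is correct and reaches the same endpoint, but it takes a detour that the paper avoids by a single key observation. You telescope the difference $\mathcal{J}^{0,\delta}-\overline{\mathcal{J}}^{\varepsilon,\delta}$ into $k+1$ pieces: one for each collision kernel $[\omega_l\cdot(v_{s+l}-v^{\cdot,j_l}_{s,l-1}(t_l))]_{\pm_l}$ and a last one for the argument $Z^{\cdot}_{s,k}(0)$ of $f_0^{\otimes(s+k)}$. The paper observes instead that the first $k$ pieces are \emph{identically zero}: after the surgery, $t_l\in U^c_{j_l}$ forces $x^{0,j_l}_{s,l-1}(t_l)\cdot e_1\geq\rho$, whereas the ``desynchronisation window'' you describe (one particle has bounced, the other not) occurs only while the Boltzmann particle is within $O(n\varepsilon)\leq a<\rho$ of the wall---indeed, during that window the two particles have the same pre-bounce velocity and positions within $2(l-1)\varepsilon+\varepsilon/2$ of each other, one of them being at the wall. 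Hence $t_l$ never falls in the window, and $v^{\varepsilon,j_l}_{s,l-1}(t_l)=v^{0,j_l}_{s,l-1}(t_l)$ exactly. Your $O(nR\varepsilon/\alpha)$ bound on each kernel piece is valid but vacuous; the whole comparison reduces to the final piece.

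For that final piece your treatment matches the paper's: one telescopes $f_0^{\otimes(s+k)}$ over the $s+k$ particles, uses $f_0(a)-f_0(b)=\big(\sqrt{f_0(a)}+\sqrt{f_0(b)}\big)\big(\sqrt{f_0(a)}-\sqrt{f_0(b)}\big)$ to produce a Lipschitz factor $|\nabla_x\sqrt{f_0}|_\infty$ while retaining a Gaussian weight $\exp(-\tfrac{\beta_0}{4}|V_{s+k}|^2)$, and handles the particles whose time-$0$ velocities differ by $\mathcal{S}_0$ by first moving their positions by $O(kR\varepsilon/\alpha)$ to the wall and invoking the boundary condition. What your route buys is robustness (it would survive a weaker surgery that did not force the kernel velocities to coincide); what the paper's route buys is a cleaner argument with no spurious kernel terms and no need to track the $t_l$-measure of the desynchronisation window.
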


\begin{proof}[Proof of Lemma \ref{SECT3LemmeErreuVenanDiverTraje}]
The first step consists in noticing that the elementary term $\overline{\mathcal{J}}^{\varepsilon,\delta}_{\substack{s,s+k-1 \\ M_k,J_k}}$ is defined as a usual integral, thanks to the first point of Proposition \ref{SECT3PropoControParamAdjonPatho}: after surgery, the pseudo-trajectories of the BBGKY hierarchy do not produce any recollision, and therefore between two adjunctions they are described only using the free transport and symmetries. It is now possible to apply the plan used in Section \ref{SSect2.1__DefinOperaColliBBGKY} (relying on the fact that the hard sphere transport coincides locally with the free transport) to give a sense to the collision operator, but this time without any time restriction (since there is no recollision to be avoided, so that the hard sphere transport coincides with the free transport, globally in time).\\
\newline
The rest of the proof relies on the explicit expression of $\mathcal{J}^{0,\delta}_{\substack{s,s+k-1 \\ M_k,J_k }} - \overline{\mathcal{J}}^{\varepsilon,\delta}_{\substack{s,s+k-1 \\ M_k,J_k}}$.\\
In particular, the velocities of the particles $j_l$ chosen for the adjunctions are the same at the times of adjunction $t_j$ for the two pseudo-trajectories, that is $v^{\varepsilon,j_l}_{s,l-1}(t_l) = v^{0,j_l}_{s,l-1}(t_l)$ for all $1 \leq l \leq k$, as a consequence of the second point of Proposition \ref{SECT3PropoControParamAdjonPatho}, together with the fact that the adjunctions happen only when the particle chosen is far from the wall, and the relation between $\varepsilon$ and $\rho$: either the particles $j_l$ of both hierarchies have bounced against the wall at $t_l$, or none of them has.\\
Then, the only difference between the expressions of $\mathcal{J}^{0,\delta}_{\substack{s,s+k-1 \\ M_k,J_k }}$ and $\overline{\mathcal{J}}^{\varepsilon,\delta}_{\substack{s,s+k-1 \\ M_k,J_k}}$ lies in $Z^0_{s,k}(0)$ and $Z^\varepsilon_{s,k}(0)$, the respective arguments of the function $f^{(s+k)}_0$, corresponding to the final configurations of the two pseudo-trajectories. Taking now into account that the velocities for the two pseudo-trajectories at time $0$ may differ by the symmetry $\mathcal{S}_0$, we denote $j_1,\dots,j_p$ the labels of the particles having different velocities at time $0$, and $t^{j_l}_0$ (for $1 \leq l \leq p$) the times such that $x^{0,j_l}_{s,k}(t^{j_l}_0)\cdot e_1 = 0$. Let us describe the process for the easy case $s+k=2$ to fix the ideas. We decompose:
\begin{align}
f^{\otimes 2}_0\big(X^0_{s,k}(0),V^0_{s,k}(0)\big) - f^{\otimes 2}_0\big(&X^\varepsilon_{s,k}(0),V^\varepsilon_{s,k}(0)\big) \nonumber\\
=&\, \Big( f_0\big(x^{0,1}_{s,k}(0),v^{0,1}_{s,k}(0)\big) - f_0\big(x^{0,1}_{s,k}(t^{j_1}_0),v^{0,1}_{s,k}(0)\big) \Big) f_0\big(x^{0,2}_{s,k}(0),v^{0,2}_{s,k}(0)\big) \label{SECT3PREUVErreuDiverTrajeLign1}\\
&+ f_0\big(x^{0,1}_{s,k}(t^{j_1}_0),v^{0,1}_{s,k}(0)\big) \Big( f_0\big(x^{0,2}_{s,k}(0),v^{0,2}_{s,k}(0)\big) - f_0\big(x^{0,2}_{s,k}(t^{j_2}_0),v^{0,2}_{s,k}(0)\big)\Big) \label{SECT3PREUVErreuDiverTrajeLign2}\\
&+ f_0\big(x^{0,1}_{s,k}(t^{j_1}_0),v^{0,1}_{s,k}(0)\big) f_0\big(x^{0,2}_{s,k}(t^{j_2}_0),v^{0,2}_{s,k}(0)\big) - f_0^{\otimes 2}\big(X^\varepsilon_{s,k}(0),V^\varepsilon_{s,k}(0)\big), \label{SECT3PREUVErreuDiverTrajeLign3}
\end{align}
the terms \eqref{SECT3PREUVErreuDiverTrajeLign1} and \eqref{SECT3PREUVErreuDiverTrajeLign2} being controlled respectively by $\vert x^{0,1}_{s,k}(0) - x^{0,1}_{s,k}(t^{j_1}_0) \vert$ and $\vert x^{0,2}_{s,k}(0) - x^{0,2}_{s,k}(t^{j_2}_0) \vert$, while we can use the boundary conditions for the term \eqref{SECT3PREUVErreuDiverTrajeLign3}, replacing $v^{0,l}_{s,k}(0)$ by $v^{\varepsilon,l}_{s,k}(0)$, and then using the divergence between the pseudo-trajectories (second point of Proposition \ref{SECT3PropoControParamAdjonPatho}).\\
In order to make those arguments quantitative, considering that $t^{j_l}_0 \leq (2k\varepsilon + \varepsilon/2)/\alpha$ (because $t=0$ lies in the interval delimited by $t^{j_l}_0$ and $t^{j_l}_\varepsilon$, and $\vert t^{j_1}_\varepsilon - t^{j_l}_0 \vert \leq (2k+1/2)\varepsilon/\alpha$), we have $\vert x^{0,j_l}_{s,k}(0) - x^{0,j_l}_{s,k}(t^{j_l}_0) \vert \leq (2k+1/2)\varepsilon R/\alpha$, and writing:
\begin{align*}
&\Big\vert \Big(f_0\big(x^{0,1}_{s,k}(0),v^{0,1}_{s,k}(0)\big) - f_0\big(x^{0,1}_{s,k}(t^{j_1}_0),v^{0,1}_{s,k}(0)\big)\Big) \prod_{l=2}^{s+k} f_0\big(x^{0,l}_{s,k}(0),v^{0,l}_{s,k}(0)\big) \Big\vert \\
&\leq \Big( \sqrt{f_0\big(x^{0,1}_{s,k}(0),v^{0,1}_{s,k}(0)\big)} + \sqrt{f_0\big(x^{0,1}_{s,k}(t^{j_1}_0),v^{0,1}_{s,k}(0)\big)} \Big) \Big\vert \big\vert x^{0,1}_{s,k}(0) - x^{0,1}_{s,k}(t^{j_1}_0) \big\vert \big\vert \nabla_x \sqrt{f_0} \big\vert_\infty \Big\vert \prod_{l=2}^{s+k} f_0\big(x^{0,l}_{s,k}(0),v^{0,l}_{s,k}(0)\big) \Big\vert,
\end{align*}
implies (because $\sqrt{f_0} \in X_{0,1,\beta_0/2}$)
\begin{align*}
\big\vert f^{(s+k)}&\big(Z^0_{s,k}(0)\big) - f^{(s+k)}_0\big(Z^\varepsilon_{s,k}(0)\big) \big\vert \\
&\leq 2 \big\vert \nabla_x \sqrt{f_0} \big\vert_\infty (s+k)\Big(2(s+k) \frac{2k+1/2}{\alpha}R\varepsilon + k\varepsilon\Big) \exp \Big(-\frac{\beta_0}{4}\vert v^{0,1}_{s,k}(0) \vert^2\Big)\exp\Big(-\frac{\beta_0}{4} \sum_{l=2}^{s+k} \vert v^{0,l}_{s,k}(0) \vert^2 \Big).
\end{align*}
The conclusion is now obtained thanks to the conservation of the kinetic energy along the pseudo-trajectories $\exp \Big(-\frac{\beta_0}{4}\vert v^{0,1}_{s,k}(0) \vert^2\Big)\exp\Big(-\frac{\beta_0}{4} \sum_{l=2}^{s+k} \vert v^{0,l}_{s,k}(0) \vert^2 \Big) = \exp\Big( -\frac{\beta_0}{4} \vert V_{s+k} \vert^2 \Big)$, and the contracting property of the integrated in time transport-collision operators. Lemma \ref{SECT3LemmeErreuVenanDiverTraje} is proven.
\end{proof}

\section{The quantified main result: the Lanford's theorem in the half-space}

To complete the proof of Lanford's result and provide a quantitative version of Theorem \ref{SECT3TheorLanford___Vers_Quali}, it remains to evaluate the error produced with the replacement of the elementary terms $\mathcal{J}^{N,\varepsilon,\delta}_{\substack{s,s+k-1 \\ M_k,J_k}}$ by the hybrid ones $\overline{\mathcal{J}}^{\varepsilon,\delta}_{\substack{s,s+k-1 \\ M_k,J_k}}$. We recall that the two differences between those terms lie in the sequence of initial data and on the absence of the prefactor.\\
\newline
Let us start this last section with a result concerning the \emph{admissible Boltzmann initial data}, proving that the tensorized initial data $\big((f_0^{\otimes s})_{s \geq 1}\big)_N$ of the Boltzmann hierarchy enable to define an associated sequence of initial data $\big(f_{N,0}^{(s)}\big)_{1 \leq s \leq N}$ for the BBGKY hierarchy (for any numbers of particles $N$) which converges, for all $s$, locally uniformly on $\Omega_s$. For more details, the reader may refer to \cite{PhDTT}\footnote{See Section 16.1 starting page 503.}, or especially to \cite{GSRT}\footnote{See in particular Section 6.1 "Quasi-independence", starting page 43.} for a detailed discussion and the proof of the following result.

\begin{propo}[The tensorized initial data are Boltzmann admissible]
\label{SECT3PropoDonnéTensoBoltzAdmis}
Let $\beta_0 > 0$ and $\mu_0$ be two real numbers. For any nonnegative, normalized function $f_0$ belonging to $X_{0,1,\beta_0}$ such that $e^{\mu_0} \vert f_0 \vert_{0,1,\beta_0} \leq 1$, the sequence of chaotic configurations $\big(f^{\otimes s}_0\big)_{s \geq 1}$ (defined for all $s \geq 1$ as $f^{\otimes s}_0(Z_s) = \prod_{i=1}^s f_0(z_i)$) is a sequence of admissible Boltzmann data, that is, if for any $N \in \mathbb{N}^*$ we define:
$$
f^{(N)}_{N,0}: Z_N \mapsto \mathcal{Z}^{-1}_N \mathds{1}_{\mathcal{D}^\varepsilon_N} f^{\otimes N}_0(Z_N)
$$
with $\displaystyle{\mathcal{Z}_N = \int_{\hspace{-0.25mm}\substack{\vspace{-0.5mm}\mathbb{R}^{2dN}}} \hspace{-5mm} \mathds{1}_{\mathcal{D}^\varepsilon_N} f^{\otimes N}_0 \dd Z_N}$, and $f^{(s)}_{N,0}(Z_s) = \displaystyle{\int}_{\hspace{-2mm}\substack{\vspace{-0.75mm}\mathbb{R}^{2d(N-s)}}} \hspace{-9mm} \mathds{1}_{\mathcal{D}^\varepsilon_N} f^{(N)}_{N,0} \dd z_{s+1} \dots \dd z_N$, then we have:
\begin{itemize}[leftmargin=*]
\item for any $N \in \mathbb{N}^*$ with $N\varepsilon^{d-1} = 1$, the sequence of initial data $F_{N,0} = \big(f^{(s)}_{N,0}\big)_{1 \leq s \leq N}$ belongs to $\textbf{X}_{N,\varepsilon,\beta_0,\mu_0}$,
\item $\sup_{N \geq 1} \vertii{ \big(f^{(s)}_{N,0}\big)_{1 \leq s \leq N} }_{N,\varepsilon,\beta_0,\mu_0} < +\infty$,
\item for any positive integer $s$, we have $f^{(s)}_{N,0} \underset{N \rightarrow + \infty}{\longrightarrow} f^{(s)}_0$ locally uniformly on $\Omega_s$, with the explicit rate of convergence:
\begin{equation}
\label{SECT3PROPOTensoBoltzAdmisQuant}
\big\vert \mathds{1}_{\mathcal{D}^\varepsilon_s}f^{\otimes s}_0 - f^{(s)}_{N,0} \big\vert_\infty \leq C(d)s\varepsilon \vert f_0 \vert_{L^\infty(\overline{\Omega^c},L^1(\mathbb{R}^d))}.
\end{equation}
\end{itemize}
\end{propo}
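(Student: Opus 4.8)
The plan is to run the classical quasi-independence computation of \cite{GSRT}, made quantitative by the Boltzmann--Grad identity $N\varepsilon^{d-1}=1$. The starting point is the exact factorization of the exclusion condition,
$$
\mathds{1}_{\mathcal{D}^\varepsilon_N}(Z_N)=\mathds{1}_{\mathcal{D}^\varepsilon_s}(Z_s)\,\mathds{1}_{\mathcal{D}^\varepsilon_{N-s}}(z_{s+1},\dots,z_N)\prod_{i=1}^{s}\prod_{j=s+1}^{N}\mathds{1}_{|x_i-x_j|\geq\varepsilon},
$$
which holds because the wall constraints $x_k\cdot e_1\geq\varepsilon/2$ are already encoded inside $\mathcal{D}^\varepsilon_s$ and $\mathcal{D}^\varepsilon_{N-s}$. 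Integrating against $f^{\otimes(N-s)}_0$ and using $\mathds{1}_{\mathcal{D}^\varepsilon_N}^2=\mathds{1}_{\mathcal{D}^\varepsilon_N}$ yields
$$
\mathcal{Z}_N\,f^{(s)}_{N,0}(Z_s)=\mathds{1}_{\mathcal{D}^\varepsilon_s}(Z_s)\,f^{\otimes s}_0(Z_s)\int\mathds{1}_{\mathcal{D}^\varepsilon_{N-s}}\,f^{\otimes(N-s)}_0\prod_{i\leq s<j}\mathds{1}_{|x_i-x_j|\geq\varepsilon}\,\dd z_{s+1}\cdots\dd z_N,
$$
and bounding the last product between $1-\sum_{j>s}\mathds{1}_{\min_{i\leq s}|x_i-x_j|<\varepsilon}$ and $1$ gives the two-sided estimate
$$
\mathds{1}_{\mathcal{D}^\varepsilon_s}f^{\otimes s}_0\,\frac{\mathcal{Z}_{N-s}-\Delta_{N,s}}{\mathcal{Z}_N}\ \leq\ f^{(s)}_{N,0}\ \leq\ \mathds{1}_{\mathcal{D}^\varepsilon_s}f^{\otimes s}_0\,\frac{\mathcal{Z}_{N-s}}{\mathcal{Z}_N}.
$$
Here, integrating one added variable at a time and using that $\bigcup_{i\leq s}B(x_i,\varepsilon)$ carries at most $C(d)\,s\,\varepsilon^d\,|f_0|_{L^\infty(\overline{\Omega^c},L^1(\mathbb{R}^d))}$ of $f_0$-mass, the deficit obeys $\Delta_{N,s}\leq C(d)\,(N-s)\,s\,\varepsilon^d\,|f_0|_{L^\infty(\overline{\Omega^c},L^1(\mathbb{R}^d))}\,\mathcal{Z}_{N-s-1}\leq C(d)\,s\,\varepsilon\,|f_0|_{L^\infty(\overline{\Omega^c},L^1(\mathbb{R}^d))}\,\mathcal{Z}_{N-s-1}$, the crucial last step being $(N-s)\varepsilon^d\leq N\varepsilon^{d-1}\cdot\varepsilon=\varepsilon$.

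The second ingredient is the control of the partition functions, obtained by peeling off one particle at a time: $\mathcal{Z}_m/\mathcal{Z}_{m-1}$ is the $\mathcal{Z}_{m-1}^{-1}\mathds{1}_{\mathcal{D}^\varepsilon_{m-1}}f^{\otimes(m-1)}_0$-average of $\int\mathds{1}_{x_m\cdot e_1\geq\varepsilon/2}\prod_{i<m}\mathds{1}_{|x_i-x_m|\geq\varepsilon}\,f_0(z_m)\,\dd z_m$, which lies between $1$ and $1-C(d)\,(m-1)\,\varepsilon^d\,|f_0|_{L^\infty(\overline{\Omega^c},L^1(\mathbb{R}^d))}-\int_{0<x\cdot e_1<\varepsilon/2}f_0$. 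The wall--slab term $\int_{0<x\cdot e_1<\varepsilon/2}f_0$ is $O(\varepsilon)$ by the integrability and continuity of $f_0$, and $(m-1)\varepsilon^d\leq\varepsilon$, so $\mathcal{Z}_m/\mathcal{Z}_{m-1}=1+O(\varepsilon)$ uniformly in $1\leq m\leq N$; in particular $\mathcal{Z}_{N-s}/\mathcal{Z}_N\leq(1+C\varepsilon)^{s}$ and $\mathcal{Z}_{N-s-1}/\mathcal{Z}_{N-s}=1+O(\varepsilon)$, and for fixed $s$ both $\mathcal{Z}_{N-s}/\mathcal{Z}_N$ and $\Delta_{N,s}$ reach their limits at rate $O(s\varepsilon)$.

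From the upper side of the squeeze, $|f^{(s)}_{N,0}|_{\varepsilon,s,\beta_0}\leq(\mathcal{Z}_{N-s}/\mathcal{Z}_N)\,|f_0|^{s}_{0,1,\beta_0}$, hence
$$
|f^{(s)}_{N,0}|_{\varepsilon,s,\beta_0}\,e^{s\mu_0}\ \leq\ \frac{\mathcal{Z}_{N-s}}{\mathcal{Z}_N}\,\big(e^{\mu_0}|f_0|_{0,1,\beta_0}\big)^{s}\ \leq\ (1+C\varepsilon)^{s}\,\big(e^{\mu_0}|f_0|_{0,1,\beta_0}\big)^{s},
$$
which, using $e^{\mu_0}|f_0|_{0,1,\beta_0}\leq1$, gives $F_{N,0}\in\textbf{X}_{N,\varepsilon,\beta_0,\mu_0}$ and $\sup_{N\geq1}\vertii{F_{N,0}}_{N,\varepsilon,\beta_0,\mu_0}<+\infty$. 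For the convergence, fix $s$; combining the two sides of the squeeze with $\mathcal{Z}_{N-s}/\mathcal{Z}_N=1+O(s\varepsilon)$ and $\Delta_{N,s}=O(s\varepsilon)$ gives, uniformly in $Z_s\in\mathcal{D}^\varepsilon_s$, the bound $\big|\mathds{1}_{\mathcal{D}^\varepsilon_s}f^{\otimes s}_0-f^{(s)}_{N,0}\big|(Z_s)\leq C(d)\,s\,\varepsilon\,|f_0|_{L^\infty(\overline{\Omega^c},L^1(\mathbb{R}^d))}$, i.e.\ \eqref{SECT3PROPOTensoBoltzAdmisQuant}. Finally, any compact $K\subset\Omega_s$ has pairwise separated positions lying at positive distance from the wall, so $K\subset\mathcal{D}^\varepsilon_s$ for $\varepsilon$ small and $\mathds{1}_{\mathcal{D}^\varepsilon_s}f^{\otimes s}_0\equiv f^{\otimes s}_0$ there; the previous bound then gives $f^{(s)}_{N,0}\to f^{\otimes s}_0=f^{(s)}_0$ locally uniformly on $\Omega_s$ with the announced rate.

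The main obstacle is to keep everything quantitative and, above all, \emph{uniform} in $N$ and in $1\leq s\leq N$. One must count only the $s(N-s)$ pairs straddling the split between the first $s$ and the last $N-s$ particles --- a naive union bound over all $\binom{N}{2}$ pairs would produce a term of size $\sim N^2\varepsilon^d=N\varepsilon$ and be worthless --- and one must balance the partition-function factor $\mathcal{Z}_{N-s}/\mathcal{Z}_N$ against the exponential weight, which is exactly where the normalization $e^{\mu_0}|f_0|_{0,1,\beta_0}\leq1$ enters; the sharp form of this balance for $s$ comparable to $N$ is the delicate point, carried out in \cite{GSRT} and \cite{PhDTT}. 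The only feature specific to the half-space is the wall--slab loss $\int_{0<x\cdot e_1<\varepsilon/2}f_0$, which is again $O(\varepsilon)$ and therefore harmless; apart from it, the computation is the one of \cite{GSRT}.
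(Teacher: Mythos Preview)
The paper does not give its own proof here; it simply refers to the quasi-independence argument of \cite{GSRT}, Section~6.1, and to \cite{PhDTT}, Section~16.1. Your proposal reproduces precisely this argument: the exact factorisation of $\mathds{1}_{\mathcal{D}^\varepsilon_N}$, the two-sided squeeze for $f^{(s)}_{N,0}$ in terms of $\mathcal{Z}_{N-s}/\mathcal{Z}_N$, the one-particle peeling for the partition functions, and the use of $N\varepsilon^{d-1}=1$ to turn the exclusion cost $N\varepsilon^d$ into $\varepsilon$. You also correctly isolate the one delicate uniformity issue (the balance of $\mathcal{Z}_{N-s}/\mathcal{Z}_N$ against $(e^{\mu_0}|f_0|_{0,1,\beta_0})^s$ when $s$ is comparable to $N$) and, like the paper, defer it to \cite{GSRT} and \cite{PhDTT}. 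So the approach is the same.

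One point deserves a caveat. Your claim that the wall--slab loss $\int_{0<x\cdot e_1<\varepsilon/2}f_0$ is $O(\varepsilon)$ ``by the integrability and continuity of $f_0$'' is not justified as stated: integrability only gives $o(1)$, and continuity does not help since the slab has infinite transverse volume. What one actually needs is that this loss be absorbed, together with the exclusion loss, into the same delicate balance against $(e^{\mu_0}|f_0|_{0,1,\beta_0})^s$ that you already flag for $s\sim N$; the detailed treatment is in \cite{PhDTT}. In other words, the wall term is not separately ``harmless'' --- it is harmless \emph{for the same reason} the exclusion term is, and should be bundled into the reference you already make rather than dismissed by a one-line estimate.
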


It is now possible to draw the final link between the solutions of the two hierarchies: the next lemma quantifies the difference $\mathcal{J}^{N,\varepsilon,\delta}_{\substack{s,s+k-1 \\ M_k,J_k}} - \overline{\mathcal{J}}^{\varepsilon,\delta}_{\substack{s,s+k-1 \\ M_k,J_k}}$.

\begin{lemma}[Error coming from the substitution of the initial data and the prefactors]
\label{SECT4LemmeErreuVenanDInitPrefa}
Let $s$ and $n$ be two positive integers, $\beta_0>0$ and $\mu_0$ be two reals numbers. Let $f_0$ be a nonnegative, normalized function belonging to $X_{0,1,\beta_0}$ and let $K$ be a compact set of the domain of local uniform convergence $\Omega_s$.\\
Then, there exists five strictly positive numbers $\overline{\varepsilon}$, $\overline{\varepsilon}_0$, $\overline{\alpha}$, $\overline{\gamma}$ and $\overline{R}$, depending only on $K$, such that for every strictly positive numbers $R$, $\delta$, $\varepsilon$, $a$, $\varepsilon_0$, $\rho$, $\eta$ and $\alpha$ which satisfy $\varepsilon \leq \overline{\varepsilon}, \varepsilon_0 \leq \overline{\varepsilon}_0, \alpha \leq \overline{\alpha}, \max\big(16R\varepsilon/\varepsilon_0,\varepsilon_0/\delta\big) \leq \overline{\gamma}$ and $R \geq \overline{R}$, and $4\sqrt{3} \leq \varepsilon_0, 3a \leq \rho, \varepsilon_0 \leq \eta\delta, R \geq 1, \eta \leq 1, \alpha \leq c(d)$ and finally $2n\varepsilon \leq a$, we have in the Boltzmann-Grad limit $N\varepsilon^{d-1} = 1$ the uniform convergence on the compact set $K$, uniform on the time interval $[0,T]$, of the sequence of the sum of the elementary BBGKY terms with the sequence of initial data $\big(F_{N,0}\big)_{N \geq 0}$, associated to the sequence of tensorized initial data $\big(f^{\otimes s}_0\big)_{s \geq 1}$, towards the sum of the hybrid terms with the sequence of tensorized initial data $\big(f^{\otimes s}_0\big)_{s \geq 1}$, that is:
\begin{align*}
\Big\vert\kern-0.3ex\Big\vert \mathds{1}_K \sum_{k=1}^n \sum_{M_k \in \mathfrak{M}_k} \sum_{J_k \in \mathfrak{J}^s_k} \Big(  &\overline{\mathcal{J}}^{\varepsilon,\delta}_{\substack{s,s+k-1 \\ M_k,J_k}}(U^c,E^c) \big( f^{\otimes(s+k)}_0 \mathds{1}_{\vert V_{s+k} \vert \leq R} \big) - \mathcal{J}^{N,\varepsilon,\delta}_{\substack{s,s+k-1 \\ M_k,J_k}}(U^c,E^c) \big( f^{(s+k)}_{N,0} \mathds{1}_{\vert V_{s+k} \vert \leq R} \big)  \Big) \Big\vert\kern-0.3ex\Big\vert_{L^\infty} \\
&\hspace{60mm} \leq C(d) (s+n)^2 \varepsilon \vert f_0 \vert_{L^\infty(\overline{\Omega^c},L^1(\mathbb{R}^d))} \vertii{ F_0 }_{0,\beta_0,\mu_0}.
\end{align*}
\end{lemma}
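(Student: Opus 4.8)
The plan is to compare the two families of elementary terms factor by factor, isolating the two sources of discrepancy mentioned just before the statement: the substitution of the tensorized Boltzmann initial data $f_0^{\otimes(s+k)}$ by the BBGKY marginal $f_{N,0}^{(s+k)}$, and the presence of the prefactor $\prod_{l=1}^{k}(N-s-l+1)\varepsilon^{d-1}$ in the BBGKY term which is absent (equivalently, replaced by $\prod_{l=1}^{k} N\varepsilon^{d-1} = 1$) in the hybrid term $\overline{\mathcal J}^{\varepsilon,\delta}$. First I would write $\overline{\mathcal J}^{\varepsilon,\delta} - \mathcal J^{N,\varepsilon,\delta}$ as a telescoping sum of two intermediate differences: one where only the initial data is changed (from $f_0^{\otimes(s+k)}$ to $f_{N,0}^{(s+k)}$) and one where only the prefactor is changed. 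For the initial-data difference, Proposition \ref{SECT3PropoDonnéTensoBoltzAdmis}, in particular the quantitative bound \eqref{SECT3PROPOTensoBoltzAdmisQuant}, gives pointwise $\big\vert \mathds 1_{\mathcal D^\varepsilon_{s+k}} f_0^{\otimes(s+k)} - f_{N,0}^{(s+k)}\big\vert_\infty \leq C(d)(s+k)\varepsilon \vert f_0\vert_{L^\infty(\overline{\Omega^c},L^1)}$, and since after the surgery in adjunction parameters $\overline{\mathcal J}^{\varepsilon,\delta}$ is a genuine iterated integral (by the first point of Proposition \ref{SECT3PropoControParamAdjonPatho}, the BBGKY pseudo-trajectories have no recollision, hence the hard-sphere transport coincides with the free transport globally in time, exactly as in the argument of Section \ref{SSect2.1__DefinOperaColliBBGKY}), the difference is controlled by the operator norm of the integrated-in-time transport-collision operators applied to this $L^\infty$ bound.

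Next I would handle the prefactor difference. In the Boltzmann-Grad scaling $N\varepsilon^{d-1}=1$ one has $\prod_{l=1}^{k}(N-s-l+1)\varepsilon^{d-1} = \prod_{l=1}^{k}\big(1 - (s+l-1)/N\big)$, so $\big\vert 1 - \prod_{l=1}^{k}(1-(s+l-1)/N)\big\vert \leq \sum_{l=1}^{k}(s+l-1)/N \leq k(s+k)/N = k(s+k)\varepsilon^{d-1} \leq C(s+k)^2\varepsilon$ (using $k\leq n$, $d\geq 2$ so $\varepsilon^{d-1}\leq \varepsilon$, and $R\geq 1$); this factor multiplies a term bounded, again, by the operator norm of the iterated transport-collision operators acting on $f_0^{\otimes(s+k)}\mathds 1_{\vert V_{s+k}\vert\leq R}$, i.e. by $\vertii{F_0}_{0,\beta_0,\mu_0}$ up to a dimensional constant. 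The summation over $k$ from $1$ to $n$, over $M_k \in \mathfrak M_k$ and $J_k \in \mathfrak J^s_k$, together with the contracting property of the integrated-in-time collision operators on $[0,T]$ (Theorem \ref{SECT2TheorExistUniciSolutHiera} and the geometric-series estimate behind Lemma \ref{PROPOCutofGrandNombrColli}) absorbs the combinatorial factor $2^k s(s+1)\cdots(s+k-1)$: each application of a transport-collision operator contributes a factor bounded by a fixed fraction, so the series $\sum_k (\cdots)$ converges and is bounded by a constant times $(s+n)^2\varepsilon$. Combining the two contributions gives the claimed bound $C(d)(s+n)^2\varepsilon \vert f_0\vert_{L^\infty(\overline{\Omega^c},L^1(\mathbb R^d))} \vertii{F_0}_{0,\beta_0,\mu_0}$, uniformly on the compact $K\subset\Omega_s$ (the indicator $\mathds 1_K$ only restricts the domain, it does not affect the estimates) and uniformly on $[0,T]$.

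The main obstacle I anticipate is not the arithmetic of the prefactor but the rigorous bookkeeping for the BBGKY elementary terms, which, as the paper stresses, \emph{are not a priori defined as ordinary iterated integrals}: one must first invoke Proposition \ref{SECT3PropoControParamAdjonPatho} to know that after the surgery in pathological adjunction parameters the pseudo-trajectories are recollision-free, and then re-run the construction of Section \ref{SSect2.1__DefinOperaColliBBGKY} to reinterpret $\mathcal J^{N,\varepsilon,\delta}$ as a true integral against the explicit free-flow-with-symmetries pseudo-trajectory — but now without the time restriction $t\leq\delta$, precisely because there is no recollision to avoid. Once both $\mathcal J^{N,\varepsilon,\delta}$ and $\overline{\mathcal J}^{\varepsilon,\delta}$ are written as integrals \emph{over the same pseudo-trajectory} $Z^\varepsilon$ with the \emph{same} truncated domains $U^c_{J_k}, E^c_{J_k}$ and the same cross-section weights $\big[\omega_l\cdot(v_{s+l}-v^{\varepsilon,j_l}_{s,l-1}(t_l))\big]_{\pm_l}$, the comparison becomes purely pointwise in the integrands (only the initial data and the scalar prefactor differ), and the remaining work is the routine $L^\infty$-norm bookkeeping described above. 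The careful propagation of the exponential weights $\exp(\tfrac{\beta_0}{2}\vert V_{s+k}\vert^2)$ through the conservation of kinetic energy along the pseudo-trajectories, exactly as in the proof of Lemma \ref{SECT3LemmeErreuVenanDiverTraje}, is what keeps the velocity integrals convergent and produces the factor $\vertii{F_0}_{0,\beta_0,\mu_0}$.
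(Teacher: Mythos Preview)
Your proposal is correct and follows essentially the same approach as the paper. The paper's own proof is extremely terse, reducing to two ingredients: the quantitative convergence of initial data \eqref{SECT3PROPOTensoBoltzAdmisQuant} from Proposition~\ref{SECT3PropoDonnéTensoBoltzAdmis}, and the elementary estimate $1 \geq \frac{(N-s)!}{(N-s-k)!}\varepsilon^{k(d-1)} \geq 1 - 2(s+n)^2/N$ on the prefactor; your telescoping decomposition, your prefactor bound via $\big\vert 1-\prod_l(1-(s+l-1)/N)\big\vert \leq k(s+k)/N$, and your use of the contracting property to absorb the combinatorial sums are exactly the right way to flesh this out, and your remark that the BBGKY elementary terms must first be reinterpreted as genuine integrals after surgery (borrowed from the first step of the proof of Lemma~\ref{SECT3LemmeErreuVenanDiverTraje}) is the one subtlety the paper leaves implicit here.
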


\begin{proof}[Proof of Lemma \ref{SECT4LemmeErreuVenanDInitPrefa}]
This result is a direct consequence of the control \eqref{SECT3PROPOTensoBoltzAdmisQuant} of Proposition \ref{SECT3PropoDonnéTensoBoltzAdmis}, together with the following bounds on the prefactors:
\begin{align*}
1 \geq \frac{(N-s)!}{(N-s-k)!}\varepsilon^{k(d-1)} \geq \big( (N-s-k)\varepsilon^{d-1} \big)^k &\geq 1 + k \ln(N-s-k/N) \geq 1 - 2k\frac{(s+k)}{N} \geq 1 - 2\frac{(s+n)^2}{N}\cdotp
\end{align*}
for $N$ large enough (with respect to $s$ and $n$, that are fixed here).
\end{proof}

\begin{theor}[Lanford's theorem in the half-space]
\label{SECT4TheorPrincipal_deLanford_}
Let $\beta_0>0$ and $\mu_0$ be two real numbers. There exist two times $0 \leq T' < T$ such that the following holds:\\
let $f_0$ be a nonnegative normalized function belonging to $X_{0,1,\beta_0}$ which satisfies $\vert f_0 \vert_{0,1,\beta_0} \leq \exp(-\mu_0)$ and such that $\sqrt{f_0}$ is Lipschitz with respect to the position variable uniformly in the velocity variable.\\
Then if one considers the solution $F = \big( f^{(s)} \big)_{s \geq 1}$ on $[0,T]$ of the Boltzmann hierarchy with the tensorized initial datum $F_0 = \big( f^{\otimes s}_0 \big)_{s \geq 1}$, and if one considers for every positive integer $N$ the solution $F_N = \big(f^{(s)}_N\big)_{1 \leq s \leq N}$ on $[0,T]$ of the BBGKY hierarchy with the initial datum $F_{N,0} = \big( f^{(s)}_{N,0} \big)_{1 \leq s \leq N}$, where $\big(F_{N,0}\big)_{N \geq 0}$ is the sequence of initial data associated to the tensorized initial datum $\big( f^{\otimes s}_0 \big)_{s \geq 1}$ (in the sense of Proposition \ref{SECT3PropoDonnéTensoBoltzAdmis}), one has that, in the Boltzmann-Grad limit $N\varepsilon^{d-1} = 1$, for every positive integer $s$, the following locally uniform convergence on the domain of local uniform convergence $\Omega_s$, uniform on the time interval $[0,T']$, holds:
$$
f^{(s)}_N \underset{N \rightarrow +\infty}{\longrightarrow} f^{(s)},
$$
moreover, whatever the dimension $d$ is, the rate of convergence is of order $O(\varepsilon^\gamma)$, for all $\gamma \in\ ]0,13/128[$.
\end{theor}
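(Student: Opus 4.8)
The plan is to assemble all the estimates gathered so far into a single telescoping bound for $f^{(s)}_N - f^{(s)}$ on a compact $K\subset\Omega_s$, uniformly on a time interval $[0,T']$, and then to optimise the free cut-off parameters. First I would fix $s$ and $K$, invoke Proposition \ref{SECT3PropoControParamAdjonPatho} to get the thresholds $\overline{\varepsilon},\overline{\varepsilon}_0,\overline{\alpha},\overline{\gamma},\overline{R}$ ensuring $K\subset\Delta_s$, and record via Proposition \ref{SECT3PropoDonnéTensoBoltzAdmis} that the associated BBGKY data satisfy $\sup_N\vertii{F_{N,0}}_{N,\varepsilon,\beta_0,\mu_0}<+\infty$, that $\vertii{F_0}_{0,\beta_0,\mu_0}\leq 1$, and that $\big\vert\mathds{1}_{\mathcal{D}^\varepsilon_s}f^{\otimes s}_0-f^{(s)}_{N,0}\big\vert_\infty=O(s\varepsilon)$ locally uniformly on $\Omega_s$.

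Then, on $K$, I would write $f^{(s)}_N-f^{(s)}$ as a sum along the chain: truncate both solutions in the number of collisions $n$ (Lemma \ref{PROPOCutofGrandNombrColli}, error $(1/2)^n$), then in the energy $R$ (Lemma \ref{PROPOCutofConfigHauteEnerg}, error $e^{-C_2R^2}$), then in the time separation $\delta$ (Lemma \ref{PROPOCutofPetitTempsSepa1}, error $\sqrt{\delta}$ up to polynomial factors in $s$ and $n$), using throughout that $\mathcal{T}^{s,\varepsilon}_t$ preserves the $|\cdot|_{\varepsilon,s,\beta}$ norm; next perform the surgery in the pathological adjunction parameters on both hierarchies (Lemma \ref{SECT3LemmeChiruParamAdjonPatho}, whose error is $C_4 n(s+n)R$ times $\frac{\rho}{\alpha}+\eta^d+R^d(a/\rho)^{d-1}+nR^{2d-1}(a/\varepsilon_0)^{d-3/2}+nR^{d+1/2}(\varepsilon_0/\delta)^{d-3/2}+R^{d-1}\alpha+R^d\alpha^{1/8}$). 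After this the BBGKY approximation $F^{n,R,\delta}_N(U^c,E^c)$ is an honest finite sum of the elementary terms $\mathcal{J}^{N,\varepsilon,\delta}_{\cdot}$, and the Boltzmann approximation $F^{n,R,\delta}(U^c,E^c)$ a sum of the $\mathcal{J}^{0,\delta}_{\cdot}$ with the tensorized data $f^{\otimes(s+k)}_0$; these two are linked by the two remaining bridges: replacing $\mathcal{J}^{N,\varepsilon,\delta}_{\cdot}$ by the hybrid term $\overline{\mathcal{J}}^{\varepsilon,\delta}_{\cdot}$, which absorbs the difference in the initial data and the prefactors (Lemma \ref{SECT4LemmeErreuVenanDInitPrefa}, error $O((s+n)^2\varepsilon)$), and replacing $\overline{\mathcal{J}}^{\varepsilon,\delta}_{\cdot}$ by $\mathcal{J}^{0,\delta}_{\cdot}$, which is exactly the control of the divergence of the pseudo-trajectories (Lemma \ref{SECT3LemmeErreuVenanDiverTraje}, error $O((s+n)^2nR\,\varepsilon/\alpha)$, where the hypothesis that $\sqrt{f_0}$ is Lipschitz in the position variable is used). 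The zeroth terms $\mathcal{T}^{s,\varepsilon}_tf^{(s)}_{N,0}\mathds{1}_{|V_s|\leq R}$ and $\mathcal{T}^{s,0}_tf^{\otimes s}_0\mathds{1}_{|V_s|\leq R}$ are compared directly on $\Delta_s$, where the hard sphere flow of the $s$ particles has no recollision and therefore differs from the free flow only by $O(s\varepsilon)$ shifts at the bouncings and by the symmetries $\mathcal{S}_0$ on the velocities, which together with Proposition \ref{SECT3PropoDonnéTensoBoltzAdmis} and the Lipschitz bound again yields an $O(s\varepsilon R)$ error. Summing everything gives, on $[0,T']$ (the time provided by Lemma \ref{SECT3LemmeErreuVenanDiverTraje}), a bound for $\sup_{[0,T']}\sup_K\big\vert f^{(s)}_N-f^{(s)}\big\vert$ which is a finite sum of monomials in $\varepsilon,\delta,R,a,\varepsilon_0,\rho,\eta,\alpha$ and $(1/2)^n$, with coefficients polynomial in $s$ and $n$.

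Finally I would choose $n=\lfloor\gamma\log_2(1/\varepsilon)\rfloor$ so that $(1/2)^n=O(\varepsilon^\gamma)$ while the polynomial-in-$n$ prefactors only cost an $\varepsilon^{-o(1)}$ loss, take $R=\varepsilon^{-\kappa}$ for an arbitrarily small $\kappa>0$ so that $e^{-C_2R^2}$ is negligible while the powers of $R$ cost $\varepsilon^{-O(\kappa)}$, and scale $\delta,a,\varepsilon_0,\rho,\eta,\alpha$ as powers of $\varepsilon$. The constraint $2n\varepsilon\leq a$ forces $a\sim\varepsilon^{1-o(1)}$; imposing in addition the compatibility chain $3a\leq\rho$, $4\sqrt{3}\,a\leq\varepsilon_0$, $\varepsilon_0\leq\eta\delta$, $\varepsilon_0/\delta\leq\overline{\gamma}$, $\max(16R\varepsilon/\varepsilon_0,\varepsilon_0/\delta)\leq\overline{\gamma}$, $\alpha\leq c(d)$, and requiring each monomial above to be $O(\varepsilon^\gamma)$ turns the optimisation into a small linear program in the scaling exponents. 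Its optimum is dictated by the competition between the near-the-wall loss $\rho/\alpha$, the separation loss $(a/\rho)^{d-1}$ with $a\sim\varepsilon$, the energy loss $\varepsilon/\alpha$, and — in dimension $2$ — the grazing-scattering loss $\alpha^{1/8}$ coming from Lemma \ref{SECT3LemmeScatePavé_VitesRasan}; since $d=2$ is the worst case, the resulting admissible range $\gamma\in\,]0,13/128[$ holds for every $d$. Combined with the local uniform convergence of $f^{(s)}_{N,0}$ towards $f^{(s)}_0$ on $\Omega_s$, this establishes both the convergence $f^{(s)}_N\to f^{(s)}$ and the announced rate. I expect the main obstacle to be precisely this last step: verifying that one single scaling of the eight cut-off parameters simultaneously meets all of the compatibility constraints of Lemmas \ref{SECT3LemmeChiruParamAdjonPatho}--\ref{SECT4LemmeErreuVenanDInitPrefa} and carrying out the ensuing optimisation, the point being that $a$ cannot be pushed below $\sim n\varepsilon$ (the two pseudo-trajectories drift apart by $2k\varepsilon$ after $k$ adjunctions), which caps the exponent, the extra $\alpha^{1/8}$ loss in dimension $2$ then bringing it below the value obtained in \cite{GSRT} without an obstacle.
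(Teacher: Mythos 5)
Your proposal is correct and follows essentially the same route as the paper: assemble the error terms from Lemmas \ref{PROPOCutofGrandNombrColli}--\ref{SECT4LemmeErreuVenanDInitPrefa} and Propositions \ref{SECT3PropoControParamAdjonPatho}, \ref{SECT3PropoDonnéTensoBoltzAdmis}, then choose the cut-off parameters as explicit functions of $\varepsilon$ to balance them. The paper simply states the resulting choices ($\delta=\varepsilon^{1/2}|\ln\varepsilon|^3$, $a=\varepsilon|\ln\varepsilon|^2$, $\varepsilon_0=\varepsilon^{3/4}|\ln\varepsilon|^2$, $\eta=\varepsilon^{1/4}$, $\rho=\varepsilon^{15/16}|\ln\varepsilon|^2$, $\alpha=\varepsilon^{13/16}|\ln\varepsilon|^2$, with $n$ and $R$ logarithmic in $\varepsilon$), whereas you describe the underlying optimisation; both identify the same dominant loss $\alpha^{1/8}$ in dimension $2$ yielding the exponent $13/128$.
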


\begin{proof}[Proof of Theorem \ref{SECT4TheorPrincipal_deLanford_}]
Collecting all the errors obtained along the different cut-offs, if one can express the parameters $a,\varepsilon_0,\eta,\rho,\alpha,\delta,R$ and $n$ as explicit functions of $\varepsilon$, fulfilling the conditions:
\begin{align*}
n\varepsilon/a \underset{\varepsilon \rightarrow 0}{\longrightarrow} 0,\ \ a/\varepsilon_0 \underset{\varepsilon \rightarrow 0}{\longrightarrow} 0,\ \ \varepsilon_0/&(\eta \delta) \underset{\varepsilon \rightarrow 0}{\longrightarrow} 0,\ \ \varepsilon_0/\delta \underset{\varepsilon \rightarrow 0}{\longrightarrow} 0,\ \ a/\rho \underset{\varepsilon \rightarrow 0}{\longrightarrow} 0,\ \ \alpha \underset{\varepsilon \rightarrow 0}{\longrightarrow} 0 \ \ \text{and}\ \ R\underset{\varepsilon \rightarrow 0}{\longrightarrow}+\infty,
\end{align*}
we would recover an explicit rate of convergence of the solutions of the BBGKY hierarchy towards the solution of the Boltzmann hierarchy.\\
To do so, we may choose
\begin{align*}
\delta = \varepsilon^{1/2}& \vert \ln(\varepsilon) \vert^3,\ \ a= \varepsilon \vert \ln(\varepsilon) \vert^2,\ \ \varepsilon_0 = \varepsilon^{3/4} \vert \ln(\varepsilon) \vert^2,\ \ \eta = \varepsilon^{1/4},\\
&\rho = \varepsilon^{15/16} \vert \ln(\varepsilon) \vert^2 \ \ \text{and}\ \ \alpha = \varepsilon^{13/16} \vert \ln(\varepsilon) \vert^2,
\end{align*}
providing the claimed rate of convergence, and concluding the proof of Theorem~\ref{SECT4TheorPrincipal_deLanford_}.
\end{proof}

\begin{remar}
The bound $T'$ of the time interval on which the convergence holds is smaller than the time $T$ of co-existence of the solutions of the two hierarchies (provided by Theorem \ref{SECT2TheorExistUniciSolutHiera}). This loss comes from the quantitative control of the difference between the terms with different pseudo-trajectories, especially from the Lipschitz control on $\sqrt{f_0}$, implying to consider only powers of the initial data (with a power $a$ smaller than $1$) on which the collision operator will act. But this power forces to relax the weight $\beta$ of the space $X_{0,1,\beta}$ in which $f^a_0$ lies, making the size of the time interval in which the collision operator is contracting smaller.\\
The limitation in the rate of convergence comes essentially from the geometrical estimates, that are clearly not optimal. This limitation is then of technical order, and specific to the proof presented in this work. Nevertheless, it would be surprising to recover the rate of convergence $O(\varepsilon)$ by optimal means. Indeed, concerning the derivation of the Boltzmann equation without obstacle, one can find in \cite{BGSR} an improvement of the explicit rate of convergence previously obtained in \cite{GSRT}, namely $\varepsilon\ln(\varepsilon)$, obtained by the integration of the singularity in time. Therefore, in the case of particles evolving outside of an obstacle, except if this obstacle has a regularizing effect (which is probably not the case), one shouldn't expect a better convergence than $\varepsilon \ln(\varepsilon)$.
\end{remar}

\appendix
\appendixpage

\section{The proof of the (almost everywhere) well-posedness of the hard sphere transport}
\label{AppenSectiBonneDefinSpherDures}

\begin{proof}[Proof of Proposition \ref{SECT1PropoBonneDefinSpherDures}]
Let $\delta \in\ ]0,1]$, $\rho \geq 1$ and $R \geq 1$ be three strictly positive real numbers. We start by removing configurations featuring large distances between the particles. We remove also configurations with positions far from $x=0$, or with high energies (that is, norms of the velocities): we set
$$
\mathcal{D}^\varepsilon_{N,\rho,R}=\mathcal{D}^\varepsilon_N\cap\left(\left(B_{\mathbb{R}^d}(0,\rho)\right)^N\times B_{\mathbb{R}^{dN}}(0,R)\right).
$$
For every integer $1\leq i\leq N$, and any pair $1\leq j\neq k\leq N$ with $j\neq i$ and $k\neq i$, we also consider
$$
\mathcal{A}^i_{j,k}(\delta)=\left\{Z_N\in\mathcal{D}^\varepsilon_{N,\rho,R}\ /\ |x_i-x_j|\leq 2\delta R+\varepsilon,\ |x_i-x_k|\leq 2\delta R+\varepsilon\right\}.
$$
If $Z_N$ does not belong to this subset of initial configurations, the particle $i$ will not be able to collide with the particles $j$ and $k$ on the time interval $[0,\delta]$. We have also:
\begin{align}
\label{BonneDefinMajorColliColli}
\big\vert \mathcal{A}^i_{j,k} \big\vert \leq C_1(d,N)\rho^{d(N-2)}R^{dN}\Big(\sum_{k=1}^d{d\choose k}\big( 2\delta R\big)^k\varepsilon^{d-k} \Big)^2.
\end{align}
Similarly, for all integers $1\leq i\neq j\leq N$, we consider
$$
\mathcal{B}^i_j(\delta)=\left\{Z_N\in\mathcal{D}^\varepsilon_{N,\rho,R}\ /\ |x_i-x_j|\leq 2\delta R+\varepsilon, d(\Omega,x_i)\leq\delta R+\varepsilon/2\right\}.
$$
If $Z_N$ does not belong to $\mathcal{B}^i_j(\delta)$, the particle $i$ will not collide with the particle $j$ nor bounce against the obstacle during the time interval $[0,\delta]$. We have in addition:
\begin{align}
\label{BonneDefinMajorColliRebon}
\big\vert \mathcal{B}^i_j \big\vert \leq C_2(d,N)\rho^{d(N-1)-1}R^{dN+1}\delta\sum_{k=1}^d{d\choose k}\left(2\delta R\right)^k\varepsilon^{d-k}.
\end{align}
Considering $\mathcal{D}^\varepsilon_{N,\rho,R}\ \backslash\ \mathcal{C}^\varepsilon_{N,\rho,R}(\delta,1)$ with
$$
\mathcal{C}^\varepsilon_{N,\rho,R}(\delta,1) = \bigg( \Big( \hspace{-8mm} \bigcup_{\substack{1\leq i\leq N\\ 1\leq j\neq k\leq N, \ j\neq i,\ k\neq i}} \hspace{-10mm} \mathcal{A}^i_{j,k}(\delta)\Big) \cup \Big( \hspace{-3mm} \bigcup_{\substack{1\leq i\leq N\\ 1\leq j\leq N,\ j\neq i}} \hspace{-4mm} \mathcal{B}^i_j(\delta)\Big) \bigg),
$$
we obtained a subset of $\mathcal{D}^\varepsilon_{N,\rho,R}$ composed of initial configurations which are all leading to a dynamics well defined on the whole time interval $[0,\delta]$, with:
\begin{align}
\label{TaillCondiMauvaDefinDelta}
\big\vert \mathcal{C}^\varepsilon_{N,\rho,R}(\delta,1) \big\vert \leq\ C_3(d,N)R^{dN+2}\left(\rho^{d(N-2)}\varepsilon^{2(d-1)}+\rho^{d(N-1)-1}\varepsilon^{d-1}\right)\delta^2
\end{align}
for $\delta$ small enough with $\varepsilon$, $R$ and $\rho$ fixed.
\newline
Now let $t$ be a given strictly positive constant, we choose $\delta > 0$ such that $t/\delta$ is a positive integer $m$. By the definition of the dynamics, and since $Z_N \in B_{\mathbb{R}^d}(0,\rho)$, at time $\delta$, the transport sends $Z_N$ in $\mathcal{D}^\varepsilon_{N,\rho+\delta R,R}$. Remembering that $\varepsilon\leq1$, the bound \eqref{TaillCondiMauvaDefinDelta} can be rewritten using:
\begin{equation}
\label{MajorSommePuissRho__Epsil}
\left(\rho^{d(N-2)}\varepsilon^{2(d-1)}+\rho^{d(N-1)-1}\varepsilon^{d-1}\right)\leq 2\rho^{d(N-1)-1}\varepsilon^{d-1}.
\end{equation}
Then, following the same steps as above and thanks to the bound \eqref{TaillCondiMauvaDefinDelta} rewritten with \eqref{MajorSommePuissRho__Epsil}, and up to excluding a small subset $\mathcal{C}^\varepsilon_{N,\rho+\delta R,R}(\delta,1)$ of $\mathcal{D}^\varepsilon_{N,\rho+\delta R,R}$ of size bounded by $C_4(d,N)R^{dN+2}\left(\rho+\delta R\right)^{d(N-1)-1}\varepsilon^{d-1}\delta^2$, the dynamics is well-defined until $2\delta$. Since the hard sphere flow preserves the measure (see for example \cite{CeIP}\footnote{In particular, see Appendix 4.A "More About Hard-Sphere Dynamics".}), it is possible to exclude from the set of the initial configuration a subset denoted $\mathcal{C}^\varepsilon_{N,\rho,R}(\delta,2)$, the union of $\mathcal{C}^\varepsilon_{N,\rho,R}(\delta,1)$ and $T^{N,\varepsilon}_{-\delta}\big(\mathcal{C}^\varepsilon_{N,\rho+\delta R,R}(\delta,1)\big) \cap \big(B_{\mathbb{R}^d}(0,R)^N\times B_{\mathbb{R}^{dN}}(0,R)\big)$, such that outside, the dynamics is well-defined until $2\delta$.\\
By induction, up to excluding a subset $\mathcal{C}^\varepsilon_{N,\rho,R}(\delta,m)$ which has a size smaller than
$$
C_4(d,N)R^{dN+2}\varepsilon^{d-1}\delta^2\sum_{k=0}^{m-1}\left((\rho+k\delta R)^{d(N-1)-1}\right),
$$
the dynamics is well defined on the whole time interval $[0,t]$. Taking $\rho=R$, the size of the excluded set is then bounded by (remembering that $t = m\delta$):
\begin{equation}
\label{TaillEnsemFinalBonneConfi}
C_4(d,N)R^{d(2N-1)+1}\varepsilon^{d-1}t(1+(m-1)\delta)^{d(N-1)-1}\delta\leq C(d,N,t,\varepsilon)R^{d(2N-1)+1}\delta.
\end{equation}
Considering the subset $\mathcal{F}^\varepsilon_{N,R}(t) = \bigcap_{m\ \in\ \mathbb{N}^*}\mathcal{C}^\varepsilon_{N,R,R}(t/m,m)$, of measure zero, any initial configuration taken in
$\mathcal{D}^\varepsilon_{N,R,R}=\mathcal{D}^\varepsilon_N\cap\left(\left(B_{\mathbb{R}^d}(0,R)\right)^N\times B_{\mathbb{R}^{dN}}(0,R)\right)$ and outside $\mathcal{F}^\varepsilon_{N,R}(t)$ produces a well-defined dynamics on $[0,t]$.\\
Considering now a sequence $\left(R_n\right)_{n\in\mathbb{N}}$ going to infinity with $R_0\geq1$, and $\mathcal{F}^\varepsilon_N(t)=\bigcup_{n\in\mathbb{N}}\mathcal{F}^\varepsilon_{N,R_n}(t)$, of measure zero, we see that it contains all the initial configurations leading to an ill-defined dynamics before the time $t$.\\
Finally, considering $\mathcal{F}^\varepsilon_N=\bigcup_{p\in\mathbb{N}^*}\mathcal{F}^\varepsilon_N(pt)$, any initial configuration $Z_N\in\mathcal{D}^\varepsilon_N$ taken outside this subset $\mathcal{F}^\varepsilon_N$,  leads to a well-defined dynamics, and the first point of Proposition \ref{SECT1PropoBonneDefinSpherDures} is proven.\\
For the second point, we will consider an initial configuration of $\mathcal{D}^\varepsilon_N$, which is not an element of $\mathcal{F}^\varepsilon_N$. We saw that this subset is of measure zero, and the dynamics from this initial configuration is well-defined on the whole time interval $\mathbb{R}_+$.\\
On the one hand, if one assumes that $\mathcal{E}(Z_N)$ admits an accumulation point, say $t_0>0$ (that is there exists a sequence of events $(t_k)_{k\in\mathbb{N}}$ such that $t_k \underset{k\rightarrow+\infty}{\longrightarrow}t_0$), since there is only a finite number of particles (here : $N$), there exists $1\leq N_0\leq N$ and a subsequence $(t_{\varphi(k)})_{k\in\mathbb{N}}$ of $(t_k)_{k\in\mathbb{N}}$, such that all the events of this subsequence correspond to the particle $N_0$: each event $t_{\varphi(k)}$ is either a collision or a bouncing involving $N_0$.\\
On the other hand, going back to the definition of the subset $\mathcal{F}^\varepsilon_N$, the fact that $Z_N\notin\mathcal{F}^\varepsilon_N$ exactly means that:
$$
\forall t>0,\ \forall\ p\in\mathbb{N}^*,\ \forall\ n\in\mathbb{N},\ Z_N\notin \mathcal{F}^\varepsilon_{N,R_n}(pt),
$$
where $\mathcal{F}^\varepsilon_{N,\rho_n,R_n}(pt)$ is defined above, that is:
$$
\forall t>0,\ \forall\ p\in\mathbb{N}^*,\ \forall\ n\in\mathbb{N},\ \exists\ m\in\mathbb{N}^*\ /\ Z_N\in\mathcal{C}^\varepsilon_{N,R_n}(pt/m,m).
$$
In particular, one chooses $t=2t_0$, where $t_0$ is the accumulation point of the set of events $\mathcal{E}(Z_N)$ mentionned above, so that $t_0\in\ ]0,t[$, and then there exists $k_0\in\mathbb{N}$ so that for all $k\geq k_0$, we have $t_{\varphi(k)}\in\ ]0,t[$. Since the sequence $\big(R_n\big)_{n\in\mathbb{N}}$ is increasing and tends to infinity, there exists $n_0\in\mathbb{N}$ such that $Z_N\in\big(B_{\mathbb{R}^d}(0,R_{n_0})\big)^N\times B_{\mathbb{R}^{dN}}(0,R_{n_0})$. Finally, for $t=2t_0$, $n=n_0$ and $p=1$, there exists $m_0\in\mathbb{N}^*$ such that $Z_N\in\mathcal{C}^\varepsilon_{N,R_{n_0}}(t/m_0,m_0)$. So all the events concerning the particle $N_0$ are separated by a time interval of length larger than $t/m_0$, which is obviously a contradiction. Proposition \ref{SECT1PropoBonneDefinSpherDures} is entirely proven.
\end{proof}

\section{The proof of the contracting property of the BBGKY and Boltzmann operators}
\label{AppenSectiOperaColliContractio}

\subsection{Stability of the spaces $\widetilde{\textbf{X}}_{N,\varepsilon,\widetilde{\beta},\widetilde{\mu}^\alpha}$ and $\widetilde{\textbf{X}}_{0,\widetilde{\beta},\widetilde{\mu}^\alpha}$ under the action of the operators $\mathfrak{E}_{N,\varepsilon}$ and $\mathfrak{E}_0$}

The first step to obtain Theorem \ref{SECT2TheorExistUniciSolutHiera} page \pageref{SECT2TheorExistUniciSolutHiera} is to show that the BBGKY and the Boltzmann operators $\mathfrak{E}_{N,\varepsilon}\big((f^{(s)}_{N,0})_{1 \leq s \leq N},\cdot\,\big)$ and $\mathfrak{E}_0\big((f^{(s)}_0)_{s \geq 1},\cdot \,\big)$ (for appropriate sequences of initial data $(f^{(s)}_{N,0})_{1 \leq s \leq N}$ and $(f^{(s)}_0)_{s \geq 1}$), send respectively the spaces $\widetilde{\textbf{X}}_{N,\varepsilon,\widetilde{\beta},\widetilde{\mu}^\alpha}$ and $\widetilde{\textbf{X}}_{0,\widetilde{\beta},\widetilde{\mu}^\alpha}$ into themselves.\\
Concerning the first terms $f^{(s)}_{N,0}$ and $\mathcal{T}^{s,0}_tf^{(s)}_0$ of the two operators, on the one hand it is immediate that if $(f^{(s)}_{N,0})_{1\leq s \leq N} \in \textbf{X}_{N,\varepsilon,\widetilde{\beta}(0),\widetilde{\mu}(0)^\alpha}$, then the constant function with respect to time $t \mapsto (f^{(s)}_{N,0})_{1\leq s \leq N}$ belongs to the space $\widetilde{\textbf{X}}_{N,\varepsilon,\widetilde{\beta},\widetilde{\mu}^\alpha}$, and this for any time interval $[0,T]$, as soon as the two weight functions $\widetilde{\beta}$ and $\widetilde{\mu}$ are non increasing (as an obvious consequence of the embedding result Proposition \ref{SECT2PropoIncluEspaXHierarchie}). On the other hand, concerning the Boltzmann hierarchy, the same result can be proven (see \cite{GSRT}\footnote{See Lemma 20 page 239}), that is if $(f^{(s)}_0)_{s\geq1}$ belongs to $\textbf{X}_{0,\widetilde{\beta}(0),\widetilde{\mu}(0)^\alpha}$, then $t \mapsto (\mathcal{T}^{s,0}_t f^{(s)}_0)_{s \geq 1}$ belongs to $\widetilde{\textbf{X}}_{0,\widetilde{\beta},\widetilde{\mu}^\alpha}$, up to assume in addition that the weight functions $\widetilde{\beta}$ and $\widetilde{\mu}$ are \emph{decreasing} (and not only non increasing).\\
This additional hypothesis is another element showing that the conjugated hierarchies (as \eqref{SECT2HieraBBGKYInteg/TmpsConju}) are more regular than the "usual" hierarchies \eqref{SECT1HieraBBGKYVersiInteg/Tmps} and \eqref{SECT1HieraBoltzVersiInteg/Tmps}.\\
\newline
Concerning the second terms defining the BBGKY and the Boltzmann operators, that is, the integrated in time collision operators, the functional spaces are designed exactly so that the elements of the spaces $\widetilde{\textbf{X}}_{N,\varepsilon,\widetilde{\beta},\widetilde{\mu}^\alpha}$ and $\widetilde{\textbf{X}}_{0,\widetilde{\beta},\widetilde{\mu}^\alpha}$, under the action of those integrated in time collision operators, provide functions with finite $\vertiii{\cdot}_{N,\varepsilon,\tilde{\beta},\tilde{\mu}^\alpha}$ and $\vertiii{\cdot}_{0,\tilde{\beta},\tilde{\mu}^\alpha}$ norms.

\subsection{The contracting property of the operators $\mathfrak{E}_{N,\varepsilon}$ and $\mathfrak{E}_0$}
\label{AppenSSectContractioOperaHiera}

Before Theorem \ref{SECT2TheorExistUniciSolutHiera}, the choice of the time interval on which we defined the spaces $\widetilde{\textbf{X}}_{N,\varepsilon,\widetilde{\beta},\widetilde{\mu}^\alpha}$ and $\widetilde{\textbf{X}}_{0,\widetilde{\beta},\widetilde{\mu}^\alpha}$ was arbitrary (and then could be chosen as long as one wants). Here, in order to use the fixed point theorem in the Banach spaces, we need to show that the operators $\mathfrak{E}_{N,\varepsilon}$ and $\mathfrak{E}_0$ are contracting mappings in the $\vertiii{\cdot}_{N,\varepsilon,\widetilde{\beta},\widetilde{\mu}^\alpha}$ and $\vertiii{\cdot}_{0,\widetilde{\beta},\widetilde{\mu}^\alpha}$ respectively. A time restriction will be necessary to deduce this contracting property.\\
\newline
The original idea and computation are due to Ukai (\cite{Ukai}) and Uchiyama (\cite{Uchi}), but a recent version is of course presented in \cite{GSRT}, and also in \cite{PhDTT}. We first point out the fact that the computation of the norm of those collision operators is the same for the BBGKY hierarchy and for the Boltzmann hierarchy. We present only the case $\alpha=1$, for the integrated in time transport-collision-transport operator of the BBGKY hierarchy.\\
For any $t \in [0,T]$ and for any $1\leq s\leq N-1$, and with $Q$ denoting
$$
Q = \Big\vert \int_0^t \mathcal{T}^{s,\varepsilon}_{-u} \mathcal{C}^{N,\varepsilon}_{s,s+1}\mathcal{T}^{s+1,\varepsilon}_u h^{(s+1)}_N(u,Z_s) \dd u \Big\vert,
$$
we have (thanks to Theorem \ref{SECT2TheorDefinOperaColliBBGKY}):
\begin{align*}
&Q\leq 2(N-s)\varepsilon^{d-1} \frac{\vert \mathbb{S}^{d-1} \vert}{2} \sum_{i=1}^s \int_0^t \Big\vert h^{(s+1)}_N(u,Z_{s+1})\exp\Big(\frac{\widetilde{\beta}(u)}{2}\sum_{j=1}^{s+1}\vert v_j \vert^2\Big) \Big\vert_{L^\infty(\mathcal{D}^\varepsilon_{s+1})}\\
&\ \ \ \ \ \ \ \ \ \ \ \ \ \ \ \times \int_{\mathbb{R}^d} \big( \vert v_i \vert + \vert v_{s+1} \vert \big) \exp\Big( -\frac{\widetilde{\beta}(u)}{2}\sum_{j=1}^{s+1} \vert v_j \vert^2 \Big) \dd v_{s+1} \dd u.
\end{align*}
Remembering that we want to obtain a bound on the $\vertiii{\cdot}_{N,\varepsilon,\widetilde{\beta},\widetilde{\mu}^1}$ norm of the function of sequences:
$$
t\mapsto \Big( \mathds{1}_{s\leq N-1} \int_0^t \mathcal{T}^{s,\varepsilon}_{-u}\mathcal{C}^{N,\varepsilon}_{s,s+1}\mathcal{T}^{s+1,\varepsilon}_u h^{(s+1)}_N(u,\cdot) \dd u \Big)_{1\leq s\leq N},
$$
we consider the product of $Q$ with $\exp\Big( \frac{\widetilde{\beta}(t)}{2}\sum_{i=1}^s\vert v_i \vert^2 \Big) \times \exp\big( s\widetilde{\mu}(t) \big)$ and bound this product uniformly in $t\in[0,T]$ and $1\leq s\leq N-1$, which gives in the end (see \cite{PhDTT} or \cite{GSRT} for more details):
\begin{align}
\label{SECT2InegaContractioOperaColli}
Q(t,s,Z_s) \leq C_3(d,N,\varepsilon) \widetilde{\beta}_\lambda(T)^{-d/2}&\exp\big(-\widetilde{\mu}_\lambda(T)\big)\frac{\big( 1 + \widetilde{\beta}_\lambda(T)^{-1/2} \big)}{\lambda}\nonumber\\
&\ \ \ \ \ \ \ \ \times\vertiii{ \big(h^{(s+1)}_N\big)_{1\leq s\leq N} }_{N,\varepsilon,\widetilde{\beta}_\lambda,\widetilde{\mu}_\lambda^1},
\end{align}
We obtained an explicit control of the $\vertiii{\cdot}_{N,\varepsilon,\widetilde{\beta}_\lambda,\widetilde{\mu}_\lambda^1}$ norm of the integral term of the BBGKY operator. It remains to choose wisely $\lambda$ and $T$ to obtain a contracting mapping.\\
\newline
To finish showing that the integrated in time transport-collision-transport operator sends the space $\widetilde{\textbf{X}}_{N,\varepsilon,\widetilde{\beta}_\lambda,\widetilde{\mu}_\lambda^1}$ into itself, it is also important to check the left continuity for all $t\in\ ]0,T]$ with respect to the $\vert\cdot\vert_{\varepsilon,s,\widetilde{\beta}_\lambda(t)}$ norm, for any $s$. This verification is presented in \cite{PhDTT} in details, and leads to the discussion page \pageref{SSSecDefinFonctSuiteUnifB} about balancing the strength of the continuity in time and the value of the parameter $\alpha$.\\
To perform this verification we consider, for each $1\leq s\leq N$ \emph{fixed}:\\
$Q \exp\Big(\frac{\widetilde{\beta}_\lambda(t)}{2}\vert V_s \vert^2\Big) \exp(s\widetilde{\mu}_\lambda(t))$ with $Q = \Big\vert \displaystyle{\int}_u^t \mathcal{T}^{s,\varepsilon}_{-\tau}\mathcal{C}^{N,\varepsilon}_{s,s+1}\mathcal{T}^{s+1,\varepsilon}_\tau h^{(s+1)}_N(\tau,Z_s)\dd \tau \Big\vert$.
Using the definition of the norms $\vert \cdot \vert_{\varepsilon,s,\widetilde{\beta}_\lambda(t)}$, $\vertii{\cdot}_{N,\varepsilon,\widetilde{\beta}_\lambda,\widetilde{\mu}_\lambda(t)^1}$ and $\vertiii{\cdot}_{N,\varepsilon,\widetilde{\beta}_\lambda,\widetilde{\mu}_\lambda^1}$, and the fact that the functions $\widetilde{\beta}_\lambda$ and $\widetilde{\mu}_\lambda$ are decreasing, we find, as above:
\begin{align*}
Q \exp\Big(\frac{\widetilde{\beta}_\lambda(t)}{2}\vert V_s \vert^2\Big) \exp(s\widetilde{\mu}_\lambda(t)) \leq&\ C(d,N,\varepsilon) \widetilde{\beta}_\lambda(T)^{-d/2} \vertiii{\big(h^{(s)}_N\big)_{1\leq s\leq N}}_{N,\varepsilon,\widetilde{\beta}_\lambda,\widetilde{\mu}_\lambda^1} \exp(-\widetilde{\mu}_\lambda(T)) \\
&\hspace{7mm} \times \Big(\sum_{i=1}^s\vert v_i \vert + s\widetilde{\beta}_\lambda(T)^{-1/2}\Big) \int_u^t \exp\Big(\frac{\lambda}{2}(\tau-t)\sum_{i=1}^s\vert v_i \vert^2 + \lambda s(\tau - t)\Big) \dd \tau.
\end{align*}
Using now the Cauchy-Schwarz inequality:
\begin{align*}
\int_u^t \exp\Big(\frac{\lambda}{2}(\tau-t)\sum_{i=1}^s\vert v_i \vert^2 + \lambda s(\tau - t)\Big) \dd \tau &\leq \sqrt{\int_u^t 1^2\dd \tau} \sqrt{\int_u^t \exp\Big(\frac{\lambda}{2}(\tau-t)\sum_{i=1}^s\vert v_i \vert^2 + \lambda s(\tau - t)\Big)^2 \dd \tau}\\
&\leq \sqrt{t-u} \frac{1}{\sqrt{\lambda\big(\sum_{i=1}^s\vert v_i \vert^2 + 2s\big)}},
\end{align*}
we obtain
\begin{align*}
\Bigg\vert \int_u^t \mathcal{T}^{s,\varepsilon}_{-\tau}\mathcal{C}^{N,\varepsilon}_{s,s+1}&\mathcal{T}^{s+1,\varepsilon}_\tau h^{(s+1)}_N(\tau,Z_s)\dd \tau \Bigg\vert \exp\Big(\frac{\widetilde{\beta}_\lambda(t)}{2}\vert V_s \vert^2\Big)\\
\leq& C(d,N,\varepsilon) \widetilde{\beta}_\lambda(T)^{-1/2} \exp(-(s+1)\widetilde{\mu}_\lambda(T))\frac{\big(1 + \widetilde{\beta}_\lambda(T)^{-1/2}\big)}{\sqrt{\lambda}}\sqrt{s}\sqrt{t-u} \vertiii{\big( h^{(s)}_N\big)_{1\leq s\leq N}}_{N,\varepsilon,\widetilde{\beta}_\lambda,\widetilde{\mu}_\lambda^1}
\end{align*}
that is, for every $1\leq s\leq N$:
$$
\big\vert h^{(s)}_N(t) - h^{(s)}_N(u) \big\vert_{\varepsilon,s,\widetilde{\beta}_\lambda(t)} \underset{u\rightarrow t^-}{\longrightarrow} 0,
$$
hence the continuity in time (in the sense \eqref{SECT2DEFINEspacSuiteTempsConte} of Definition \ref{SECT2DefinEspacSuiteMargiTemps} page \pageref{SECT2DefinEspacSuiteMargiTemps}) for the second term of the BBGKY operator.\\
\newline
Thanks to those controls, and following for example \cite{GSRT}\footnote{See Section 5.4, and in particular the very end of the proof of Lemma 5.4.3.}, we can find, for any $\beta_0>0$ and $\mu_0\in\mathbb{R}$, first a constant $\lambda$ (which is the decrease rate of the weights $\widetilde{\beta}$ and $\widetilde{\mu}$, that is, the speed of the loss of regularity), and then a constant $T$ (which is the length of the time interval on which $\widetilde{\beta}$ remains positive, so that the limitation on the size of the time interval on which we have existence of the solutions comes from this criterion) such that the $\vertiii{\cdot}_{\cdot,\widetilde{\beta}_\lambda,\widetilde{\mu}^1_\lambda}$ norm of the integrated in time collision operator is smaller than $1$, hence the result of Theorem \ref{SECT2TheorExistUniciSolutHiera} page \pageref{SECT2TheorExistUniciSolutHiera} thanks to the fixed point theorem in the Banach spaces.\\

\textbf{Acknowledgements.} The author expresses his warm gratitude to Isabelle Gallagher and Laurent Desvillettes, for their constant support and invaluable advice, dispensed throughout the years spent preparing his thesis, from which this article is taken. The author is also deeply grateful to Chiara Saffirio and to the University of Basel, for the ideal conditions and the serene atmosphere which made this work possible. Finally, the author acknowledges the support of the Swiss National Science Foundation through the Eccellenza project PCEFP2{\_}181153, and of the NCCR SwissMAP.

\bibliographystyle{plain}
\bibliography{bibli}

\end{document}